\theoremstyle{definition}
\newtheorem{theorem}{Theorem}[section]
\newtheorem{lemma}[theorem]{Lemma}
\newtheorem{corollary}[theorem]{Corollary}
\newtheorem{proposition}[theorem]{Proposition}
\newtheorem*{proposition*}{Proposition}
\newtheorem{problem}[theorem]{Problem}
\newtheorem{observation}[theorem]{Observation}
\theoremstyle{definition}
\newtheorem{definition}[theorem]{Definition}
\newtheorem{remark}[theorem]{Remark}
\newtheorem{notation}[theorem]{Notation}
\newtheorem{example}[theorem]{Example}
\newtheorem*{claim}{Claim}
\newtheorem*{convention}{Convention}
\newtheorem*{acknowledge}{Acknowledgements}
\newcommand{\R}{\mathbb{R}}
\newcommand{\Z}{\mathbb{Z}}
\newcommand{\N}{\mathbb{N}}
\newcommand{\id}{id}
\newcommand{\Emc}{E_{\rm mc}}
\newcommand{\sE}{\mathsf{E}}
\newcommand{\calA}{\mathcal{A}}
\newcommand{\calB}{\mathcal{B}}
\newcommand{\calC}{\mathcal{C}}
\newcommand{\calD}{\mathcal{D}}
\newcommand{\calF}{\mathcal{F}}
\newcommand{\calM}{\mathcal{M}}
\newcommand{\calV}{\mathcal{V}}
\newcommand{\calP}{\mathcal{P}}
\newcommand{\less}{\prec}
\newcommand{\lesseq}{\preccurlyeq}
\newcommand{\moreq}{\succcurlyeq}
\DeclareMathOperator{\Homeo}{Homeo}
\DeclareMathOperator{\mcg}{Map}
\DeclareMathOperator{\pmcg}{PMap}
\DeclareMathOperator{\I}{i}
\DeclareMathOperator{\diam}{diam}
\newcounter{notes}
\newcommand{\from}{\colon\,}
\newcommand{\CS}{{\mathcal{C}(S)}}
\renewcommand{\paragraph}[1]{\medskip 
\noindent \textbf{#1}}
\title{Large scale geometry of big mapping class groups}
\author{Kathryn Mann and Kasra Rafi}
\begin{document}

\begin{abstract}
We study the large-scale geometry of mapping class groups of surfaces of infinite type, using the framework of Rosendal for coarse geometry of non locally compact groups.   We give a complete classification of those surfaces whose mapping class groups have local 
{\em coarse boundedness} (the analog of local compactness). When the end space of the 
surface is countable or tame, we also give a classification of those surfaces where 
there exists a coarsely bounded generating set (the analog of finite or compact generation, giving the group a well-defined quasi-isometry type) and those surfaces with mapping class groups of bounded diameter
(the analog of compactness).

We also show several relationships between the topology of a surface and the geometry of its mapping class groups.  For instance, we show that {\em nondisplaceable subsurfaces} are responsible for nontrivial geometry and can be used to produce unbounded length functions on mapping class groups using a version of subsurface projection; while {\em self-similarity} of the space of ends of a surface is responsible for boundedness of the mapping class group.  
\end{abstract}

\maketitle

\section{Introduction} 

Mapping class groups of surfaces of infinite type (with infinite genus or infinitely many ends) form a rich class of examples of non locally compact Polish topological groups.   These ``big" mapping class groups can be seen as as natural generalizations of, or limit objects of, the mapping class groups of finite type surfaces, and also arise naturally in the study of laminations and foliations, and the dynamics of group actions on finite type surfaces.  

Several recent papers (see for instance \cite{DFV, BDR, AFP}) have studied big mapping class groups through their actions on associated combinatorial structures such as curve or arc complexes.   From this perspective, an important problem is to understand whether a given mapping class group admits a {\em metrically nontrivial} action on such a space, namely, an action with unbounded orbits. 
It is our observation that this should be framed as part of a larger question, one of the {\em coarse} or {\em large-scale geometry} of big mapping class groups.   This is the goal of the present work.   

However, describing the large-scale structure of big mapping class groups - or even determining whether this notion makes sense - is a nontrivial problem, as standard tools of geometric group theory apply only to locally compact, compactly generated groups, and big mapping class groups do not fall in this category.  Instead, we use recent work of Rosendal \cite{RosendalLargeScale} that extends the framework geometric group theory to a broader class of topological groups, using the notion of {\em coarse boundedness}.

\begin{definition}
Let $G$ be a Polish topological group.   
A subset $A \subset G$ is {\em coarsely bounded}, abbreviated {\em CB}, if every compatible left-invariant metric on $G$ gives $A$ finite diameter\footnote{In \cite{RosendalBergman} and related work, this condition is called (OB), for {\em orbites born\'ees}, as it is equivalent to the condition that for any continuous action of $G$ on a metric space $X$ by isometries, the diameter of every orbit $A\cdot x$ is bounded.   {\em Coarsely bounded} appears in \cite{Rosendal_book}, we prefer this terminology as it is more suggestive of the large-scale geometric context.  
}.  
\end{definition} 
\noindent For example, in a locally compact group, the CB sets are precisely the compact ones.
Rosendal shows the following. 

\begin{theorem}[Rosendal  \cite{RosendalLargeScale}]
Let $G$ be a Polish group that has both a CB neighborhood of the identity, and is generated by a CB subset. Then the identity map is a quasi-isometry between $G$ endowed with the word metrics from any two symmetric, CB generating sets. 
\end{theorem}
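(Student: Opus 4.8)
The plan is to reduce the claim to a single statement about word length and then to prove that statement by a Baire-category argument made available by the two hypotheses. Since each word metric $\rho_\Sigma$ attached to a symmetric generating set $\Sigma$ (to which we always adjoin the identity) is left-invariant and the identity map $(G,\rho_A)\to(G,\rho_B)$ is a bijection, this map is a quasi-isometry precisely when each of the length functions $\ell_A(g):=\rho_A(e,g)$ and $\ell_B(g):=\rho_B(e,g)$ is bounded above by an affine function of the other. Expanding $g=s_1\cdots s_k$ with $s_i\in B$ and $k=\ell_B(g)$ and using subadditivity of $\ell_A$ yields $\ell_A(g)\le\bigl(\sup_{b\in B}\ell_A(b)\bigr)\ell_B(g)$, so the theorem follows once we know $\sup_{b\in B}\ell_A(b)<\infty$ and, symmetrically, $\sup_{a\in A}\ell_B(a)<\infty$. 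Each of these is a case of the single assertion that \emph{for any CB generating set $\Sigma$ and any CB set $C\subseteq G$ one has $C\subseteq\Sigma^{n}$ for some $n$} --- equivalently, that $\rho_\Sigma$ is coarsely proper.

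To prove the assertion I would separate two ingredients. The first is formal: every power $\Sigma^{n}$ is itself CB, because singletons are CB and a finite product of CB sets is CB. The second, which uses the hypotheses, is that some power $\Sigma^{n}$ contains an identity neighbourhood. Granting the second --- say $U\subseteq\Sigma^{m}$ with $U$ open and $e\in U$ --- the assertion is immediate from Rosendal's combinatorial description of CB sets: applied to this $U$, a CB set $C$ satisfies $C\subseteq(FU)^{k}$ for some finite $F\subseteq G$ and some $k\ge 1$; since $F$ is finite and $\Sigma$ generates $G$ we have $F\subseteq\Sigma^{\ell}$ for some $\ell$, hence $C\subseteq(\Sigma^{\ell}\Sigma^{m})^{k}=\Sigma^{k(\ell+m)}$. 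Taking $C=A$, $\Sigma=B$ and then reversing the roles gives the two finite diameters, and hence the quasi-isometry.

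The hard part is the second ingredient, and it is exactly the place where the hypotheses --- that $G$ is Polish, hence Baire, and that it is locally CB --- are needed. The statement is cleanest, and is what one uses in practice, for CB generating sets that contain an identity neighbourhood: there $n=1$ works immediately, and such generating sets exist because $G$ is locally CB (for instance, the union of a CB identity neighbourhood with any CB generating set). For a closed CB generating set one reaches the same conclusion by Baire category: $G=\bigcup_n\Sigma^{n}$ is an increasing cover, so some $\Sigma^{n_0}$ is non-meagre; since $\Sigma^{n_0}$ is analytic it has the Baire property, and Pettis' lemma then makes $\Sigma^{n_0}(\Sigma^{n_0})^{-1}=\Sigma^{2n_0}$ a neighbourhood of $e$. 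I expect the genuinely delicate point to be the passage from a closed (or neighbourhood-containing) generating set to an arbitrary CB generating set, and the related observation that local coarse boundedness cannot be dispensed with: without it a word metric from a CB generating set may fail to be coarsely proper, and two such metrics need not be quasi-isometric, so the statement is properly read as being about the CB generating sets for which this coarse-properness holds. Everything else --- the reduction to the two diameter estimates and the manipulations with the description of CB sets --- is routine.
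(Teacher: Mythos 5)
The paper does not prove this statement -- it is quoted from Rosendal -- so there is nothing internal to compare against; judged on its own terms, your argument is essentially the standard one and is correct where it is complete. The reduction to the single assertion ``every CB set lies in some power of a CB generating set,'' the use of Rosendal's criterion (Theorem \ref{thm:CB_criterion}) once an identity neighbourhood is trapped in a power of $\Sigma$, and the Baire--Pettis argument producing that neighbourhood when $\Sigma^{n_0}$ has the Baire property (e.g.\ when $\Sigma$ is closed, Borel, or analytic, so that $\Sigma^{n_0}$ is analytic) are all sound, and this covers every generating set that actually arises in the paper, which are of the form $\calF\cup\calV_K$.

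The one point you flag but leave open -- arbitrary CB generating sets without the Baire property -- is not a gap you could have closed: the statement read completely literally is false there. Take $G=\R$, let $(h_i)_{i\in I}\subset(0,1)$ be a Hamel basis of $\R$ over $\Q$, and let $\Sigma=\{0\}\cup\{\pm h_i/n : i\in I,\ n\geq 1\}$. This is a symmetric subset of $[-1,1]$ (hence CB, as $\R$ is locally compact) generating $\R$, yet every element of the $m$-fold sumset $\Sigma^m$ has Hamel support at most $m$, so $\Sigma^m$ has empty interior and $[-1,1]\not\subseteq\Sigma^m$ for any $m$; thus $\ell_\Sigma$ is unbounded on $[-1,1]$ and $\rho_\Sigma$ is not quasi-isometric to $\rho_{[-1,1]}$. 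So Rosendal's theorem must indeed be read with a definability hypothesis on the generating sets (analytic suffices), exactly as you suspected. One small correction: your closing remark that local coarse boundedness ``cannot be dispensed with'' for coarse properness is not supported by your own argument -- in the Baire--Pettis route the CB identity neighbourhood is never used, and the example above shows local CB does not rescue pathological generating sets either; the local CB hypothesis is there so that $G$ has a coarsely proper \emph{compatible} metric and hence a coarse geometry in Rosendal's sense, not to make the word metrics of definable CB generating sets mutually quasi-isometric.
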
 

Thus, word metrics from CB generating sets can be used to {\em define} the quasi-isometry type of a locally CB group.  
However, even determining whether any given group has a well-defined large-scale structure is a challenging question -- analogous to whether a particular group admits a finite generating set.   We show that, among the big mapping class groups, there is a rich family of examples to which Rosendal's theory applies, and give the first steps towards a QI classification of such groups.

\subsection{Main results} 
For simplicity, we assume all surfaces are oriented and have empty boundary, and all homeomorphisms are orientation-preserving.  (The cases of non-orientable surfaces, and those with finitely many boundary components can be approached using essentially the same tools.)

\subsection*{Summary} We give a complete classification of surfaces $\Sigma$ for which $\mcg(\Sigma)$ is {\em locally CB} (Theorem \ref{thm:loc_CB}), a complete classification under mild hypotheses of those which are additionally {\em CB generated}, so have a well-defined quasi-isometry type (Thoerem \ref{thm:CB_generated}), and which are {\em globally CB}, i.e. have trivial QI type (Theorem \ref{thm:global_CB}).    

To give the precise statements, we need to recall the classification of surfaces and state two key definitions.  

\subsection*{End spaces} By a theorem of Richards \cite{Richards} orientable, boundaryless,
infinite type surfaces are completely classified by the following data:  the genus (possibly infinite), the {\em space of ends} $E$, which is a totally disconnected, separable, metrizable topological space, and the subset of ends $E^G$ that are accumulated by genus, which is a closed subset of $E$.   Every such pair $(E, E^G)$ occurs as the space of ends of some surface, with $E^G = \emptyset$ iff the surface has finite genus.
We call a pair $(E, E^G)$ {\em self-similar} if for any decomposition $E = E_1 \sqcup E_2 \sqcup \ldots \sqcup E_n$ of $E$ into pairwise disjoint clopen sets, there exists a clopen set $D$ contained in some $E_i$ such that the pair $(D, D\cap E^G)$ is homeomorphic to $(E, E^G)$.   

\subsection*{Complexity} A key tool in our classification is the following ranking of the ``local complexity'' of an end, which (as we show) gives a partial order on equivalence classes of ends.  
\begin{definition} \label{def:partial_order} 
For $x, y \in E$, we say $x \lesseq y$ if every neighborhood of $y$ contains a homeomorphic copy of a neighborhood of $x$. We say $x$ and $y$ are {\em equivalent} if  $x \lesseq y$ and $y \lesseq x$.  
\end{definition} 
We show that this order has maximal elements (Proposition \ref{prop:maximal_element}), and for $A$ a clopen subset of $E$, we denote the maximal ends of $A$ by $\calM(A)$. 

The following theorem gives the classification of locally CB mapping class groups.  While the statement is technical, it is easy to apply in specific examples.  For instance, the surfaces in Figure \ref{fig:locCB} (left) satisfy the conditions, while those on the right fail to have CB mapping class group.  
  \begin{figure*}[h]
     \centerline{ \mbox{
 \includegraphics[width = 4.5in]{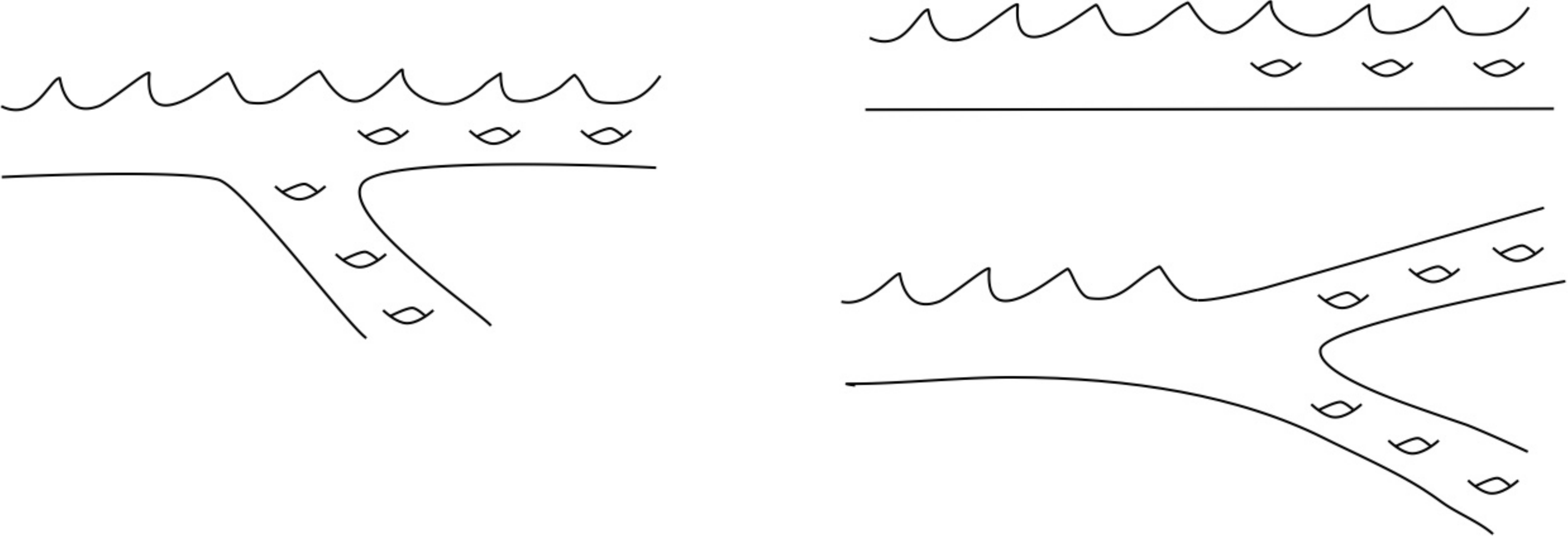}}}
 \caption{By Theorem \ref{thm:loc_CB}, the surface on the left has a locally CB mapping class group and those on the right do not.  All have $\calP = \emptyset$.}
  \label{fig:locCB}
  \end{figure*}

\begin{theorem}[\textbf{Classification of locally CB mapping class groups}] \label{thm:loc_CB}
$\mcg(\Sigma)$ is locally CB if and only if 
there is a finite type surface $K \subset \Sigma$ such that the complimentary regions of $K$ each have infinite type and 0 or infinite genus, and partition $E$ into finitely many clopen sets 
$$E = \left( \bigsqcup_{A \in \calA} A \right) \sqcup \left( \bigsqcup_{P \in \mathcal{P}} P \right)$$
such that: 
\begin{enumerate}[ parsep=0pt, partopsep=2pt, itemsep=0pt]
\item Each $A \in \calA$ is self-similar, with $\calM(A) \subset \calM(E)$ and $\calM(E) \subset \sqcup_{A \in \calA}\calM(A)$, 
\item each $P \in \mathcal{P}$ is homeomorphic to a clopen subset of some $A  \in \calA$, and 
\item for any $x_A \in \calM(A)$, and any neighborhood $V$ of the end $x_A$ in $\Sigma$, there is $f_V \in \Homeo(\Sigma)$ so that $f_V(V)$ contains the complimentary region to $K$ with end set $A$.
\end{enumerate}
Moreover, in this case $\calV_K := \{g \in \Homeo(\Sigma) : g|_K = id \}$ is a CB neighborhood of the identity. 
\end{theorem}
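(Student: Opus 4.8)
The plan is to prove both directions by reducing to a single local question — when is the mapping class group of an infinite-type surface with one boundary curve coarsely bounded — and to resolve that question by an ``absorption'' (push-off) argument for sufficiency and by a subsurface-projection length function for necessity. \emph{Reduction.} The subgroups $\calV_{K'}=\{g: g|_{K'}=\id\}$, as $K'$ ranges over finite-type subsurfaces, form a neighborhood basis of the identity, so $\mcg(\Sigma)$ is locally CB precisely when some $\calV_{K'}$ is CB; since $K''\supseteq K'$ gives $\calV_{K''}\subseteq\calV_{K'}$ and subsets of CB sets are CB, we may freely enlarge $K'$. By Richards' classification \cite{Richards}, any finite-type $K'$ can be enlarged to a \emph{nice} $K$ — one whose complementary regions have infinite type, genus $0$ or $\infty$, and realize a prescribed finite clopen partition of $E$. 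For nice $K$, a homeomorphism fixing $K$ pointwise preserves each complementary region, so $\calV_K\cong\prod_R\mcg(R,\partial R)$ is a \emph{finite} product over the complementary regions $R$. Since finite products of CB groups are CB and quotients — in particular direct factors — of CB groups are CB, we conclude: $\mcg(\Sigma)$ is locally CB if and only if there is a nice $K$ with every $\mcg(R,\partial R)$ CB. The ``moreover'' clause is exactly the content of the sufficiency argument below.

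\emph{Sufficiency.} Assume a nice $K$ with a partition satisfying (1)--(3). We check $\calV_K$ is CB via Rosendal's criterion \cite{RosendalLargeScale}: for each finite-type $L\subseteq\Sigma$ we produce a finite $F\subseteq\mcg(\Sigma)$ and a bound $k$ with $\calV_K\subseteq(F\calV_L)^k$. Conditions (1)--(3) are designed precisely so that each complementary region can be pushed off $L$ by a single ambient homeomorphism. Fix $L$. For $R=S_A$ with $A\in\calA$: choose a maximal end $x_A\in\calM(A)$ and a neighborhood $V$ of $x_A$ in $\Sigma$ disjoint from $L$; by (3) there is $f_V\in\Homeo(\Sigma)$ with $f_V(V)\supseteq S_A$, so $\zeta_A:=f_V^{-1}$ carries $S_A$ into $V$ and hence $\zeta_A(S_A)\cap L=\emptyset$. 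For $R=S_P$ with $P\in\mathcal{P}$: by (2), $P$ is homeomorphic to a clopen subset of some $A\in\calA$, and by (1) we have $x_A\in\calM(E)$, so by the classification of surfaces there is an ambient homeomorphism carrying $S_P$ onto a subsurface of $S_A$ inside a small neighborhood of $x_A$; composing with $\zeta_A$ gives $\zeta_P$ with $\zeta_P(S_P)\cap L=\emptyset$. Now if $g$ is supported on a region $R$, then $\zeta_R g\zeta_R^{-1}$ is supported on $\zeta_R(R)$, disjoint from $L$, hence lies in $\calV_L$; so $\mcg(R,\partial R)\subseteq\zeta_R^{-1}\calV_L\zeta_R\subseteq(F_R\calV_L)^3$ for $F_R=\{\id,\zeta_R^{\pm1}\}$. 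Since $\calV_K$ is a product of these finitely many pieces, $\calV_K\subseteq(F\calV_L)^{3m}$ with $F=\bigcup_R F_R$ and $m$ the number of regions, so $\calV_K$ is CB.

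\emph{Necessity.} Conversely suppose $\mcg(\Sigma)$ is locally CB, so (after enlarging) some nice $K$ has $\calV_K=\prod_R\mcg(R,\partial R)$ CB and hence each $\mcg(R,\partial R)$ is CB. We manufacture a (possibly different) nice $K$ with a partition obeying (1)--(3). The crux is a structural lemma deduced from local CB-ness: $E$ admits a \emph{finite} clopen partition in which the pieces meeting $\calM(E)$ are self-similar, have maximal sets contained in $\calM(E)$, and cover $\calM(E)$, while every other piece embeds as a clopen subset of one of them. This is proved by contraposition: Proposition \ref{prop:maximal_element} provides maximal ends, and if no such finite partition existed — e.g.\ because some maximal end had no self-similar neighborhood with small maximal set, or because incomparable ``types'' accumulated so that no finite partition separates them — then for \emph{every} finite-type $K'$ one could exhibit a nondisplaceable subsurface disjoint from $K'$ and build from it, via a version of subsurface projection, an unbounded length function on $\calV_{K'}$, contradicting local CB-ness. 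Given the partition, realize it by a nice $K$ (pushing any leftover finite genus into $K$), let $\calA$ be the self-similar pieces and $\mathcal{P}$ the rest; then (1) and (2) hold by construction. Finally (3) holds: were it to fail for some $x_A\in\calM(A)$ and some neighborhood $V$, then $S_A\setminus V$ would be a nondisplaceable subsurface of $S_A$, and subsurface projection onto it would yield an unbounded length function on $\mcg(S_A,\partial S_A)$, contradicting its coarse boundedness.

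\emph{The main obstacle.} The reduction and the sufficiency direction are essentially bookkeeping once the carefully chosen conditions (1)--(3) are in hand. The real work is the necessity direction, namely (i) the structural lemma that maximal ends possess self-similar neighborhoods with the expected maximal sets and that finitely many of these suffice, and (ii) the assertion that failure of condition (3) — or of the structural lemma — is \emph{detected} by an unbounded length function. Both hinge on constructing a workable notion of subsurface projection onto \emph{nondisplaceable} subsurfaces of an infinite-type surface and proving it is coarsely Lipschitz, so that the resulting length functions on $\mcg(\Sigma)$ are well defined and genuinely unbounded. This is where the hypothesis of local coarse boundedness is actually used, and I expect it to be the hardest part of the argument.
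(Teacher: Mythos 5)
Your overall architecture --- sufficiency by conjugating each complementary region off a given finite-type $L$ using the homeomorphisms supplied by conditions (2) and (3), necessity by combining a structural analysis of the end space with length functions built from nondisplaceable subsurfaces --- is the same as the paper's. But there is a genuine gap in your reduction, and it propagates into the necessity direction. You assert that $\mcg(\Sigma)$ is locally CB if and only if some nice $K$ has every $\mcg(R,\partial R)$ CB, treating $\calV_K\cong\prod_R\mcg(R,\partial R)$ and invoking closure of CB groups under products and direct factors. This conflates two different notions: $\calV_K$ being coarsely bounded \emph{as a subset of} $\mcg(\Sigma)$ (which is what local CB-ness requires, and which is witnessed by finite sets $F\subset\mcg(\Sigma)$ whose elements need not lie in $\calV_K$) with each factor $\mcg(R,\partial R)$ being coarsely bounded \emph{as a topological group in its own right}. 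The latter essentially never holds: a finite-type neighborhood of $\partial R$ inside $R$ is nondisplaceable under every homeomorphism fixing $\partial R$, so the length-function construction of Section 2 already shows $\mcg(R,\partial R)$ is unbounded as a group. What is true --- and what your own sufficiency argument actually proves --- is that each $\mcg(R,\partial R)$ is CB \emph{as a subset of} $\mcg(\Sigma)$, precisely because the ambient group contains the maps $f_V$ and $f_P$ that push $R$ around inside $\Sigma$. So your sufficiency direction is essentially the paper's argument and is fine, but the step ``hence each $\mcg(R,\partial R)$ is CB'' in necessity gives you nothing you can later contradict.

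This matters concretely when you derive condition (3): you claim that if (3) fails then $\Sigma_A - V$ is a nondisplaceable subsurface of $\Sigma_A$ and that subsurface projection onto it yields an unbounded length function on $\mcg(\Sigma_A,\partial \Sigma_A)$, contradicting ``its coarse boundedness.'' Three problems: $\Sigma_A - V$ is of infinite type, so the projection machinery (which requires finite-type pieces of complexity at least $5$) does not apply to it; nondisplaceability must be measured in $\Sigma$, not in $\Sigma_A$, for the resulting length function to be defined on $\mcg(\Sigma)$ and hence to obstruct CB-ness of the \emph{subset} $\calV_K$; and the implication ``(3) fails $\Rightarrow$ nondisplaceable'' is nowhere justified. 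The paper instead \emph{constructs} $f_V$ directly: when $|E(x_A)|>1$ it uses stable neighborhoods and the classification of surfaces, and when $|E(x_A)|=1$ it displaces a pair of pants in $\Sigma-K$ (displaceable because $\calV_K$ is CB) and uses invariance of $E(x_A)$ and of the other maximal classes, the one remaining case being globally CB by self-similarity. You would need to supply arguments of that kind; the route through CB-ness of the factors as standalone groups cannot work.
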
 

In order to illustrate Theorem \ref{thm:loc_CB} and motivate the conditions in the next two classification theorems, we now state results in the much simpler special case when $\Sigma$ has genus zero and countable end space.  

\subsection*{Special case: $E$ countable, genus zero}
If $E$ is a countable set and $E^G = \emptyset$, a classical result of Mazurkiewicz and Sierpinski \cite{MS} states that there exists a countable ordinal $\alpha$ such that $E$ is homeomorphic to the ordinal $\omega^\alpha n +1$ equipped with the order topology.   
Thus, any $x \in E$ is locally homeomorphic to $\omega^\beta+1$ for some $\beta \leq \alpha$ (here $\beta$ is the Cantor-Bendixon rank of the point $x$). In this case, our partial order $\less$ agrees with the usual ordering of the ordinal numbers, points are equivalent iff they are locally homeomorphic, and we have the following.  

\begin{theorem}[Special case of Theorems \ref{thm:loc_CB}, \ref{thm:CB_generated}, and \ref{thm:global_CB}]  \label{thm:countableE}
Suppose $\Sigma$ is an infinite type surface of genus 0 with $E \cong \omega^\alpha n + 1$, then  
\begin{enumerate}[label=\roman*), parsep=0pt, partopsep=2pt, itemsep=0pt]
\item $\mcg(\Sigma)$ is CB if and only if $n =1$; in this case $E$ is self-similar.  
\item If $n\geq 2$ and $\alpha$ is a successor ordinal, then $\mcg(\Sigma)$ is locally CB and generated by a CB set, but admits a surjective homomorphism to $\Z$, so is not globally CB.  
\item If $n \geq 2$ and $\alpha$ is a limit ordinal, then $\mcg(\Sigma)$ is locally CB, but not generated by any CB set.  
\end{enumerate}
\end{theorem}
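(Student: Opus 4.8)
The plan is to deduce all three parts from Theorems~\ref{thm:loc_CB}, \ref{thm:CB_generated}, and~\ref{thm:global_CB} by checking that, for $E\cong\omega^\alpha n+1$, the hypotheses of those theorems reduce to the stated arithmetic conditions. The starting point is the topology of $E$: since the order $\lesseq$ on $E$ is the order of Cantor--Bendixson ranks, $\calM(E)$ consists of the $n$ points of rank $\alpha$, namely $\omega^\alpha,2\omega^\alpha,\dots,n\omega^\alpha$; these are pairwise equivalent, and each has a neighborhood basis in $E$ of clopen sets $(\beta,k\omega^\alpha]$, every one homeomorphic to $\omega^\alpha+1$ because $\beta+\omega^\alpha=\omega^\alpha$ for all $\beta<\omega^\alpha$.

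For~(i), the first step is the purely topological claim that $E$ is self-similar iff $n=1$. If $n=1$, any clopen decomposition $E=E_1\sqcup\dots\sqcup E_m$ places the unique maximal end in some $E_i$, and a small enough basic clopen neighborhood $(\beta,\omega^\alpha]\subseteq E_i$ is homeomorphic to $\omega^\alpha+1=E$ (and $E^G=\emptyset$), giving self-similarity of the pair. If $n\ge2$, the clopen partition $E=[0,\omega^\alpha]\sqcup(\omega^\alpha,2\omega^\alpha]\sqcup\dots\sqcup((n-1)\omega^\alpha,n\omega^\alpha]$ has each part homeomorphic to $\omega^\alpha+1$, with a single point of rank $\alpha$, so no clopen subset of a part accommodates all $n\ge2$ rank-$\alpha$ points of $E$, and self-similarity fails. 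Since $\Sigma$ has genus $0$, the hypotheses of Theorem~\ref{thm:global_CB} reduce to self-similarity of $E$, so $\mcg(\Sigma)$ is CB exactly when $n=1$, in which case $E$ is self-similar. (For $n\ge2$, non-boundedness can also be seen directly: a subsurface surrounding all $n$ maximal ends, chosen with infinite-type complement, is nondisplaceable and supports an unbounded length function.)

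For the local-CB assertions in~(ii) and~(iii), which hold for every $n\ge2$ and every $\alpha$, I would apply Theorem~\ref{thm:loc_CB} with $K$ an $n$-holed sphere embedded so that its complementary regions $R_1,\dots,R_n$ have genus $0$ and end sets $U_k:=((k-1)\omega^\alpha,k\omega^\alpha]\cong\omega^\alpha+1$; take $\calA=\{U_1,\dots,U_n\}$ and $\calP=\emptyset$. Conditions~(1) and~(2) are immediate: each $U_k$ is self-similar, $\calM(U_k)=\{k\omega^\alpha\}\subset\calM(E)$, and $\calM(E)=\bigsqcup_k\calM(U_k)$. For~(3): given $x=k\omega^\alpha$ and a neighborhood $V$, shrink to a subsurface-neighborhood $N\subseteq V$ of $x$ with clopen end set $V'\cong\omega^\alpha+1$; then $N$ and $R_k$ are genus-$0$ one-holed surfaces with the same end space, and $\Sigma\setminus N$ and $\Sigma\setminus R_k$ also have homeomorphic end spaces (both clopen, hence compact and countable, with rank $\alpha$ and exactly $n-1$ points of rank $\alpha$, hence both $\cong\omega^\alpha(n-1)+1$), so the change-of-coordinates principle yields $f_V\in\Homeo(\Sigma)$ with $f_V(N)=R_k$, whence $f_V(V)\supseteq R_k$. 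By the ``moreover'' clause, $\calV_K$ is a CB identity neighborhood, and by~(i), $\mcg(\Sigma)$ is not globally CB when $n\ge2$.

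It remains to separate the successor and limit cases. When $\alpha=\beta+1$, each maximal end $k\omega^\alpha$ is the limit of a \emph{discrete} sequence of ends of rank exactly $\beta$, so the complexity accumulating at the maximal ends is uniformly bounded; this is the condition in Theorem~\ref{thm:CB_generated}, so $\mcg(\Sigma)=\langle\calV_K,g_1,\dots,g_r\rangle$ for finitely many $g_i$ (a shift at each maximal end, plus finitely many classes supported near $K$ realizing the moves among the $R_k$), and $\calV_K\cup\{g_1,\dots,g_r\}$ is a CB generating set. Non-global-CB is then witnessed concretely by a surjection $\mcg(\Sigma)\to\Z$ built from the shift along the rank-$\beta$ chain at a maximal end; that this is a well-defined homomorphism follows from a standard translation-number argument on a suitable $\Z$-family of separating curves, and is the routine-but-fiddly part. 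When $\alpha$ is a limit ordinal, fix a cofinal sequence $\gamma_m\uparrow\alpha$. Near each maximal end there are ``elementary moves'' (shifts, and transfers of clopen packets of ends) of every complexity $\gamma<\alpha$, and no finite collection of these together with $\calV_K$ generates: letting $H_m\le\mcg(\Sigma)$ be generated by $\calV_K$ and the moves of complexity $\le\gamma_m$, the $H_m$ are open (they contain $\calV_K$), increasing, and exhaust $\mcg(\Sigma)$, while a CB generating set $S$ is covered by finitely many left cosets $\calV_Kg_1,\dots,\calV_Kg_j$, so choosing $m$ with $g_1,\dots,g_j\in H_m$ gives $\langle S\rangle\subseteq H_m\subsetneq\mcg(\Sigma)$, a contradiction. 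The \emph{main obstacle} is to show each $H_m$ is proper: one must exhibit a complexity-$\gamma_{m+1}$ mapping class detectably outside $H_m$, which is exactly where the subsurface-projection length functions of nondisplaceable subsurfaces of complexity exceeding $\gamma_m$ enter --- the machinery underlying Theorems~\ref{thm:loc_CB} and~\ref{thm:CB_generated}.
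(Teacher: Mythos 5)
Your reduction of part (i) and of the local-CB claims to Theorems \ref{thm:global_CB} and \ref{thm:loc_CB} is correct and is exactly the intended deduction (the paper gives no separate proof of Theorem \ref{thm:countableE}): the self-similarity dichotomy at $n=1$ versus $n\ge 2$, the choice of $K$ an $n$-holed sphere with $\calA=\{U_1,\dots,U_n\}$ and $\calP=\emptyset$, and the verification of condition (3) via the classification of surfaces are all fine. One small caveat: your parenthetical nondisplaceable subsurface is only justified by Example \ref{ex:easy_nondisplace} when $n\ge 3$; for $n=2$ the paper instead rules out global CB via the flow homomorphism attached to $E_{\rm mc}(A,B)$.

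The genuine gaps are in the successor/limit dichotomy, where the efficient move is to verify the two hypotheses of Theorem \ref{thm:CB_generated} --- \emph{finite rank} and \emph{not limit type} --- rather than to re-prove (non-)generation from scratch. For part (ii) you never address finite rank: one must check that for $\alpha=\beta+1$ the only types whose classes accumulate at $x_A$ while avoiding a deleted neighborhood of $x_A$ are ranks $\beta$ and $\alpha$, so the infinite-rank condition fails; your phrase ``uniformly bounded complexity'' gestures only at the limit-type condition. For part (iii), the needed verification is that $E$ is of \emph{limit type} (Example \ref{ex:limit_type}: choose $z_m$ of ranks $\gamma_m\uparrow\alpha$ with $\gamma_m$ exceeding the Cantor--Bendixson rank of $U_0\setminus U_m$), after which Lemma \ref{lem:limit-type} applies directly. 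Your substitute argument has two unclosed steps. First, a CB set is \emph{not} in general covered by finitely many cosets $\calV_K g_i$; Theorem \ref{thm:CB_criterion} only gives $S\subseteq(\calF\calV_K)^k$. This is repairable ($\calF\subseteq H_m$ and $\calV_K\subseteq H_m$ still force $\langle S\rangle\subseteq H_m$), but then the exhaustion $\bigcup_m H_m=\mcg(\Sigma)$, which you need in order to place the finite set $\calF$ inside some $H_m$, is itself a substantial claim --- essentially the hard direction of Theorem \ref{thm:CB_generated}. Second, and more seriously, properness of $H_m$ cannot come from subsurface-projection length functions: those certify that a subset is not coarsely \emph{bounded}, not that it fails to \emph{generate}, and in the successor case nondisplaceable subsurfaces of arbitrarily large complexity still exist even though the group is CB generated, so the proposed mechanism cannot distinguish (ii) from (iii). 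The obstruction that actually works is the invariance argument of Lemma \ref{lem:limit-type}: for $m$ large, every element of a fixed finite set $F$ and of a small identity neighborhood maps $E(z_m)\cap U_m$ into $U_m$, hence so does every word in them, so no such word carries a rank-$\gamma_m$ end from $U_m$ out to $U_0^c$, whereas such a homeomorphism exists because $E(z_m)$ meets $U_0^c$ when $n\ge 2$.
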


\subsection*{Classification: general case}
One cannot hope for such a clean statement as Theorem \ref{thm:countableE} to hold in general, since there is no similarly clean classification of end spaces.  In fact, even in the genus zero case, classifying possible end spaces $E$ (i.e. closed subsets of Cantor sets) up to homeomorphism is a difficult and well studied problem, equivalent to the classification problem for countable Boolean algebras.\footnote{By Stone duality, totally disconnected, separable, compact sets are in 1-1 correspondence with countable Boolean algebras.}   Ketonen \cite{Ketonen} gives some description and isomorphism invariants; but in practice, these are difficult to use, despite being in a sense an optimal classification, as Carmelo and Gao show in \cite{CamerloGao} that the isomorphism relation is Borel complete.    Our definition of the partial order $\lesseq$ allows us to sidestep the worst of these issues.

For technical reasons, the order is better behaved under a weak hypothesis on the topology of the end space which we call ``tameness."  See Section \ref{sec:CBgen} for motivation and the definition. 
To our knowledge, tame surfaces include all concrete examples studied thus far in the literature, including the mapping class 
groups of some specific infinite type surfaces in 
 \cite{Bavard,APV, FHV}, and the discussion of geometric or dynamical properties of various translation surfaces of infinite type in \cite{Chaman,Hooper,Rand}.  
Although non-tame examples do exist (see Example~\ref{ex:non_tame}) 
there are no known non-tame surface that have a well defined quasi-isometry 
type (Problem \ref{prob:non_tame}).   Under this hypothesis, we can give a complete classification of surfaces with a well defined QI type, and those with a trivial QI type as follows.

\begin{theorem}[\textbf{Classification of CB generated mapping class groups}] \label{thm:CB_generated}
For a tame surface $\Sigma$ with locally (but not globally) CB mapping class group, $\mcg(\Sigma)$ is CB generated if and only if $E$ is {\em finite rank} and not of {\em limit type}.  
\end{theorem}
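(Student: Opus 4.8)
The plan is to combine the structural description of locally CB mapping class groups from Theorem~\ref{thm:loc_CB} with Rosendal's criterion that a locally CB Polish group $G$ is CB generated if and only if $G = \langle \calU \cup F\rangle$ for some CB identity neighborhood $\calU$ and some \emph{finite} set $F$ --- equivalently, if and only if $G$ is not the union of a strictly increasing chain of proper open subgroups. We fix once and for all the finite type surface $K\subset\Sigma$, the CB neighborhood $\calV_K$, and the partition $E = \bigl(\bigsqcup_{A\in\calA}A\bigr)\sqcup\bigl(\bigsqcup_{P\in\calP}P\bigr)$ furnished by Theorem~\ref{thm:loc_CB}. Since every subgroup of $\mcg(\Sigma)$ containing $\calV_K$ is automatically open, the question is exactly whether $\mcg(\Sigma)$ is generated by $\calV_K$ together with finitely many further mapping classes.

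\paragraph{Necessity.} I would prove the contrapositive: if $E$ has infinite rank, or is of limit type, then $\mcg(\Sigma)$ is a strictly increasing union of proper open subgroups $H_m$. Each $H_m$ is generated by $\calV_K$, the finitely many mapping classes permuting pairwise-homeomorphic complementary regions of $K$ and twisting about $\partial K$, and finitely many ``shift'' maps relocating $K$ only within the part of $\Sigma$ of complexity at most the $m$-th level. The union is all of $\mcg(\Sigma)$: since $K$ is compact, for any $g$ the subsurface $g(K)$ lies in a finite type surface whose complementary ends have complexity bounded by some finite level, so $g\in H_m$ for that $m$. Each $H_m$ is proper because, $E$ having infinite rank or unboundedly many distinct complexities accumulating at a maximal end, there is a nondisplaceable subsurface of complexity above level $m$, and a version of subsurface projection onto it produces a length function bounded on $\calV_K$ and on the chosen finite sets of generators but unbounded on $\mcg(\Sigma)$. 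This generalizes the mechanism behind Theorem~\ref{thm:countableE}(iii).

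\paragraph{Sufficiency.} Assume $\Sigma$ is tame with $E$ of finite rank and not of limit type; I would show $\mcg(\Sigma) = \langle\calV_K\cup F\rangle$ for a finite set $F$ by induction on the rank of $E$, the base case (where all pieces are globally CB) being covered by Theorem~\ref{thm:global_CB}. The non-limit hypothesis means that each self-similar piece $A\in\calA$, after removal of a finite type subsurface, is a disjoint union of finitely many homeomorphism types of clopen pieces of \emph{strictly smaller} rank --- its maximal ends approximated only by ends of the single ``predecessor'' complexity --- and finite rank makes this recursion terminate. The set $F$ then consists of: the maps $f_V$ of condition (3) of Theorem~\ref{thm:loc_CB}, of which finitely many now suffice to relocate $K$ anywhere (this is where finite rank and non-limit-type are used); finitely many maps permuting pairwise-homeomorphic complementary regions of $K$ and the isomorphic sub-pieces of each $A$; finitely many handle shifts where genus occurs; and the inductively finite generating sets of the lower-rank sub-pieces. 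That these generate is proved by correcting an arbitrary $g$ from the maximal ends downward: one first arranges, using the $f_V$'s and the permutation maps, that $g$ preserves $K$ and the chosen decomposition and is standard on the top level; one then recurses into each piece with its inductive generators; what remains is supported off $K$, hence lies in $\calV_K$.

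\paragraph{Main obstacle.} The delicate point is the inductive step of the sufficiency argument. Two things need care: first, establishing the dichotomy that ``not of limit type'', together with finite rank and tameness, genuinely forces each self-similar piece to be assembled near its maximal ends from only finitely many homeomorphism types of strictly-lower-rank pieces --- this is precisely where the good behaviour of $\lesseq$ under tameness is needed, and where a non-tame end space could defeat the argument; and second, setting up a normal form for homeomorphisms of the complementary regions robust enough that the finitely many shift, permutation, and handle-shift generators provably realize every rearrangement modulo $\calV_K$. The necessity direction is comparatively routine: the subsurface-projection length functions are furnished by the machinery developed elsewhere in the paper, and only the verification that they stay bounded on each $H_m$ requires attention.
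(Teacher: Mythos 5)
Both directions of your proposal contain genuine gaps. In the necessity direction, the mechanism you give for properness of the subgroups $H_m$ does not work: a length function that is bounded on $\calV_K$ and on a finite set of generators of $H_m$ is \emph{not} thereby bounded on $H_m$ (it only satisfies $\ell(g)\leq C\,|g|$ in the word metric), so it cannot certify $H_m\neq\mcg(\Sigma)$. Indeed, subsurface-projection length functions are the tool for showing a group is not \emph{globally} CB, and a locally CB, non-globally-CB group can perfectly well be CB generated (Theorem~\ref{thm:countableE}(ii)). The paper's two obstructions use different mechanisms: for limit type, a dynamical/invariant-set argument (Lemma~\ref{lem:limit-type}) showing that for any finite $F$ the subgroup $\langle F\calV\rangle$ preserves the configuration $E(z_n)\subset U_n\cup U_0^c$ for large $n$, so a homeomorphism moving a point of $E(z_n)\cap U_n$ into $U_0^c$ is omitted; for infinite rank, continuous surjections $G\to\Z^k$ vanishing on $\calV$ (Lemma~\ref{lem:infinite-rank}), so that $\langle F\calV\rangle$ has image generated by $|F|$ elements and cannot surject onto $\Z^k$ for $k>|F|$. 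Neither is recovered by your construction, and the homomorphism mechanism in particular is what the name ``rank'' refers to.

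In the sufficiency direction, the induction ``on the rank of $E$'' is not well-defined: the paper's \emph{finite rank} is a condition about countable-type points with stable neighborhoods accumulating on invariant sets, not an ordinal-valued rank, and the partial order $\lesseq$ on an uncountable end space need not be well-founded (in a Cantor set all points are equivalent and maximal, so there is nothing to descend to). Your claimed dichotomy --- that not-limit-type plus tameness forces each $A\in\calA$ to be assembled from \emph{finitely many} homeomorphism types of strictly-lower-rank clopen pieces --- is stronger than what holds: Lemma~\ref{lem:W_sets} only gives a clopen set $A-N(x_A)$ meeting every type of $A-\{x_A\}$ (possibly infinitely many types), and the finiteness that actually matters is that of the \emph{maximal countable} shared types $E_{\rm mc}(A,B)$ (Lemma~\ref{lem:mc_finite}, which is exactly where finite rank enters). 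Correspondingly, your finite set $F$ is missing the essential generators: the shift maps $\eta_{A,B}$ and the independent shifts $\eta_{A,B,z}$ for $z\in E_{\rm mc}(A,B)$, built from the standard decomposition $A-\{x_A\}=\bigsqcup_k T_k(A)$ of Proposition~\ref{prop:push}. These are what redistribute non-maximal ends between different pieces $A$ and $B$; the maps $f_V$ of Theorem~\ref{thm:loc_CB}(3) only expand neighborhoods of a maximal end within its own piece and cannot perform this redistribution. The paper's actual induction is on the number of pieces of $\calA$ already pointwise fixed by the mapping class being reduced, not on any rank of the end space.
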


\begin{theorem}[\textbf{Classification of globally CB mapping class groups}] \label{thm:global_CB}
Suppose $\Sigma$ is either tame or has countable end space.  Then $\mcg(\Sigma)$ is CB if and only if $\Sigma$ has infinite or zero genus and $E$ is self-similar or a variant of this called ``telescoping".  The telescoping case occurs only when $E$ is uncountable.  
\end{theorem}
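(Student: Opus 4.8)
The plan is to derive both directions from Theorem~\ref{thm:loc_CB} together with a ``displacement plus self-similarity'' swindle, using the reformulation (the footnote to the definition of CB) that $\mcg(\Sigma)$ is CB exactly when every continuous isometric action of it on a metric space has bounded orbits. When $E$ is countable I would first pass to the Mazurkiewicz--Sierpi\'nski normal form $E\cong\omega^\alpha n+1$ of Theorem~\ref{thm:countableE}: there $\less$ is the ordinal order, self-similarity of $E$ is equivalent to $n=1$, and $E$ has no strictly increasing exhaustion by self-similar clopen subsets, so the ``telescoping'' case never occurs, matching the statement. Henceforth assume $\Sigma$ is tame.

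\textbf{Necessity.} If $\mcg(\Sigma)$ is CB then it is locally CB, so Theorem~\ref{thm:loc_CB} supplies a finite type $K$ and a partition $E=\bigl(\bigsqcup_{A\in\calA}A\bigr)\sqcup\bigl(\bigsqcup_{P\in\calP}P\bigr)$ satisfying (1)--(3). First, the genus of $\Sigma$ must be $0$ or infinite: if it were finite and positive, a subsurface carrying the whole genus cannot be disjoint from any of its $\mcg(\Sigma)$-translates (disjoint genus-$g$ subsurfaces force genus $\ge 2g$), hence is nondisplaceable, and subsurface projection onto its orbit yields an unbounded length function, contradicting CB. Second, I would show that $\calP=\emptyset$ and that the $\calA$-pieces either collapse to a single self-similar piece or organize into a telescoping family: otherwise one can enlarge $K$ so that a complementary region separates two ``incomparable'' flavors of maximal ends (or isolates an unabsorbed $\calP$-piece), and then either some separating finite type subsurface is nondisplaceable --- again producing an unbounded length function --- or $\mcg(\Sigma)$ surjects onto $\Z$; both are incompatible with CB. This is exactly the obstruction exhibited in Theorem~\ref{thm:countableE}(ii),(iii). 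Finally, a telescoping end space cannot be countable, by the normal form above.

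\textbf{Sufficiency.} Suppose $\Sigma$ has genus $0$ or infinite and $(E,E^G)$ is self-similar; the telescoping case runs the same way, absorbing into the members of the defining nested family in turn. By self-similarity, choose $K$ finite type with $\Sigma\setminus K$ connected, $\bigl(\Ends(\Sigma\setminus K),\,\Ends(\Sigma\setminus K)\cap E^G\bigr)\cong(E,E^G)$, and matching genus type; then $\calA=\{E\}$, $\calP=\emptyset$, conditions (1)--(3) hold, and Theorem~\ref{thm:loc_CB} gives that the open subgroup $\calV_K$ is CB. Fix a continuous isometric action of $\mcg(\Sigma)$ on $(X,d)$ and a basepoint $x_0$; we must bound $d(x_0,fx_0)$ over all $f\in\mcg(\Sigma)$, knowing that $D:=\diam(\calV_K\cdot x_0)<\infty$. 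I would then argue: (a) $K$ embeds in $\Sigma\setminus K$ with homeomorphic complement, so fix one $h$ with $h(K)\cap K=\emptyset$; any mapping class supported in $K$ equals $h^{-1}vh$ with $v\in\calV_K$, hence moves $x_0$ at most $2d(x_0,hx_0)+D$. (b) Using condition (3) of Theorem~\ref{thm:loc_CB} I straighten $f(\partial K)$ back to $\partial K$ by some $\psi$ with $d(x_0,\psi x_0)$ bounded independently of $f$: pick a tiny clopen neighborhood $V$ of a maximal end disjoint from $K\cup f(K)$, apply (3) --- to $K$, and, through a fixed finite list of ``absorbing'' charts up to $\calV_K$-conjugacy, to $f(K)$ --- to carry $\Sigma\setminus K$ and $\Sigma\setminus f(K)$ inside $V$, join the two resulting tiny neighborhoods by an element of $\calV_K$, and compose. (c) Then $\psi f$ preserves $K$ setwise (the infinite type side of the multicurve maps to the infinite type side, and that side is unique), so after a further $K$-supported correction $\psi f=\eta\zeta$ with $\eta$ supported in $K$ and $\zeta\in\calV_K$; combining (a)--(c) and the triangle inequality, $d(x_0,fx_0)\le d(x_0,\psi^{-1}x_0)+d(x_0,\eta x_0)+d(x_0,\zeta x_0)$ is bounded independently of $f$. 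Hence all orbits are bounded and $\mcg(\Sigma)$ is CB.

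\textbf{Main obstacle.} The crux is step (b): returning $f(\partial K)$ to $\partial K$ with control independent of $f$. This is where self-similarity (via condition (3)) is indispensable --- it says $\Sigma\setminus K$ can be sucked into an arbitrarily small neighborhood of a maximal end, so the whole $\mcg(\Sigma)$-orbit of $K$ has bounded diameter in the natural action graph on finite type subsurfaces. The delicate points are making this uniform in $f$ (in particular replacing the absorbing charts for the varying subsurfaces $f(K)$ by a single finite set modulo $\calV_K$), and, in the telescoping case, running the analogous bounded-absorption argument along the defining nested family without letting the estimates accumulate; the input ``nondisplaceable subsurface $\Rightarrow$ not CB'' is used as a black box from the earlier sections on subsurface projection.
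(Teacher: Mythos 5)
Your overall architecture (necessity via nondisplaceable subsurfaces and $\Z$-quotients, sufficiency via an absorption swindle powered by self-similarity) matches the paper's in spirit, but the proposal has real gaps at exactly the points where the theorem's content lives. For sufficiency you are re-deriving Propositions \ref{prop:SSCB} and \ref{prop:telescoping}, which you could simply cite; as written, your step (b) --- straightening $f(\partial K)$ back to $\partial K$ with control independent of $f$ --- is flagged by you as ``the main obstacle'' but never carried out, and this is precisely the content of the proof of Proposition \ref{prop:SSCB} (the argument with the surfaces $R$ and $R'$ and the partition of $g(E')$ by the three clopen sets $Z$, $E_S^0 - Z$, $E_R^0 - Z$). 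So the sufficiency half is either redundant with earlier results or incomplete.

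The more serious gap is in necessity. After reducing to a minimal decomposition $E = A \sqcup B$ with $\calM(A)=\{x_A\}$, $\calM(B)=\{x_B\}$, you assert the pieces ``organize into a telescoping family'' without ever verifying Definition \ref{def:telescoping}, i.e.\ without constructing the homeomorphisms $f_i$ (which the paper gets by displacing a pair of pants separating $W_1$, $V_2$ and the rest) and, crucially, the $h_i$ with $h_i(W_i)=V_i$ fixing $V_{3-i}$. Producing $h_i$ requires showing $\Sigma - V_1 - V_2$ and $\Sigma - W_1 - V_2$ are homeomorphic, which is where tameness is actually used (via stable neighborhoods and Lemma \ref{lem:consume}); your proposal never locates the role of tameness. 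You also miss the key intermediate step that $\Emc(A,B)=\emptyset$ --- proved by building a continuous surjection to $\Z$ counting points of $E(z)$ shifted between $A$ and $B$ --- which is what both kills the $\Z$-quotient obstruction and shows $E$ must be uncountable in the two-piece case; your substitute claim that a countable $E$ ``has no strictly increasing exhaustion by self-similar clopen subsets'' is not the definition of telescoping and does not prove Proposition \ref{prop:telescoping_uncountable}. Finally, invoking ``the obstruction exhibited in Theorem \ref{thm:countableE}(ii),(iii)'' is circular, since that theorem is stated in the paper as a special case of the very classification theorems you are proving.
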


{\em Finite rank}, loosely speaking, means that finite index subgroups of $\mcg(\Sigma)$ do not admit surjective homomorphisms to $\Z^n$ for arbitrarily large $n$.   {\em Limit type} refers to behavior of equivalence classes for the partial order that mimics the behavior of limit ordinals in the special countable case stated above.  See Section \ref{sec:limit-type}. 
{\em Telescoping} is a slightly broader notion of homogeneity or local similarity of an end space.   Informally speaking, {\em self-similar} sets either appear very homogeneous (e.g. a Cantor set) or may have one ``special" point, any neighborhood of which contains a copy of the whole set -- for instance, a countable set with a single accumulation point is self-similar.  Telescoping is a generalization that allows for two special points.  See section \ref{sec:telescoping} for further motivation and the precise definition.

\subsection*{Key tool:  Nondisplaceable subsurfaces}  
The following tool is of independent interest and provides an easily employable criterion to certify that a surface has non-CB mapping class group (or, equivalently, admits a continuous isometric action on a metric space with unbounded orbits).  

\begin{definition}A connected, finite type subsurface $S$ of a surface $\Sigma$ is called 
{\em nondisplaceable} if $f(S) \cap S \neq \emptyset$ for each $f \in \Homeo(\Sigma)$.  
A non-connected surface is nondisplaceable if, for every $f \in \Homeo(\Sigma)$ there are connected components $S_i$, $S_j$ of $S$ such that $f(S_i) \cap S_j \neq \emptyset$.  \end{definition}

\begin{theorem}  \label{thm:nondisplace}
If $\Sigma$ is a surface that contains a nondisplaceable finite type subsurface, then 
$\mcg(\Sigma)$ is not globally CB.   
\end{theorem}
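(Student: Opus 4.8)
The plan is to build an unbounded length function on $\mcg(\Sigma)$ directly from the nondisplaceable subsurface $S$, using a version of subsurface projection as advertised in the introduction. Concretely, fix $S$ (first assume $S$ is connected) and fix a hyperbolic metric on $\Sigma$; let $\calC(S)$ be the curve complex of $S$, which has infinite diameter since $S$ has finite type and, being nondisplaceable, is not an annulus, sphere-with-few-punctures, or torus-with-one-puncture in a degenerate way (one should check $\xi(S)$ is large enough, or pass to a subsurface of $S$ where the curve complex is unbounded). For $g \in \mcg(\Sigma)$, since $S$ is nondisplaceable we have $g(S) \cap S \neq \emptyset$, so the essential curves of $g(S)$ admit a subsurface projection $\pi_S(g(S))$ to $\calC(S)$ (using the standard Masur--Minsky projection, taking the union of boundary curves of $g(S)$ that intersect $S$, or intersecting $g(S)$ with $S$). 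Define
\[
\ell(g) = \diam_{\calC(S)}\big( \pi_S(S) \cup \pi_S(g(S)) \big) = d_{\calC(S)}\big(\partial S, \pi_S(g(S))\big) + O(1).
\]
First I would verify this is well-defined (the projection is coarsely well-defined, with ambiguity bounded by a universal constant), and that it is a \emph{length function}: symmetry $\ell(g) = \ell(g^{-1})$ follows because $d_{\calC(S)}(\partial S, \pi_S(g(S)))$ and $d_{\calC(S)}(\partial S, \pi_S(g^{-1}(S)))$ differ by a bounded amount (apply $g$, which is an isometry $\calC(S) \to \calC(g(S))$, and use the Behrstock inequality / the coarse equivariance $\pi_{g(S)}(gY) = g\,\pi_S(Y)$); the triangle inequality $\ell(gh) \le \ell(g) + \ell(h) + C$ follows from the triangle inequality in $\calC(S)$ together with the bounded-geodesic-image property controlling $\pi_S$ of things far from $S$. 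A Lipschitz-type length function that is unbounded forces $\mcg(\Sigma)$ to be non-CB, hence (a fortiori) not globally CB: any continuous isometric action with an unbounded orbit obstructs coarse boundedness, and $g \mapsto \ell(g)$ together with the left action on the pseudo-metric space $(\mcg(\Sigma), d(g,h) = \ell(g^{-1}h))$ provides such an action — alternatively cite directly that an unbounded length function gives an incompatible-with-CB quasimetric.

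The crux is then \textbf{unboundedness} of $\ell$: I must produce $g_n \in \mcg(\Sigma)$ with $d_{\calC(S)}(\partial S, \pi_S(g_n(S))) \to \infty$. The idea is to take $\phi$ a pseudo-Anosov (or just a partial pseudo-Anosov supported on a finite-type subsurface $R \subset \Sigma$ that \emph{properly contains} $S$ and in which $S$ is non-peripheral) whose axis in $\calC(R)$ moves $\partial S$ far: then $\phi^n(\partial S)$ has $\pi_S$-projection going to infinity in $\calC(S)$ by Masur--Minsky's quasi-geodesic stability, and since $\phi^n(S)$ is a subsurface whose boundary is $\phi^n(\partial S)$, we get $\pi_S(\phi^n(S)) = \pi_S(\phi^n(\partial S)) \to \infty$. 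Such a subsurface $R \supsetneq S$ of finite type, with $S$ non-peripheral and $\xi(R)$ large enough to support a pseudo-Anosov filling relative to $S$, exists whenever $\Sigma$ is of infinite type and $S$ is a proper subsurface — which it is, being finite type inside an infinite type surface. This step is where the bulk of the technical verification lies: one needs the bounded geodesic image theorem to ensure $\pi_S$ of the $\phi$-orbit stays coarsely on a geodesic, and one needs to know the pseudo-Anosov can be chosen supported in a compact subsurface so that $\phi \in \mcg(\Sigma)$ genuinely.

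For the \textbf{disconnected case}, $S = S_1 \sqcup \cdots \sqcup S_k$ with the property that for every $f$ some $f(S_i)$ meets some $S_j$, I would replace $\calC(S)$ by the disjoint union of the $\calC(S_i)$ and define $\ell(g)$ as the maximum over pairs $(i,j)$ with $g(S_i) \cap S_j \neq \emptyset$ of $d_{\calC(S_j)}(\partial S_j, \pi_{S_j}(g(S_i)))$; the nondisplaceability hypothesis guarantees at least one such pair exists for every $g$, and the length-function axioms are checked as before, using that the $g_n = \phi^n$ from the connected argument (with $R$ chosen to contain some fixed $S_{i_0}$ and with $\phi$ fixing the complement of $R$ setwise, so that $\phi^n(S_{i_0})$ eventually meets only $S_{i_0}$ among the components) still force the relevant projection distance to infinity. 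The main obstacle I anticipate is not any single step but assembling the subsurface-projection machinery cleanly in the infinite-type setting — in particular checking that all the Masur--Minsky estimates (coarse well-definedness, Behrstock inequality, bounded geodesic image) are applied only to finite-type subsurfaces $S$, $R$, $g(S)$, where they hold verbatim, and that continuity of the action and the length function cause no trouble because $\ell$ factors through the (discrete) action on curves of the fixed finite-type pieces.
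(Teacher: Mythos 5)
Your overall strategy (an unbounded length function built from subsurface projections associated to $S$) is the same as the paper's, but the specific function you define and the element you use for unboundedness both have genuine problems. On unboundedness: you set $\ell(g)\approx d_{\mathcal{C}(S)}(\partial S,\pi_S(g(S)))$ and claim that for a pseudo-Anosov $\phi$ supported on a finite-type $R\supsetneq S$ the projections $\pi_S(\phi^n(\partial S))$ go to infinity ``by quasi-geodesic stability.'' This is backwards: the bounded geodesic image theorem says a geodesic in $\mathcal{C}(R)$ all of whose vertices cut $S$ has uniformly bounded $\pi_S$-image, and since $\{\phi^n(\partial S)\}_{n\ge N}$ fellow-travels the quasi-axis of $\phi$, which stays a definite distance from $\partial S$ in $\mathcal{C}(R)$ for large $N$, the set $\{\pi_S(\phi^n\partial S)\}_{n\ge N}$ has \emph{bounded} diameter. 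The element that actually works is a mapping class preserving $S$ whose restriction to $S$ is pseudo-Anosov, so that $d_S(\gamma,\phi^n\gamma)\to\infty$ for curves $\gamma$ in $S$ --- but that element is invisible to your $\ell$, since $\phi^n(S)=S$ and your function records only where $g$ sends the subsurface, not what it does inside it. (Also $\pi_S(\partial S)=\emptyset$, so the basepoint must be a filling curve system $\mu_S$ in $S$, not $\partial S$.)

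There is a second gap in the triangle inequality: projecting to the single fixed subsurface $S$, the estimate $d_S(\mu,gh\mu)\le d_S(\mu,g\mu)+d_S(g\mu,gh\mu)$ has middle term $d_S(g\mu,gh\mu)=d_{g^{-1}(S)}(\mu,h\mu)$, a projection to a \emph{different} subsurface that is not controlled by $\ell(h)$; neither the Behrstock inequality nor bounded geodesic image repairs this. The paper's fix for both issues is to take the maximum over the whole orbit: with $\mathcal{I}=\{f(S): f\in\mcg(\Sigma)\}$ and $\mu_R=\pi_R(\mu_S)$ (defined for every $R\in\mathcal{I}$ precisely because $S$ is nondisplaceable), one sets $\ell(\phi)=\max_{R\in\mathcal{I}} d_{\phi(R)}\bigl(\phi(\mu_R),\mu_{\phi(R)}\bigr)$. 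The max over the orbit makes symmetry and an exact (not coarse) triangle inequality formal, finiteness follows from the intersection-number bound on projection distances, and the $R=S$ term detects the pseudo-Anosov supported on $S$. Your disconnected-case sketch inherits the same two issues.
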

A key ingredient of the proof is {\em subsurface projection}, a familiar tool from the study of mapping class groups of finite type surfaces introduced by Masur and Minsky \cite{MM2}.  

Theorem \ref{thm:nondisplace} immediately gives many examples of surfaces whose mapping class groups are not CB, hence 
admit unbounded orbits on combinatorial complexes.  For instance, any surface with finite but nonzero genus has this property.  (See Theorem \ref{thm:countableE} below for a number of other easily described examples.)
Theorem~\ref{thm:nondisplace} also recovers, with a new proof, some of the work of Bavard in \cite{Bavard} and Durham-Fanoni-Vlamis in \cite{DFV}.  

\subsection*{Outline} 
\begin{itemize}[label=$\bullet$, parsep=0pt, partopsep=2pt, itemsep=0pt]
\item Section \ref{sec:nondisplace} contains background information on standard mapping class group techniques and the proof of Theorem \ref{thm:nondisplace}. 
\item Section \ref{sec:SS_tele_CB} gives two criteria for CB mapping class groups: self-similarity and telescoping end spaces.  This is used later in the proof of the local and global CB classification theorems. 
\item Section \ref{sec:order} introduces the partial order on the end space and proves key properties of this relation, and a characterization of self-similar end spaces in terms of the partial order. 
\item Section \ref{sec:loc_CB} contains the proof of Theorem \ref{thm:loc_CB}.  This and the following section form the technical core of this work. 
\item Section \ref{sec:CBgen} contains the proof of Theorem \ref{thm:CB_generated}.  
\item Section \ref{sec:global_CB} gives the proof of Theorem \ref{thm:global_CB}.  
\end{itemize} 

\begin{acknowledge}
K.M. was partially supported by NSF grant  DMS-1844516.   
K.R was partially supported by NSERC Discovery grant RGPIN 06486.
Part of this work was completed at the 2019 AIM workshop on surfaces of infinite type. 
We thank Camille Horbez and Justin Lanier for helpful comments on an earlier version of this paper. 
\end{acknowledge}

\section{Proof of Theorem \ref{thm:nondisplace}}  \label{sec:nondisplace}
In this section we prove that nondisplaceable finite type subsurfaces of a surface $\Sigma$ are responsible for nontrivial geometry in $\mcg(\Sigma)$.  We begin by introducing some notions from large-scale geometry and setting some conventions that will be useful throughout.   

\paragraph{A criterion for coarse boundedness.}
Recall that a subset $A \subset G$ of a metrizable, topological group is said to be {\em coarsely bounded} or {\em CB} if it has finite diameter in every compatible left-invariant metric on $G$.  
The following result gives an equivalent condition that is often easier to use in practice.  

\begin{theorem}[Rosendal, Theorem 1.10 in \cite{RosendalLargeScale}]  
\label{thm:CB_criterion} 
Let $A$ be a subset of a separable, metrizable group $G$.  The following are equivalent
\begin{enumerate}
\item $A$ is coarsely bounded.
\item For every neighborhood $\calV$ of the identity in $G$, there is a finite subset $\calF$
 and some $k \geq 1$ such that $A \subset (\calF \calV)^k$
\end{enumerate} 
\end{theorem}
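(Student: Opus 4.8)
The plan is to prove the two implications separately, with essentially all of the work in $(1)\Rightarrow(2)$. It will be convenient to first record a reformulation of $(2)$ as a statement about bounded word length: since $e\in\calV$ one has $(\calF\cup\calV)^m\subseteq(\calF'\calV)^m$ for $\calF'=\calF\cup\{e\}$, and conversely $(\calF\calV)^k\subseteq(\calF\cup\calV)^{2k}$. Thus $(2)$ is equivalent to the existence of a finite symmetric set $\calF$ and an integer $m$ with $A\subseteq(\calF\cup\calV)^m$, i.e.\ to $A$ having bounded word length with respect to the symmetric set $\calF\cup\calV$. I would keep this equivalent form in mind throughout.

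For $(2)\Rightarrow(1)$ I would take any compatible left-invariant metric $d$ and apply $(2)$ to the open neighborhood $\calV=\{g:d(g,e)<1\}$, obtaining $\calF$ and $k$ with $A\subseteq(\calF\calV)^k$. Writing a typical element of $(\calF\calV)^k$ as a product of $k$ factors $f_iv_i$ with $f_i\in\calF$, $v_i\in\calV$, and using left-invariance and the triangle inequality, each such element lies within distance $k\bigl(1+\max_{f\in\calF}d(f,e)\bigr)$ of $e$; hence $\diam_d(A)\le 2k\bigl(1+\max_{f\in\calF}d(f,e)\bigr)<\infty$. Since this holds for every compatible $d$, $A$ is CB.

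For the substantive direction $(1)\Rightarrow(2)$ I would argue by contraposition, constructing a continuous left-invariant écart in which $A$ has infinite diameter. Fix a symmetric open $\calV\ni e$ witnessing the failure of $(2)$, so $A\not\subseteq(\calF\calV)^k$ for every finite $\calF$ and every $k$, and fix a countable dense set $\{h_n\}_{n\ge1}$ (using separability). Next choose a tower of symmetric open neighborhoods $\calV=\calV_1\supseteq\calV_2\supseteq\cdots$ forming a basis at $e$ with $\calV_{n+1}^2\subseteq\calV_n$, and define a length function $\ell$ as the infimal weighted word length over the generating set $\bigl(\bigcup_k\calV_k\bigr)\cup\{h_n^{\pm1}\}$, where a letter in $\calV_k$ carries weight $2^{-k}$ and each $h_n^{\pm1}$ carries weight $n$. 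This set generates $G$ (an open subgroup containing a dense set is all of $G$), so $\ell$ is finite-valued; it is symmetric and subadditive with $\ell(e)=0$, and since every $g\in\calV_k$ satisfies $\ell(g)\le2^{-k}$ it is continuous at $e$, so the associated left-invariant écart is continuous. Adding a fixed compatible left-invariant metric of diameter $\le1$ then makes it compatible without affecting unboundedness.

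The heart of the argument, and the step I expect to be the main obstacle, is to show $\ell$ is unbounded on $A$. Supposing instead $\ell\le N$ on $A$, I would fix $a\in A$ with a factorization of weight $\le N+1$ and extract its structure: the letters equal to some $h_n^{\pm1}$ each cost at least $1$, so there are at most $N+1$ of them and each has $n\le N+1$, placing them in a \emph{fixed} finite set $\calF$; the remaining letters all lie in $\calV$ and occur in at most $N+2$ consecutive runs separating the special letters. The key quantitative input is the Birkhoff--Kakutani estimate for the tower $(\calV_k)$: a product of $\calV$-letters of total weight $\le W$ lies in $\calV^{O(W)}$, because $\calV_{k+1}^2\subseteq\calV_k$ forces a product of up to roughly $2^{k}$ letters of $\calV_k$ into $\calV$, so many tiny letters advance one through only a bounded number of powers of $\calV$. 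Applying this to each run gives $a\in(\calF\cup\calV)^{O(N)}\subseteq(\calF'\calV)^{O(N)}$ with $\calF'=\calF\cup\{e\}$ and the exponent depending only on $N$; since this is uniform over $a\in A$, we get $A\subseteq(\calF'\calV)^{k}$ for a single finite $\calF'$ and a single $k$, contradicting the choice of $\calV$. Hence $\ell$, and therefore $\diam(A)$, is infinite, so $A$ is not CB, completing the contrapositive. The delicate point throughout is exactly this control of many small steps, which is why the condition $\calV_{k+1}^2\subseteq\calV_k$ and the exponential weighting are essential.
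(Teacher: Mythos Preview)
The paper does not prove this theorem; it is quoted verbatim as Theorem~1.10 of Rosendal \cite{RosendalLargeScale} and used as a black box throughout, so there is no in-paper argument to compare against.

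Your reconstruction is the standard one and is essentially correct. The implication $(2)\Rightarrow(1)$ is exactly as you wrote. For $(1)\Rightarrow(2)$, the contrapositive via a weighted word-length \'ecart (Birkhoff--Kakutani style) over the alphabet $\bigl(\bigcup_k\calV_k\bigr)\cup\{h_n^{\pm1}\}$ is precisely Rosendal's argument. One small technical caution: the classical Birkhoff--Kakutani lemma---that a product of letters $v_i\in\calV_{k_i}$ with $\sum 2^{-k_i}$ small lies in a single $\calV_n$---is usually proved under the hypothesis $\calV_{n+1}^3\subseteq\calV_n$ rather than $\calV_{n+1}^2\subseteq\calV_n$. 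With only squares the combinatorics of collapsing a long product of small-weight letters into a bounded power of $\calV$ still goes through, but the induction is slightly more delicate and you may need to adjust the threshold (e.g.\ require total weight $<2^{-n-1}$ rather than $<2^{-n}$ to land in $\calV_n$). Since you flagged exactly this step as the main obstacle, just make sure you either carry out the square version carefully or simply demand $\calV_{n+1}^3\subseteq\calV_n$ when choosing the tower; both are available in a metrizable group.
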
 

While Rosendal's theory is quite broadly applicable, mapping class groups (of any manifold) fall into the nicest family to which it applies, namely the completely metrizable or {\em Polish} groups.  
 For any manifold $M$, the homeomorphism group $\Homeo(M)$ endowed with the compact-open topology is Polish, and hence also for any closed subset of $M$, the closed subgroups $\Homeo(M, X)$ and $\Homeo(M \, \mathrm{rel} X)$ of homeomorphisms respectively preserving and pointwise fixing $X$.  (In the mapping class groups context, $X$ is typically taken to be the boundary of $M$ or a set of marked points.)
Thus, since the identity component $\Homeo_0(M,X)$ is a closed, normal subgroup, the quotient $\Homeo(M, X)/\Homeo_0(M,X)$ is also a Polish group.\footnote{For the case where $M$ is a surface, that mapping class groups are Polish was also observed in \cite{APV} using the property that these groups are the automorphism groups of the curve complex of the surface.}

One useful tool for probing the geometry of a topological group is the following concept of a length function.  

\begin{definition}
A {\em length function} on a topological group $G$ is a continuous function $\ell: G \to [0, \infty)$ satisfying  $\ell(g) = \ell(g^{-1})$, $\ell(id) = 0$, and $\ell(gh) \leq \ell(g) + \ell(h)$ for all $g, h \in G$.  
\end{definition} 
If $\ell$ is any length function, then for 
any $\epsilon > 0$ the set $\ell^{-1}([0,\epsilon))$ is a neighborhood of the identity in $G$.  It 
follows from the criterion in Theorem \ref{thm:CB_criterion} that $\ell$ is bounded on any 
CB subset. 

Our strategy for the proof of Theorem \ref{thm:nondisplace} is to use the presence of a nondisplaceable subsurface to construct an unbounded length function.  
In order to do this, we introduce some notation and conventions which will also be used in later sections.

\paragraph{Surfaces: conventions.}
The following conventions will be used throughout this work.  Infinite type surfaces, typically denoted 
by $\Sigma$, are assumed to be connected and orientable, and unless otherwise specified will be assumed to have empty boundary. 
By a {\em curve} in $\Sigma$ we mean a free homotopy
class of a non-trivial, non-peripheral, simple closed curve. 
In the first part of this section, when we talk about a 
subsurface $S \subset \Sigma$, we always assume that $S$ is connected, has finite type 
and is {\em essential} meaning that every curve in $\partial S$ is non-trivial and 
non-peripheral in $\Sigma$.  (Later we will broaden our discussion to include non-connected subsurfaces.)
As is standard, the {\em complexity} of a finite type surface $S$ is defined to be $\xi(S) = 3 g_S + b_S + p_S$
where $g_S$ is the genus, $p_S$ is the number of punctures and $b_S$
is the number of boundary components of $S$.   {\em Finite type} simply means that all these quantities are finite.

The {\em intersection number} between two curves $\gamma_1$
and $\gamma_2$, is the usual geometric intersection number $\I(\gamma_1, \gamma_2)$ defined to be the minimal 
intersection number between representatives in the free homotopy classes of $\gamma_1$ and $\gamma_2$. 
To simplify the exposition going forward, we will fix a complete hyperbolic structure on $\Sigma$. 
Then every curve has a unique geodesic representative and the homotopy class
of every subsurface has a unique representative that has geodesic boundary.  A pair of curves $\gamma_1$ and $\gamma_2$ have disjoint representatives iff their geodesic representatives are disjoint. In this case, we say that $\I(\gamma_1, \gamma_2) = 0$. 
Otherwise, we say $\gamma_1$ intersects $\gamma_2$ and in this case, the intersection number 
$\I(\gamma_1, \gamma_2)$ is the cardinality of the intersection of their geodesic representatives.  

Similarly, two subsurfaces $R$ 
and $S$ (or a subsurface $R$ and geodesic $\gamma$) intersect if every subsurface homotopic to $R$ intersects every subsurface homotopic 
to $S$ (or analogously for $\gamma$), and this is equivalent to saying that the representatives of $R$ and $S$ with 
geodesic boundaries intersect each other. 
Hence, from now on, every time we consider a curve we assume it is a geodesic and
every time we consider a subsurface we assume it has geodesic 
boundary.  This allows us to unambiguously speak of intersections.

\begin{definition} \label{def:non-displaccable} 
A finite type, connected subsurface $S \subset \Sigma$ is 
{\em nondisplaceable} if $S \cap f(S) \neq \emptyset$ for all $f \in \mcg(\Sigma)$.
\end{definition}  

\begin{example} \label{ex:finite-genus}
When $\Sigma$ has positive, finite genus any subsurface $S$ whose genus matches that of $\Sigma$ is nondisplaceable.  This is because $S$ contains non-separating curves but $\Sigma -S$ does not.  Since every image of $S$ under a homeomorphism of $\Sigma$ will also contain a non-separating curve, it must intersect $S$. 
\end{example} 

\begin{example}[Nondisplaceable subsurfaces] \label{ex:easy_nondisplace}
It is also easy to construct examples of nondisplaceable surfaces using the topology of the end space.   If $Z$ is any invariant, finite set of ends of cardinality at least 3, then any surface $S$ which separates points of $Z$ into different complimentary regions will be nondisplaceable. 

Similarly, if $X$ and $Y$ are disjoint, closed invariant sets of ends, with $X$ homeomorphic to a Cantor set, then a subsurface homeomorphic to a pair of pants which contains points of $X$ in two complementary regions, and all of $Y$ in the third complimentary region will also be nondisplaceable.  
\end{example} 

\paragraph{Curve graphs and subsurface projections.} 
We recall some basic material on curve graphs.  A reader unfamiliar with this machinery may wish to consult the introductory notes \cite{Schleimer} or paper \cite{MM1} for more details.   As in the previous paragraph, we continue to assume here that surfaces are connected.  

The {\em curve graph} $\CS$ of a surface $S$ is a graph whose vertices are curves 
in $S$ and whose edges are pairs of disjoint curves.  We give each edge length one 
and denote the induced metric on $\CS$ by $d_S$. With this metric, as soon as $\xi(S) \geq 5$, 
$(\CS, d_S)$ has infinite dimeter and is Gromov hyperbolic \cite{MM1}.   One can define curve graphs analogously for infinite type surfaces,  but we will use only the classical finite type setting.  

If $\Sigma$ is any surface and $S \subset \Sigma$ a subsurface there is a {\em projection map} $\pi_S$ from the set of curves
in $\Sigma$ that intersect $S$ to the set of subsets of $\CS$, defined as follows: for a curve 
$\gamma$, the intersection $\gamma \cap S$ of the geodesic $\gamma$ with the 
subsurface $S$ is either equal to $\gamma$ (if $\gamma \subset S$) or is a union of arcs 
with end points in $\partial S$. For every such arc $\omega$, 
one may perform a surgery between $\gamma$ and $\partial S$ to obtain in curve
in $S$ disjoint from $\omega$, possibly in two different ways (the curve is a 
concatenation of one or two copies of $\omega$ and one or two arcs in $\partial S$). 
We define the projection $\pi_S(\gamma)$ to be $\gamma$ if $\gamma \subset S$ 
and otherwise to be the the {\em union} of curves associated to each arc on $\gamma \cap S$
obtained by surgery as above. When $\xi(S) \geq 5$, the set $\pi_S(\gamma)$ has diameter at most 
$2$ in $\CS$, in fact, we have 
\begin{equation} \label{eq:projection-lipschitz} 
\I(\gamma_1,\gamma_2)=0 \quad \Longrightarrow \quad 
\diam_S \pi_S(\gamma_1 \cup \gamma_2) \leq 2. 
\end{equation}
(see \cite[Lemma 2.2]{MM2} for more details). In general, if $\mu$ is a subset of $\CS$, 
we define 
\[
\pi_S(\mu) = \bigcup_{\gamma \in \mu} \pi_S(\gamma). 
\]

The natural distance $d_S$ on $\CS$ can be extended to a distance function on curves 
in $\Sigma$ that intersect $S$ via 
\[
d_S(\gamma_1, \gamma_2) = \min_{\alpha_i \in  \pi_S(\gamma_i)} d_S(\alpha_1, \alpha_2).
\]
The following result states that a bound on the intersection number between two 
curves gives a bound on their projection distance in any subsurface.  This principle 
is well known and there are many similar results in the literature. We give a short proof 
with a suboptimal bound.  

\begin{lemma}
Let $\gamma_1$ and $\gamma_2$ be curves in $\Sigma$ that intersect $S$. Then 
\begin{equation} \label{Eq:int>distance}
d_S(\gamma_1, \gamma_2) \leq 2 \log_2\big( \I(\gamma_1, \gamma_2)+1\big) + 6
\end{equation}
\end{lemma}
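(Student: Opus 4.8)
The plan is to bound the subsurface projection distance by controlling how much a single arc of $\gamma_2$ (the ``more complicated'' curve, say the one with more intersections with $\gamma_1$, or rather: we run the argument symmetrically) can spread out in $\CS$ relative to $\gamma_1$. The key geometric input is the Lipschitz-type estimate \eqref{eq:projection-lipschitz}: curves that are disjoint in $\Sigma$ have projections of diameter $\leq 2$ in $\CS$. So the strategy is: write $\gamma_2 \cap S$ as a union of arcs $\omega_1, \dots, \omega_m$; the number of such arcs is at most $\I(\gamma_1,\gamma_2)$ plus a bounded term coming from how $\gamma_2$ meets $\partial S$ — but since we may isotope $\gamma_2$ to intersect $\partial S$ minimally and this contributes a bounded amount, or alternatively absorb it into the constant, we get $m \leq \I(\gamma_1, \gamma_2) + O(1)$. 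Then I would find a curve $\delta \in \pi_S(\gamma_1)$ and, separately, organize the arcs of $\gamma_2$ so that consecutive ``chunks'' project to nearby points.

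Here is the mechanism I expect to use for the logarithm. Consider $\gamma_1 \cap S$, a union of arcs (and/or $\gamma_1$ itself); pick one component $\alpha \in \pi_S(\gamma_1)$. Now look at $\gamma_2$: each arc $\omega_j$ of $\gamma_2 \cap S$, after surgery, gives a curve $\beta_j \in \pi_S(\gamma_2)$. I want to bound $d_S(\alpha, \beta_j)$ for some $j$. The trick is a bisection/pigeonhole argument: partition the arcs of $\gamma_2$ into two families according to some side of $\gamma_1$ (or of $\alpha$); one family has $\geq m/2$ arcs. Iterating this halving $\log_2(m)$ times and using at each stage that passing between ``adjacent'' families costs a bounded distance in $\CS$ (via \eqref{eq:projection-lipschitz}, because arcs in a common region of the complement can be made disjoint), one builds a path in $\CS$ from $\alpha$ to some $\beta_j$ of length $O(\log_2 m)$. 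Since $m \leq \I(\gamma_1,\gamma_2)+1$ up to the additive constant, this yields $d_S(\gamma_1,\gamma_2) \leq 2\log_2(\I(\gamma_1,\gamma_2)+1) + 6$, with the constants $2$ and $6$ coming from the diameter-$\leq 2$ bound applied at each of the roughly $\log_2$ steps and from cleaning up the boundary-intersection and base-case terms.

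More concretely, I would set it up as: cut $\Sigma$ along (the geodesic) $\gamma_1$; the arcs of $\gamma_2 \cap S$ live in the pieces, and curves obtained by surgery from arcs lying in the same complementary piece of $\gamma_1$ (union a collar of $\gamma_1$) are disjoint, hence project within distance $2$. The number of complementary pieces of $\gamma_1$ that $\gamma_2$ visits is controlled, but to get the logarithm rather than a linear bound one instead does the recursive halving: split the cyclic sequence of intersection points of $\gamma_2$ with $\gamma_1$ into two arcs of $\gamma_2$, each carrying half the intersections; each such ``half-arc'' of $\gamma_2$, together with a sub-arc of $\gamma_1$, forms a curve whose projection to $S$ is controlled and is within bounded distance of $\pi_S(\gamma_1)$; recurse on each half. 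The depth of the recursion is $\lceil \log_2 \I(\gamma_1,\gamma_2) \rceil$, each level adds a bounded jump, and at the bottom one has arcs with at most one intersection with $\gamma_1$, which are essentially disjoint from a fixed reference and contribute the additive $+6$.

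The main obstacle, and the place requiring care, is making the recursion geometrically legitimate: when I split $\gamma_2$ into two sub-arcs I must close each one up into an honest simple closed curve in $S$ (or in $\Sigma$) in order to measure its projection, and I must verify that the closed-up curve at one level of the recursion is genuinely within bounded $d_S$-distance of the closed-up curves at the next level — this is where I invoke \eqref{eq:projection-lipschitz} via the observation that the relevant arcs can be realized disjointly after allowing a collar. A secondary nuisance is the interaction with $\partial S$: arcs of $\gamma_2 \cap S$ end on $\partial S$, and one must check the surgery curves behave well and that the count of arcs is controlled by $\I(\gamma_1,\gamma_2)$ up to the stated additive slack (this is why the bound is advertised as suboptimal — one is generous with constants rather than tracking boundary contributions sharply). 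Neither obstacle is deep; the proof is a standard ``divide and conquer on intersection points'' argument of the kind appearing in Masur–Minsky and subsequent work, and I would present it with the explicit constants $2$ and $6$ absorbing all the slack.
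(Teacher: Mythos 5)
Your proposal is essentially a plan to re-prove, from scratch and inside the subsurface projection, the logarithmic distance-versus-intersection bound that the paper simply cites. The paper's proof is a clean two-step reduction: first it surgers one arc $\omega_i$ of each $\gamma_i\cap S$ along $\partial S$ to get honest curves $\alpha_i\in\pi_S(\gamma_i)$ and checks the elementary count $\I(\alpha_1,\alpha_2)\le 4\,\I(\omega_1,\omega_2)+4\le 4\,\I(\gamma_1,\gamma_2)+4$; then it invokes the known estimate $d_S(\alpha_1,\alpha_2)\le 2\log_2\I(\alpha_1,\alpha_2)+2$ for curves already lying in $S$ (\cite{Schleimer}, Lemma 1.21), and the arithmetic gives exactly $2\log_2(\I(\gamma_1,\gamma_2)+1)+6$. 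Your bisection-on-intersection-points recursion is precisely the standard proof of that cited lemma, so your route is mathematically viable; what the paper's route buys is that the boundary surgery is performed exactly once, up front, after which the divide-and-conquer happens entirely among closed curves in $S$ and can be quoted as a black box. Your version forces you to carry the $\partial S$ interaction through every level of the recursion, which is the part your sketch leaves vaguest and which is genuinely fiddly to make rigorous.

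Two specific points to correct if you write this up. First, the claim that the number $m$ of arcs of $\gamma_2\cap S$ is at most $\I(\gamma_1,\gamma_2)+O(1)$ is false: $m$ is governed by $|\gamma_2\cap\partial S|$ and can be arbitrarily large independently of $\I(\gamma_1,\gamma_2)$. This does not sink the argument, because all arcs of $\gamma_2\cap S$ are pairwise disjoint and hence, by \eqref{eq:projection-lipschitz}, their surgered curves lie within diameter $2$ in $\CS$, so only one arc of each curve ever matters --- but the bound as stated should be removed. Second, the direction of your surgery estimate is reversed: when you close up a half-arc of $\gamma_2$ with a sub-arc of $\gamma_1$ (chosen between intersection points adjacent along $\gamma_1$), the resulting curve has bounded intersection with $\gamma_2$, hence is close to $\pi_S(\gamma_2)$, while its intersection with $\gamma_1$ is roughly halved; the path of length $O(\log_2\I)$ is built by iterating on that side, not by producing curves close to $\pi_S(\gamma_1)$ at the first step.
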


\begin{proof}
Let $\omega_1$ be an arc in $S$ that is a component of the restriction of $\gamma_1$ 
and let $\alpha_1 \in \pi_S(\gamma_1)$ be the curve in $\CS$ that is obtained by doing 
a surgery between $\omega_1$ and the boundary of $S$. Then $\alpha_1$ is a 
concatenation of 
one or two copies of $\omega_1$ (depending on whether the end points of $\omega_1$
are on the same boundary or different boundary components of $S$) and
some arcs in $\partial S$. Similarly, let $\omega_2$ be an arc in $S$ that is a restriction 
of $\gamma_2$ and $\alpha_2$ be the associated curve in $\pi_S(\gamma_2)$.
Then every intersection point between $\omega_1$ and $\omega_2$ results in 
$1$, $2$ or $4$ intersection points between $\alpha_1$ and $\alpha_2$. Also, 
applying surgery between $\omega_2$ and $\partial S$ can result in two intersection
points between $\alpha_2$ and $\alpha_1$ at each end of $\omega_2$. Therefore, 
\[
\I(\alpha_1, \alpha_2) \leq 4 \I(\omega_1, \omega_2) + 4,
\]
On the other hand, from \cite[Lemma 1.21]{Schleimer}, we have 
\[
d_S(\alpha_1, \alpha_2) \leq 2 \log_2(\I(\alpha_1, \alpha_2)) + 2.
\]
Therefore, 
\begin{align*}
d_S(\alpha_1, \alpha_2) &\leq  2 \log_2(4 \I(\omega_1, \omega_2)+4) + 2\\
&\leq 2 \log_2( \I(\gamma_1, \gamma_2)+1) + 6
\end{align*} 
which is as we claimed. 
\end{proof}
 
The notions of distance $d_S$ and intersection number can also be extended further 
to take finite sets of curves as arguments.   If $\mu_i$ are finite sets of curves, we define
\[
d_S(\mu_1, \mu_2) = \max_{\gamma_1 \in \mu_1, \gamma_2 \in \mu_s}
d_S(\gamma_1, \gamma_2)
\qquad\text{and}\qquad
\I(\mu_1, \mu_2) = \max_{\gamma_1 \in \mu_1, \gamma_2 \in \mu_s}
\I(\gamma_1, \gamma_2).
\]
Using Equation~\eqref{Eq:int>distance}, for any finite subsets $\mu_1$ and $\mu_1$
of $\CS$, we have
\begin{equation} \label{Eq:marking}
d_S(\mu_1, \mu_2) \leq 2 \log_2( \I(\mu_1, \mu_2)+1) + 6\\
\end{equation} 
Note that the triangle inequality still holds for this generalized distance $d_S$.  

\paragraph{Construction of an unbounded length function.} 
We now proceed with the proof of Theorem \ref{thm:nondisplace}.  
Let $\Sigma$ be any surface, and let $S$ be a  nondisplaceable subsurface.  Enlarge $S$ if needed so that $\xi(S) \geq 5$ and so that $S$ is connected.  (In Section \ref{sec:disconnected}, we give an alternative modification for non-connected subsurfaces that will be useful in later work.)

Let $\mathcal{I}$ denote the set of (isotopy classes of) subsurfaces of the same type as $S$, i.e. 
\[\mathcal{I} = \big\{ f(S) \mid f \in \mcg(\Sigma)\big\}. \]  
As usual, while $f(S)$ denotes only an isotopy class of a surface when $f \in \mcg(\Sigma)$, the reader may identify it with an honest subsurface by taking the representative with geodesic boundary.  
Let $\mu_S$ be a filling set of curves in $\CS$, i.e. a set of curves with the property that every curve in $S$ intersects some
curve in $\mu$. 

For $R \in \mathcal{I}$ let $\mu_R = \pi_R(\mu_S)$. Note that this is alway defined since
$\mu_S$ fills $S$ and $R$ intersects $S$ because $S$ was assumed nondisplaceable.  

Now, define 
\[
\ell \from \mcg(\Sigma) \to \Z \text{ by } 
\ell(\phi) = \max_{R \in \mathcal{I}}  d_{ \phi(R)} \big( \phi( \mu_R) , \mu_{ \phi (R) } \big) . 
\]

Equivalently, we have $\ell(\phi) = \max \limits_{T \in \mathcal{I}} d_T \big( \phi( \mu_{\phi^{-1}(T)}) , \mu_T \big)$.  

Note that $\ell$ is finite because, for every $\phi$, the intersection number $\I\big(\mu_S, \phi (\mu_S) \big)$
is a finite number. Hence, by Equation~\eqref{Eq:marking}, 
their projections to $\phi(R)$ lie at a bounded distance in $\calC(R)$, with a bound
that depends on $\phi$ alone, not on $R$. 

The latter definition also makes it clear that $\ell(\phi) = \ell(\phi^{-1})$, since 
\begin{align*} 
\ell(\phi^{-1}) &= \max_{T \in \mathcal{I}} d_T \big( \phi^{-1}( \mu_{\phi(T)}) , \mu_T \big) \\
&= \max_{T \in \mathcal{I}} d_{\phi(T)} \big(  \mu_{\phi(T)} , \phi(\mu_T) \big) \\
&= \max_{R = \phi(T) \in \mathcal{I}} d_{R} \big(  \mu_{R} , \phi(\mu_{\phi^{-1}(R)}) \big) 
= \ell(\phi) .
\end{align*}

We now check the triangle inequality.  Let $\psi$ and $\phi$ be given, and let $R \in \mathcal{I}$ be a surface such that $\ell (\psi \phi) = d_{ \psi \phi(R)} \big(\psi \phi( \mu_R) , \mu_{ \psi \phi (R) } \big)$.  Then we have
\begin{align*}
\ell (\psi \phi) &= d_{ \psi \phi(R)} \big(\psi \phi( \mu_R) , \mu_{ \psi \phi (R) } \big) \\
 & \leq  d_{ \psi \phi (R)} \big(\psi \phi(  \mu_R) , \psi ( \mu_{ \phi (R) } ) \big) 
   +  d_{ \psi \phi (R)} \big(\psi ( \mu_{ \phi (R)} ) , \mu_{ \psi \phi (R) } \big) \\
 & = d_{ \phi (R)} \big( \phi(  \mu_R) , \mu_{ \phi (R) }  \big) 
   +  d_{ \psi (Q)} \big(\psi ( \mu_Q ) , \mu_{ \psi (Q)} \big), 
       \qquad \text{ where $Q = \phi (R)$ }\\
  & \leq  \ell(\phi) + \ell(\psi). 
\end{align*} 

Continuity of $\ell$ as a function on $\mcg(\Sigma)$ is immediate, since for any given $\phi \in \mcg(\Sigma)$, the preimage of $\ell(\phi)$ under $\ell$ contains the open set consisting of mapping classes $\phi'$ so that $\phi(\mu_S) = \phi'(\mu_S)$.  Note also that $\ell(id) = 0$.  This proves that $\ell$ is a length function.  

To see that $\ell$ is unbounded, let $\phi \in \mcg(\Sigma)$
be a homeomorphism that preserves $S$ and such that the restriction $\phi|_S$ of $\phi$
to $S$ is a pseudo-Anosov homeomorphism of $S$. Then for any curve 
$\gamma$ in $S$, 
\begin{equation} \label{eq:unbounded}
d_S\big(\gamma, \phi^n(\gamma)\big) \to \infty
\qquad\text{as}\qquad n \to \infty. 
\end{equation}
(see e.g. \cite{MM1} for details). 
Thus, $\ell$ is an unbounded length function, and so $\mcg(\Sigma)$ is not coarsely bounded.  \qed

\subsection{Disconnected subsurfaces} \label{sec:disconnected}
While we have so far worked only with connected nondisplaceable subsurfaces, there is a natural generalization of the work above to non-connected subsurfaces.  
This will be useful when we need to find a non-displacable subsurface that is disjoint from 
a given compact subset of $\Sigma$ to determine if $\mcg(\Sigma)$ is locally CB. 
The extension to this broader framework requires a little care, since, if we simply take the definitions above verbatim then the diameter of the curve graph $C(S)$ is finite as soon as $S$ is not connected.  However, the following minor adaptations allow our work above to carry through in this case.

\begin{definition} \label{def:disconnected} 
A {\em disconnected finite type
subsurface} is a disjoint union of 
$\overline{S}=\{S_1, \dots, S_k\}$ of disjoint finite type
subsurfaces. 
Such a 
subsurface $\overline S$ is {\em nondisplaceable}, 
for any $f \in \mcg(\Sigma)$ and any component $S_i \in \overline{S}$,
there is $S_j \in \overline{S}$ such that $S_j \cap f(S_i) \neq \emptyset$. 
\end{definition}  

We now use $\overline{S}$ to construct a length function on $\mcg(\Sigma)$. As before, 
let $\mathcal{I}$ denote the set of images of $\overline{S}$ under mapping classes, i.e. 
\[\mathcal{I} = \Big\{ f(\overline{S}) \mid f \in \mcg(\Sigma)\Big\}. \] 
An element $\overline{R}$ of $\mathcal{I}$ is simply the disjoint union of a set $\{R_1, \ldots R_k \}$ where $R_i = f(S_i)$.  
Let $\mu_{\overline S}$ be a set of curves in $\cup_i \mathcal{C}(S_i)$ 
that fill every $S_i$. 
Keeping the notation from before, note that $\pi_{R_i}(\mu_{\overline S})$ is always defined since $R_i$ intersects some 
$S_j$ and curves in $\mu_{\overline S}$ fill $S_j$. Now, define 
\[
\ell_{\overline S} \from \mcg(\Sigma) \to \Z, \quad 
\ell_{\overline S}(\phi) = \max_{\overline{R} \in \mathcal{I}} \, \max \Big\{  d_{ \phi(R_i)} \big( \phi(\mu_{R_i}) , \mu_{ \phi (R) } \big) \mid R_i \text{ a component of } \overline{R} \Big\}
\]
The same computation as in the connected case shows that $\ell_{\overline S}$ 
is finite, is continuous as a function on $\mcg(\Sigma)$, and satisfies the triangle 
inequality. To see that $\ell_{\overline S}$ is unbounded, let $\phi \in \mcg(\Sigma)$
be a homeomorphism that preserves $\overline{S}$ and such that the restriction 
$\phi|_{S_1}$ of $\phi$ to $S_1$ is a pseudo-Anosov homeomorphism of $S_1$. 
Since $\ell_{\overline S}$ is defined as a maximum of distances in various curve graphs, 
if $\phi$ has a positive translation length in $\mathcal{C}(S_1)$ (or in any 
$\mathcal{C}(S_i)$) then $\ell_{\overline S}(\phi^n) \to \infty$ as $n \to \infty$.   This gives an alternative proof of Theorem \ref{thm:nondisplace} in the disconnected case, and the following more general statement.

\begin{proposition} \label{prop:disconnected}
If $\Sigma$ contains a connected or disconnected, nondisplaceable, finite type subsurface $S$ such that each connected component of $S$ has complexity at least $5$, then there exists a length function $\ell$ defined on $\mcg(\Sigma)$ such that the restriction of $\ell$ to mapping classes supported on $S$ is unbounded.    
\end{proposition}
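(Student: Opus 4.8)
The plan is to note that the two length functions already constructed in this section together prove the proposition, so the task is to assemble them and verify the points not yet fully spelled out. After enlarging each component of $S$ if necessary so that every component has complexity at least $5$ (this affects neither the hypotheses nor the conclusion), the connected case is literally the length function $\ell$ built in the proof of Theorem~\ref{thm:nondisplace}, and the general case is the function $\ell_{\overline S}$ defined above, of which the connected case is the instance with one component. So I would concentrate on $\ell_{\overline S}$, and check (a) that it is a genuine length function and (b) that its restriction to mapping classes supported on $S$ is unbounded.

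For (a): symmetry $\ell_{\overline S}(\phi) = \ell_{\overline S}(\phi^{-1})$ and the triangle inequality follow verbatim from the re-indexing identity and the two-step projection estimate used for $\ell$ in the connected case, applied componentwise inside the outer maximum over $\mathcal{I}$; that $\ell_{\overline S}(id) = 0$ is immediate; and continuity holds because the preimage of $\ell_{\overline S}(\phi)$ contains the open set of mapping classes $\phi'$ with $\phi'(\mu_{\overline S}) = \phi(\mu_{\overline S})$. The step I expect to require the most care — and the main obstacle — is finiteness: one must know that, for a fixed $\phi$, the quantities $d_{\phi(R_i)}\big(\phi(\mu_{R_i}),\mu_{\phi(R_i)}\big)$ are bounded uniformly over all $\overline R \in \mathcal{I}$ and all components $R_i$ of $\overline R$. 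Here nondisplaceability of $\overline S$ ensures every $R_i$ meets some $S_j$ whose filling set it can project, so $\mu_{R_i} = \pi_{R_i}(\mu_{\overline S})$ is defined; and since $\I\big(\mu_{\overline S},\phi(\mu_{\overline S})\big)$ is a finite number depending on $\phi$ alone, the coarse Lipschitz bound \eqref{Eq:marking} converts it into a bound on $d_{\phi(R_i)}$ that is independent of the choice of $\overline R$ and of the component.

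For (b): fix a component $S_1$ of $S$, which has $\xi(S_1) \geq 5$, so $\mathcal{C}(S_1)$ is Gromov hyperbolic of infinite diameter and carries pseudo-Anosov elements acting with positive translation length. Let $\phi \in \mcg(\Sigma)$ restrict to such a pseudo-Anosov on $S_1$ and to the identity on $\Sigma \setminus S_1$; then $\phi$ is supported on $S$, preserves $\overline S$, and fixes $S_1$. Taking in $\ell_{\overline S}(\phi^n)$ the term coming from $\overline R = \overline S = id(\overline S) \in \mathcal{I}$ and the component $R_1 = S_1$ gives $\ell_{\overline S}(\phi^n) \geq d_{S_1}\big(\phi^n(\mu_{S_1}),\mu_{S_1}\big)$, and the right-hand side tends to $\infty$ as $n\to\infty$ because $\phi|_{S_1}$ acts loxodromically on $\mathcal{C}(S_1)$ (see Equation~\eqref{eq:unbounded}, or e.g.\ \cite{MM1}). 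Since each $\phi^n$ is supported on $S$, this shows the restriction of $\ell_{\overline S}$ to mapping classes supported on $S$ is unbounded, which completes the proof.
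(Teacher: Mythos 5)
Your proposal is correct and follows essentially the same route as the paper: you use the function $\ell_{\overline S}$ defined via the maximum over $\mathcal{I}$ and over components, verify the length-function axioms and finiteness exactly as in the connected case (with nondisplaceability guaranteeing the projections $\pi_{R_i}(\mu_{\overline S})$ are defined and Equation~\eqref{Eq:marking} giving finiteness), and obtain unboundedness from a mapping class supported on a single component $S_1$ acting as a pseudo-Anosov, evaluated at $\overline R = \overline S$. The only superfluous step is the initial ``enlarging each component'' remark, since the hypothesis already grants complexity at least $5$ on every component; otherwise this matches the paper's argument.
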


\section{Self-similar and telescoping end spaces} \label{sec:SS_tele_CB}
In this section we give two topological conditions (in Propositions \ref{prop:SSCB} and \ref{prop:telescoping}) that imply coarse boundedness of the mapping class group: {\em self-similarity} and {\em telescoping}.   

\subsection{Self-similar end spaces} 
Recall that a space of ends $(E, E^G)$ is said to be {\em self-similar} if for any decomposition $E = E_1 \sqcup E_2 \sqcup \ldots \sqcup E_n$ of $E$ into pairwise disjoint clopen sets, there exists a clopen set $D$ in some $E_i$ such that $(D, D\cap E^G)$ is homeomorphic to $(E, E^G)$.  
There are many examples of such sets, a few basic ones are:
\begin{itemize} [label=$\bullet$, parsep=0pt, partopsep=2pt, itemsep=0pt]
\item $E$ equal to a Cantor set, and $E^G$ either empty, equal to $E$, or a singleton.
\item $E$ a countable set homeomorphic to $\omega^\alpha +1$ with the order topology, 
for some countable ordinal $\alpha$, $E^G$ is the set of points of type 
$\omega^\beta+1$ for all ordinals $\beta \geq \beta_0$ where $\beta_0$ is a some
fixed ordinal. 
\item $E$ the union of a countable set $Q$ and a Cantor set where the sole accumulation 
point of $Q$ is a point in the Cantor set, and $E^G = \overline{Q}$. 
\end{itemize}

\begin{convention} Going forward, we drop the notation $E^G$, assuming that $E$ comes with a designated closed subset of ends accumulated by genus, empty if the genus of $\Sigma$ is finite, and that all homeomorphisms between sets or subsets of end spaces preserve (setwise) the ends accumulated by genus.
\end{convention}

Since $E$ and $E^G$ are totally disconnected spaces, we also make the following convention.  

\begin{convention} 
For the remainder of this work, when we speak of a {\em neighborhood} in an end space $E$, we always mean a {\em clopen neighborhood}.  
\end{convention}

\begin{proposition}[Self-similar implies CB] \label{prop:SSCB}
Let $\Sigma$ be a surface of infinite or zero genus.  If the space of ends of $\Sigma$ is self-similar, then $\mcg(\Sigma)$ is CB. 
\end{proposition}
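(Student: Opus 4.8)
The plan is to use Rosendal's criterion (Theorem~\ref{thm:CB_criterion}): I must show that for every neighborhood $\calV$ of the identity in $\mcg(\Sigma)$, there is a finite set $\calF$ and some $k$ with $\mcg(\Sigma) \subset (\calF\calV)^k$. Since the groups $\calV_K = \{g : g|_K = \id\}$ for $K$ a finite type subsurface form a neighborhood basis of the identity, it suffices to treat $\calV = \calV_K$ for an arbitrary finite type $K$. First I would fix such a $K$, and enlarge it if necessary so that $\Sigma \setminus K$ consists of components each of which is either a disk, an annulus, or a single infinite type surface $\Sigma'$ whose end space is a clopen subset $E_1 \subset E$; by cutting along more curves I can further arrange that there is such a component $\Sigma_0 \subset \Sigma\setminus K$ that "carries everything," i.e. whose end set is a clopen $E_0$ on which self-similarity can be applied.

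The heart of the argument is a \emph{change of coordinates / absorption} step. Given $\phi \in \mcg(\Sigma)$, I want to write $\phi = h_1 g_1 h_2 g_2 \cdots$ with each $h_i$ in the finite set $\calF$ and each $g_i \in \calV_K$, i.e. fixing $K$ pointwise. The idea: self-similarity says that for the clopen decomposition $E = E_0 \sqcup (E \setminus E_0)$ there is a clopen $D$ inside $E_0$ with $(D, D\cap E^G) \cong (E, E^G)$; realizing this on the surface level (using the classification of Richards and the fact that infinite-or-zero genus is preserved), there is a subsurface $\Sigma_D \subset \Sigma_0$ homeomorphic to all of $\Sigma$, with complement of finite type. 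Pick once and for all a homeomorphism $\Sigma \to \Sigma_D$; this is the source of the "binary absorption" trick familiar from proving boundedness of $\Homeo(\text{Cantor set})$ and similar groups. Then for any $\phi$, I conjugate/compose so that its support is pushed into a copy of $\Sigma$ sitting inside $\Sigma_D$, which in turn sits inside $\Sigma_0 \subset \Sigma\setminus K$, hence commutes with $\calV_K$-coset manipulations; the finitely many "moving" homeomorphisms (the fixed identification $\Sigma\cong\Sigma_D$, a shift swapping two disjoint copies of $\Sigma_D$, and a bounded number of handle/end swaps to realize that $\phi$'s action on ends and genus can be absorbed) constitute $\calF$, and $k$ is an absolute constant.

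More precisely, the steps in order: (1) reduce to $\calV = \calV_K$; (2) normalize $K$ and extract the self-similar carrier $\Sigma_0$ with end set $E_0$; (3) use self-similarity to get nested copies $\Sigma \cong \Sigma_{D} \subset \Sigma_0$ and fix the relevant homeomorphisms; (4) given arbitrary $\phi$, first multiply by an element of $\calV_K$ to assume $\phi$ already agrees with the identity outside $\Sigma_0$ — this uses that any homeomorphism of $\Sigma$ can be isotoped so that it maps $K$ to a disjoint subsurface and then, via self-similarity, corrected back, so that up to an element of $\calF\calV_K$ we reduce to $\phi$ supported in $\Sigma_0$; (5) inside $\Sigma_0$, run the standard "commutator/displacement" absorption: using the shift $s$ exchanging $\Sigma_D$ with a disjoint translate, write $\phi|_{\Sigma_0}$ as a bounded product of conjugates of elements supported in $\Sigma_D \cong \Sigma$, and recurse one level — the recursion terminates in boundedly many steps because each conjugate can be chosen supported in $\Sigma_D$, which is again self-similar of the same homeomorphism type, so no genuine induction on complexity is needed, just the fixed-point-free shift. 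Collecting, $\phi \in (\calF\calV_K)^k$ for uniform $\calF, k$.

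The main obstacle I anticipate is step (4)/(5): carefully arranging, purely from the combinatorial self-similarity hypothesis on $(E,E^G)$, that an \emph{arbitrary} mapping class — which may permute the ends and redistribute genus in complicated ways outside $K$ — can be brought into the carrier $\Sigma_0$ using only finitely many fixed homeomorphisms, rather than a set depending on $\phi$. This requires knowing that $\Homeo(\Sigma)$ acts "transitively enough" on the relevant clopen pieces: concretely, that any clopen $E_i$ appearing in a finite partition, once it is homeomorphic to a clopen subset of $E_0$, can be moved inside $E_0$ by a homeomorphism of $\Sigma$ supported off $K$ — and that the finitely many partition pieces arising from normalizing $K$ give only finitely many homeomorphism types to handle. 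This is exactly where the precise bookkeeping lives, and where I would expect to invoke the change-of-coordinates principle for big mapping class groups together with the self-similarity definition applied to the partition $E = E_0 \sqcup E_1 \sqcup \cdots \sqcup E_n$ induced by $\partial K$. The triangle-inequality/length-function perspective is not needed here; the whole proof is the verification of Rosendal's combinatorial criterion via an absorption argument.
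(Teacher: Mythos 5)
Your overall strategy matches the paper's: verify Rosendal's criterion for $\calV_K$, use self-similarity applied to the clopen partition induced by $\partial K$ to locate a copy of the whole structure inside one complementary component $\Sigma_0$, and absorb an arbitrary mapping class using a single fixed ``nesting'' homeomorphism plus elements of $\calV_K$. However, the proof has a genuine gap exactly where you flag ``the main obstacle I anticipate'': you never actually show that an arbitrary $\phi$ can be corrected into $\calV_K$ using only the fixed finite set. The difficulty is that $\phi(K)$ and $\phi(\Sigma_0)$ can intersect $K$, $\Sigma_0$, and your chosen copy $\Sigma_D$ in ways that depend on $\phi$, so the assertion in your step (4) that ``any homeomorphism can be isotoped so that it maps $K$ to a disjoint subsurface and then, via self-similarity, corrected back'' is precisely the statement that needs proof, not a tool you can invoke. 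The paper's resolution is a second, $\phi$-dependent application of self-similarity: letting $E'$ be the copy of $E$ in $\Sigma_0$ and $R = f_{SR}(S)$ the fixed nested copy of $K$, one partitions $\phi(E')$ (itself self-similar) by the three clopen sets $E_R^0\cap E_S^0$, $E_S^0 - E_R^0$, $E_R^0 - E_S^0$; this produces a fresh copy $E''$ of $E$ lying in $\phi(\Sigma_0)\cap\Sigma_0$ or in $\phi(\Sigma_0)\cap\Sigma_R^0$, the latter case reducing to the former at the cost of one letter $f_{SR}^{-1}$. One then builds a copy $R'$ of $K$ near $E''$, disjoint from both $K$ and $\phi(K)$, and uses elements of $\calV_K$ together with conjugates by $f_{SR}$ to undo $\phi$ in a uniformly bounded number of steps. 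Without some version of this argument your claim that $k$ is an absolute constant is unsupported.

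Two smaller points. First, your claim that self-similarity yields $\Sigma_D\subset\Sigma_0$ homeomorphic to $\Sigma$ ``with complement of finite type'' is false in general (take $E$ a Cantor set and $D$ a proper clopen subset: the complement is again a Cantor set of ends), though this does not affect the strategy. Second, your step (5) is superfluous: once $\phi$ has been modified to restrict to the identity on $K$ it already lies in $\calV_K$, so no further commutator or shift absorption inside $\Sigma_0$ is needed; all the work is in step (4).
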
 
Note that finite, nonzero genus surfaces cannot have CB mapping class group by Example \ref{ex:finite-genus}, so Proposition \ref{prop:SSCB} is optimal in this sense.   Note also that the Proposition holds for finite type surfaces as well, the only applicable example is the once-punctured sphere which has trivial mapping class group. 

\begin{proof}[Proof of Proposition \ref{prop:SSCB}]
Let $\Sigma$ be an infinite type surface satisfying the hypotheses of the proposition, and let $\calV$ be a neighborhood of the identity in $\mcg(\Sigma)$.   Then there exists some finite type subsurface $S$ such that $\calV$ contains the open set $\calV_S$ consisting of mapping classes of homeomorphisms that restrict to the identity on $S$.  
 By Theorem \ref{thm:CB_criterion}, it suffices to find a finite set 
$\calF \subset \mcg(\Sigma)$ and $k \in \N$ (which are allowed to depend on $\calV_S$, 
hence on $S$) such that $\mcg(\Sigma) = (\calF\calV_S)^k$.  
Enlarging $S$ (and therefore shrinking $\calV_S$) if needed, we may assume that each connected component of $\Sigma-S$ is of infinite type.  

The connected components of $\Sigma - S$, together with the finite set $P_S$
of punctures of $S$, partition $E$ into clopen sets.  Let 
\[
E =  E_S^0 \sqcup E_S^1 \sqcup \ldots \sqcup E_S^n \sqcup P_S
\] 
denote this decomposition, and let $\Sigma_S^i$ denote the connected component of $\Sigma - S$ containing $E_S^i$.  Since $S$ is of finite type, $E^G \cap P_S= \emptyset$.   Since $E$ is self-similar, one of the $E^i_S$ contains a copy of $E$.  Without loss of generality, we assume this is $E_S^0$, the set of ends of $\Sigma_S^0$.  
The next lemma asserts that we may find a surface $R$ as depicted in Figure \ref{fig:self-sim}. 

  \begin{figure*}[h]
   \labellist 
  \small\hair 2pt
   \pinlabel $S$ at 160 30
   \pinlabel $R$ at 70 50 
   \pinlabel $\Sigma_S^0$ at 130 125 
   \pinlabel $\Sigma_S^1$ at 215 130 
   \pinlabel $\Sigma_S^2$ at 215 50 
   \pinlabel {\scriptsize $\Sigma_R^2$} at 98 57 
   \pinlabel {\scriptsize $\Sigma_R^1$} at 60 80
   \endlabellist
     \centerline{ \mbox{
 \includegraphics[width = 3in]{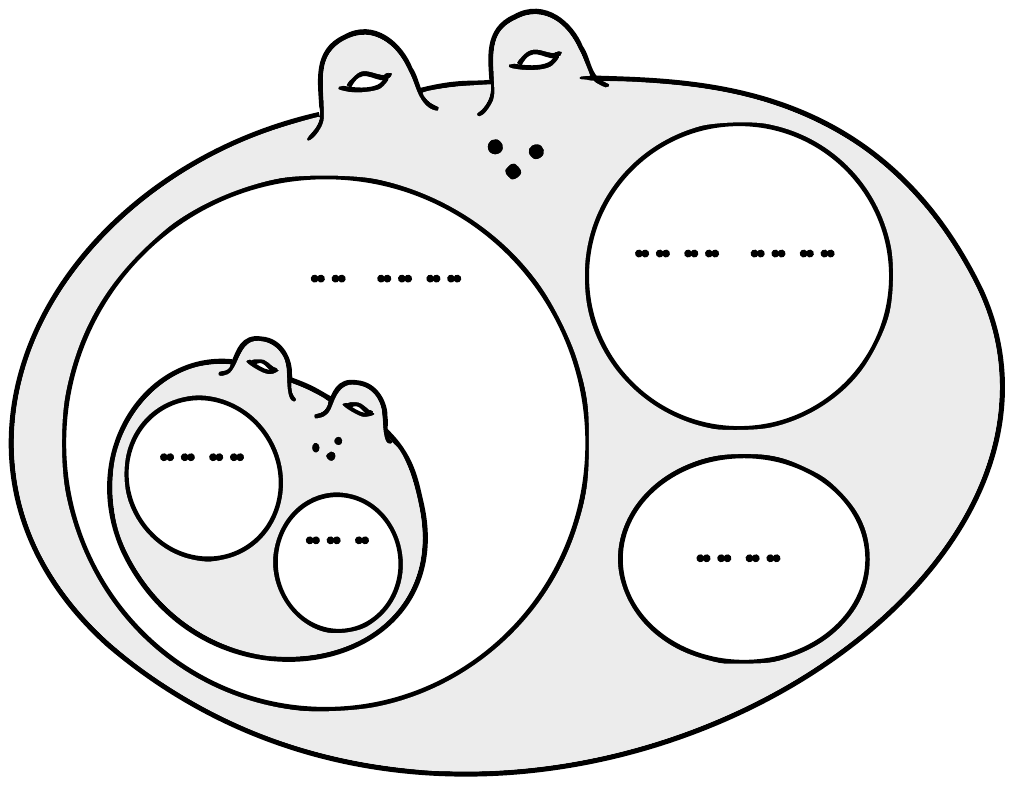}}}
 \caption{A homeomorphic copy $R$ of $S$ contained in the complimentary region $\Sigma_S^0$.}
  \label{fig:self-sim}
  \end{figure*}

\begin{lemma}  \label{lem:find_R}
Let $\Sigma_S^0$ be a connected component of $\Sigma - S$ whose end set contains a homeomorphic copy $E'$ of $E$.  Then $\Sigma_S^0$ contains a homeomorphic copy $R$ of $S$ such that there exists a homeomorphism $f_{SR}$ of $\Sigma$ with $f_{SR}(S) = R$ and $f_{SR}(\Sigma_S^0) \supset S$.
\end{lemma}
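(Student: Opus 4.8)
The plan is to transplant into $\Sigma_S^0$ a copy of the entire configuration consisting of $S$ together with its complementary pieces, using self-similarity of $E$ to arrange that the new copy of $\Sigma_S^0$ again has end space homeomorphic to $E_S^0$. First, fix a homeomorphism $\phi\from E \to E'$ preserving ends accumulated by genus. Pushing forward the clopen decomposition $E = E_S^0 \sqcup E_S^1 \sqcup \cdots \sqcup E_S^n \sqcup P_S$ induced by the complementary components of $S$, we obtain a clopen decomposition
\[
E' = E'_0 \sqcup E'_1 \sqcup \cdots \sqcup E'_n \sqcup P', \qquad E'_i := \phi(E_S^i),\quad P' := \phi(P_S),
\]
in which $E'_i \cong E_S^i$ (preserving genus), $P'$ is a finite set of isolated ends of $\Sigma$ contained in $E' \subset E_S^0$, and, crucially, $E'_0 \cong E_S^0$. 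Writing $F := E_S^0 \setminus E'$ we then have $E_S^0 = E'_0 \sqcup F \sqcup E'_1 \sqcup \cdots \sqcup E'_n \sqcup P'$.

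Next I would use the change of coordinates principle inside $\Sigma_S^0$. Since $\Sigma_S^0$ is of infinite type with end space $E_S^0$ and (as $E_S^0 \supset E' \cong E$) has genus $0$ or infinite according as $\Sigma$ does, one can choose a finite type subsurface $R \subset \Sigma_S^0$ homeomorphic to $S$ — with the same genus and number of boundary components, and its punctures placed at $P'$ — positioned so that $\Sigma_S^0 - R$ has components $W \sqcup \Sigma_R^1 \sqcup \cdots \sqcup \Sigma_R^n$, where $W$ is connected and is the component meeting $\partial\Sigma_S^0$, $W$ has end space $E'_0 \sqcup F$, each $\Sigma_R^i$ has end space $E'_i$ and genus matching $\Sigma_S^i$, and the boundary components of $R$ meet $W$ and the $\Sigma_R^i$ in the same pattern in which those of $S$ meet $\Sigma_S^0$ and the $\Sigma_S^i$. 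Let $\Sigma_R^0$ be the component of $\Sigma - R$ containing $W$; it contains $S$ and all of $\Sigma_S^1, \dots, \Sigma_S^n$, and $\Sigma - R$ has components exactly $\Sigma_R^0, \Sigma_R^1, \dots, \Sigma_R^n$.

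It then remains to verify that $\Sigma_R^0 \cong \Sigma_S^0$ (preserving genus). Granting this, the configuration $(R;\Sigma_R^0,\dots,\Sigma_R^n)$ realizes in $\Sigma$ the same combinatorial data as $(S;\Sigma_S^0,\dots,\Sigma_S^n)$, so the change of coordinates principle supplies $f_{SR}\in\Homeo(\Sigma)$ with $f_{SR}(S)=R$ and $f_{SR}(\Sigma_S^j)=\Sigma_R^j$ for all $j$; in particular $f_{SR}(\Sigma_S^0)=\Sigma_R^0\supset S$, as desired. For the verification: the ends of $\Sigma_R^0$ are those of $\Sigma$ not trapped in $R$ or in $\Sigma_R^1,\dots,\Sigma_R^n$, namely
\[
E \setminus \big(P' \cup E'_1 \cup \cdots \cup E'_n\big) \;=\; (E'_0 \sqcup F)\;\sqcup\; E_S^1 \sqcup \cdots \sqcup E_S^n \;\sqcup\; P_S,
\]
whereas $E_S^0 = (E'_0\sqcup F)\sqcup E'_1\sqcup\cdots\sqcup E'_n\sqcup P'$; the map that is the identity on $E'_0\sqcup F$ and agrees with $\phi$ on each $E_S^i$ and on $P_S$ is then a homeomorphism between these two spaces preserving ends accumulated by genus. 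Since $E_S^0$ contains a copy of $E$, it contains an end accumulated by genus iff $\Sigma$ does, so $\Sigma_S^0$ and $\Sigma_R^0$ have the same genus; as they also have the same (finite) number of boundary circles, Richards' classification gives $\Sigma_R^0 \cong \Sigma_S^0$.

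The main obstacle is this last end-space bookkeeping: one has to notice that transporting the \emph{entire} decomposition of $E$ — including the summand $E_S^0$ itself — into the self-similar copy $E'\subset E_S^0$ produces a clopen copy $E'_0$ of $E_S^0$ inside $E_S^0$ which is free to ``absorb'' the ends of $S$ (the finite set $P_S$) and of $\Sigma_S^1,\dots,\Sigma_S^n$ once the pieces $\Sigma_R^1,\dots,\Sigma_R^n$ and the punctures of $R$ have been carved out of $\Sigma_S^0$, and that this absorption is precisely what yields $\Sigma_R^0\cong\Sigma_S^0$. The remaining points — that $\Sigma_S^0$ has room for $R$ and the nested pieces $\Sigma_R^i$ (using that $\Sigma_S^0$ has infinite genus whenever $\Sigma$ does), that boundary patterns and genera can be matched up, and that the conclusion follows from a single application of the change of coordinates principle — are routine.
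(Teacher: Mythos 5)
Your proposal is correct and follows essentially the same route as the paper: transplant the pattern of complementary end sets $E_S^1,\dots,E_S^n, P_S$ into the self-similar copy $E'\subset E_S^0$, carve out a copy $R$ of $S$ realizing that pattern, and invoke the classification of surfaces to produce $f_{SR}$. Your explicit bookkeeping with $E'_0$ and the piecewise homeomorphism identifying the end space of $\Sigma_R^0$ with $E_S^0$ is just a more verbose version of the paper's ``swap'' homeomorphism of $E$ that exchanges $E_S^i$ with $E_R^i$ and fixes everything else.
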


\begin{proof} 
Since $E'$ is homeomorphic to $E$, we may find inside $E'$ pairwise disjoint clopen sets $E^i_R$, $i=1, 2, \ldots, n$ and $P_R$ with $E^i_R$ homeomorphic to $E^i_S$ and $P_R$ homeomorphic to $P_S$. (As always, we mean via homeomorphisms which respect $E^G$.)    Richards' classification of surfaces implies that there is a homeomorphism $f$ of $\Sigma$ such that, for $i \geq 1$, $f(E^i_S) = E^i_R$, 
$f(P_S) = P_R$ and that restricts to the identity on the complement of 
$\left( \bigcup_{i \geq 1} E^i_R \cup E^i_S \right) \cup P_R \cup P_S$.  In fact, one can take such a homeomorphism to move $S$ to a surface $R$ disjoint from $S$, with the properties of $f_{SR}$ claimed in the Lemma.  
In detail, take connected neighborhoods $\Sigma^i_R$ of $E^i_R$, pairwise disjoint from each other and from $\bigcup_{i\geq 1} E^i_S \cup P_S \cup P_R$, each with a single boundary component.   Let $c$ be a curve in $\Sigma_S^0$ separating $\bigcup_{i \geq 1} E^i_R \cup P_R$ from the other ends, and (shrinking one of the surfaces $\Sigma^i_R$ to exclude some genus from it if needed) so that the connected component of 
$(\Sigma - \bigcup_{i \geq 1} \Sigma^i_R) - c$
containing $P_R$ has the same finite genus as $\Sigma$.   Then the union of $c$ and the boundary components of the $\Sigma^i_R$ bound a surface homeomorphic to $S$, with complimentary regions homeomorphic to the complementary regions of $S$, and the classification of surfaces tells us that we may find a homeomorphism $f_{SR}$ with $f_{SR}(\partial \Sigma_S^0) = c$, $f_{SR}(\Sigma^i_S) = \Sigma^i_R$, and $f_{SR}(S) = R$.  By construction, this also satisfies $f_{SR}(\Sigma_S^0) \supset S$.  
\end{proof}

Now fix $R$ and $f_{SR}$ as in Lemma \ref{lem:find_R} and let 
$\mathcal{F} = \{f_{SR}, f^{-1}_{SR}\}$. We will show 
\[\mcg(\Sigma) = (F \calV_S)^5.\] 
Let $g \in \mcg(\Sigma)$.   Let $E'$ be a homeomorphic copy of $E$ in $\Sigma^0_S$, and consider the set $g(E')$.  
Since the clopen sets $Z := (E_R^0 \cap E_S^0)$,  $(E_S^0 - Z)$ and $(E_R^0 -Z)$ partition $E$, their intersections with $g(E')$ partition $g(E')$.  Since $g(E')\cong E$ is a self-similar set, one of these three sets in the partition contains a homeomorphic copy of $E$; call this $E''$.  
Thus, $E''$ lies either in $g(E') \cap E_R^0$ or in $g(E') \cap E_S^0$ (or both).  
If the first case occurs, then we have $f_{SR}^{-1} g(E') \cap f_{SR}^{-1}(E_R^0) = f_{SR}^{-1} g(E') \cap E^0_S$.  This means, that, 
at the cost of replacing $g$ by $f_{SR}^{-1} g$, and therefore using one more letter from $F$, we can assume that we are in the second case, i.e. where $E'' \subset g(E') \cap E_S^0$.  So it suffices to show that in this case, we have $g \in (\mathcal{F} \calV_S)^4$.  This situation is illustrated in Figure \ref{fig:g(S)}.  

  \begin{figure*}[h]
   \labellist 
  \small\hair 2pt
   \pinlabel $R'$ at 230 30
   \pinlabel $R$ at 20 75 
   \pinlabel $g(S)$ at 320 118 
   \pinlabel $\Sigma_S^0$ at 270 75
   \pinlabel $S$ at 400 75 
   \pinlabel $E''$ at 140 42
   \endlabellist
     \centerline{ \mbox{
 \includegraphics[width = 4in]{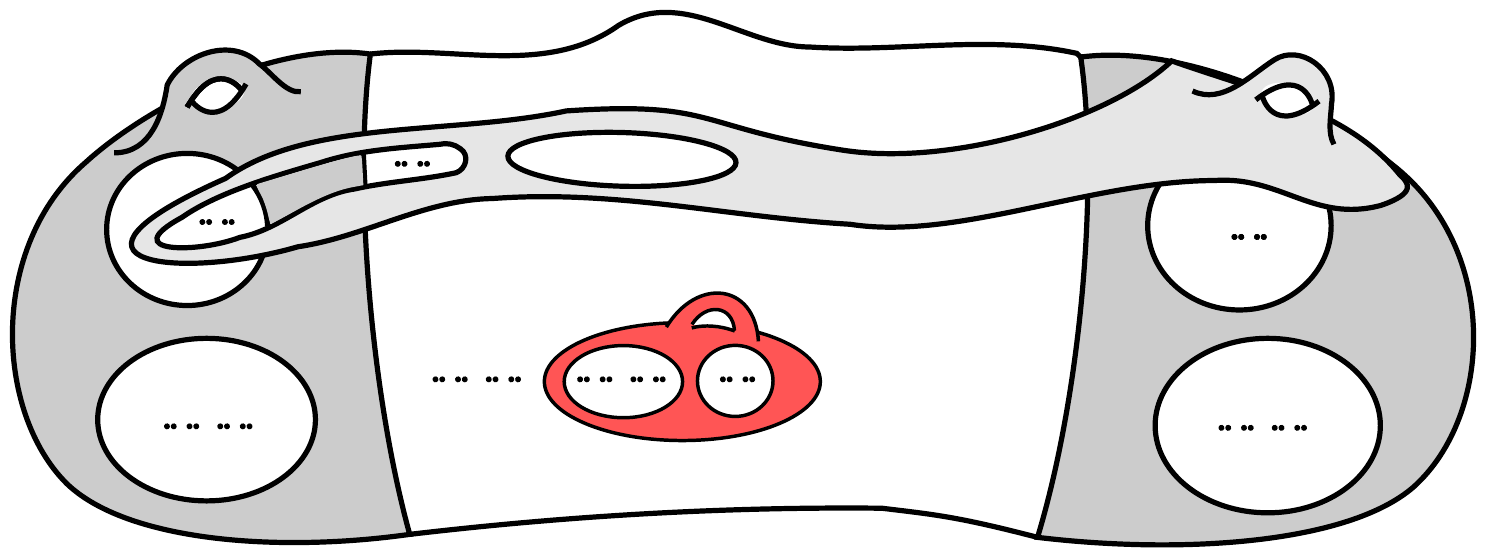}}}
 \caption{$g(S)$ may intersect $R$ and $S$ in a complicated way, but $R'$ lies in the ``big'' complimentary region of at least one of them (here it is in their intersection, $Z$)}
  \label{fig:g(S)}
  \end{figure*}

Assuming that $E'' \subset g(E') \cap E_S^0$, we can now find another copy of $R$ in a small neighborhood of $E''$, and hence in $g(\Sigma_S^0) \cap \Sigma_S^0$.    Specifically, just as in Lemma \ref{lem:find_R}, but using $E''$ instead of $E'$, and working with the subsurface $R$ of the surface $\Sigma_S^0$ instead of the subsurface $S$ of $\Sigma$,  we may find a surface $R' \subset \Sigma_S^0 \cap g(\Sigma_S^0)$ homeomorphic to $R$, and a homeomorphism $v$ of $\Sigma_S^0$ mapping $R$ to $R'$ that satisfies $v(\Sigma_R^0 \cap \Sigma_S^0) \supset R$.  
Extend $v$ to a homeomorphism of $\Sigma$ by declaring it to be the identity on $\Sigma - \Sigma_S^0$.  Abusing notation slightly, denote this homeomorphism also by $v$, and so we have $v \in \calV_S$.  

By construction, we also have $g(S) \cup S \subset v(\Sigma_R^0)$.  
The same argument to that in Lemma \ref{lem:find_R} using the classification of surfaces now shows that we may find $u$ restricting to the identity on $R’$ with $ug(S) = S$ and $ug$ equal to identity on $S$.   
(The details as a straightforward exercise.)
Since $u$ is the identity on $R'$, it follows that $(v f_{SR})^{-1} u (v f_{SR})$ is the identity on $(v f_{SR})^{-1}(R') = S$, which implies that $u \in (\mathcal{F} \calV_S)^3$, hence $g \in (\mathcal{F} \calV_S)^4$.  
This concludes the proof of Proposition \ref{prop:SSCB}. 
\end{proof}

\subsection{Telescoping end spaces}  \label{sec:telescoping}

\textbf{Motivation.} Recall from Example \ref{ex:easy_nondisplace} that, if $\Sigma$ is a surface such that there exists a finite, $\mcg(\Sigma)$-invariant set $F \subset E$ of cardinality at least 3, then $\mcg(\Sigma)$ is not CB: any finite-type subsurface $S$ such that the elements of $F$ each lie in different complimentary regions of $S$ is easily seen to be non-displaceable.  
The definition of {\em telescoping} below was motivated by the question: {\em under what conditions is a 2-element $\mcg(\Sigma)$-invariant subset of $E$ compatible with global coarse boundedness?}
As will follow from our work in Section \ref{sec:global_CB}, this never happens if $E$ is 
countable: every surface with countable end space and coarsely bounded mapping class group is self-similar.  However, in the uncountable case, surfaces with telescoping end spaces provide additional examples (and are the only additional examples among tame surfaces).  
Informally speaking, telescoping spaces of ends have two ``special'' points with the property that neighborhoods of each point can be expanded an arbitrary amount, and can also be expanded a fixed amount relative to a neighborhood of the other point. 

\begin{convention}
In the following definition, and for the remainder of this work, a {\em neighborhood of an end} $x$ in $\Sigma$ means a connected subsurface with a single boundary component that has $x$ as an end.     
\end{convention}

\begin{definition}  \label{def:telescoping}
A surface $\Sigma$ is {\em telescoping} if there are ends
$x_1, x_2 \in E$ and disjoint clopen neighborhoods $V_i$ of $x_i$ in $\Sigma$ such that 
for all clopen neighborhoods $W_i \subset V_i$ of $x_i$, there exist homeomorphisms $f_i, h_i$ of $\Sigma$, both pointwise fixing $\{x_1, x_2\}$, with 
\[ 
f_i(W_i) \supset (\Sigma - V_{3-i}) \qquad
h_i(W_i) = V_i, \qquad \text{ and }\qquad h_i(V_{3-i}) = V_{3-i}. 
\]
When we wish to make the points $x_1, x_2$ explicit, we say also {\em telescoping with respect to \{$x_1, x_2$\}}. 
\end{definition} 
Note that this definition implies that $\Sigma$ has infinite or zero genus, as does $\Sigma - (V_1 \cup V_2)$.  

While the compliment of a Cantor set in $S^2$ is both self-similar and telescoping with respect to any pair of points, there are many examples of telescoping sets that are {\em not} self-similar, for instance:
\begin{itemize}
\item $E^G$ a Cantor set, $E$ the union of  $E^G$ and another Cantor set which intersects $E^G$ at exactly two points
\item $E$ the union of two copies of the Cantor set, $C_1$ and $C_2$, which intersect at exactly two points, and a countable set $Q$ such that the accumulation points of $Q$ are exactly $C_1$.  $E^G$ could be empty, equal to the closure of $Q$, or equal to $E$.  
\end{itemize} 
In the second example above, note that $E^G$ cannot be, for instance, a singleton as depicted in Figure \ref{fig:telescoping} there are homeomorphisms of $E$ that satisfy the requirements of $h_i$ and $f_i$ above, but these do not come from homeomorphisms of $\Sigma$ that have the required behavior on neighborhoods of the ends, due to genus considerations.  

  \begin{figure*}[h]
     \labellist 
  \small\hair 2pt
   \pinlabel $x_1$ at -5 20 
   \pinlabel $x_2$ at 195 20 
   \endlabellist
     \centerline{ \mbox{
 \includegraphics[width = 3in]{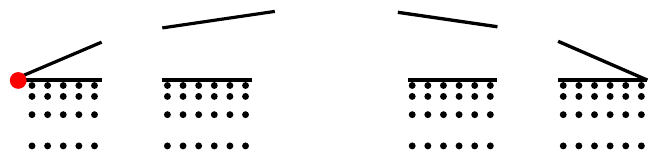}}}
 \caption{A surface that narrowly fails the definition of telescoping.  $E^G = \{x_1\}$ is the singleton shown in red. If $E^G$ were empty, the complement of this set in $S^2$ would be a telescoping surface}
  \label{fig:telescoping}
  \end{figure*}
\noindent Of course, there are many more complicated telescoping sets; including numerous variations on the second example above.  

\begin{remark} \label{rk:for_all_small}
Note that an equivalent definition of telescoping may be given by replacing ``there exist disjoint neighborhoods $V_i$ of $x_i$" with ``for all sufficiently small neighborhoods $V_i$ of $x_i$." 
The proof is an immediate consequence of the definition. 
\end{remark}

\begin{proposition}[Telescoping implies CB] \label{prop:telescoping}
Let $\Sigma$ be a surface that is telescoping with respect to $\{x_1, x_2\}$, then the pointwise stabilizer of $\{x_1, x_2\}$ in $\mcg(\Sigma)$ is CB.  
\end{proposition}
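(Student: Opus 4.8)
The plan is to follow the proof of Proposition~\ref{prop:SSCB} closely, with the telescoping structure supplying the moves that self-similarity supplied there. Write $G$ for the pointwise stabilizer of $\{x_1,x_2\}$ in $\mcg(\Sigma)$. By Rosendal's criterion (Theorem~\ref{thm:CB_criterion}) it is enough to show that for every identity neighborhood $\calV$ of $\mcg(\Sigma)$ there are a finite set $\calF\subset\mcg(\Sigma)$ and an integer $k$ with $G\subseteq(\calF\calV)^k$. Such a $\calV$ contains $\calV_S=\{h\in\mcg(\Sigma):h|_S=\id\}$ for some finite-type $S$. By Remark~\ref{rk:for_all_small} we may first shrink the telescoping neighborhoods $V_1,V_2$ and then enlarge $S$, so that $V_1$ and $V_2$ are two complementary components of $\Sigma-S$, every complementary component has infinite type and $0$ or infinite genus, and the middle piece $\Sigma_0:=\Sigma-(V_1\cup V_2)$ (which has $0$ or infinite genus, by the remark following Definition~\ref{def:telescoping}) contains $S$.

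Next, fix once and for all clopen sub-neighborhoods $W_i\subset V_i$ of $x_i$ and, applying the telescoping hypothesis to this choice, homeomorphisms $f_i,h_i\in G$ with $f_i(W_i)\supset\Sigma-V_{3-i}$, $h_i(W_i)=V_i$, and $h_i(V_{3-i})=V_{3-i}$; from the first relation one also gets $f_i(\Sigma-W_i)\subseteq V_{3-i}$, in particular $f_i(V_{3-i})\subseteq V_{3-i}$. The finite set $\calF$ will be $\{f_1^{\pm1},f_2^{\pm1},h_1^{\pm1},h_2^{\pm1}\}$ together with at most finitely many auxiliary elements. The relevant features are that $h_i$ shifts the telescope of nested neighborhoods $V_i\supsetneq h_i^{-1}(V_i)=W_i\supsetneq h_i^{-2}(V_i)\supsetneq\cdots$ shrinking onto $x_i$ while leaving $V_{3-i}$ fixed, and that $f_i$ expands the fixed first level $W_i$ over all of $\Sigma-V_{3-i}\supset\Sigma_0\supset S$.

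Given $g\in G$, the goal is then exactly that of the last two paragraphs of the proof of Proposition~\ref{prop:SSCB}: to produce, by a bounded-length word $\rho$ in $\calF\cup\calV_S$, a copy $R'=\rho(S)$ of $S$ and an infinite-type region $\Omega$ with $S\cup g(S)\subseteq\Omega$ and $\Omega\cap R'=\emptyset$; granting this, the flexibility of homeomorphisms of the infinite-type surface $\Omega$ afforded by Richards' classification (used as in Lemma~\ref{lem:find_R}) yields $u$ pointwise fixing $R'$ with $ug$ the identity on $S$, and then $\rho^{-1}u\rho\in\calV_S$ exhibits $u$, hence $g=u^{-1}(ug)$, as a bounded word in $\calF\cup\calV_S$. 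To build $R'$ and $\Omega$ one uses $f_1$ to send $S$ deep into $W_1$, so $R'$ may be taken of the form $\rho(S)$ with $\rho$ a bounded word built from $f_1$ and elements of $\calV_S$; the part of $g(S)$ lying in $\Sigma-V_1$ then automatically avoids $R'$, and the part of $g(S)$ lying inside $V_1$ or $V_2$ is dealt with by a combination of an element of $\calV_S$ (which acts as the full mapping class group of the relevant complementary region) and, as discussed next, the shift maps $h_i$.

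The step I expect to be the main obstacle — and the genuinely new point compared with the self-similar case — is controlling the depth to which $g(S)$ penetrates the telescope neighborhoods $V_1,V_2$: this depth is unbounded over $g\in G$, whereas $\calF$ is finite, so one cannot use a $g$-dependent power of $h_i$ nor a $g$-dependent expansion map to ``reach in.'' Handling this is where the relations $h_i(W_i)=V_i$ and $h_i(V_{3-i})=V_{3-i}$ are essential: they allow one to renormalize the telescope by a single bounded word so that the relevant portion of $g(S)$ inside $V_i$ is pushed out past the fixed sub-neighborhood $W_i$, after which one application of $f_i$ suffices to bring everything into a controlled region $\Omega$ disjoint from $R'$. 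One must also verify that the framing-correcting homeomorphism $u$ can be chosen with support in $\Omega$, which is where infinite type of $\Omega$ and the $0$-or-infinite genus of $\Sigma_0$ enter.
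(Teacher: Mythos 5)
Your setup and your diagnosis of the main difficulty are both correct: the obstacle is that $g(S)$ can penetrate arbitrarily deeply into $V_1$ and $V_2$, so no $g$-independent word in fixed shift maps can pull it back out. But your proposed resolution does not close this gap. You place the homeomorphisms $h_i$ attached to the \emph{fixed} choice $W_i\subset V_i$ into $\calF$ and assert that they ``renormalize the telescope by a single bounded word.'' The homeomorphism actually needed is the one the telescoping definition attaches to a $g$-\emph{dependent} sub-neighborhood $W_i'\subset g(V_i)\cap V_i$, and that map is not a bounded word in the fixed $h_i^{\pm1}$: iterating $h_i$ only produces the particular nested sequence $h_i^{-n}(V_i)$, which need bear no relation to $g(V_i)$, and the number of iterations required is unbounded in $g$. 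The auxiliary element of $\calV_S$ you invoke is also problematic: it is supported in $V_i$ rel $\partial V_i$, so it can only rearrange $g(S)\cap V_i$ inside $V_i$, and $V_i-W_i$ may have no topological room (it can be an annulus) to receive a high-complexity piece of $g(S)$; pushing that piece deeper past $R'$ instead disconnects the region $\Omega$ you need.

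The missing idea --- the heart of the paper's proof --- is a conjugation trick that makes every $g$-dependent move cost a bounded number of letters. Work with the smaller identity neighborhood $\calV\subset\calV_S$ of classes restricting to the identity on all of $\Sigma'=\Sigma-(V_1\cup V_2)$, and take $\calF=\{f_1^{\pm1},f_2^{\pm1}\}$ only. Since $f_i(W_i)\supset\Sigma-V_{3-i}$, one has $f_i^{-1}(V_{3-i})\supset\Sigma-W_i\supset\Sigma'$, and therefore \emph{every} homeomorphism restricting to the identity on $V_{3-i}$ lies in $f_i\,\calV f_i^{-1}$ --- in particular the $g$-dependent homeomorphism taking $W_i'$ to $V_i$ while fixing $V_{3-i}$, no matter how deep $W_i'$ is. With this, the paper normalizes $g$ in a uniformly bounded word: conjugates of $\calV$ arrange $g$ to match the $W_i$, a fixed power of $f_1$ pushes $\Sigma'$ into $W_2$, and one more conjugate of $\calV$ compares the result with the fixed configuration $f_1^{-1}f_2(\Sigma')$. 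Without some version of this observation your argument cannot bound the word length independently of $g$, so as written the proof has a genuine gap at exactly the step you flagged as the main obstacle.
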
 

In particular, if $\{x_1, x_2\}$ is a $\mcg(\Sigma)$-invariant set, then $\mcg(\Sigma)$ is itself CB.  

\begin{proof} 
Suppose that $\Sigma$ is telescoping and let $x_i,V_i$ be as in the definition.
To simplify notation, let $G$ denote the pointwise stabilizer of $\{ x_1, x_2\}$ in $\mcg(\Sigma)$. 
Fix a neighborhood of the identity in $\mcg(\Sigma)$, shrinking this if needed we may take it to be the set  $\calV_S$ of mapping classes that restrict to the identity on some finite type subsurfaces $S$.    By Remark \ref{rk:for_all_small}, we may assume that $S \subset \Sigma - (V_1 \cup V_2)$.   
Let $\calV \subset \calV_S$ be the set of mapping classes that restrict to the identity on $\Sigma' := \Sigma - (V_1 \cup V_2)$.
We will exhibit a finite set $\calF$ such that $G \subset (\calF \calV)^10 \subset (\calF \calV_S)^10$.  This is sufficient to show that $G$ is CB by Theorem \ref{thm:CB_criterion}.

Fix neighborhoods $W_i \subset V_i$ of $x_i$ in $E$ and homeomorphisms $f_i$ with $f_i(W_i) \supset (E - V_{3-i})$ as given by the definition of telescoping.   Let $\calF = \{ f_1^{\pm 1}, f_2^{\pm 1} \}$.   This is our finite set.   
Note that any homeomorphism which restricts to the identity on $V_2$ lies in $f_1 \calV f_1^{-1}$.  

Given $g \in G$, let $W_i'$ be a neighborhood of $x_i$ such that $W_i' \subset g(V_i) \cap V_i$.   Then, by definition of telescoping, there exists a homeomorphism $h_1$ restricting to the identity on $V_2$ and taking $W_i'$ to $V_1$; as well as another, say $j_1$, taking $W_1$ to $V_1$. Thus $g_1 := j_1^{-1} h_1$ is the identity on $V_2$, hence lies in $f_1 \calV f_1^{-1}$, and satisfies $g_1(W_1') = W_1$. 

Similarly, we can find $g_2 \in f_2 \calV f_2^{-1}$ restricting to the identity on $V_1$ with $g_2(W'_2) = W_2$.  Thus, 
\[ g_2 g_1 g (W_i') = W_i \text{ for } i = 1,2. \]
It follows that $g_2 g_1 g(\Sigma') \subset (\Sigma - W_1 \cup W_2)$,  and so $f_1^{-1} g_2 g_1 g(\Sigma') \subset V_2$ and
\[f_1^{-2} g_2 g_1 g(\Sigma') \subset W_2.\]
For notational convenience, let $\phi = f_1^2 g_2 g_1 g$.  Since $\phi(\Sigma')$ and $f_1^{-1}f_2\Sigma'$ both lie in $W_2$, as a consequence of the definition of telescoping there exists a homeomorphism $\psi$ restricting to the identity on $V_1$, with $\psi \phi (\Sigma') = f_1^{-1}f_2\Sigma'$.  Precomposing $\psi$ with a homeomorphism that is also the identity on $V_1$, we can also ensure that $(f_1^{-1}f_2)^{-1}\psi \phi$ restricts to the identity on $\Sigma'$.  
Thus, we have shown that $f_1 f_2^{-1} \psi \phi g \in \calV$, hence $g \in (\calF \calV)^10$.   
Note that this exponent could be decresed at the cost of enlarging $\calF$ to include words of length two in $\{f_1^{\pm1}, f_2^{\pm1}\}$.
\end{proof}

We conclude this section with a result whose proof serves as a good warm-up for the technical work to come.  

\begin{proposition} \label{prop:telescoping_uncountable}
No telescoping surface has countable end space.    
\end{proposition}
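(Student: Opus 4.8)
The plan is to argue by contradiction: suppose $\Sigma$ is telescoping with respect to $\{x_1, x_2\}$ and that $E$ is countable, and derive a contradiction from the Cantor--Bendixson analysis of the countable space $E$. Since $E$ is countable, separable and metrizable, it is scattered, so it has a well-defined Cantor--Bendixson rank, and every point $x \in E$ has a Cantor--Bendixson rank $\operatorname{rk}(x)$, with the property that there is a neighborhood $U$ of $x$ in which $x$ is the unique point of maximal rank. The key observation is that the telescoping homeomorphisms $f_i$ and $h_i$ fix $x_1$ and $x_2$ and carry clopen neighborhoods of these points to larger clopen neighborhoods; since homeomorphisms preserve Cantor--Bendixson rank of points and of subspaces, this will force a contradiction about ranks.

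First I would recall the standard facts about the Cantor--Bendixson derivative on a countable, compact-or-locally-compact totally disconnected metrizable space, in particular that for any clopen set $A \subset E$ the subspace $A$ has a maximal Cantor--Bendixson rank $\beta(A)$ achieved at finitely many points, and that $\beta$ is monotone under inclusion of clopen sets and invariant under homeomorphism. Next I would apply the telescoping hypothesis: fix small disjoint clopen neighborhoods $V_1, V_2$ of $x_1, x_2$ as in Definition~\ref{def:telescoping}, and take $W_i = V_i$. Then $f_1(V_1) \supset \Sigma - V_2$, so passing to end spaces, $f_1$ restricts to a homeomorphism from the clopen set (of ends) $V_1$ onto a clopen set containing $E - V_2$. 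Hence $\beta(V_1) = \beta(f_1(V_1)) \geq \beta(E - V_2)$. But $E = V_2 \sqcup (E - V_2)$ with $V_1 \subset E - V_2$, so $\beta(E - V_2) \geq \beta(V_1)$, giving $\beta(V_1) = \beta(E - V_2)$, and symmetrically $\beta(V_2) = \beta(E - V_1)$. Since $E = V_1 \sqcup (E - V_1)$ and ranks of a disjoint union are the max of the ranks, $\beta(E) = \max(\beta(V_1), \beta(E - V_1)) = \max(\beta(V_1), \beta(V_2))$; WLOG say $\beta(E) = \beta(V_1)$.

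Now I would iterate: the point of telescoping is that one may take $W_1 \subset V_1$ arbitrarily small. Choosing $W_1$ to be a small enough clopen neighborhood of $x_1$ that it omits some point of $V_1$ of rank $\beta(V_1) - 1$ or lower (possible since $x_1$ has a neighborhood basis, and if $\beta(V_1) \geq 1$ the rank is achieved at a point distinct from some others nearby), we still get $f_1(W_1) \supset \Sigma - V_2$, so $\beta(W_1) \geq \beta(E - V_2) = \beta(V_1) = \beta(E)$; but $W_1 \subsetneq V_1$ yields $\beta(W_1) \leq \beta(V_1) = \beta(E)$, forcing equality $\beta(W_1) = \beta(E)$ for \emph{every} clopen $W_1 \subset V_1$ containing $x_1$. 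This is the crux: it says $x_1$ is a point whose every clopen neighborhood has full Cantor--Bendixson rank $\beta(E)$, which (since the set of points of rank $\geq \beta(E)$ is discrete, hence finite-in-compact-pieces, and $x_1$ is in the closure of ranks accumulating to it) forces $\operatorname{rk}(x_1) = \beta(E)$ and moreover forces $x_1$ to be a limit of points of rank $\beta(E)$ — impossible, since points of rank $\beta(E)$ form a closed discrete (in fact locally finite) subset, so $x_1$ cannot be such a limit unless $\beta(E) = 0$, i.e. $E$ is discrete, hence $\Sigma$ has finitely many ends. But a telescoping surface needs $x_1 \ne x_2$ both non-isolated-enough to admit the homeomorphisms $h_i$ properly expanding their neighborhoods, which fails for isolated ends; this contradiction completes the proof.

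The main obstacle I anticipate is making precise the step ``$\beta(W_1) = \beta(E)$ for all small $W_1$ forces a contradiction'' — one must carefully use that the set $D_{\beta(E)}$ of points of maximal Cantor--Bendixson rank in a clopen piece of a countable space is finite (equivalently, the $\beta(E)$-th derived set is finite and has empty derived set), so that sufficiently small neighborhoods of $x_1$ contain at most one such point, and then argue that the homeomorphism $f_1$ would have to carry this at-most-one-point set onto a set containing $D_{\beta(E)}(E - V_2)$, which generically has more than one point once $E$ is infinite and $\beta(E)$ is set up correctly; alternatively, and perhaps more cleanly, one compares the \emph{number} of maximal-rank points rather than just the rank, since a homeomorphism preserves this count. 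I would likely phrase the final contradiction in terms of this count: $f_1(W_1)$ contains $E - V_2$, which contains all but finitely many (in fact, all but the $V_2$-part) of the maximal-rank points of $E$, while $W_1$, being a small neighborhood of a single end, contains at most one — a contradiction as soon as $E$ has at least two points of maximal Cantor--Bendixson rank, which one arranges by the disjointness of $V_1, V_2$ and the rank computations above.
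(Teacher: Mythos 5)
Your overall strategy --- Cantor--Bendixson analysis of the countable end space combined with rank-invariance of homeomorphisms --- is in the same spirit as the paper's proof, but the specific comparison you make does not produce a contradiction in the main case, and the step you yourself flag as the ``crux'' is exactly where the argument breaks. From $f_1(W_1) \supset E - V_2$ you correctly deduce that every clopen neighborhood $W_1$ of $x_1$ inside $V_1$ has maximal Cantor--Bendixson rank equal to $\beta(E)$. But this does not force $x_1$ to be a limit of points of rank $\beta(E)$: it is equally well explained by $x_1$ itself having rank $\beta(E)$, since $\beta(W_1)$ is the maximum of $\rk$ over all points of $W_1$, including $x_1$. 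Indeed, by Mazurkiewicz--Sierpinski $E \cong \omega^\alpha n + 1$, and the configuration one actually has to rule out is $n = 2$ with $x_1, x_2$ the two points of maximal rank $\alpha$, one in each $V_i$. In that configuration your fallback counting argument also gives nothing: $E - V_2$ contains exactly one point of maximal rank (namely $x_1$), and $W_1$ also contains exactly one (again $x_1$), so $f_1(W_1) \supset E - V_2$ is perfectly consistent both with rank and with the count of maximal-rank points. Your closing claim that one gets ``a contradiction as soon as $E$ has at least two points of maximal Cantor--Bendixson rank, which one arranges'' is not something you can arrange --- the two maximal points are separated by $V_1$ and $V_2$, so each of the sets you compare sees only one of them.

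The missing idea is to go one level down the Cantor--Bendixson hierarchy and to use the maps $h_i$ rather than the maps $f_i$; this is what the paper does. Since $h_1(W_1) = V_1$ and $h_1(V_2) = V_2$, the homeomorphism $h_1$ carries $E - (W_1 \cup V_2)$ onto $E - (V_1 \cup V_2)$. If $\alpha$ is a successor with predecessor $\eta$, points of rank $\eta$ accumulate only at $x_1$ and $x_2$, so $E - (V_1 \cup V_2)$ contains finitely many of them; choosing $W_1$ so that $V_1 - W_1$ contains exactly one more point of rank $\eta$ makes the two sides have different finite numbers of rank-$\eta$ points, which is the contradiction. If $\alpha$ is a limit ordinal, one instead takes $\eta$ to be the supremum of the ranks occurring in $E - (V_1 \cup V_2)$ and chooses $W_1$ so that $V_1 - W_1$ contains a point of rank exceeding $\eta$. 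Your argument as written never invokes the $h_i$ at all, and without them (or some equivalent device that pins the annulus $V_1 - W_1$ against the fixed middle region $E - (V_1 \cup V_2)$) the proof does not close.
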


\begin{proof}
Suppose that $\Sigma$ has countable end space $E$, so $E \cong \omega^\alpha n +1$ by \cite{MS}, and $E^G$ is some closed subset.  
Assume for contradiction that $E$ is telescoping with respect to some pair of ends $x_1$, $x_2$.   
For each point $x \in E$, there exists $\beta= \beta(x) \leq \alpha$ such that every sufficiently small neighborhood of $x$ is homeomorphic to $\omega^\beta +1$ (this ordinal $\beta(x)$ is simply the Cantor-Bendixon rank of $x$).   It follows from the definition of telescoping that every clopen neighborhood $U$ of $x_i$ disjoint from $x_{3-i}$ is homeomorphic to each other such neighborhood.  In particular, necessarily $n=2$ and $x_1$ and $x_2$ are points of equal and maximal rank $\alpha$.  Suppose as a first case that $\alpha$ is a sucessor ordinal and let $\eta$ denote its predecessor.  Then the set of points of rank $\eta$ accumulates only at $x_1$ and $x_2$.  If $V_i$ are any neighborhoods of $x_i$, then $\Sigma - (V_1 \cup V_2)$ contains finitely many points of rank $\eta$.  Thus, if $W_1 \subset V_1$ satisfies that $V_1 - W_1$ contains exactly one point of rank $\eta$, then no homeomorphism fixing $V_2$ can send $W_1$ to $V_1$, and the definition of telescoping fails. 

The case where $\alpha$ has limit type is similar.  Given neighborhoods $V_i$ of $x_i$, let $\eta < \alpha$ be the supremum of the ranks of points in $E - (V_1 \cup V_2)$.  Let $W_1 \subset V_1$ be a set such that $V_1 - W_1$ contains a point of rank $\alpha$ where $\eta < \alpha$.  Then no homeomorphism fixing $V_2$ can send $W_1$ to $V_1$, and the definition of telescoping fails. 
\end{proof} 

As we will see in the next sections, this limit type phenomenon is closely related to the failure of the mapping class group to be generated by a CB set.  However, to treat this also in the case where $E$ is uncountable, we will need to develop a more refined ordering on the space of ends.

\section{A partial order on the space of ends}  \label{sec:order}

Let $\Sigma$ be an infinite type surface with set of ends $(E, E^G)$. 
As in the previous section, we drop the notation $E^G$ and, by convention, all homeomorphisms of an end space $E$ of a surface $\Sigma$ 
are required to preserve $E^G$, so to say that $A \subset E$ 
is homeomorphic to $B \subset E$ means that there is a homeomorphism from 
$(A, A \cap E^G)$ to $(B, B \cap E^G)$.   
It follows from Richards' classification of surfaces in \cite[Theorem 1]{Richards} that  each homeomorphism of $(E, E^G)$ is 
induced by a homeomorphism of $\Sigma$.\footnote{While this is not in the statement of Theorem 1 of \cite{Richards}, the proof gives such a construction.  This was originally explained to the authors by J. Lanier following work of  S. Afton.}
Thus, we will pass freely between speaking of 
homeomorphisms of the end space and the underlying surface. 

Observe also that, if $U$ and $V$ are two disjoint, clopen subsets of $E$, then any homeomorphism $f$ from $U$ onto $V$ can be extended to a globally defined homeomorphism $\bar{f}$ of $E$ by declaring $\bar{f}$ to agree with $f^{-1}$ on $V$ and to pointwise fix the complement of $U \cup V$.  Thus, to say points $x$ and $y$ are locally homeomorphic is equivalent to the condition that there exists $\bar{f} \in \mcg(\Sigma)$ with $\bar{f}(x) = y$.  We will use this fact frequently.  In particular, we have the following equivalent rephrasing of Definition \ref{def:partial_order}. 

\begin{definition} 
Let $\lesseq$ be the binary relation on $E$ where $y \lesseq x$ if, for every
neighborhood $U$ of $x$, there exists a neighborhood $V$ of $y$ and 
$f \in \mcg(\Sigma)$ so that $f(V) \subset U$. 
\end{definition} 

Note that this relation is transitive.  

\begin{notation}
For $x, y \in E$ we say that $x \sim y$ or ``$x$ and $y$ are of the same type'' if $x \lesseq y$ and $y \lesseq x$, and write $E(x)$ for the set $\{y \mid y \sim x\}$. 
\end{notation}

It is easily verified that $\sim$ defines an equivalence relation: symmetry and reflexivity are immediate from the definition, while transitivity follows from the transitivity of $\lesseq$.  
From this it follows that the relation $\less$, defined by $x \less y$ if $x \lesseq y$ and $x \nsim y$, gives a partial order on the set of equivalence classes under $\sim$.   
Note that, for any homeomorphism $f$ of $\Sigma$, we have $x \succ y$ 
(respectively, $x \succcurlyeq y$) if and only of $f(x) \succ f(y)$
(respectively, $f(x) \succcurlyeq f(y)$). 

\begin{proposition} \label{prop:countable_nice}
If $E$ is countable, then $x \sim y$ if and only if $x$ and $y$ are locally homeomorphic.   If additionally $E^G = \emptyset$, then the Cantor-Bendixon rank gives an order isomorphism between equivalence classes of points and countable ordinals. 
\end{proposition}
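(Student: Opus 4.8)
The plan is to prove the two statements in turn. For the first equivalence, one direction is trivial: if $x$ and $y$ are locally homeomorphic then $x \sim y$, since a homeomorphism between neighborhoods always extends to a homeomorphism of $\Sigma$ (as recalled just before Definition \ref{def:partial_order}), so taking $V$ to be a neighborhood of $y$ homeomorphic to a prescribed neighborhood $U$ of $x$ witnesses both $x \lesseq y$ and $y \lesseq x$. For the converse, suppose $x \sim y$ with $E$ countable. The key tool is the Mazurkiewicz--Sierpi\'nski theorem \cite{MS}: a countable, compact, Hausdorff, totally disconnected space is determined up to homeomorphism by a pair $(\beta, m)$, namely it is homeomorphic to $\omega^\beta m + 1$, where $\beta$ is the Cantor--Bendixon rank. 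So a sufficiently small clopen neighborhood of a point $z$ in a countable end space is homeomorphic to $\omega^{\beta(z)}+1$, where $\beta(z)$ is the CB rank of $z$; hence two points are locally homeomorphic exactly when they have the same CB rank. It therefore suffices to show that $x \lesseq y$ forces $\beta(x) \le \beta(y)$. The point is that a clopen embedding cannot increase Cantor--Bendixon rank: if $f(V) \subset U$ with $V$ a neighborhood of $y$, then the CB rank of $y$ inside $V$ is at most the CB rank of its image inside $U$, which is at most $\beta$ (the rank of $U \cong \omega^\beta + 1$). Making $U$ small this yields $\beta(y) \le \beta(x)$, and by symmetry $\beta(x) = \beta(y)$, so $x$ and $y$ are locally homeomorphic. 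I would spell out the monotonicity of CB rank under clopen embeddings as the one genuinely technical point: iterating the Cantor--Bendixon derivative commutes with passing to a clopen subspace, so $D^{(\gamma)}(V) \subseteq D^{(\gamma)}(E) \cap V$ for all ordinals $\gamma$, and thus the least $\gamma$ killing a neighborhood of $y$ in $V$ is at most the corresponding ordinal for its image in $U$.

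For the second statement, assume additionally $E^G = \emptyset$, so homeomorphisms of end spaces are unconstrained (there is no marked subset to preserve). We have already identified, for each point $z$, its equivalence class with its CB rank $\beta(z)$, and the order $\less$ on classes with the usual order of ordinals restricted to the ranks occurring in $E$. Concretely: $x \less y$ iff $x \lesseq y$ and $x \nsim y$, which by the above is exactly $\beta(x) < \beta(y)$; and $x \lesseq y$ iff $\beta(x) \le \beta(y)$ — the forward direction is the rank-monotonicity just established, and the reverse direction holds because if $\beta(x) \le \beta(y)$ then $\omega^{\beta(x)}+1$ embeds as a clopen subset of $\omega^{\beta(y)}+1$ (take, e.g., the final $\omega^{\beta(x)}+1$ points in the order topology), so a small neighborhood of $y$ contains a clopen homeomorphic copy of a small neighborhood of $x$, witnessing $x \lesseq y$ via the extension trick. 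Thus the map sending the equivalence class of $z$ to $\beta(z)$ is a well-defined order isomorphism from $(E/{\sim}, \less)$ onto the set of CB ranks occurring in $E$, ordered as ordinals; and when $E \cong \omega^\alpha n + 1$ this set of ranks is exactly $\{\gamma : \gamma \le \alpha\}$ if we include $\gamma = 0$ for isolated points (or one can simply say it is an initial segment of the ordinals), giving the stated order isomorphism with countable ordinals.

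The main obstacle I anticipate is not conceptual but bookkeeping: making the Cantor--Bendixon rank argument clean, in particular being careful that ``$U \cong \omega^\beta + 1$'' only holds for \emph{sufficiently small} $U$ (a large clopen neighborhood of $x$ can have higher rank than $\beta(x)$, since it may contain other high-rank points), so all the neighborhoods in the definition of $\lesseq$ need to be shrunk appropriately before invoking Mazurkiewicz--Sierpi\'nski. One should also remark that $0$ is the CB rank of isolated points and handle the degenerate cases (e.g. $\alpha = 0$, when $E$ is finite) uniformly, though since $E$ is assumed the end space of an infinite type surface one has $\alpha \ge 1$. Everything else is a direct application of \cite{MS} together with the extension-of-homeomorphisms observation already in the text.
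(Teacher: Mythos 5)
Your overall strategy --- Mazurkiewicz--Sierpi\'nski plus monotonicity of Cantor--Bendixson rank under clopen embeddings --- is the same as the paper's, and your second paragraph (the order isomorphism when $E^G = \emptyset$) is correct. But there is a genuine gap in the converse of the first statement. You reduce everything to the claim that ``two points are locally homeomorphic exactly when they have the same CB rank,'' and then deduce local homeomorphy from $\beta(x)=\beta(y)$. Under the paper's standing convention, \emph{locally homeomorphic} means via a homeomorphism of pointed neighborhoods respecting $E^G$ (equivalently, there exists $\bar f \in \mcg(\Sigma)$ with $\bar f(x)=y$), and equal Cantor--Bendixson rank does not imply this when $E^G \neq \emptyset$. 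For instance, take $E$ to be two disjoint convergent sequences with limit points $a$ and $b$, and let $E^G$ be the closure of one of the sequences: then $\beta(a)=\beta(b)=1$, but $a$ and $b$ are neither equivalent nor locally homeomorphic in the required sense. So rank equality alone cannot close the argument.

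The repair is exactly what the paper does, and it costs one sentence: once you know $\beta(x)=\beta(y)$, return to the homeomorphism $f$ witnessing $x \lesseq y$, which respects $E^G$ by convention. For $U$ a sufficiently small neighborhood of $y$, the point $y$ is the unique point of $U$ of rank $\beta(y)$, and since clopen embeddings preserve rank, $f$ must send $x$ to $y$; hence $f$ itself exhibits $x$ and $y$ as locally homeomorphic, with no need to reconstruct a homeomorphism from the rank data. (A smaller quibble of the same flavor: in your ``trivial'' direction you should use that the local homeomorphism takes $x$ to $y$, so that it can be restricted to arbitrarily small neighborhoods of $y$; a single pair of homeomorphic neighborhoods does not by itself verify the ``for every neighborhood of $y$'' quantifier in the definition of $\lesseq$.)
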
 

\begin{proof}
Suppose that $E$ is countable, so every point $x \in E$ has a neighborhood $U_x$ homeomorphic (in this proof alone we break convention briefly and do not require homeomorphisms to respect $E^G$) to the set $\omega^{\alpha(x)} +1 $ where $\alpha(x)$ is the Cantor-Bendixon rank of $x$.  If $x \lesseq y$ and $y \lesseq x$ both hold, it follows that $\alpha(x) = \alpha(y)$, and so any homeomorphism from a  neighborhood of $x$ into a neighborhood of $y$ necessarily takes $x$ to $y$.  In particular, these points also have the same rank.  
\end{proof} 

\begin{remark}
We do not know if Proposition \ref{prop:countable_nice} holds in the uncountable case.  This appears to be an interesting question.  However, it is quite easy to construct large families of examples for which it does hold. 
\end{remark} 

\begin{remark}
Despite the above remark, there are indeed some marked differences between the behavior of $\less$ when $E$ is countable and when $E$ is uncountable. 
In the countable case, it follows from Proposition \ref{prop:countable_nice} that $x \less y$ if and only if $y$ is an accumulation point of $E(x)$, giving a convenient alternative description of $\less$.   In general, if $y$ is an accumulation point of $E(x)$, then $x \lesseq y$, as proved below.  However,  if $E$ is a Cantor set and $E^G = \emptyset$, for example,  then all points are equivalent and all are accumulation points of their equivalence class.  
\end{remark} 

We now prove some general results on the structure of $\lesseq$.  

\begin{lemma} \label{lem:closed}
For every $y \in E$, the set $\{x \mid x \moreq y\}$ is closed.  
\end{lemma}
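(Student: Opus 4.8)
The plan is to show that the complement $\{x \mid x \not\moreq y\}$ is open, i.e. that whenever $x \not\moreq y$ there is a neighborhood $N$ of $x$ with $x' \not\moreq y$ for all $x' \in N$. Unwinding the definition, $x \not\moreq y$ means there is some neighborhood $U$ of $x$ such that no neighborhood $V$ of $y$ maps into $U$ by a homeomorphism of $\Sigma$; equivalently, $U$ contains no homeomorphic copy of any neighborhood of $y$. The key observation is that this property is ``inherited downward'': if $x' \in U$, then $x'$ has arbitrarily small neighborhoods $U'$ contained in $U$ (since neighborhoods in the end space are clopen and $U$ is clopen, $U$ itself is a neighborhood of $x'$), and if some neighborhood $V$ of $y$ mapped homeomorphically into such a $U'$, it would in particular map into $U$, contradicting the choice of $U$. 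Hence $x' \not\moreq y$ for every $x' \in U$, so $U$ is the desired open neighborhood of $x$ contained in the complement.

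First I would spell out the reformulation of $x \moreq y$: using the rephrasing of Definition~\ref{def:partial_order} recorded just before Lemma~\ref{lem:closed}, $y \lesseq x$ iff for every neighborhood $U$ of $x$ there is a neighborhood $V$ of $y$ and $f \in \mcg(\Sigma)$ with $f(V) \subset U$. So $x \moreq y$ (meaning $y \lesseq x$) fails precisely when there exists a witness neighborhood $U \ni x$ admitting no such $(V,f)$. Next I would fix such a $U$ and, using the convention that neighborhoods in an end space are clopen, note that $U$ is simultaneously a clopen neighborhood of each of its points. Then for any $x' \in U$ and any neighborhood $U'$ of $x'$ with $U' \subset U$ — or even just taking $U' = U$ — the nonexistence of a copy of a neighborhood of $y$ inside $U$ forces the nonexistence inside $U'$, giving $y \not\lesseq x'$, i.e. $x' \not\moreq y$. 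Therefore $U \subset \{x \mid x \not\moreq y\}$, which shows this set is open and completes the proof.

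I do not anticipate a serious obstacle here; the only point requiring minor care is making sure the witnessing neighborhood $U$ of $x$ genuinely serves as a neighborhood of every nearby point, which is exactly where the clopen-neighborhood convention (stated earlier) does the work. One could phrase the argument contrapositively instead — if $x' \moreq y$ and $x'$ is close enough to $x$ then $x \moreq y$ — but the direct ``witness $U$ is open'' formulation above is cleaner and avoids any appeal to transitivity or to properties of $\less$ on equivalence classes.
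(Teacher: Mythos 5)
Your proof is correct and is essentially the paper's argument in contrapositive form: the paper takes $x_n \to x$ with $x_n \moreq y$ and notes that any neighborhood $U$ of $x$ is eventually a neighborhood of the $x_n$, hence contains a copy of a neighborhood of $y$, while you observe dually that a witness neighborhood $U$ for $x \not\moreq y$ serves as a witness for every point of $U$. The key observation, that a (clopen) neighborhood of a point is simultaneously a neighborhood of all its points, is the same in both versions.
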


\begin{proof}
Consider a a sequence $x_n \to x$ where $x_n \succcurlyeq y$ holds for all $n$.   
Let $U$ be a neighborhood 
of $x$.  Then, for large $n$, $U$ is also a neighborhood of $x_n$ and hence 
contains homeomorphic copies of some neighborhood of $y$. 
\end{proof}

\begin{proposition} \label{prop:maximal_element} 
The partial order $\succ$ has maximal elements. Furthermore, for every
maximal element $x$, the equivalence class $E(x)$ is either finite or a Cantor set. 
\end{proposition}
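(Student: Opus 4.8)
\textbf{Proof proposal for Proposition \ref{prop:maximal_element}.}

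The plan is to establish existence of maximal elements by a compactness argument applied to chains in the partial order, and then to analyze the equivalence class of a maximal element using a self-similarity-type dichotomy. For existence, I would take a chain $x_1 \moreq x_2 \moreq x_3 \moreq \cdots$ (or more generally a linearly ordered subfamily of equivalence classes) and produce an upper bound. The natural candidate is an accumulation point: since $E$ is compact (it is closed in a Cantor set — or one can work in a clopen neighborhood, which is compact), any infinite sequence of representatives $x_i$, one from each class in the chain, has a convergent subsequence $x_{i_k} \to x_\infty$. I claim $x_\infty \moreq x_{i}$ for every $i$ in the chain: given a neighborhood $U$ of $x_\infty$, for all large $k$ the point $x_{i_k}$ lies in $U$, so $U$ is a neighborhood of $x_{i_k}$, and since $x_{i_k} \moreq x_i$ (the chain is linearly ordered and cofinal), $U$ contains a homeomorphic copy of a neighborhood of $x_i$. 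This is essentially the argument already used in Lemma \ref{lem:closed}. Thus every chain has an upper bound, and Zorn's lemma gives a maximal element. (One should be slightly careful that the set of equivalence classes is a set and that Zorn applies; since equivalence classes are disjoint subsets of the separable space $E$, there are at most continuum-many, so this is fine.)

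For the structure of $E(x)$ when $x$ is maximal, I would argue as follows. If $E(x)$ is finite we are done, so suppose $E(x)$ is infinite. Then $E(x)$ has an accumulation point $z$ in $E$ (by compactness). By the remark preceding the proposition (or a direct argument as in Lemma \ref{lem:closed}), $x \lesseq z$; but $x$ is maximal, so $z \sim x$, i.e. $z \in E(x)$, and moreover $E(x)$ is closed by Lemma \ref{lem:closed} applied in both directions (the set $\{w : w \moreq x\} \cap \{w : w \lesseq x\}$ is an intersection of a closed set with... — here I need that $\{w : w \lesseq x\}$ is also closed when $x$ is maximal, which follows since for maximal $x$ this set equals $\{w : w \moreq x\}$'s complement's... ). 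Let me instead argue directly: for maximal $x$, if $w_n \to w$ with $w_n \in E(x)$, then $w \moreq x$ by the Lemma-\ref{lem:closed} argument, and by maximality $w \sim x$, so $w \in E(x)$; hence $E(x)$ is closed, thus compact, and has no isolated points (any point $y \in E(x)$ is an accumulation point of $E(x)$: one would use that $y \sim x$ together with the fact that $x$ is an accumulation point of $E(x)$, transporting a small copy of the accumulation behavior near $x$ to near $y$ via a homeomorphism — this uses the observation that homeomorphisms of $\Sigma$ act on $E$ realizing all local homeomorphisms, and that $\less$-classes and hence accumulation-of-a-class behavior are preserved). A nonempty, compact, totally disconnected, metrizable space with no isolated points is a Cantor set by Brouwer's characterization.

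The main obstacle I anticipate is the perfectness claim: showing that $E(x)$ has no isolated points when it is infinite and $x$ is maximal. The subtlety is that being an accumulation point of $E(x)$ is not obviously a "local" or homeomorphism-invariant property in the naive sense — one must leverage that any local homeomorphism between neighborhoods of two points of $E(x)$ extends to a global homeomorphism of $E$ (hence of $\Sigma$), which does preserve the set $E(x)$ setwise (since $\sim$ is preserved by homeomorphisms, as noted after the definition of $\less$). Given a point $y \in E(x)$, pick a tiny neighborhood $U$ of $y$ and a homeomorphism $\bar f$ of $E$ with $\bar f(y) = x$; then $x$ is an accumulation point of $E(x)$ (shown above), so points of $E(x)$ accumulate on $x$ inside $\bar f(U)$, and pulling back by $\bar f^{-1}$ gives points of $E(x)$ in $U$ accumulating on $y$. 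Hence $y$ is not isolated in $E(x)$, as needed. Once this is in hand, Brouwer's theorem finishes the proof; I would state and cite that characterization rather than reprove it.
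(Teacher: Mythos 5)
Your existence argument is essentially the paper's. The paper applies Zorn's lemma by intersecting the nested closed sets $\{w \mid w \succcurlyeq y\}$ for $y$ ranging over the chain; these are nonempty and closed by Lemma \ref{lem:closed}, so compactness of $E$ gives a common point, which is an upper bound. Your version via a convergent subsequence of representatives is the same idea, but as written it only treats chains of countable cofinality (you cannot list representatives of an uncountable chain as a sequence, nor guarantee that your convergent subsequence is cofinal); the finite intersection property applied to the nested closed sets handles arbitrary chains with no extra work, so you should phrase it that way.

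The genuine gap is in the perfectness step. To show $y \in E(x)$ is not isolated you invoke a homeomorphism $\bar f$ of $E$ with $\bar f(y) = x$, justified by ``$y \sim x$ plus the fact that local homeomorphisms extend globally.'' But $y \sim x$ only says that every neighborhood of $y$ contains a homeomorphic copy of \emph{some} neighborhood of $x$ and vice versa; it does not produce a homeomorphism of a neighborhood of $y$ onto a neighborhood of $x$ carrying $y$ to $x$. Whether $x \sim y$ forces $x$ and $y$ to be locally homeomorphic is precisely what the authors flag as unknown in the remark following Proposition \ref{prop:countable_nice}: they prove it when $E$ is countable, and for points admitting stable neighborhoods via the back-and-forth argument of Lemma \ref{lem:stable_nbhd}, but not in general. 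The paper's proof sidesteps this entirely. Having found one accumulation point $z$ of the infinite set $E(x)$, with $z \sim x$ by maximality, it uses only that $z \preccurlyeq w$ for every $w \in E(x)$: any neighborhood $V$ of $w$ then contains a homeomorphic copy $\iota(N)$ of some small neighborhood $N$ of $z$; extending $\iota$ to a global homeomorphism of $E$ (which preserves $\sim$-classes) and noting that $N$ meets $E(x)$ in an infinite set shows that $V$ does too. Only an unpointed embedding of a neighborhood of $z$ is needed, and that is available directly from the definition of $\preccurlyeq$. Repair your transport step along these lines; the remainder of your argument (closedness of $E(x)$ from maximality together with Lemma \ref{lem:closed}, then Brouwer's characterization of the Cantor set) is correct and matches what the paper leaves implicit.
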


\begin{proof}
To show that $E$ has maximal elements, by Zorn's lemma, it suffices to show 
that every chain has an upper bound. Suppose that $\mathcal{C}$ is a totally ordered 
chain.  Consider the family of sets $\{ x  \mid x \succcurlyeq y \}$, for $y \in \mathcal{C}$. 
Then, by Lemma \ref{lem:closed}, this is a family of nested, closed, non-empty 
sets and hence 
\[
\mathcal{C}_M = \bigcap_{y \in \mathcal{C}} \big\{ x  \mid x \succcurlyeq y \big\}
\]
is non-empty.  By definition, any point of this intersection is an upper bound for 
$\mathcal{C}$.

To see the second assertion, consider a maximal element $x$. If $E(x)$ is an infinite 
set, then it has an accumulation point, say $z$.  Then $z \moreq x$, but since $x$ is maximal, we have 
$z \sim x$.  Since any neighborhood of any other point in $E(x)$ contains a homeomorphic copy of a neighborhood of $z$, it follows that all points of $E(x)$ are accumulation points and hence $E(x)$
is a Cantor set. 
\end{proof}

Going forward, we let $\mathcal{M}= \mathcal{M}(E)$ denote the set of maximal elements for $\succ$. 

\subsection{Characterizing self-similar end sets}
The remainder of this section consists of a detailed study of the behavior of end sets using the partial order.  We will develop a number of tools for the classification of locally CB and CB generated 
mapping class groups that will be carried out in the next sections. 

\begin{proposition} \label{prop:M_self_sim}
Let $\Sigma$ be a surface with end space $E$ and no nondisplaceable subsurfaces.  
Then $E$ is self-similar if and only if $\calM$ is either a singleton or a Cantor set of points of the same type  
\end{proposition}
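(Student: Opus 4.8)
The plan is to prove both implications directly, using the characterizations of maximal elements from Proposition \ref{prop:maximal_element} together with the defining property of self-similarity. I will rely throughout on the convention that neighborhoods in $E$ are clopen, and on the fact (Richards' classification) that every homeomorphism between disjoint clopen subsets of $E$ extends to a homeomorphism of $\Sigma$, so membership of the partial order and homeomorphism types can be checked purely combinatorially on $E$.

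\textbf{The ``only if'' direction.} Assume $E$ is self-similar. First I would show that if $x, y \in \calM$ then $x \sim y$. Indeed, since $x$ and $y$ are not equivalent would force one, say $x \less y$, but $y \in \calM$ only says $y$ is maximal, not that everything below it is equivalent — so I need to argue more carefully. Pick disjoint clopen neighborhoods $U \ni x$, $V \ni y$ and write $E = U \sqcup V \sqcup (E \setminus (U \cup V))$; by self-similarity there is a clopen $D$ in one of these three pieces with $(D, D \cap E^G) \cong (E, E^G)$. Since $D \cong E$ contains (a copy of) every point type present in $E$, and in particular a point $z$ with $z$ equivalent to a maximal element of $E$; but any point of $D$ satisfies $w \lesseq x$ if $D \subset U$ (and $w \lesseq y$ if $D \subset V$, etc.), because $D$ embeds into a neighborhood of that point. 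Chasing this, the maximal type realized inside $D$ must coincide with the type of whichever of $x$, $y$ (or a maximal end in $E \setminus (U\cup V)$) is "above" $D$; using maximality of $x$ and $y$ and that $E \setminus (U \cup V)$ cannot host a strictly larger type than $\calM$, one concludes all maximal ends are equivalent. Once $\calM$ consists of a single $\sim$-class, Proposition \ref{prop:maximal_element} tells us this class is finite or a Cantor set; it remains to rule out "finite with at least two points." If $\calM = \{x_1, \dots, x_k\}$ with $k \geq 2$, separate the $x_i$ into distinct clopen sets $E = E_1 \sqcup \cdots \sqcup E_k$; self-similarity produces $D \subset E_i$ with $D \cong E$, but then $D$ contains a copy of $E$ hence contains copies of all $k$ maximal ends, while all ends of $D$ lie $\lesseq x_i$ — and since the $x_j$ are maximal and pairwise non-$\lesseq$ (being distinct points of the same maximal type, a copy of $x_j$ in $D \subset E_i$ would be a point of $D$ equivalent to $x_j$, forcing $x_i \sim$ that point $\sim x_j$, fine) — the real obstruction is that $D$, being a neighborhood inside $E_i$, has a unique maximal end up to the accumulation structure, contradicting $D \cong E$ which has $k \geq 2$ maximal ends unless these ends accumulate. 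This is exactly where the hypothesis "no nondisplaceable subsurfaces" enters: by Example \ref{ex:easy_nondisplace}, a finite invariant end set of size $\geq 3$ gives a nondisplaceable subsurface, so $|\calM| \leq 2$ with $\calM$ not a Cantor set would still need handling; I expect the clean statement is that $\calM$ finite of size $\geq 2$ is incompatible with self-similarity by a direct counting argument on how many maximal ends a clopen subset can have.

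\textbf{The ``if'' direction.} Suppose $\calM$ is a singleton $\{x_0\}$ or a Cantor set of points all of the same type. Given a decomposition $E = E_1 \sqcup \cdots \sqcup E_n$ into clopen sets, some $E_i$ contains a point of $\calM$ (since $\calM \neq \emptyset$ by Proposition \ref{prop:maximal_element}); I want to produce a clopen $D \subset E_i$ homeomorphic to $E$. The strategy is to show that a suitable clopen neighborhood of a maximal end, chosen inside $E_i$, already realizes every end type of $E$ with the correct accumulation pattern, hence is homeomorphic to $E$. The key sub-claim is: for $x \in \calM$, every neighborhood $U$ of $x$ in $E$ satisfies $(U, U \cap E^G) \cong (E, E^G)$. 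Here I would invoke the machinery developed in Section \ref{sec:order} — since $x$ is maximal, $x \moreq y$ for every $y \in E$, so every $y$ has a copy of a neighborhood inside $U$; combined with the fact that $x$ accumulates its own $\sim$-class (in the Cantor case) or is the unique top point (in the singleton case), one builds a homeomorphism $U \to E$ by a back-and-forth argument matching clopen pieces according to the finitely many "profiles" of end types, using that $E$ and $U$ have the same set of realized types and the same top structure. The singleton case is the model "$E$ has one special point any neighborhood of which contains a copy of $E$"; the Cantor case generalizes this.

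\textbf{Main obstacle.} The hard part will be the back-and-forth / matching argument showing that neighborhoods of maximal ends are homeomorphic to $E$ — i.e., upgrading the purely order-theoretic statement "$U$ contains copies of neighborhoods of all points" to the topological statement "$U \cong E$." This requires knowing that the partial order, together with the accumulation structure of each $\sim$-class, is a complete enough invariant to reconstruct the homeomorphism type, at least for neighborhoods of maximal points; in the uncountable case this is delicate because (as the Remarks note) $\less$ need not be as well-behaved as in the countable case. I expect the proof handles this by reducing to the finitely many clopen "pieces" cut out by a large enough finite-type $K \subset \Sigma$ and arguing inductively on complexity, invoking self-similarity-type absorption at each step.
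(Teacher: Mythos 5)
There is a genuine gap in your ``if'' direction, and it stems from never using the hypothesis that $\Sigma$ has no nondisplaceable subsurfaces. Your plan rests on the sub-claim that every clopen neighborhood $U$ of a maximal end is homeomorphic to $(E,E^G)$, to be proved by a back-and-forth argument from the order-theoretic fact that $U$ contains copies of neighborhoods of every point of $E$. But the relation $\lesseq$ records only which local types embed where; it controls neither multiplicities nor accumulation patterns, so ``$U$ realizes every type occurring in $E$'' does not yield $U\cong E$. You acknowledge this in your ``main obstacle'' paragraph, and your proposed fix (``invoking self-similarity-type absorption at each step'') is circular, since self-similarity is exactly what is being proved. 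The paper's proof avoids the issue entirely, and this is where displaceability enters: given $E=E_1\sqcup E_2$ with a maximal end $x$ in $E_1$, one covers $E_2$ by finitely many clopen sets each having a copy in $E_1$, builds a three-holed sphere (or, when $\calM$ is a Cantor set, a sequence of many-holed spheres) separating the relevant pieces, and uses the homeomorphism displacing it --- which must preserve the invariant set $\calM$ --- to produce a homeomorphic copy of $E_2$ inside $E_1$ (Lemma \ref{lem:move_into}). Iterating produces infinitely many disjoint copies of $E_2$ in $E_1$ Hausdorff-converging to a point, and the ``shift map'' of Observation \ref{obs:shift} then absorbs them, giving $E_1\cong E_1\sqcup E_2=E$. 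No reconstruction of the homeomorphism type of an end space from its order data is ever needed, which is fortunate because no such reconstruction is available in the uncountable case.

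Your ``only if'' direction contains the right idea --- a clopen $D\cong E$ inside a piece of a partition separating maximal points would have to contain points equivalent to each maximal end of $E$, which is impossible by a direct count when $\calM$ is finite of cardinality at least two or contains two inequivalent types --- but you should carry out that count rather than defer it with ``I expect.'' Note also that, as the paper observes, this direction requires no hypothesis on displaceability at all; your remark that the nondisplaceability hypothesis ``enters exactly here'' misplaces it, since it is the converse that cannot be proved without it.
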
 

One direction is easy and does not require the assumption that $\Sigma$ has no nondisplaceable subsurfaces: if $\calM$ contains two distinct maximal types $x_1$ and $x_2$, then a partition $E = E_1 \sqcup E_2$ where $E(x_i) \subset E_i$ fails the condition of self-similarity.  Similarly, if $\calM$ is a finite set of cardinality at least two, then any partition separating points of $\calM$ similarly fails the condition.   By Proposition \ref{prop:maximal_element}, the only remaining possibility is that $\calM$ is a Cantor set of points of the same type.  This proves the first direction.    The converse is more involved, so we treat the singleton and Cantor set case separately.  We will need the following easy observation.  

\begin{observation}[``Shift maps''] \label{obs:shift}
Suppose that $U_1, U_2, \ldots$ are disjoint, pairwise homeomorphic clopen sets which Hausdorff converge to a point.  Then $\bigcup_{i=1}^\infty U_i$ is homeomorphic to $\bigcup_{i=2}^\infty U_i$
\end{observation}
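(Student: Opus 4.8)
The plan is to build the evident ``shift'' map directly and to check that it is a homeomorphism, which will be easy precisely because each $U_i$ is clopen. First I would fix, for every $i\geq 1$, a homeomorphism $\psi_i\from U_i \to U_{i+1}$ respecting $E^G$; these exist since the $U_i$ are pairwise homeomorphic (and, by our standing convention, homeomorphisms of subsets of $E$ preserve $E^G$). Writing $X=\bigcup_{i\geq 1}U_i$ and $Y=\bigcup_{i\geq 2}U_i$, both with the subspace topology from $E$, I would define $\Phi\from X\to Y$ by $\Phi|_{U_i}=\psi_i$. Since the sets $\{U_i\}_{i\geq 1}$ partition $X$ and the sets $\{U_{i+1}\}_{i\geq 1}=\{U_j\}_{j\geq 2}$ partition $Y$, the map $\Phi$ is a well-defined bijection, and it carries $E^G\cap X$ onto $E^G\cap Y$ because each $\psi_i$ does.

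The remaining step is to see that $\Phi$ is bicontinuous. The key observation is that each $U_i$, being clopen in $E$, is clopen in $X$, so $\{U_i\}_{i\geq 1}$ is an open cover of $X$ on each member of which $\Phi$ agrees with the continuous map $\psi_i$ composed with the inclusion $U_{i+1}\hookrightarrow Y$; a map that is continuous on every piece of an open cover is continuous, so $\Phi$ is continuous. Symmetrically, $\{U_j\}_{j\geq 2}$ is an open cover of $Y$, and $\Phi^{-1}$ restricts on $U_j$ to the continuous map $\psi_{j-1}^{-1}$, so $\Phi^{-1}$ is continuous as well. Hence $\Phi$ is the desired homeomorphism $\bigcup_{i\geq 1}U_i\cong\bigcup_{i\geq 2}U_i$.

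I do not expect a genuine obstacle here. The Hausdorff-convergence hypothesis is used only to pin down the geometric picture (in particular the limit point lies in none of the $U_i$, since an open $U_i$ containing it would be a neighborhood meeting cofinitely many of the disjoint $U_j$), but it plays no role in the argument above, which goes through for any sequence of disjoint clopen sets. The one small point to keep in mind is that $X$ and $Y$ need not be closed in $E$; this is harmless, since we only ever use their subspace topologies and never claim that $\Phi$ extends over the closure.
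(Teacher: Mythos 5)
Your construction is the same as the paper's: fix homeomorphisms $\psi_i\colon U_i\to U_{i+1}$ and glue. The continuity argument via the open cover $\{U_i\}$ of the union is correct, and the statement as literally written is proved.

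One point of divergence is worth flagging. You assert that the Hausdorff-convergence hypothesis plays no role; for the literal statement (a homeomorphism between the two unions in their subspace topologies) you are right, and your argument is in fact more general. But the paper's one-line proof cites Hausdorff convergence deliberately, because everywhere the observation is used (Lemmas \ref{lem:M_singleton}, \ref{lem:M_Cantor}, \ref{lem:consume}) what is actually needed is the \emph{extension} of the shift by the identity on the complement of $\bigcup_i U_i$ to a homeomorphism of the ambient end space (e.g.\ $E\to E_1$, or $V_1\cup U_0\to V_1$). That extension is continuous at the limit point $x$ precisely because the $U_i$ Hausdorff converge to $x$: a sequence $y_k\in U_{n_k}$ with $n_k\to\infty$ converges to $x$, and so does its image in $U_{n_k+1}$. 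Without the convergence hypothesis the extended map can fail to be continuous, so the hypothesis is not decorative --- it is what makes the observation usable downstream. Your proof is fine as a proof of the stated claim; just be aware that the slightly stronger ``extendable'' version is the one the paper actually invokes, and there the hypothesis is essential.
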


\begin{proof}
For each $i$, fix a homeomorphism $f_i: U_i \to U_{i+1}$.  Since the $U_i$ Hausdorff converge to a point, the union of these defines a global homeomorphism $\bigcup_{i=1}^\infty U_i \to \bigcup_{i=2}^\infty U_i$.
\end{proof}

We will also use the following alternative characterization of self-similarity.  
\begin{lemma} \label{lem:two_sets_suffice}
Self-similarity is equivalent to the following condition:  If $E = E_1 \sqcup E_2$ is a decomposition into clopen sets, then some $E_i$ contains a clopen set homeomorphic to $E$.  
\end{lemma}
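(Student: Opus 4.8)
The plan is to prove the equivalence in Lemma \ref{lem:two_sets_suffice} by showing that the two-set condition implies the full self-similarity property, the converse being trivial. So suppose $E$ has the property that whenever $E = E_1 \sqcup E_2$ with $E_1, E_2$ clopen, one of the $E_i$ contains a clopen copy of $E$. We want to upgrade this to an arbitrary finite partition $E = E_1 \sqcup \cdots \sqcup E_n$. The natural approach is induction on $n$, reducing an $n$-part partition to a $2$-part one. First I would handle the base case $n=2$ (which is the hypothesis) and then, given an $n$-part partition with $n \geq 3$, group the pieces as $E_1 \sqcup (E_2 \sqcup \cdots \sqcup E_n)$, a $2$-part clopen partition. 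By hypothesis, one of these two sets contains a clopen copy $E'$ of $E$. If the copy sits inside $E_1$, we are done. If it sits inside $E_2 \sqcup \cdots \sqcup E_n$, then $E'$ itself is partitioned into the clopen pieces $E' \cap E_2, \ldots, E' \cap E_n$, which is an $(n-1)$-part partition of a space homeomorphic to $E$.

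The subtle point is that the inductive hypothesis as stated applies to $E$, not a priori to a homeomorphic copy $E'$. But self-similarity (and the two-set condition) is clearly a topological property: if $E' \cong E$ and $E'$ satisfies the hypothesis of the lemma (which it does, since the hypothesis is preserved under homeomorphism), then by induction $E'$ satisfies the full self-similarity condition, so one of $E' \cap E_i$ (for $i \geq 2$) contains a clopen copy of $E' \cong E$. Such a copy is clopen in $E' \cap E_i$, hence clopen in $E_i$, hence clopen in $E$ (using that $E'$ is clopen and $E_i$ is clopen). This yields the desired $D \subseteq E_i$ with $D \cong E$. One should also note that under our standing convention all homeomorphisms respect the ends accumulated by genus, so ``copy of $E$'' means a copy of the pair $(E, E^G)$, and $E' \cap E_i$ inherits the correct genus-accumulation structure since $E'$ does.

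I do not expect any real obstacle here — the argument is a routine induction once one observes that the hypothesis is inherited by clopen copies of $E$. The only thing requiring a moment's care is the bookkeeping with clopen sets: verifying that a clopen subset of a clopen subset of a clopen subset of $E$ is again clopen in $E$, which is immediate, and that the homeomorphism $E' \cong E$ can be taken to respect $E^G$, which is built into our convention. I would write this up in a few lines: state the induction, do the grouping, invoke the $n=2$ case to land inside either $E_1$ (done) or the complementary block, and then apply the inductive hypothesis to that block.
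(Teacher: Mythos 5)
Your proposal is correct and follows essentially the same route as the paper: group the partition as $E_1 \sqcup (E_2 \sqcup \cdots \sqcup E_n)$, apply the two-set hypothesis, and if the clopen copy $E'$ of $E$ lands in the second block, induct on the $(n-1)$-part partition $\{E' \cap E_i\}_{i \geq 2}$ of $E'$. The paper phrases this as ``iterating this argument'' rather than a formal induction, and your explicit remark that the two-set condition is inherited by clopen homeomorphic copies of $E$ (and that clopen-in-clopen is clopen) is exactly the point the paper leaves implicit.
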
 

\begin{proof}
Self-similarly implies the condition by taking $n=2$.  For the converse, suppose the condition holds and let $E = E_1 \sqcup E_2 \sqcup \ldots \sqcup E_n$ be a decomposition into clopen sets.   Grouping these as $E_1 \sqcup (E_2 \sqcup \ldots \sqcup E_n)$, by assumption one of these subsets contains a clopen set $E'$ homeomorphic to $E$.  If it is $E_1$, we are done.  Else, the sets 
$E' \cap E_i$ $(i = 2, 3, ... n)$ form a decomposition of $E' \cong E$ into clopen sets; so by the same reasoning either $E_2 \cap E'$ contains a clopen set homeomorphic to $E$, or the union of the sets $E' \cap E_i$, for $i \geq 3$ does.   Iterating this argument eventually produces a set homeomorphic to $E$ in one of the $E_i$.  
\end{proof} 

The next three lemmas give the proof of Proposition \ref{prop:M_self_sim}.  

\begin{lemma} \label{lem:move_into}
Suppose $\Sigma$ has no nondisplaceable subsurfaces and $\calM$ is a singleton.  Then for any decomposition 
 $E = A \sqcup B$, if $\calM \subset A$ then $A$ contains a homeomorphic copy of $B$.  
\end{lemma}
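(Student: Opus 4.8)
The plan is to use the hypothesis that $\Sigma$ has no nondisplaceable subsurfaces to ``push'' a copy of $B$ into $A$ via a homeomorphism of $\Sigma$, exploiting that the unique maximal type of $E$ lives in $A$. First I would observe that since $\calM = \{[x]\}$ is a singleton with $\calM \subset A$, every point of $B$ has type strictly below $x$, and $A$ must accumulate onto a point of type $x$. The key structural input is the following: because there is no nondisplaceable subsurface, the end space cannot have a finite invariant set of cardinality $\geq 3$, nor the other configurations of Example~\ref{ex:easy_nondisplace}; in particular, for any clopen decomposition, points of the maximal type can be ``shuffled'' freely by homeomorphisms of $\Sigma$. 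Concretely, I would argue that there exists a homeomorphism $f$ of $\Sigma$ with $f(B) \subset A$. To build $f$, fix a point $x_0 \in A$ of maximal type; since $x_0$ is maximal, every neighborhood of $x_0$ contains a homeomorphic copy of a neighborhood of any given end, and in particular (using that $x_0$ is an accumulation point of $\calM$ when $\calM$ is infinite, or handling the isolated/finite case by hand) contains a clopen copy $B'$ of $B$ inside $A$.

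The main work is to upgrade ``$A$ contains a clopen copy $B'$ of $B$'' to ``there is a global homeomorphism of $\Sigma$ carrying $B$ onto a subset of $A$,'' which requires matching genus data as well. Here I would invoke Richards' classification as used repeatedly in the paper: having produced disjoint clopen sets $B \subset B$ (the original, sitting inside $E$) and $B' \subset A$ with $(B', B'\cap E^G) \cong (B, B \cap E^G)$, and checking that we may choose $B'$ so that the complementary region of $B'$ in $\Sigma$ carries the same (zero or infinite) genus as the complementary region of $B$, the classification of surfaces yields a homeomorphism $f_{BB'}$ of $\Sigma$ with $f_{BB'}(B) = B'$, hence $f_{BB'}(B) \subset A$. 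Extending $f_{BB'}$ by the identity outside a neighborhood of $B \cup B'$ (as in Lemma~\ref{lem:find_R}) makes this honest. Since $f_{BB'}$ is a homeomorphism of $\Sigma$, the clopen set $f_{BB'}^{-1}(A) \supset B$, and $A \cong f_{BB'}(f_{BB'}^{-1}(A))$; more directly, $f_{BB'}(B)$ is a clopen copy of $B$ inside $A$, which is exactly the assertion.

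The step I expect to be the main obstacle is showing that the copy $B'$ of $B$ can be found \emph{inside $A$}, not just somewhere in $E$. If $\calM$ is infinite (a Cantor set of points of type $x$), then $A$ contains points of type $x$, and any such point $x_0$ is an accumulation point of $\calM \cap A$, so small neighborhoods of $x_0$ in $A$ are ``as complicated as'' all of $E$ near a maximal point; since every end of $B$ has type $\preccurlyeq x$, a neighborhood of $x_0$ contains a clopen copy of $B$ — this is where the no-nondisplaceable-subsurface hypothesis is used, to rule out the obstruction of Example~\ref{ex:easy_nondisplace} that would prevent absorbing $B$. If $\calM$ is a single isolated point $x_0 \in A$, one argues instead that, because there is no finite invariant set of ends of size $\geq 3$ separating $B$ from anything, $B$ has a clopen copy in any neighborhood of $x_0$; the finiteness of $\calM$ plus no nondisplaceability forces $x_0$ to dominate $B$ locally. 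I would organize the genus bookkeeping exactly as in the proof of Lemma~\ref{lem:find_R}: shrink the chosen neighborhood of $x_0$ if needed to excise excess genus, so that both the copy and its complement have the prescribed genus, then appeal to Richards.
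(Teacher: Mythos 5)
There is a genuine gap at the central step of your argument. You correctly reduce to the claim that $A$ contains a clopen copy of $B$, and you correctly identify this as ``the main obstacle,'' but your justification for it is circular. Maximality of $x$ only gives: for each $y \in B$, \emph{some} neighborhood of $y$ has a homeomorphic copy inside $A$. It does not give a copy of any \emph{prescribed} neighborhood of $y$, and certainly not a copy of all of $B$ at once, since $B$ is a union of neighborhoods of many inequivalent points. Passing from ``each point of $B$ has a small neighborhood with a copy in $A$'' to ``$B$ itself has a copy in $A$'' is exactly the content of the lemma, and your proposal never supplies a mechanism for it: you say the no-nondisplaceable-subsurfaces hypothesis ``rules out the obstruction of Example~\ref{ex:easy_nondisplace} that would prevent absorbing $B$,'' but absence of that particular obstruction is a necessary condition, not a proof, and you never actually displace any subsurface.

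The paper's argument makes the mechanism concrete. By compactness, write $B = U_1 \sqcup \cdots \sqcup U_k$ with each $U_i$ admitting a copy $V_i$ in $A$ (note $x \notin \bigcup_i V_i$, since $\calM = \{x\}$ forces every point of $B$ to be strictly below $x$). Take a three-holed sphere $S$ separating $\{x\}$, $\bigcup_i V_i$, and $B$ into distinct complementary regions, and let $f$ displace it. Because $x$ is the \emph{unique} maximal point it is fixed by $f$, and tracking which complementary region $f(S)$ lands in shows that, after possibly replacing $f$ by $f^{-1}$, either $f(B) \subset A$ outright, or $A - \bigcup_i V_i$ contains a copy of $A$; iterating the latter produces $k$ disjoint copies of $\bigcup_i V_i$ in $A$, and extracting the copy of $U_j$ from the $j$-th one assembles a copy of $B$. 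Two further points: your case analysis on ``$\calM$ a Cantor set'' is outside the scope of this lemma (that case is Lemma~\ref{lem:M_Cantor}; here $\calM$ is literally one point of $E$, which is why $f(x)=x$ is available), and the Richards/genus bookkeeping is unnecessary, since the conclusion is a statement about clopen subsets of the end space, not about subsurfaces of $\Sigma$.
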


\begin{proof}
Let $E = A \sqcup B$ be a decomposition of $E$ into clopen sets with $\calM = \{x\} \subset A$.    Since $A$ is a neighborhood of $x$, every point $y \in B$ has a neighborhood homeomorphic to a subset of $A$.   Since $B$ is compact, finitely many of these cover, say $U_1, U_2, \ldots, U_k$.  Without loss of generality, we may assume all the $U_i$ are disjoint and their union is $B$.  Let $V_i$ be a homeomorphic copy of $U_i$ in $A$, note that $x \notin \bigcup_i V_i$.    Let $S$ be a three-holed sphere subsurface so that the disjoint sets $\{x\}$, $\bigcup_i V_i$ and $B$ all lie in different connected components of the complement of $S$.  
Let $f$ be a homeomorphism displacing $S$.  Since $f(x) = x$, up to replacing $f$ with its inverse, we have either $f(B) \subset A$, in which case we are done, or that $A$ contains a homeomorphic copy of $A \sqcup \left( \bigcup_i V_i \right)$.  In this latter case, by iterating $f$ we can find $k$ disjoint copies of $\bigcup_i V_i$ inside $A$.  Since each contains a copy of $U_i$, this gives a subset of $A$ homeomorphic to $\sqcup U_i = B$.  
\end{proof} 

As a consequence, we can prove the first case of Proposition \ref{prop:M_self_sim}
\begin{lemma} \label{lem:M_singleton}
Suppose $\Sigma$ has no nondisplaceable subsurfaces and $\calM$ is a singleton.  Then $E$ is self-similar.  
\end{lemma}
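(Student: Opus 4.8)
The plan is to combine Lemma~\ref{lem:move_into} with Lemma~\ref{lem:two_sets_suffice} to verify the definition of self-similarity directly. Since $\calM$ is a singleton, say $\calM=\{x\}$, the crucial point is that $x$ behaves like a ``universal'' end: any neighborhood of $x$ already contains a homeomorphic copy of $E$. First I would take an arbitrary decomposition $E = E_1 \sqcup E_2$ into clopen sets. Exactly one of the $E_i$ contains $x$ (they are disjoint), so without loss of generality $x \in E_1$. Applying Lemma~\ref{lem:move_into} with $A = E_1$ and $B = E_2$, we obtain a homeomorphic copy $E_2'$ of $E_2$ inside $E_1$, pairwise disjoint from nothing in particular but contained in $E_1$; we may further arrange (again using that $x\notin E_2$, hence $x\notin E_2'$) that $E_2'$ misses $x$.

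Next I would argue that $E_1$ contains a clopen set homeomorphic to all of $E$. The set $E_1 \sqcup E_2'$ (disjoint union taken inside $E$ by viewing $E_2'\subset E_1$... more carefully: consider the clopen set $E_1$ together with the clopen copy $E_2' \subset E_1$) — the cleanest formulation is that $E$ is homeomorphic to $E_1 \sqcup E_2$, and since $E_2 \cong E_2'$ with $E_2'$ a clopen subset of $E_1$, we want a clopen subset of $E_1$ homeomorphic to $E_1 \sqcup E_2'$. Here I would invoke Observation~\ref{obs:shift} (``shift maps''): using Lemma~\ref{lem:move_into} iteratively, or by a shift-map argument, one produces inside $E_1$ a clopen set homeomorphic to $E_1\sqcup E_2$, hence to $E$. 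Concretely, since $x\in E_1$ is the unique maximal end and $E_1$ is a neighborhood of it, there is a neighborhood basis of $x$ inside $E_1$; picking a shrinking sequence of such neighborhoods $E_1 = N_0 \supsetneq N_1 \supsetneq \cdots$ Hausdorff converging to $x$, the ``annuli'' $N_{j}\setminus N_{j+1}$ together with a copy of $E_2$ placed in each can be absorbed by a shift map, yielding $E_1 \cong E_1 \sqcup E_2 \cong E$. This gives the condition of Lemma~\ref{lem:two_sets_suffice}, so $E$ is self-similar.

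The main obstacle I anticipate is the bookkeeping in the second paragraph: making the ``absorb a copy of $E_2$ into $E_1$'' step precise while respecting the convention that all homeomorphisms preserve $E^G$, and ensuring the copies one places are genuinely disjoint and Hausdorff-converge to $x$ so that Observation~\ref{obs:shift} applies. One must check that the neighborhoods of $x$ used really form a basis (this is automatic in a totally disconnected metrizable space) and that the copy $E_2'$ furnished by Lemma~\ref{lem:move_into} can be taken inside an arbitrarily small neighborhood of $x$ — which follows by applying Lemma~\ref{lem:move_into} not to the global decomposition but to $E = N_j \sqcup (E\setminus N_j)$ after first noting $B = E_2$ embeds in any neighborhood of $x$. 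A cleaner route, which I would probably adopt to avoid the shift-map gymnastics, is: apply Lemma~\ref{lem:move_into} with $A = E_1$, $B = E_2$ to get $E_2' \subset E_1$; then the clopen set $E_1$ decomposes as $E_1 = E_2' \sqcup (E_1 \setminus E_2')$, and $E_1 \setminus E_2'$ still contains $x$, so $E_1 \setminus E_2'$ is itself a clopen neighborhood of $\calM$ in $E$; reapplying Lemma~\ref{lem:move_into} to the decomposition $E = (E_1\setminus E_2') \sqcup (E_2' \sqcup E_2)$... — but $E_2'\sqcup E_2 \cong E_2 \sqcup E_2$ need not be $\cong E_2$, so one genuinely needs the shift. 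Hence the shift-map argument of Observation~\ref{obs:shift} really is the heart of the matter, and I expect that to be where the care is needed; everything else is immediate from Lemma~\ref{lem:move_into} and Lemma~\ref{lem:two_sets_suffice}.
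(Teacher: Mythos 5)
Your proposal is correct and follows essentially the same route as the paper: apply Lemma~\ref{lem:move_into} repeatedly (to the decompositions $E = N_j \sqcup (E\setminus N_j)$ for a shrinking clopen neighborhood basis $N_j$ of $x$) to produce disjoint copies of $E_2$ inside $E_1$ Hausdorff-converging to $x$, then absorb $E_2$ via the shift map of Observation~\ref{obs:shift} to conclude $E_1 \cong E$ and invoke Lemma~\ref{lem:two_sets_suffice}. Your closing remark correctly identifies that the naive one-step absorption fails and that the shift is the essential point, exactly as in the paper's argument.
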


\begin{proof}
Let $E = E_1 \sqcup E_2$ be a decomposition of $E$ into clopen sets.  Without loss of generality, suppose $\calM = \{x\} \subset E_1$.  Lemma \ref{lem:move_into} says that there is a homeomorphic copy $U_2$ of $E_2$ inside $E_1$, necessarily this is disjoint from $\{x\}$.  Let $A$ be a smaller neighborhood of $x$, disjoint from $U_2$.  Lemma \ref{lem:move_into} again gives a homeormorphic copy $U_3$ of $E_2$ inside $A$.  Proceeding in this way, we may find $E_2 = U_1, U_2, U_3, \ldots$ each homeomorphic to $E_2$ and Hausdorff converging to $x$, then apply Observation \ref{obs:shift}. 
\end{proof}

The second case is covered by the following.  
\begin{lemma} \label{lem:M_Cantor}
Suppose $\Sigma$ has no nondisplaceable subsurfaces and $\calM$ is a Cantor set of points all of the same type.  Then $E$ is self-similar.  
\end{lemma}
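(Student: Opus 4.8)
\textbf{Proof plan for Lemma \ref{lem:M_Cantor}.}

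The plan is to mimic the singleton case (Lemmas \ref{lem:move_into} and \ref{lem:M_singleton}), with the Cantor set $\calM$ playing the role of the single maximal point, while exploiting self-similarity \emph{internal} to $\calM$ to handle the fact that $\calM$ is no longer a single point. By Lemma \ref{lem:two_sets_suffice}, it suffices to show that for any decomposition $E = E_1 \sqcup E_2$ into clopen sets, some $E_i$ contains a clopen set homeomorphic to $E$. Since $\calM$ is a Cantor set of points of a single type and $\calM \cap E_1$, $\calM \cap E_2$ partition it into clopen pieces, at least one of them — say $\calM \cap E_1$ — is a nonempty clopen subset of $\calM$, hence itself a Cantor set of points of that same type; call it $\calM_1 \subset E_1$. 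The key point is that a clopen neighborhood $N$ of $\calM_1$ in $E_1$ is ``as good as'' a neighborhood of all of $\calM$ in the sense that every point $y \in E$ has a neighborhood homeomorphic to a clopen subset of $N$: indeed $y \lesseq x$ for some $x \in \calM$ (Proposition \ref{prop:maximal_element} gives maximal elements above $y$), and since all points of $\calM$ are of the same type, $y \lesseq x'$ for any $x' \in \calM_1$, so small neighborhoods of $y$ embed near $\calM_1$.

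With that observation in hand, I would first prove the analogue of Lemma \ref{lem:move_into}: for a decomposition $E = A \sqcup B$ with $\calM_1 \subset A$ for some Cantor clopen $\calM_1 \subset \calM$ of the maximal type, $A$ contains a homeomorphic copy of $B$. The argument is the same displacement-of-a-pair-of-pants trick: cover the compact set $B$ by finitely many clopen sets $U_1, \dots, U_k$ each homeomorphic to a clopen subset of $A$ disjoint from $\calM_1$ (possible by the previous paragraph), take disjoint copies $V_i \subset A$ with $\bigcup V_i$ disjoint from a smaller clopen neighborhood of $\calM_1$, and choose a three-holed sphere $S$ separating $\calM_1$ (or rather a clopen neighborhood of it), $\bigcup_i V_i$, and $B$ into distinct complementary regions. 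Here is the one real subtlety versus the singleton case: the displacing homeomorphism $f$ of $S$ need not fix $\calM_1$ pointwise, but it does send the complementary region containing $\calM_1$ into one of the three regions, and since $\calM_1$ has no room to go ``up'' (its points are maximal and the region containing $B$ or $\bigcup V_i$ could not absorb a copy of $\calM_1$ unless it already contained maximal-type points), one arranges, after possibly replacing $f$ by $f^{-1}$ and shrinking the neighborhood of $\calM_1$, that $f$ maps the $\calM_1$-region into itself. Then either $f(B) \subset A$ and we are done, or $A$ contains a copy of $A \sqcup \bigcup_i V_i$, and iterating $f$ produces $k$ disjoint copies of $\bigcup_i V_i$ in $A$, hence a copy of $\bigsqcup U_i = B$ in $A$.

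Finally, to conclude self-similarity, return to $E = E_1 \sqcup E_2$ with $\calM_1 := \calM \cap E_1$ a Cantor clopen subset of $\calM$ of maximal type. Apply the move-into lemma with $A = E_1$, $B = E_2$ to get a copy $U_2 \cong E_2$ inside $E_1$ disjoint from $\calM_1$; then choose a smaller clopen neighborhood of $\calM_1$ disjoint from $U_2$ and repeat, producing disjoint copies $E_2 = U_1, U_2, U_3, \dots$ each homeomorphic to $E_2$, which Hausdorff converge into $\calM_1$ (we may arrange the neighborhoods to shrink toward a single point of $\calM_1$, or invoke a Cantor-set version of Observation \ref{obs:shift}). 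Observation \ref{obs:shift} (applied after nesting them toward a point, or its evident generalization to a nested sequence of clopen sets) then shows $\bigcup_{i \ge 1} U_i \cong \bigcup_{i \ge 2} U_i$, which yields a copy of $E = E_2 \sqcup (E_1 \setminus U_1\text{-part})$ — more precisely, writing $E_1 = U_2 \sqcup (E_1 \setminus U_2)$ and absorbing via the shift, one gets a clopen subset of $E_1$ homeomorphic to $E$. The main obstacle, and the place requiring the most care, is precisely the step above where $\calM$ is not fixed pointwise by the displacing map: one must argue that maximality of the points of $\calM$ forbids the $\calM_1$-region from being pushed into a region not already meeting $\calM$, so that the ``either...or'' dichotomy of Lemma \ref{lem:move_into} still closes up.
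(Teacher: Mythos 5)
The gap is at the step you yourself flag as the main obstacle, and it is a real one. In your analogue of Lemma \ref{lem:move_into} you need the displacing homeomorphism $f$ --- which is merely \emph{given} to exist by the no-nondisplaceable-subsurface hypothesis; you do not get to choose it --- to map the region $R_1$ containing $\calM_1$ back into itself, up to replacing $f$ by $f^{-1}$. Your justification only rules out $f(R_1)$ landing in the region $R_2 \supset \bigcup_i V_i$, since that region is disjoint from $\calM$ while $f(\calM_1)\subset\calM$. It does not rule out $f(R_1)\subset R_3=B$: in the only genuinely new case (both $E_1$ and $E_2$ meet $\calM$; otherwise you are in the singleton situation), $B$ does contain maximal-type points, so nothing prevents $f$ from pushing $\calM_1$ into $B$. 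Replacing $f$ by $f^{-1}$ does not help: if $f(R_1)\subset R_3$, $f(R_2)\subset R_3$ and $f(R_3)\supset R_1\sqcup R_2$, then also $f^{-1}(R_1)\subset R_3$, and such homeomorphisms certainly exist (already for $E$ a Cantor set). In this bad case your dichotomy produces a copy of $A$ inside $B$ rather than a copy of $B$ inside $A$; swapping the roles of $E_1$ and $E_2$ and iterating can oscillate, so the construction of a sequence $U_1,U_2,\dots$ of copies of $E_2$ descending into $E_1$ does not close up.

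The paper sidesteps this with a different device. Fix a metric on $E$ and, for each $n$, a partition of $E$ into clopen pieces of diameter at most $2^{-n}$ refining $\{E_1,E_2\}$; displaceability of a multi-holed sphere realizing this partition forces some single piece to contain copies of all but one of the others, hence a copy of $E_1$ or of $E_2$ of diameter at most $2^{-n}$. Pigeonholing over $n$ and passing to a subsequence yields arbitrarily small copies of one fixed $E_i$ converging to a point, whence (using that all points of $\calM$ are of one type) \emph{every} neighborhood of every maximal point contains a copy of $E_i$; Observation \ref{obs:shift} then absorbs $E_i$ into the other piece near one of its maximal points. Your reduction via Lemma \ref{lem:two_sets_suffice}, your treatment of the case $\calM\subset E_1$ by the singleton argument, and your final shift step all match the paper; it is only the pair-of-pants ``move-into'' step that must be replaced by this small-diameter pigeonhole argument, which never needs to control where the displacing map sends $\calM$.
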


\begin{proof} 
Let $E = E_1 \sqcup E_2$ be a decomposition of $E$ into clopen sets.  If $\calM$ is contained in only one of the $E_i$, then one may apply the argument from Lemma~\ref{lem:M_singleton}, by letting $x$ be any point of $\calM$.  Thus, we assume that both $E_1$ and $E_2$ contain points of $\calM$. 

For concreteness, fix a metric on $E$.  For each $n \in N$, fix a decomposition $A^{(n)}_1, \ldots A^{(n)}_{j_n}$ of $E$ into clopen sets of diameter at most $2^{-n}$, such that $E_1$ and $E_2$ are each the union of some number of these sets.   Let $S_n$ be a subsurface homeomorphic to a $j_n$-holed sphere, with complimentary regions containing the sets $A^{(n)}_k$.  Since $S_n$ is displaceable, there exists some $k$ such that $A^{(n)}_k$ contains a copy of all but one of the sets $A^{(n)}_j$, in particular it contains either $E_1$ or $E_2$.   Passing to a subsequence, this gives us some $i$ such that there exist homeomorphic copies of $E_i$ of diameter less than $2^n$, for each $n$.  Without loss of generality, say that this holds for $E_1$.  Passing to a further subsequence, we can assume these copies of $E_1$ Hausdorff converge to a point $x$, so in particular every neighborhood of $x$ contains a copy of $E_1$. 

It follows from the definition of $\lesseq$ that each $y \in \calM$ therefore also has this property: every neighborhood of $y$ contains a homeomorphic copy of $E_1$.  Let $y_2, y_3, y_4, \ldots$ be a sequence of points in $E_2$ converging to $y \in E_2$, let $U_1 = E_1$.  Fix disjoint neighborhoods $N_i$ of $y_i$ converging to $y$, and let $U_i$ be a homeomorphic copy of $E_1$ in $N_i$.  Now apply Observation \ref{obs:shift}.  
\end{proof} 

This completes the proof of Proposition \ref{prop:M_self_sim}

\subsection{Stable neighborhoods}
Motivated by the behavior of maximal points in the Proposition above, we make the following definition.  

\begin{definition}  \label{def:stable_nbhd}
For $x \in E$, call a neighborhood $U$ of $x$ {\em stable} if for any smaller neighborhood $U' \subset U$ of $x$, there is a homeomorphic copy of $U$ contained in $U'$.  
\end{definition} 
Our use of the terminology ``stable" is justified by Lemma \ref{lem:stable_nbhd} below, 
which says that all such neighborhoods of a point are homeomorphic.  
(Recall that, by convention, neighborhood always means clopen neighborhood.)

\begin{remark} \label{rem:stable_nbhd}
Note that stable neighborhoods are automatically self-similar sets, and that if $U$ is a 
stable neighborhood of $x$, then $x \in \calM(U)$.  Our work in the previous section shows  
that when $\less$ has a unique maximal type and all subsurfaces are displaceable, each 
maximal point has a stable neighborhood.  
\end{remark}

It follows immediately from the definition that if $x$ has one stable neighborhood, then {\em every} sufficiently small neighborhood of $x$ is also stable.  More generally, we have the following.  

\begin{lemma} \label{lem:stable_easy}
If $x$ has a stable neighborhood, and $y \sim x$, then $y$ has a stable neighborhood. 
\end{lemma}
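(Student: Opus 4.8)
The plan is to unwind the definitions and chase neighborhoods, using the hypothesis $y \sim x$, i.e. $x \lesseq y$ and $y \lesseq x$, to transport a stable neighborhood of $x$ to one of $y$. First I would fix a stable neighborhood $U$ of $x$ and, using $x \lesseq y$, choose a neighborhood $V_0$ of $y$ together with $f \in \mcg(\Sigma)$ so that $f(V_0) \subset U$. The natural candidate for a stable neighborhood of $y$ is $V_0$ itself (or a suitable neighborhood of $y$), and the goal is to show: for every smaller neighborhood $V' \subset V_0$ of $y$, there is a homeomorphic copy of $V_0$ inside $V'$.

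The key step is the following chain of moves. Given $V' \subset V_0$, apply $f$ to get $f(V') \subset f(V_0) \subset U$, a neighborhood of the point $f(y)$. Now I would use $y \lesseq x$: applied at the neighborhood $f(V')$ of the point $f(y)$ (note $f(y) \sim x$ since $f$ preserves the order, so $f(y)$ also has the property that neighborhoods of $x$ embed into neighborhoods of $f(y)$ and vice versa), there is a neighborhood $W$ of $x$ and $g \in \mcg(\Sigma)$ with $g(W) \subset f(V')$. Shrinking $W$ if necessary we may assume $W \subset U$ and, since $U$ is stable, $W$ (being a small enough neighborhood of $x$, hence itself stable — or directly by the definition of stability) contains a homeomorphic copy $U''$ of $U$. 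Then $g(U'') \subset g(W) \subset f(V')$ is a homeomorphic copy of $U$, hence contains a homeomorphic copy of $f(V_0)$ via $f$ restricted appropriately; pulling back by $f$, we obtain a homeomorphic copy of $V_0$ inside $V'$. This shows $V_0$ is a stable neighborhood of $y$.

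The main obstacle I anticipate is bookkeeping with the domains: the homeomorphisms $f, g$ from the definition of $\lesseq$ are globally defined elements of $\mcg(\Sigma)$ but only their restrictions to the relevant neighborhoods are controlled, so I must be careful that compositions land where intended and that "contains a homeomorphic copy of" is used consistently (a clopen subset of $E$, via a homeomorphism respecting $E^G$). The cleanest way to organize this is to observe once and for all that $U$ stable implies every sufficiently small neighborhood of $x$ contains a homeomorphic copy of $U$ (already noted in the text preceding the lemma), and that $f(V_0) \subset U$ is such a neighborhood after shrinking, so the whole argument reduces to: "map $V'$ into $U$ by $f$; find a copy of $U$ inside an even smaller neighborhood dictated by $V'$ using stability and the order; map back." A minor point to verify is that $V_0$ can indeed be taken small enough (inside any prescribed neighborhood of $y$) — this follows since the defining property of $\lesseq$ quantifies over all neighborhoods $U$ of $x$, so shrinking $U$ first yields a correspondingly small $V_0$.
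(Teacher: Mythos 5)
Your argument is correct and is essentially the paper's proof: fix a stable neighborhood $U$ of $x$, use $y\lesseq x$ to find a neighborhood $V_0$ of $y$ with a copy inside $U$, and then for $V'\subset V_0$ chase the chain ``$V'$ contains a copy of a small neighborhood of $x$ inside $U$, which by stability contains a copy of $U$, which contains a copy of $V_0$.'' The only cosmetic difference is that you conjugate by $f$ and work inside $f(V')$ rather than applying the order relation to $V'$ directly as a neighborhood of $y$; this changes nothing of substance.
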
 

\begin{proof} 
Let $U$ be a stable neighborhood of $x$.  Since $y \less x$, there is a neighborhood $V$ 
of $y$ such that $U$ contains a homeomorphic copy of $V$.  Suppose $V' \subset V$ is a
smaller neighborhood of $V$.  Since $x \less y$, there is some neighborhood $U'$ of $x$ (without loss of generality, we may assume that $U' \subset U$) such that $V'$ contains a homeomorphic copy of $U'$.  By definition of stable neighborhoods, $U'$ contains a 
homeomorphic copy of $U$, thus $V'$ contains a homeomorphic copy of $U$ and hence a homeomorphic copy of $V$.  
\end{proof}

\begin{lemma} \label{lem:stable_nbhd}
If $x$ has a stable neighborhood $U$, then for any $y \sim x$, all sufficiently small neighborhoods of $y$ are homeomorphic to $U$ via a homeomorphism taking $x$ to $y$.  
\end{lemma}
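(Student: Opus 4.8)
The plan is to prove the precise statement that for every $y\sim x$ and every sufficiently small clopen neighborhood $V$ of $y$ there is a homeomorphism $(U,x)\cong(V,y)$ carrying $x$ to $y$. By Lemma~\ref{lem:stable_easy}, $y$ has a stable neighborhood, and stability is inherited by all smaller neighborhoods, so it suffices to fix one small stable neighborhood $V$ of $y$ and exhibit such a homeomorphism; the statement for all sufficiently small $V$ then follows by running the same argument for each. Two facts will be used throughout. First, every homeomorphism between clopen subsets of $E$ preserves the relation $\lesseq$, hence preserves the type $E(\cdot)$ of each end and carries maximal ends to maximal ends. Second, if $W$ is a stable neighborhood of a point $w$ then $\calM(W)=E(w)\cap W$: by stability, any end maximal in $W$ is $\lesseq w$, and maximality then forces it to be equivalent to $w$; the reverse inclusion is the same computation. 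By the proof of Proposition~\ref{prop:maximal_element}, $\calM(W)$ is therefore either the singleton $\{w\}$ or a Cantor set of ends of type $E(w)$; and in the singleton case no point of $E(w)$ is an accumulation point of $E(w)$ (otherwise, transporting a small neighborhood of such a point into a small neighborhood of $w$ via $\lesseq$ would exhibit $w$ as an accumulation point of $E(w)$, contradicting $\calM(W)=\{w\}$), so $U$ at $x$ and $V$ at $y$ fall into the same one of these two cases.

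The engine of the proof is an absorption principle: if $C$ is a stable neighborhood of $w$ and $D$ is a clopen set all of whose ends are $\lesseq w$, and if every neighborhood of $w$ in $C$ contains a clopen copy of $D$ avoiding $w$, then $(C\sqcup D,w)\cong(C,w)$. I would prove this directly with shift maps (Observation~\ref{obs:shift}): choose clopen neighborhoods $C=A_0\supsetneq A_1\supsetneq\cdots$ of $w$ with $\bigcap_n A_n=\{w\}$ and inside each annulus $A_n\setminus A_{n+1}$ a clopen copy $D_n$ of $D$; the map that is the identity off $\bigcup_n D_n$ and shifts the pieces $D,D_0,D_1,\dots$ up by one index is then a homeomorphism $C\sqcup D\to C$ fixing $w$, the only subtle point being continuity at $w$, which holds because the $D_n$ Hausdorff converge to $w$. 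The hypothesis on $D$ holds in the cases we need: if $D$ has no ends of type $E(w)$ then its clopen copies automatically avoid $w$, while if $\calM(C)$ is a Cantor set one places the finitely many pieces of $D$ near maximal ends of $C$ other than $w$.

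Finally I would produce a pointed clopen copy $C_0$ of $U$ inside $V$ — a clopen $C_0\subseteq V$ with $y\in C_0$ and $(C_0,y)\cong(U,x)$. Then $C_0$ is a stable neighborhood of $y$; the complement $D:=V\setminus C_0$ has all its ends $\lesseq y$ (every end of $V$ lies below some maximal end, and every maximal end is of type $E(y)\sim E(x)$); and in either case $D$ satisfies the hypothesis of the absorption principle relative to $(C_0,y)$ (in the singleton case $D$ contains no ends of type $E(y)$, since $C_0\cap E(y)=\{y\}$; in the Cantor case $\calM(C_0)$ is a Cantor set). Hence $(V,y)=(C_0\sqcup D,y)\cong(C_0,y)\cong(U,x)$, which is what we want. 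To construct $C_0$: using $x\lesseq y$, any small stable neighborhood of $y$ contains a homeomorphic copy of a small stable neighborhood of $x$, and stability of $U$ upgrades this to a clopen copy of all of $U$ inside $V$; by type preservation the image of the marked point $x$ always lies in $\calM(V)$, so in the singleton case it is forced to equal $y$ and we are done. The main obstacle is the Cantor case of this last step: one must slide the image of $x$ onto $y$ by a self-homeomorphism of $V$, i.e.\ one needs the homogeneity statement that a stable neighborhood whose set of maximal ends is a Cantor set admits self-homeomorphisms taking any maximal end to any other. This is the only genuinely delicate point, and I expect it to require a dedicated shift-map argument in the spirit of Lemma~\ref{lem:M_Cantor}; once it is in hand, the pieces above assemble into the full statement.
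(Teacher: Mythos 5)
Your reduction is sound in the case where $\calM(V)$ is a singleton: the identity $\calM(W)=E(w)\cap W$ for a stable neighborhood $W$ of $w$ is correct, the absorption principle via Observation~\ref{obs:shift} works as you describe, and type-preservation does force the marked point of the embedded copy of $U$ to land on $y$, so that case is complete. But in the Cantor case there is a genuine gap, and it sits exactly where you flag it. The ``homogeneity of stable neighborhoods'' you defer --- a self-homeomorphism of $V$ sliding one maximal end $z\in E(y)\cap V$ onto $y$ --- is not a side lemma: the only evident way to build it is to swap a small stable neighborhood of $z$ with one of $y$ via a homeomorphism carrying $z$ to $y$ and extend by the identity, and that swap is precisely an instance of Lemma~\ref{lem:stable_nbhd} itself (applied to the equivalent points $z$ and $y$). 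So your argument in the Cantor case is circular unless you supply an independent construction of a \emph{pointed} homeomorphism between stable neighborhoods of two equivalent points, which is the entire content of the lemma. Nothing in Lemma~\ref{lem:M_Cantor} produces pointed maps, so ``a shift-map argument in its spirit'' does not obviously exist; the difficulty is real, not routine.

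The paper's proof attacks this head-on with a back-and-forth (Cantor--Schr\"oder--Bernstein style) argument: fix nested neighborhood bases $V_x=V_1\supset V_2\supset\cdots$ of $x$ and $V_y=V_1'\supset V_2'\supset\cdots$ of $y$, and alternately embed the annuli $V_i-V_{i+1}$ into $V_i'$ and the leftover parts of $V_i'-V_{i+1}'$ back into $V_{i+1}$, using $x\lesseq y$, $y\lesseq x$ and stability to guarantee each embedding exists; the one delicate point is arranging each partial image to avoid the marked points $x$ and $y$, which is automatic in the singleton case and uses the Cantor structure of $E(y)$ otherwise. The union of these partial maps is a homeomorphism $V_1-\{x\}\to V_1'-\{y\}$ that extends over the marked points by continuity. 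If you want to salvage your absorption-based architecture, the honest fix is to prove your deferred homogeneity statement by exactly this back-and-forth --- at which point the absorption step becomes unnecessary, since the back-and-forth already yields the pointed homeomorphism $(U,x)\cong(V,y)$ directly.
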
  

\begin{proof}
The proof is a standard back-and-forth argument.  
Suppose $x \less y$ and $y \less x$.  Let $V_x$ be a stable neighborhood of $x$ and $V_y$ a stable neighborhood of $y$.  
Take a neighborhood basis $V_x = V_1 \supset V_2 \supset V_3 \ldots$ of $x$ consisting of nested neighborhoods, and take a neighborhood basis $V_y = V'_1 \supset V'_2 \supset V'_3 \ldots$ of $y$.  
Since $y \less x$ and $x \less y$, each $V_i$ contains a homeomorphic copy of $V'_0$ and each $V'_i$ a copy of $V_0$.  

Let $f_1$ be a homeomorphism from $V_1 - V_2$ into $V'_0$.  Note that we may assume the image of $f_1$ avoids $y$ -- if $y$ is the unique maximal point of $V'_0$, then this is automatic, otherwise, $E(y)$ is a Cantor set of points, each of which contains copies of $V_0$ in every small neighborhood.   
Let $g_1$ be a homeomorphism from the complement of the image of $f_1$ in $V'_1 - V'_2$ onto a subset 
of $V_2 - \{x\}$.  Iteratively, define $f_i$ to be a homeomorphism from the complement of the 
image of $g_{i-1}$ in $V_{i}-V_{i+1}$ onto a subset of $V'_i - \{y\}$, and $g_i$ a homeomorphism 
from the complement of the image of $f_i$ in $V'_i - V'_{i+1}$ onto a subset of $V_{i+1} - \{x\}$.  
Then the union of all $f_i$ and $g_i^{-1}$ is a homeomorphism from $V_1 - \{x\}$ to 
$V'_1 - \{y\}$ that extends to a homeomorphism from $V_1$ to $V_1'$ taking $x$ to $y$.  
\end{proof}

The following variation on Lemma \ref{lem:move_into} uses stable neighborhoods as a replacement for displaceable subsurfaces.  

\begin{lemma} \label{lem:consume} 
Let $x, y \in E$, and assume $x$ has a stable neighborhood $V_x$, and that $x$ is an accumulation point of $E(y)$.  Then for any sufficiently small neighborhood $U$ of $y$, $U \cup V_x$ is homeomorphic to $V_x$. 
\end{lemma}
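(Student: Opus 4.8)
The plan is to realize $U\cup V_x$ as a homeomorph of $V_x$ by fixing everything outside an infinite ``staircase'' of pairwise disjoint copies of $U$ accumulating on $x$, and acting as a shift along that staircase. We may assume $U\cap V_x=\emptyset$: since $V_x$ is clopen, if $y\in V_x$ then $U\subseteq V_x$ for all small $U$ and the claim is trivial. Since $x$ accumulates $E(y)$, Lemma~\ref{lem:closed} gives $y\lesseq x$, so some clopen neighborhood $U_0$ of $y$ admits a homeomorphism of $\Sigma$ carrying it into $V_x$; we take $U_0$ as our threshold for ``sufficiently small'', fix a clopen neighborhood $U\subseteq U_0$ of $y$, and let $U^\sharp\subseteq V_x$ be a clopen copy of $U$. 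Because $V_x$ is stable, Lemma~\ref{lem:stable_nbhd} applied with $y=x$ shows that every sufficiently small clopen neighborhood of $x$ in $V_x$ is homeomorphic to $V_x$ by a homeomorphism fixing $x$; transporting $U^\sharp$ forward, every clopen neighborhood of $x$ in $V_x$ contains a clopen copy of $U$, and — granting for the moment that $U^\sharp$ may be chosen to miss $x$ — one not containing the point $x$.

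Assuming this, one builds greedily pairwise disjoint clopen copies $W_1,W_2,\dots$ of $U$ in $V_x$, none containing $x$, together with a nested clopen neighborhood basis $V_x=N_0\supsetneq N_1\supsetneq\cdots$ of $x$ with $\bigcap_k N_k=\{x\}$ and $W_{k+1}\subseteq N_k\setminus N_{k+1}$: at stage $k$, choose a copy $W_{k+1}\subseteq N_k$ of $U$ missing $x$, and, $W_{k+1}$ being compact and missing $x$, choose $N_{k+1}\subsetneq N_k$ from a fixed basis at $x$ disjoint from $W_{k+1}$. Writing $Y_{k+1}=(N_k\setminus N_{k+1})\setminus W_{k+1}$ we obtain the clopen decompositions
\[
V_x=\{x\}\sqcup\bigsqcup_{k\ge 1}W_k\sqcup\bigsqcup_{k\ge 1}Y_k,
\qquad
U\cup V_x=U\sqcup\{x\}\sqcup\bigsqcup_{k\ge 1}W_k\sqcup\bigsqcup_{k\ge 1}Y_k .
\]
Define $\Phi\colon U\cup V_x\to V_x$ to be the identity on $\{x\}$ and on each $Y_k$, a homeomorphism $U\to W_1$, and fixed homeomorphisms $W_k\to W_{k+1}$ for $k\ge 1$. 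Then $\Phi$ is a bijection, continuous on each clopen piece, and $\Phi(N_j)\subseteq N_j$ while $\Phi^{-1}(N_j)\subseteq N_{j-1}$, so both $\Phi$ and $\Phi^{-1}$ are continuous at $x$; hence $\Phi$ is a homeomorphism. (This is the shift map of Observation~\ref{obs:shift}, upgraded to a homeomorphism of the ambient space by keeping track of the leftovers $Y_k$ and of the limit point $x$.)

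The step I expect to be the main obstacle is showing that, for all sufficiently small clopen neighborhoods $U$ of $y$, $V_x$ contains a clopen copy of $U$ missing $x$; the subtlety is precisely that $y$ itself need not have a stable neighborhood, so a fixed neighborhood of $y$ cannot be slid freely toward $x$ and all transport must be routed through copies of $V_x$. Here is the argument I would use. Suppose instead that for arbitrarily small clopen neighborhoods $U$ of $y$, every clopen copy of $U$ in $V_x$ contains $x$. For such a $U$ and a copy $U^\sharp\subseteq V_x$, the point $x$ has inside $U^\sharp$ a small stable neighborhood $N\subsetneq U^\sharp$ with $N\cong V_x$ (remark after Definition~\ref{def:stable_nbhd} together with Lemma~\ref{lem:stable_nbhd}), so $U\cong U^\sharp$ contains a clopen copy of $V_x$, hence a point of type $x$; letting $U$ shrink, points of type $x$ accumulate on $y$, so Lemma~\ref{lem:closed} forces $x\lesseq y$, and therefore $x\sim y$. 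But when $x\sim y$ the point $y$ has a stable neighborhood homeomorphic to $V_x$ (Lemmas~\ref{lem:stable_easy} and~\ref{lem:stable_nbhd}), and since $x$ accumulates $E(y)=E(x)$ there is a point $x'\ne x$ of type $x$ lying in $V_x$; a small enough neighborhood of $x'$ inside $V_x$ avoids $x$ and is homeomorphic to $V_x$, hence to $U$ once $U$ is small enough — a clopen copy of $U$ in $V_x$ missing $x$, contradicting the choice of those $U$. Hence such a copy exists for all sufficiently small $U$, and the construction above goes through. The remaining work is bookkeeping, chiefly arranging the various ``sufficiently small'' thresholds from these sub-arguments to be mutually compatible, which I expect to be routine but fiddly.
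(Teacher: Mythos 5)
Your proof is correct and follows essentially the same route as the paper: both arguments produce infinitely many pairwise disjoint homeomorphic copies of $U$ inside $V_x$ accumulating at $x$ and then apply the shift map of Observation \ref{obs:shift}. The only organizational difference is in securing a copy of $U$ in $V_x$ that misses $x$ --- the paper splits into the cases $x\sim y$ (using a stable neighborhood of $y$) and $y\prec x$, while you isolate this as a claim and prove it by contradiction, reducing the bad case to $x\sim y$; this is, if anything, slightly more careful than the paper's treatment of the $y\prec x$ case.
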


\begin{proof}
If $x \sim y$, then let $U$ be a stable neighborhood of $y$ disjoint from $V_x$.  
Let $V_1 \supset V_2 \supset V_3 \ldots$ be a neighborhood basis for $x$ consisting of stable neighborhoods.  
Since $x$ is an accumulation point of $E(y)$, for any sufficiently small neighborhood 
$U_0$ of $y$ (and hence for any stable neighborhood $U$), there is a homeomorphic 
copy $U_1$ of $U_0$ in $V_1-\{x\}$. Shrinking neighborhoods if needed, we may take $U_1$ 
to be disjoint from $V_{i_1}$ for some $i_1 \in \N$.  Since $V_{i_1}$ is homeomorphic to 
$V_1$, there is also a homeomorphic copy of $U_2$ of $U_0$ in $V_{i_1}$, disjoint from 
some $V_{i_2}$.  Iterating this process we can find disjoint sets $U_n \subset V_1$, each 
homeomorphic to $U$, and Hausdorff converging to $x$.  Define $f: V_1 \cup U_0 \to V_1$ 
to be the identity on the complement of $\bigcup_n U_n$ and send $U_i$ to $U_{i+1}$ by 
a homeomorphism as in Observation \ref{obs:shift}.  

If instead $y \less x$, then take any neighborhood $U$ of $y$ disjoint from $V_x$ and small enough so that $V_x$ contains a homeomorphic copy of $U$.  Since $y \less x$, this copy lies in $V_x - \{x\}$, and we may repeat the same line of argument above.  
\end{proof} 

\section{Classification of locally CB mapping class groups} 
\label{sec:loc_CB}

We now prove properties of locally CB mapping class groups, building towards our general classification theorem. Recall that we have the following notational convention.  

\begin{notation}
If  $K \subset \Sigma$ is a finite type subsurface, we denote by $\calV_K$ the identity neighborhood consisting of mapping classes of homeomorphisms that restrict to the identity on $K$.  
\end{notation}

\begin{lemma} \label{lem:nondispCB}
Let $K \subset \Sigma$ be a finite type subsurface.  If there exists a finite type, nondisplaceable (possibly disconnected) subsurface $S$ in $\Sigma-K$, then $\calV_K$ is not CB.  If this holds for every finite type $K \subset \Sigma$, then $\mcg(\Sigma)$ is not locally CB.
\end{lemma}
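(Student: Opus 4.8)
The plan is to derive the first statement from Proposition~\ref{prop:disconnected} and then obtain the second as a formal consequence. For the first statement, let $S \subset \Sigma - K$ be a finite type nondisplaceable subsurface. After enlarging $S$ within $\Sigma - K$ (which does not affect nondisplaceability) we may assume each connected component of $S$ has complexity at least $5$. Proposition~\ref{prop:disconnected} then produces a length function $\ell$ on $\mcg(\Sigma)$ whose restriction to the set of mapping classes supported on $S$ is unbounded. Since $S$ is disjoint from $K$, any homeomorphism supported on $S$ restricts to the identity on $K$, so every mapping class supported on $S$ lies in $\calV_K$; hence $\ell$ is unbounded on $\calV_K$. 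As observed following the definition of length function (via Theorem~\ref{thm:CB_criterion}), every length function is bounded on every CB subset of $\mcg(\Sigma)$, so $\calV_K$ is not CB.

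For the second statement, suppose for contradiction that $\mcg(\Sigma)$ is locally CB and let $\calV$ be a CB neighborhood of the identity. Every identity neighborhood in $\mcg(\Sigma)$ contains $\calV_{K'}$ for some finite type subsurface $K'$ (as used in the proof of Proposition~\ref{prop:SSCB}), so $\calV_{K} \subset \calV$ for some finite type $K$. A subset of a CB set is CB---any compatible left-invariant metric giving $\calV$ finite diameter also gives $\calV_K$ finite diameter---so $\calV_K$ is CB. On the other hand, by hypothesis $\Sigma - K$ contains a finite type nondisplaceable subsurface, so the first statement shows $\calV_K$ is \emph{not} CB, a contradiction.

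The only step requiring a little care is arranging that the complexity-$\geq 5$ enlargement of $S$ can be carried out while keeping $S$ inside $\Sigma - K$; this is where infinite type of $\Sigma$ enters, and in the situations where this lemma is applied the relevant complementary regions of $K$ are of infinite type, leaving ample room. Beyond that the argument is purely formal, so I anticipate no genuine obstacle: the substantive content is already packaged into Proposition~\ref{prop:disconnected}.
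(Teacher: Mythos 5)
Your argument is correct and is essentially the paper's own proof: both reduce to Proposition~\ref{prop:disconnected} by enlarging $S$ inside $\Sigma-K$ until each component has complexity at least $5$, observe that the resulting length function is unbounded on mapping classes supported on $S$ and hence on $\calV_K$, and deduce the second statement from the fact that every identity neighborhood contains some $\calV_K$. The enlargement caveat you flag is handled in the paper by the same device (it first enlarges $K$ so that all complementary regions have infinite type), so your proof matches the paper's in both substance and level of detail.
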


\begin{proof} 
Let $K$ be such a surface.  Enlarging $K$ and hence shrinking $\calV_K$ if needed, we may assume that all complimentary regions of $K$ have infinite type.  

Let $S$ be a nondisplaceable subsurface contained in $\Sigma - K$.  Now enlarging $S$ if needed, we may assume that it still remains in the complement of $K$, but is such that each component of $S$ has high enough complexity so that the length function $\ell_S$ defined in Section \ref{sec:nondisplace} will be unbounded.  
As in Proposition \ref{prop:disconnected}, this gives a length function which is unbounded on $\calV_K$, hence on $\calV$, so $\mcg(\Sigma)$ is not locally CB.  

Since each neighborhood of the identity in $\mcg(\Sigma)$ contains a set of the form $\calV_K$ for some finite type $K$, $\mcg(\Sigma)$ is locally CB if and only if some such set is CB. 
\end{proof}

Going forward, we reference the partial order $\less$ defined in Section \ref{sec:order}.

\begin{lemma} 
If $\mcg(\Sigma)$ is locally CB, then the number of distinct maximal types under $\less$ is finite. 
\end{lemma}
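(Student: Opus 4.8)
The plan is to show the contrapositive: if there were infinitely many distinct maximal types, then every finite-type subsurface $K$ would leave a nondisplaceable subsurface in its complement, so by Lemma \ref{lem:nondispCB} the group would not be locally CB. The key point is that distinct maximal types are ``far apart'' in a strong sense — no neighborhood of one can be mapped inside a neighborhood of another — and this rigidity is exactly what produces nondisplaceability.

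\emph{First} I would fix a finite-type subsurface $K$ and, enlarging it, assume every complementary region has infinite type; the complementary regions together with the punctures of $K$ partition $E$ into finitely many clopen sets, say $E = E_1 \sqcup \dots \sqcup E_m$. Suppose for contradiction there are infinitely many maximal types. Since $\calM$ is closed (it is a union of equivalence classes, each of which is closed by Lemma \ref{lem:closed}, and in fact $\calM$ is closed as the set of maximal elements — one should check this, but it follows from Lemma \ref{lem:closed} applied along a chain as in the proof of Proposition \ref{prop:maximal_element}), having infinitely many types means in particular that $\calM$ meets infinitely many ``scales'': concretely, I can pick maximal ends $z_0, z_1, \dots$ of pairwise distinct types. \emph{Then} I would find three of them, say $z_a, z_b, z_c$, lying in three \emph{different} pieces $E_i$ — possible because only finitely many pieces exist but infinitely many distinct types must be distributed among them, so one piece $E_i$ contains at least two distinct maximal types; but actually the cleaner route is: at least one piece contains infinitely many distinct maximal types. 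Working inside that single complementary region $\Sigma_i$, I pick a pair of ends $x_1, x_2 \in E_i$ of distinct maximal types and a third maximal end $x_3 \in E_i$ of a type distinct from both.

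\emph{The core step} is then to produce the nondisplaceable subsurface. Take a pair-of-pants (or more generally finite-type) subsurface $S \subset \Sigma_i - K$ whose three complementary regions inside $\Sigma_i$ separate $x_1$, $x_2$, $x_3$ into different components, with small enough neighborhoods $U_1, U_2, U_3$ of these ends in those components. I claim $S$ is nondisplaceable. Indeed, if $f(S) \cap S = \emptyset$, then $f(S)$ lies in one complementary region of $S$, so $f$ maps at least two of $U_1, U_2, U_3$ — say $U_1$ and $U_2$ — into a single complementary region of $S$, hence into a neighborhood of one of $x_1, x_2, x_3$. But $f$ sends maximal ends to maximal ends of the same type (this is the homeomorphism-invariance of $\less$ noted after the definition of $\sim$), and $x_1, x_2, x_3$ have three pairwise distinct types; so $f(U_1)$ and $f(U_2)$, containing ends of types $[x_1]$ and $[x_2]$ respectively, cannot both sit inside a neighborhood of a single $x_j$ without forcing $[x_1] \lesseq [x_j]$ and $[x_2] \lesseq [x_j]$ with both $[x_i]$ maximal, hence $[x_1] = [x_j] = [x_2]$, contradiction. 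Since this works for every finite-type $K$, Lemma \ref{lem:nondispCB} gives that $\mcg(\Sigma)$ is not locally CB.

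\emph{The main obstacle} I anticipate is making the pigeonhole argument genuinely correct: ``infinitely many maximal types'' does not immediately give infinitely many maximal types in one complementary region of a \emph{single} $K$ unless one argues uniformly — but this is fine, since for the contrapositive we get to choose $K$, and in fact we only need \emph{three} pairwise distinct maximal types total, which for large enough $K$ will be separated by $K$ itself into distinct complementary regions (or one re-runs the separation argument inside one region). A second subtlety is ensuring the separating subsurface $S$ can be taken disjoint from $K$: since complementary regions of $K$ are infinite-type and the relevant ends lie in those regions, shrinking the $U_j$ and choosing $S$ deep enough achieves this, exactly as in the proof of Lemma \ref{lem:nondispCB}. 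Finally one should double-check that $\calM$ being finite is what ``finitely many distinct maximal types'' means — it is, since by Proposition \ref{prop:maximal_element} each maximal equivalence class is finite or a Cantor set, so finitely many types is equivalent to finitely many types, and the statement as phrased is about the number of types, not the cardinality of $\calM$.
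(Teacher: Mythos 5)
Your overall strategy --- prove the contrapositive by producing, in the complement of an arbitrary finite type $K$, a nondisplaceable subsurface and invoking Lemma \ref{lem:nondispCB} --- is exactly the paper's, but the core displacement argument has a genuine gap. You separate three \emph{points} $x_1,x_2,x_3$ of pairwise distinct maximal types, whereas the only homeomorphism-invariant objects available are the equivalence classes $E(x_1),E(x_2),E(x_3)$, and these need not be singletons and need not be confined to the regions you chose: $E(x_1)$ may be a Cantor set, may have representatives elsewhere in $\Sigma_i$, and may have representatives in \emph{other} complementary regions of $K$, all of which land in the ``big'' complementary region of your pair of pants (the one containing $K$ --- note that a connected $S\subset\Sigma_i$ always has such a region, so you cannot really take all three $U_j$ to be small neighborhoods). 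A displacing homeomorphism only has to send $x_1$ to \emph{some} point of $E(x_1)$, so $f(S)$ can land in a region that legitimately contains points of two of the classes, and no contradiction arises. The step meant to rule this out --- that a complementary region around $x_j$ containing an end of type $[x_1]$ forces $[x_1]\lesseq[x_j]$ --- is a non sequitur: $\lesseq$ requires \emph{every} neighborhood of $x_j$ to contain a copy of a neighborhood of $x_1$ (equivalently that $x_j$ accumulates on $E(x_1)$); one particular region containing one point of $E(x_1)$ gives nothing. A useful sanity check: having several pairwise distinct maximal types is perfectly compatible with being locally CB (the left surface of Figure \ref{fig:locCB} has three, one per piece of $\calA$), so any argument that only ever uses ``three distinct maximal types near each other'' cannot be the whole story; the full strength of ``infinitely many'' must enter elsewhere.

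The missing idea is the paper's pigeonhole on the assignment $\sigma(x)=\{\text{components of }\Sigma-K\text{ meeting }E(x)\}$: since $\Sigma-K$ has finitely many components there are only finitely many possible values of $\sigma$, so infinitely many maximal types force two inequivalent maximal ends $x\nsim y$ with $\sigma(x)=\sigma(y)$. One then places a three-holed sphere in \emph{each} component of $\sigma(x)$, corralling all of $E(x)\cap\tau$ into one complementary piece, all of $E(y)\cap\tau$ into another, and keeping a third class $E(z)$ in the remainder; the resulting \emph{disconnected} subsurface $S$ (Definition \ref{def:disconnected}, Proposition \ref{prop:disconnected}) has the property that every complementary region misses at least two of the three invariant sets $E(x),E(y),E(z)$ entirely --- including the big region, precisely because $\sigma(x)=\sigma(y)$ guarantees no stray representatives of $E(x)$ or $E(y)$ survive outside the corrals --- while any displaced image of a component of $S$ must deposit points of at least two of the classes into a single region. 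That is what closes the displacement argument. Note also that the disconnectedness is unavoidable: if $K$ itself separates your three chosen ends into different complementary regions, no connected pair of pants in $\Sigma-K$ can separate them at all, so your proposed fallback of ``re-running the argument inside one region'' does not rescue the connected construction.
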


\begin{proof}
We prove the contrapositive.  Suppose that there are infinitely many distinct maximal types.  
Let $K$ be any subsurface of finite type.  By Lemma \ref{lem:nondispCB}, it suffices to find a nondisplaceable subsurface contained in $\Sigma-K$, which we do now.  

To every end $x \in E$ of maximal type, let $\sigma(x)$ denote the set of connected components of $\Sigma-K$ which contain ends from $E(x)$.  
Since $\Sigma-K$ has finitely many connected components, by the pigeonhole principle, there are two 
ends $x$ and $y$ with $x \nsim y$ but $\sigma(x) = \sigma(y)$.  
That is, each complimentary region of $\Sigma - K$ that has an end from $E(x)$ also contains ends from $E(y)$, and vice versa.  Fix any $z \in E$ with $z \nsim x$ and $z \nsim y$.  

Construct a surface $S$ as follows.  For each component $\tau$ of $\sigma(x)$, take a three-holed sphere subsurface of $\tau$ so that the complementary regions of the three-holed sphere separate $E(x)$ from $E(y)$ and $E(z)$ in $\tau$.  That is to say, one complimentary region contains only ends from $E(x)$ and none from $E(y)$ or $E(z)$, while another contains only ends from $E(y)$ and none from $E(x)$ or $E(z)$, and the third containing at least some points of $E(z)$ (possibly those from another complimentary region of $K$).    Let $S$ be the union of these three holed spheres.   Thus, each end from $E(x)$ is the end of some complimentary region of $S$ which has no ends of type $y$, and vice versa.   

We claim that $S$ is non-displaceable.   For if $S_i$ is a connected component of $S$, then one complimentary region of $S_i$ contains ends from $E(x)$, but none from $E(y)$.  By invariance of $E(x)$ and $E(y)$, if some homeomorphic image $f(S_i)$ were disjoint from $S$, then we would have to have $f(S_i)$ contained in one of the complimentary regions of $S$ containing points of $E(x)$.  However, this region contains no points of $E(y)$ or $E(z)$, contradicting our construction of $S_i$.  
Hence, $S$ is non-displaceable and,
by Lemma~\ref{lem:nondispCB}, $\mcg(\Sigma)$ is not locally CB. 
\end{proof}

\begin{proposition} \label{prop:partition} 
If $\mcg(\Sigma)$ is locally CB, then there is a partition 
\[ E = \bigsqcup_{A \in \calA} A \]
where $\calA$ is finite, each $A \in \calA$ is clopen, and $\calM(A) \subset \calM(E)$.  
Moreover, this decomposition can be realized by the complimentary regions to a finite type surface $L \subset \Sigma$ with $|\calA|$ boundary components, either of zero genus or of finite genus equal to the genus of $\Sigma$.  
\end{proposition}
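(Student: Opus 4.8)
The plan is to start from the finite collection of maximal types, which we already know is finite by the previous lemma, and use it to cut $\Sigma$ along a finite-type surface whose complementary regions sort the ends of $E$ according to which maximal types they accumulate. Write $\calM(E) = \{x_1, \ldots, x_m\}$ for the distinct maximal types (choosing one representative end $x_j$ of each type). For each end $x \in E$ of maximal type, consider the set $\sigma(x)$ of complementary regions of a reference finite-type surface $K$ that contain ends equivalent to $x$; I first enlarge $K$ so that all complementary regions of $K$ have infinite type and, using that $\mcg(\Sigma)$ is locally CB together with Lemma \ref{lem:nondispCB}, so that $\Sigma - K$ contains no nondisplaceable subsurface. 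The key point to extract is that, after enlarging $K$, each complementary region of $K$ ``sees'' essentially one maximal type: if a single complementary region $\tau$ of $\Sigma - K$ contained ends from two inequivalent maximal classes $E(x_j)$ and $E(x_k)$ together with ends from a third class, the three-holed-sphere construction from the proof of the preceding lemma would produce a nondisplaceable subsurface inside $\tau \subset \Sigma - K$, contradicting local CB-ness. So I would argue: after enlarging $K$ suitably, each complementary region of $\Sigma - K$ has all of its maximal ends in a single class $E(x_j)$, and moreover the set of maximal ends of that region is in $\calM(E)$.

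Given this, define $\calA$ to be the partition of $E$ indexed by which class $x_j$ the complementary region accumulates: for each $j$, let $A_j$ be the union of the end sets of those complementary regions of $\Sigma - K$ whose maximal ends lie in $E(x_j)$, together with any punctures of $K$ (assigned arbitrarily to one of the $A_j$, or absorbed by enlarging $K$ to swallow all punctures). Each $A_j$ is clopen since it is a finite union of end sets of complementary regions (each of which is clopen) plus finitely many points, and $\calM(A_j) \subset E(x_j) \subset \calM(E)$ by construction — here one uses that the maximal ends of a clopen subset $A$ of $E$, if they are maximal in all of $E$, are exactly what Proposition \ref{prop:maximal_element} and the partial order machinery of Section \ref{sec:order} control. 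Then $\calA = \{A_1, \ldots, A_m\}$ is finite (at most $m = |\calM(E)|$ sets, possibly fewer if some $A_j$ is empty, which won't happen since $x_j \in A_j$).

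To realize the decomposition by a single surface $L$ with $|\calA|$ boundary components: starting from $K$, I would take a further enlargement. Inside each complementary region of $\Sigma - K$, cut along a single separating curve so that each $A_j$ becomes the end set of exactly one connected piece; amalgamate all the complementary pieces over all regions mapping to a fixed $A_j$ into a single connected subsurface by joining them with tubes running through $K$, and let $L$ be the complement of these $|\calA|$ pieces. Concretely, $L$ is obtained from $K$ by attaching, for each $j$, arcs or subsurfaces connecting up the boundary components facing regions with ends in $A_j$, so that $\Sigma - L$ has exactly one component per $A_j$; this $L$ has $|\calA|$ boundary components. The genus statement follows because $L$ is finite type, so it carries only finitely much genus; if $\Sigma$ has finite genus we may enlarge $L$ to absorb all of it, and if $\Sigma$ has infinite genus we may arrange (by a curve-swapping argument of the type in Lemma \ref{lem:find_R}) that $L$ has any prescribed finite genus, in particular $0$ — but the cleanest statement is just that $L$ may be taken of zero genus or of genus equal to that of $\Sigma$.

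The main obstacle is the step asserting that, after finitely enlarging $K$, no complementary region mixes inequivalent maximal types in a way that produces a nondisplaceable subsurface. The subtlety is that $\calM(E)$ being finite does not a priori bound how the non-maximal ends distribute across complementary regions, and one must be careful that the three-holed-sphere construction genuinely lands inside $\Sigma - K$ and is nondisplaceable — invariance of the classes $E(x_j)$ under $\Homeo(\Sigma)$ is what makes the displaceability argument go through, exactly as in the proof of the preceding lemma, so I would model this step closely on that argument. A secondary technical point is handling the punctures of $K$ and the passage from ``many complementary regions per class'' to ``one boundary component per class,'' which is routine surgery using Richards' classification but needs to be done carefully to keep $L$ connected and finite type.
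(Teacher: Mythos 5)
Your overall shape -- separate the finitely many maximal classes by enlarging a finite-type surface $K$ and group the complementary regions by class -- does match the opening of the paper's argument (Proposition \ref{prop:partition2}). But the step you single out as the ``main obstacle'' (that no region mixes inequivalent maximal types) is in fact the easy part: since there are only finitely many maximal classes, one simply enlarges $K$ until its complementary regions separate them; no displaceability argument is needed there. The genuine difficulty, which your proposal skips, is the complementary regions containing \emph{no} ends of $\calM(E)$ at all. You propose to assign such a region ``arbitrarily'' to some $A_j$, but this breaks the conclusion $\calM(A_j)\subset\calM(E)$: if the region contains an end $y$ whose only maximal successors lie in $E(x_k)$ with $k\neq j$, then $y$ becomes a maximal element of $A_j$ that is not maximal in $E$. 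To assign these regions correctly you would need each non-maximal end $y$ to have a \emph{clopen} neighborhood all of whose ends are dominated by a maximal class represented in the target block, and this is not a formal consequence of the partial order (the set $\{y : y\lesseq x\}$ has no obvious reason to be open, since the neighborhood of $y$ embedding into a given neighborhood of $x$ may shrink as the latter shrinks).

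The paper closes exactly this gap using local CB-ness in a way your proposal never invokes: displaceability of disconnected unions of pairs of pants (Lemma \ref{lem:nondispCB}) forces the regions containing $\calM(E)$ to be self-similar, so maximal points acquire \emph{stable neighborhoods}, and then Lemma \ref{lem:consume} produces for each non-maximal $y$ a clopen neighborhood $P_y$ with $P_y\sqcup A\cong A$ for an appropriate $A$; compactness of $E-\bigsqcup_A A$ reduces to finitely many such $P_y$, and iterated absorption gives $E\cong\bigsqcup_{A\in\calA}A$. This absorption is also what legitimizes the ``Moreover'' clause: merging a $P$-region into its $A$-region (your ``tubes through $K$'') preserves $\calM\subset\calM(E)$ precisely because $P\sqcup A\cong A$, which you have not established. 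So the proposal is not a complete proof as written; the missing ingredient is the self-similarity/stable-neighborhood mechanism, which is where the locally CB hypothesis actually does its work.
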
 

This will be a quick consequence of the following stronger result.

\begin{proposition} \label{prop:partition2} 
Suppose that $\mcg(\Sigma)$ is locally CB.  Then a CB neighborhood of the identity can be taken to be $\calV_K$ where $K$ is a finite type surface with the following properties

\begin{enumerate}
\item Each connected component of $\Sigma - K$ is of infinite type and has 0 or infinite genus.
\item The connected components of $\Sigma - K$ partition $E$ as 
$$E = \bigsqcup_{A \in \calA} A \sqcup \bigsqcup_{P \in \mathcal{P}} P $$
where each $A \in \calA$ is self-similar, and for each $P \in \mathcal{P}$, there exists $A \in \calA$ such that $P$ is homeomorphic to a clopen subset of $A$, and
\item For all $A \in \calA$, the maximal points $\calM(A)$ are maximal in $E$, and $\calM(E) = \sqcup_{A \in \calA} \calM(A)$
\end{enumerate}
\end{proposition}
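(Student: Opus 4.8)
\emph{Proof plan.}
The strategy is to start from a coarsely bounded identity neighborhood furnished by the hypothesis and to improve it in stages.
First I would apply Lemma~\ref{lem:nondispCB}: since $\mcg(\Sigma)$ is locally CB, some $\calV_{K_0}$ with $K_0$ of finite type is CB, and (as noted there) we may enlarge $K_0$ so that every complementary region has infinite type. Enlarging further by adding bands and genus-handles (which enclose no ends), I would arrange that every complementary region $\Sigma_j$ of the resulting finite-type surface $K_1$ has a single boundary curve and $0$ or infinite genus, without changing the end spaces $B_j$ of the $\Sigma_j$; since $K_1\supseteq K_0$ we have $\calV_{K_1}\subseteq\calV_{K_0}$, still CB. Because a mapping class fixing $K_1$ pointwise is exactly a tuple of boundary-fixing mapping classes of the $\Sigma_j$, we get $\calV_{K_1}\cong\prod_j\mcg(\Sigma_j\,\mathrm{rel}\,\partial\Sigma_j)$ (an open subgroup, hence CB as a group); a finite direct product is CB only if each factor is (each being a continuous homomorphic image of the product), so each $\mcg(\Sigma_j\,\mathrm{rel}\,\partial\Sigma_j)$ is CB, and hence so is its quotient obtained by capping $\partial\Sigma_j$ with a once-punctured disk, namely the stabilizer $\mcg(\widehat{\Sigma_j},p_j)$ of the isolated puncture $p_j$ in $\mcg(\widehat{\Sigma_j})$, where $\widehat{\Sigma_j}$ has end space $B_j\sqcup\{p_j\}$.

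The first substantial step is to show each $B_j$ is self-similar. Suppose not. By the easy direction of Proposition~\ref{prop:M_self_sim} together with Proposition~\ref{prop:maximal_element}, one of the following holds: (a) $\Sigma_j$ contains a finite-type nondisplaceable subsurface; (b) $\calM(B_j)$ contains two inequivalent types; or (c) $\calM(B_j)$ is a single equivalence class but a finite set of size at least $2$. In case (a), the length-function construction of Section~\ref{sec:nondisplace}, restricted to the point stabilizer $\mcg(\widehat{\Sigma_j},p_j)$ (a pseudo-Anosov supported on the nondisplaceable subsurface fixes $p_j$), is unbounded. In cases (b) and (c) I would use $p_j$ as an extra invariant end and build, exactly as in Example~\ref{ex:easy_nondisplace} and Proposition~\ref{prop:disconnected}, a subsurface of $\widehat{\Sigma_j}$ (a pants enlarged to complexity at least $5$, separating $p_j$ from two distinct maximal types of $B_j$, or three invariant points) that is nondisplaceable for $\mcg(\widehat{\Sigma_j},p_j)$, since its image under any $p_j$-fixing homeomorphism must separate the same invariant sets; this again yields an unbounded length function. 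Either way this contradicts coarse boundedness of $\mcg(\widehat{\Sigma_j},p_j)$. The same argument also shows $\Sigma_j$ has no nondisplaceable subsurface and $\calM(B_j)$ is a single equivalence class, so Proposition~\ref{prop:M_self_sim} gives that $B_j\sqcup\{p_j\}$ is self-similar; and since $p_j$ is isolated, the partition $B_j\sqcup\{p_j\}$ together with Lemma~\ref{lem:two_sets_suffice} forces $B_j$ itself to be self-similar.

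Next I would split the complementary regions: let $\calA$ consist of those $\Sigma_j$ whose unique maximal type is maximal in $E$. Since every $x\in\calM(E)$ lies in some $B_j$ and is then automatically in $\calM(B_j)$, and $\calM(B_j)$ is a single type, we obtain $\calM(E)=\bigsqcup_{A\in\calA}\calM(A)$ with each $\calM(A)\subseteq\calM(E)$, which is condition (3); and each $A\in\calA$ is self-similar, giving half of condition (2). For $\Sigma_j\notin\calA$, its maximal type $[y_j]$ satisfies $y_j\less x_0$ for some $x_0\in\calM(E)$, which lies in some $A\in\calA$; since $A$ is a clopen neighborhood of $x_0$ and $y_j\less x_0$, every sufficiently small neighborhood of $y_j$ embeds as a clopen subset of $A$, and using self-similarity of $B_j$ and of $A$ together with the stable-neighborhood lemmas (Lemmas~\ref{lem:stable_nbhd}, \ref{lem:consume}) one shows $B_j$ itself — or, after subdividing $B_j$ by enlarging $K_1$ inside $\Sigma_j$ so each piece is of infinite type, with the finitely many isolated punctures that cannot be absorbed grouped into adjacent pieces using self-similarity of the target $A$ — is homeomorphic to a clopen subset of $A$. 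Taking $K$ to be $K_1$ together with these subdivisions, we have $K\supseteq K_0$ so $\calV_K$ is CB, the enlargements affect neither the $\calA$-regions nor the maximal ends, and the complementary regions of $K$ now satisfy (1), (2), and (3).

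I expect the main obstacle to be the self-similarity step: making rigorous the ``relative to $p_j$'' versions of the constructions of Section~\ref{sec:nondisplace} and Example~\ref{ex:easy_nondisplace}, and verifying that the hypothesis of Proposition~\ref{prop:M_self_sim} (no nondisplaceable subsurface for the full mapping class group of $\widehat{\Sigma_j}$) is genuinely available. A secondary difficulty is the embedding of the peripheral pieces into the $\calA$-regions when the maximal class involved is a Cantor set, where one cannot simply invoke a single stable neighborhood and must instead combine self-similarity with the back-and-forth arguments underlying Lemmas~\ref{lem:stable_nbhd} and \ref{lem:consume}.
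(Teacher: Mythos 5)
Your opening reduction contains the fatal step. You pass from ``$\calV_{K_1}$ is a CB \emph{subset} of $\mcg(\Sigma)$'' to ``$\calV_{K_1}$ is CB \emph{as a group}, hence so is each factor $\mcg(\Sigma_j\,\mathrm{rel}\,\partial\Sigma_j)$ and each capped-off stabilizer $\mcg(\widehat{\Sigma_j},p_j)$.'' This inference is false: the witnesses to coarse boundedness of $\calV_{K_1}$ in $\mcg(\Sigma)$ (the finite sets $\calF$ in Theorem \ref{thm:CB_criterion}) are homeomorphisms of $\Sigma$ that move one complementary region into another, so they do not live in $\calV_{K_1}$ and cannot be pushed to the factors. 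An open subgroup that is coarsely bounded as a subset need not carry only bounded length functions. The paper's own Example \ref{ex:weird_ex} is a counterexample to your conclusion: there $\calV_K$ is CB, yet the region $P$ contains points of two incomparable maximal types (the ``plain Cantor'' points of $C$ and the points of $D$ accumulated by $Q$), so $P$ is \emph{not} self-similar; correspondingly, a pair of pants in $\Sigma_P$ splitting $P\cap C$ into two pieces is nondisplaceable under $\calV_K$ (it preserves $P\cap C$ and fixes $K$) and yields an unbounded length function on $\mcg(\Sigma_P\,\mathrm{rel}\,\partial)$, even though that pants \emph{is} displaceable under the full group $\mcg(\Sigma)$ (which can push $\Sigma_P$ into $\Sigma_A$). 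So your first substantial step both rests on a false implication and asserts a false conclusion: the proposition deliberately claims self-similarity only for the $\calA$-regions, and only asks that the $P$-regions embed in some $A$.

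Everything downstream (the trichotomy via Proposition \ref{prop:M_self_sim}, the contradiction with CB-ness of $\mcg(\widehat{\Sigma_j},p_j)$, and the subsequent sorting into $\calA$ and $\calP$) inherits this gap. The paper's route is genuinely global: Lemma \ref{lem:nondispCB} only gives that finite-type subsurfaces of $\Sigma-K$ are displaceable under all of $\mcg(\Sigma)$, and the proof exploits exactly this, together with the $\mcg(\Sigma)$-invariance of the equivalence classes $E(x)$, to show that displacing homeomorphisms must carry a copy of some whole region $B_j$ into an arbitrarily small neighborhood of a maximal point of the \emph{same} type, possibly landing in a \emph{different} complementary region. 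Iterating over a neighborhood basis produces a stable (hence self-similar) neighborhood of each point of $\calM(E)$, and Lemma \ref{lem:consume} then absorbs the remaining regions as the sets $P$. If you want to repair your argument, you must replace the per-region CB claim with this global displacement-plus-invariance mechanism; the per-region groups simply are not coarsely bounded in general.
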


\begin{proof}[Proof of Proposition \ref{prop:partition2}]
Suppose that $\calV$ is a CB neighborhood of identity in $\mcg(\Sigma)$.  Let $K$ be a finite type surface such that $\calV_K \subset \calV$, so $\calV$ is also CB.  Enlarging $K$ if needed (and hence shrinking $\calV_K$), we may assume that each complimentary region to $K$ has either zero or infinite genus.  
Since $\less$ has only finitely many maximal types, enlarging $K$ further, we may assume that its complimentary regions separate the different maximal types, and further if for some maximal $x$ the set $E(x)$ is finite, then all the ends from $E(x)$ are separated by $K$.  Thus, complimentary regions to $K$ have either no end from $\calM(E)$, a single end from $\calM(E)$ or a Cantor set of ends of a single type from $\calM(E)$.  

Our goal is to show that the complimentary regions containing ends from $\calM(E)$ are all self-similar sets, and the end sets of the remaining regions have the property desired of the sets $P \in \mathcal{P}$ described above.  It will be convenient to introduce some terminology for the set of ends of a complimentary region to $K$, so call such a subset of $E$ a {\em complimentary end set}.  

For simplicity, assume as a first case that for each maximal type $x$, the set $E(x)$ is {\em finite}.
Fix a maximal type point $x \in E$, and let $B_1, B_2, \ldots B_k \subset E$ be the 
complementary end sets whose maximal points lie in $E(x)$.  We start by showing that 
at least one of the sets $B_i$ is self-similar.   Let $x_i$ denote the maximal point in $B_i$.   Let $U_i$ be any clopen neighborhood of $x_i$ in $B_i$.   Since $x_i \in E(x)$, we may find smallr neighborhoods $V_i \subset U_i$ such that each $U_i$ contains a homeomorphic copy of $V_j$, for all $j = 1, 2, \ldots, k$.  
Let $S \subset \Sigma-K$ be a subsurface, homeomorphic to the disjoint union of $k$ pairs of pants, such that the complimentary regions of the $i$th pair of pants partitions the ends of $\Sigma$ into $V_i$, $B_i - V_i$ 
and $E - B_i$.  

Since $\calV_K$ is assumed CB, the surface $S$ is displaceable by Lemma \ref{lem:nondispCB}, so at least one of the connected components of $S$ can be moved disjoint from $S$ by a homeomorphism.  Since $E(x)$ is homeomorphism invariant, we conclude that there is a copy of $B_j$ in some $V_i$, possibly with $i \neq j$.  Our choice of $V_i$ now implies that there is in fact a homeomorphic copy of $B_j$ in $U_j$.   Thus, we have shown that, for any neighborhoods $U_i$ of $x_i$, there exists $j$ such that $U_j$ contains a copy of $B_j$.  Applying this conclusion to each of a nested sequence of neighborhoods of the $x_i$ which give a neighborhood basis, we conclude that some $j$ must satisfy this conclusion infinitely often, giving us some $B_j$ which is self-similar.  

Since $x_i$ are the unique maximal points of $B_i$, this implies that each $x_i$ has a neighborhood $M_i$ homeomorphic to $B_j$, i.e. a self-similar set.   Repeating this process for all of the distinct maximal types, we conclude that each maximal point has a self-similar neighborhood.  Fix a collection of such neighborhoods. Since this collection is finite we may enumerate them $A_1, A_2, \ldots A_n$.  

Now for each non-maximal point $y$, Lemma \ref{lem:consume} implies that there exists a neighborhood $P_y$ of $y$ so that $P_y \cup A_i$ is homeomorphic to some $A_i$, a neighborhood of a maximal point that is a successor of $y$.  
Since $E - \sqcup_i A_i$ is compact, finitely many such neighborhoods $P_y$ cover $E - \sqcup_i A_i$.  Enlarging $K$, we may assume that it partitions the end sets into the disjoint union of such sets of the form $P_y$ and $A_i$.  
This concludes the proof in the case where $\calM$ is finite.

Now we treat the general case where, for some maximal types, the set $E(x)$ is a Cantor set.  The strategy is essentially the same.  We use the following lemma, which parallels the argument just given above.  
\begin{lemma} \label{lem:SS_nbhd}
Keeping the hypotheses of the Proposition, let $x$ be a maximal type with $E(x)$ a Cantor set. Then $x$ has a neighborhood which is self-similar.  \end{lemma} 

\begin{proof}[Proof of Lemma]
Let $A_1, \ldots A_k$ be the complimentary end sets which contain points of $E(x)$, and fix a maximal end $x_i$ in each $A_i$.   As before, we start by showing that, for some $j$, every neighborhood of $x_j$ contains a homeomorphic copy of $A_j$, so in particular $A_j$ is self-similar.    Let $U_i$ be a neighborhood of $x_i$.  For each $z \in E(x)$, let $V_z$ be a neighborhood of $z$ such that each of the sets $U_i$ contains a homeomorphic copy of $V_z$.  Since $E(x)$ is compact, finitely many such $V_z$ cover $E(x)$, so from now on we consider only a finite subcollection that covers.   Let $S \subset \Sigma-K$ be a subsurface homeomorphic to the union of $k$ disjoint $n$-holed spheres, where $n$ is chosen large enough so that each complimentary region of $S$ has its set of ends either contained in one of the finitely many $V_z$, or containing all but one of the sets $A_i$.   

Again, since $E(x)$ is invariant, and $S$ is displaceable, this means that there is some $V_z$ and some $A_j$ such that $V_z$ contains a homeomorphic copy of $A_j$.  Thus, by definition of $V_z$, we have that $U_j$ contains a homeomorphic copy of $A_j$.  
Repeating this for a nested sequence of neighborhoods of the $x_i$, we conclude that some $x_j$ satisfies this infinitely often. 
This means that $A_j$ is a stable neighborhood of $x_j$, hence by Lemma \ref{lem:stable_nbhd}, each point of $E(x)$ has a stable neighborhood, which is necessarily a self-similar set.  
\end{proof} 

At this point one can finish the proof exactly as in the case where all $E(x)$ are finite, by fixing a finite cover of $\cup_{x \in \calM(E)} E(x)$ by stable neighborhoods, and using Lemma \ref{lem:consume} as before. 
\end{proof}

\begin{proof}[Proof of Proposition \ref{prop:partition}]
Let $E = \bigsqcup_{A \in \calA} A \sqcup \bigsqcup_{P \in \mathcal{P}} P $ be the decomposition given by Proposition \ref{prop:partition2}.  By construction of the sets $P$ and Lemma \ref{lem:consume}, for each $P \in \mathcal{P}$, there exists $A \in \calA$ such that $P \sqcup A \cong A$.  Applying this to each $P$ iteratively, we conclude that $E$ is homeomorphic to the disjoint union $\sqcup_{A \in \calA} A$.   We may now take $L$ to be a subset of $K$.  
\end{proof}

With this groundwork in place, we can prove our first major classification theorem.

\begin{theorem} \label{thm:classificationCB}
$\mcg(\Sigma)$ is locally CB if and only if 
there is a finite type surface $K$ such that the complimentary regions of $K$ each have infinite type and 0 or infinite genus, and partition of $E$ into finitely many clopen sets 
$$E = \left( \bigsqcup_{A \in \calA} A \right) \sqcup \left( \bigsqcup_{P \in \mathcal{P}} P \right)$$
with the property that: 

\begin{enumerate}
\item Each $A \in \calA$ is self-similar, $\calM(A) \subset \calM(E)$, and $\calM(E) = \sqcup_{A \in \calA} \calM(A)$.  
\item Each $P \in \mathcal{P}$ is homeomorphic to a clopen subset of some $A  \in \calA$.
\item For any $x_A \in \calM(A)$, and any neighborhood $V$ of the end $x_A$ in $\Sigma$, there is $f_V \in \Homeo(\Sigma)$ so that $f_V(V)$ contains the complimentary region to $K$ with end set $A$.
\end{enumerate}
Moreover, in this case $\calV_K := \{g \in \Homeo(\Sigma) : g|_K = id \}$ is a CB neighborhood of the identity, and 
$K$ may always be taken to have genus zero if $\Sigma$ has infinite genus, and genus equal to that of $\Sigma$ otherwise, and a number of punctures equal to the number of isolated planar (not accumulated by genus) ends of $\Sigma$. 
\end{theorem}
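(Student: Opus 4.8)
The forward implication is largely in hand. Proposition \ref{prop:partition2} already produces a finite type surface $K$ whose complimentary regions have infinite type and $0$ or infinite genus, partition $E$ as $\bigsqcup_{A\in\calA}A\sqcup\bigsqcup_{P\in\mathcal{P}}P$ with each $A$ self-similar, $\calM(A)\subseteq\calM(E)$ and $\calM(E)=\bigsqcup_{A}\calM(A)$, each $P$ homeomorphic to a clopen subset of some $A$, and with $\calV_K$ coarsely bounded; this gives conditions (1), (2) and the ``moreover'' clause. What remains for the forward direction is to arrange the engulfing condition (3) and the stated normal form for $K$. For the converse one must show that the existence of such a $K$ forces $\calV_K$ --- hence an identity neighborhood --- to be coarsely bounded.

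\textbf{Forward direction, remaining steps.} The sets $A\in\calA$ produced by Proposition \ref{prop:partition2} are \emph{stable} neighborhoods of each of their maximal ends (this is what is established in the proofs of Lemmas \ref{lem:move_into}--\ref{lem:SS_nbhd}), and when $\Sigma_A$ has infinite genus these maximal ends lie in $E^G$. Starting from this, one verifies condition (3) by a further analysis of how $\Sigma_A$ sits in $\Sigma$: using stability to produce copies of $A$ in neighborhoods of its maximal ends, together with the end-space absorption identities of Lemma \ref{lem:consume} and the shift maps of Observation \ref{obs:shift}, one shows that $\Sigma_A$ can be carried --- by a homeomorphism of $\Sigma$ furnished by Richards' classification --- so as to be engulfed by any neighborhood of $x_A$; it may be necessary first to enlarge $K$, shrinking the still-CB neighborhood $\calV_K$, in order to make the relevant copies of $A$ available, and one checks this can be done without disturbing conditions (1) and (2). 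Finally a routine bookkeeping step --- pushing genus into the infinite-genus complimentary regions and absorbing isolated planar ends into punctures of $K$ --- yields the normal form.

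\textbf{The converse.} Since each $g\in\calV_K$ fixes $K$ pointwise it preserves every complimentary region $R$ of $K$ and restricts there to an element of the mapping class group $\mcg(R\,\mathrm{rel}\,\partial R)$ of $R$ fixing $\partial R$ pointwise; as there are finitely many such regions with disjoint interiors, $\calV_K$ equals the internal (commuting) product $\prod_R\mcg(R\,\mathrm{rel}\,\partial R)$ inside $\mcg(\Sigma)$. A finite product of subgroups each coarsely bounded in $\mcg(\Sigma)$ is again coarsely bounded (immediate from Theorem \ref{thm:CB_criterion}), so it suffices to show each factor is CB in $\mcg(\Sigma)$. For $R=\Sigma_A$ one runs the argument of Proposition \ref{prop:SSCB} relative to $\partial\Sigma_A$: given an identity neighborhood $\calV_S$ with $S\supseteq K$ of finite type, enlarge $S$ so that its complimentary regions have infinite type and, by self-similarity of $A$, so that some complimentary region $\Omega\subset\Sigma_A$ of $S$ has end set homeomorphic to $A$ and matching genus, hence $\Omega\cong\Sigma_A$ as bordered surfaces; then the construction of Lemma \ref{lem:find_R} (applied inside $\Sigma_A$, with condition (3) ensuring the ``rotation'' can be taken to fix $\partial\Sigma_A$) produces the homeomorphism playing the role of $f_{SR}$, and the proof of Proposition \ref{prop:SSCB} gives $\mcg(\Sigma_A\,\mathrm{rel}\,\partial\Sigma_A)\subseteq(\calF_A\calV_S)^5$ for a finite set $\calF_A\subset\mcg(\Sigma)$. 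For $R=\Sigma_P$, condition (2) gives $P\sqcup A\cong A$ for some $A\in\calA$; joining $\Sigma_P$ by a band to a subsurface of $\Sigma_A$ produces a bordered surface $\cong\Sigma_A$ that contains $\Sigma_P$, so $\mcg(\Sigma_P\,\mathrm{rel}\,\partial\Sigma_P)$ lies in a conjugate of the CB group $\mcg(\Sigma_A\,\mathrm{rel}\,\partial\Sigma_A)$ and is therefore CB. Assembling, $\calV_K\subseteq(\calF\calV_S)^k$ with $\calF$ finite, so $\calV_K$ is CB and $\mcg(\Sigma)$ is locally CB.

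\textbf{Where the difficulty lies.} The substantive points are, on the forward side, establishing condition (3) --- the end-space combinatorics behind the engulfing property, and the choice of the correct (possibly enlarged) $K$, are delicate and rely on the full strength of the stable-neighborhood machinery of Section \ref{sec:loc_CB} --- and, on the converse side, transplanting Proposition \ref{prop:SSCB} to bordered surfaces: one must check that the homeomorphisms coming from Richards' classification can be chosen to fix $\partial\Sigma_A$ pointwise, and that the Dehn twist about $\partial\Sigma_A$, trivial in $\mcg(\Sigma_A)$ but not in $\mcg(\Sigma_A\,\mathrm{rel}\,\partial\Sigma_A)$, is absorbed --- which is possible precisely because, by condition (3), $\partial\Sigma_A$ can be pushed arbitrarily far into the end $x_A$.
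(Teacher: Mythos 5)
Your forward direction has a real gap at condition (3). You propose to derive the engulfing property from the end-space machinery alone (stable neighborhoods, Lemma \ref{lem:consume}, Observation \ref{obs:shift}) applied to the decomposition furnished by Proposition \ref{prop:partition2}. That machinery cannot suffice: Corollary \ref{cor:infinite-rank-sort-of} exhibits a finite-genus surface whose end space is self-similar with a single maximal end admitting a stable neighborhood, and yet no $f_V$ as in (3) exists (and the mapping class group is not locally CB). So (3) is not a consequence of the structure recorded in (1) and (2); to establish it one must return to the coarse boundedness hypothesis. The paper does this by splitting on $|E(x_A)|$: when $E(x_A)$ has more than one point it uses a second region meeting $E(x_A)$ to absorb $A - E(V)$, and when $E(x_A)$ is a singleton it takes a pair of pants $S$ cobounding $\partial V$ and $\partial \Sigma_A$, displaces it via Lemma \ref{lem:nondispCB}, and uses invariance of $E(x_A)$ together with either a second maximal type or finiteness of the genus to force $f(\Sigma_A)\subset V$; in the one remaining case $E$ is self-similar and Proposition \ref{prop:SSCB} applies directly. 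None of this appears in your sketch.

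Your converse also has a gap, and is harder than it needs to be. Running Proposition \ref{prop:SSCB} ``rel $\partial\Sigma_A$'' cannot work as described: the engine of that proof is the homeomorphism $f_{SR}$ of Lemma \ref{lem:find_R}, which displaces the finite-type subsurface $S$ entirely into one of its complementary regions, but $S\cap\Sigma_A$ contains a collar of $\partial\Sigma_A$, and no homeomorphism fixing $\partial\Sigma_A$ pointwise can move a neighborhood of that boundary off itself. You flag this difficulty and appeal to condition (3) to repair it, but once (3) is invoked the relative argument becomes superfluous. The paper's converse is a direct conjugation: given finite type $T\supset K$, condition (3) supplies $f_V$ with $f_V(V)\supset\Sigma_A$ for $V$ the complementary region of $T$ containing $x_A$, so any $g_A$ supported on $\Sigma_A$ has $f_V^{-1}g_Af_V$ supported in $V\subset\Sigma-T$, hence $g_A\in F\calV_TF$; the maps $f_P$ of condition (2) conjugate the $\Sigma_P$-factors into the $\Sigma_A$-factors; and writing $g\in\calV_K$ as a product of $|\calA|+|\mathcal{P}|$ pieces gives $\calV_K\subset(F^2\calV_TF^2)^{|\calA|+|\mathcal{P}|}$. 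Your decomposition of $\calV_K$ into commuting factors and the observation that a finite product of CB sets is CB are fine; the CB-ness of each $\Sigma_A$-factor should come from (3) by conjugation, not from a bordered version of Proposition \ref{prop:SSCB}.
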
 

Note that the case where $K = \emptyset$ implies that $\Sigma$ has zero or infinite genus and self-similar end space, in which case we already showed that $\calV_\emptyset = \mcg(\Sigma)$ is CB.  
The reader may find it helpful to refer to Figure \ref{fig:locCB} for some very basic examples, all with $\calP = \emptyset$, and keep this in mind during the proof. 

\begin{proof}[Proof of Theorem \ref{thm:classificationCB} $(\Rightarrow)$]
The forward direction is obtained by a minor improvement of Proposition \ref{prop:partition2}.   
Assume $\mcg(\Sigma)$ is CB.  Let $K \subset \Sigma$ be a finite type surface with $\calV_K$ a CB neighborhood of the identity and the properties given in Proposition \ref{prop:partition2}.  We may enlarge $K$ if needed so that each of its boundary curves are separating, and so that whenever some maximal type $x$ has $E(x)$ homeomorphic to a Cantor set, then $E(x)$ is contained in at least two complimentary regions to $K$.  This latter step can be done as follows: if $A$ is the unique complementary region of $K$ containing the Cantor set $E(x)$, then add a 1-handle to $K$ that separates $A$ into two clopen sets, each containing points of $E(x)$.   
Since $A$ is self-similar, each point of $\calM(A)$ has a stable neighborhood by \ref{lem:stable_easy}, and so the two clopen sets of our partition are again each self-similar and each homeomorphic to $A$.  

Thus, we assume $K$ now has these properties, and let $E = \left( \sqcup_{A \in \calA} A  \right) \sqcup \left( \sqcup_{P \in \mathcal{P}}P \right)$ be the resulting decomposition of $E$, with $\Sigma_A$ and $\Sigma_P$ denoting the connected component of $K$ with end space $A$ or $P$ respectively.  
We need only establish that the third condition holds. 
Fix $A$, let $x_A \in \calM(A)$, and let $V \subset \Sigma$ be a neighborhood of the end $x_A$, and let $E(V)\subset A$ denote the end space of $V$.  We may without loss of generality assume that $V$ has a single boundary component.    If $|E(x)| > 1$, 
then by construction there exists $B \neq A \in \calA$ with $E(x) \cap B \neq \emptyset$.  
Since points of $E(x)$ have stable neighborhoods, $B \cup (A - E(V))$ is homeomorphic to $B$. Moreover, if $\Sigma_A$ has infinite genus, then $V$ and $\Sigma - \Sigma_A$ and $\Sigma - V$ all do as well, while if $\Sigma_A$ has genus 0, then so does $V$, and both complementary regions are of the same genus as well (equal to the genus of $\Sigma$).  
Thus, by the classification of surfaces there exists some $f_V \in \mcg(\Sigma)$ taking $V$ to $\Sigma_A$, which is what we needed to show.

Now suppose instead $|E(x)| = 1$. Here we will use the displaceable subsurfaces condition to find $f_V$.  
Let $S$ be a pair of pants in the complement of $K$, with one boundary component equal to $\partial V$ and another homotopic to $\partial \Sigma_A$.  Since $S \subset (\Sigma - K)$, it is displaceable, so let $f$ be a homeomorphism displacing $S$. Since $E(x) = x_A$ is an invariant set, up to replacing $f$ with its inverse, we have $f(S) \subset V$.  
If, as a first case, there exists a maximal end $y \nsim x$, then $E(y)$ is also an invariant set.  Thus, $f(\Sigma_A) \subset V$, hence we may take $f_V = f^{-1}$ and have $f_V(V) \supset A$.  

If, as a second case, $\Sigma$ has finite genus, then $f(\Sigma - \Sigma_A)$ necessarily contains all the genus of $\Sigma$, hence again we have $f(\Sigma_A) \subset V$.   Finally, if neither of these two cases holds, then $\Sigma$ has infinite or zero genus, and a unique maximal end, so $|\calA| = 1$ and $E$ is self-similar.  Thus, $\mcg(\Sigma)$ is CB by Proposition \ref{prop:SSCB}.  
\end{proof}

\begin{proof}[Proof of Theorem \ref{thm:classificationCB} $(\Leftarrow)$]
For the converse, the case where $K = \emptyset$, $\Sigma$ has 0 or infinite genus and a self-similar end space is covered by Proposition \ref{prop:SSCB}.  
 
So suppose $\Sigma$ is not 0 or infinite genus with a self-similar end space, but instead we have a finite type surface $K$ with the properties listed.  We wish to show that $\calV_K$ is CB.  
Let $T \subset \Sigma$ be a finite type surface with $\calV_T \subset \calV_K$, i.e. $T \supset K$.  We need to find a finite set F and some $n$ such that $(F \calV_T)^n$ contains $\calV_K$.   

For each $A \in \calA$, fix $x_A \in \calM_A$ and let $V_A$ be the connected component of $T$ containing $x_A$. 
Let $f_V$ be the homeomorphism provided by our assumption.   Also, for each $P \in \mathcal{P}$, choose a homeomorphism $f_P$ of $\Sigma$ that exchanges $P$ with a clopen subset of some $A \in \calA$ which is homeomorphic to $P$.  Let $F$ be the set of all such $f_V^{\pm1}$ and $f_P^{\pm 1}$.  

Now suppose $g \in \calV_K$.  We can write $g$ as a product of $|\calA| + |\mathcal{P}|$ homeomorphisms, where each one is supported on a surface of the form $\Sigma_A$ for $A \in \calA$ or $\Sigma_P$ for $P \in \mathcal{P}$ (adopting our notation from the previous direction of the proof).  

If some such homeomorphism $g_A$ is supported on $\Sigma_A$, then $f_V^{-1} g_A f_V$ restricts to the identity on $T$, so $g_A \in F\calV_T F$.  For a homeomorphism $g_P$ supported on $\Sigma_P$, then $f_P^{-1} g_P f_P$ is supported in $\Sigma_A$, so $g_P \in F^2\calV_T F^2$.  This shows that $g \in (F^2 \calV_T F^2)^{|\calA| + |\mathcal{P}|}$, which is what we needed to show.  
\end{proof} 

\subsection{Examples}
While the statement of Theorem \ref{thm:classificationCB} is somewhat involved, it is practical to apply in specific situations. Below are a few examples illustrating some of the subtlety of the phenomena at play.  
The first is an immediate consequence. 

\begin{corollary}
If $\Sigma$ has finite genus and countable, self-similar end space, then $\Sigma$ is not locally CB
\end{corollary}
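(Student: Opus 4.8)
The plan is to combine the classification of Theorem~\ref{thm:classificationCB} with a Cantor--Bendixson counting argument, the point being that condition~(3) of that theorem cannot hold. Note first that the statement is about surfaces of \emph{nonzero} genus: when $\Sigma$ has genus zero and self-similar end space, $\mcg(\Sigma)$ is coarsely bounded, a fortiori locally CB, by Proposition~\ref{prop:SSCB}. So assume the genus $g$ of $\Sigma$ is finite and positive; in particular $E^G=\emptyset$. Since $E$ is countable and $\Sigma$ is of infinite type, the theorem of Mazurkiewicz--Sierpi\'nski~\cite{MS} gives $E\cong\omega^\alpha n+1$ for some countable ordinal $\alpha\geq 1$ and some $n\geq 1$. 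By Proposition~\ref{prop:countable_nice}, $\less$ agrees with the Cantor--Bendixson order on $E$, so $\calM(E)$ is exactly the set of $n$ points of maximal rank $\alpha$; if $n\geq 2$ this is a finite set of size at least two, and self-similarity fails by the easy direction of Proposition~\ref{prop:M_self_sim}. Hence $n=1$, so $E\cong\omega^\alpha+1$ with a unique maximal end $x$, of rank $\alpha$.

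Now suppose for contradiction that $\mcg(\Sigma)$ is locally CB. Since $\Sigma$ does not have zero or infinite genus, Theorem~\ref{thm:classificationCB} supplies a \emph{nonempty} finite type surface $K$ as in its statement; let $E=\left(\bigsqcup_{A\in\calA}A\right)\sqcup\left(\bigsqcup_{P\in\mathcal{P}}P\right)$ be the induced partition, realized by the complimentary regions of $K$. As $x$ has rank $\alpha\geq 1$ it is not isolated, so it lies in a complimentary region $\Sigma_{A_0}$ with $A_0\in\calA$, and $\calM(A_0)\subset\calM(E)=\{x\}$ forces $\calM(A_0)=\{x\}$. By condition~(1), $A_0$ is self-similar; it is countable with unique maximal end $x$, so by the argument of the first paragraph $A_0\cong\omega^{\alpha'}+1$, and since $x$ has rank $\alpha$ in $A_0$ we get $\alpha'=\alpha$, i.e. $A_0\cong\omega^\alpha+1$. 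Put $Q:=E\setminus A_0$, a clopen subset of $E\cong\omega^\alpha+1$ not containing $x$; then $Q$ is empty or a countable compact space of Cantor--Bendixson rank $\delta<\alpha$, and in the latter case $Q^{(\delta)}$ is a nonempty finite set.

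We contradict condition~(3) applied to $A_0$ and the end $x$. Choose a nonempty clopen subset $Z\subset A_0\setminus\{x\}$ which, when $Q\neq\emptyset$, is homeomorphic to $\omega^\delta k+1$ for some $k>|Q^{(\delta)}|$ (possible since $A_0\cong\omega^\alpha+1$ and $\delta<\alpha$); then $|Z^{(\delta)}|=k$. Let $U:=A_0\setminus Z$, a clopen neighborhood of $x$, and let $V\subset\Sigma_{A_0}$ be a connected, one-boundary subsurface with $\Ends(V)=U$, so that $V$ is a neighborhood of the end $x$. By condition~(3) there is $f\in\Homeo(\Sigma)$ with $f(V)\supset\Sigma_{A_0}$; let $\phi$ be the homeomorphism it induces on $E$. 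From $\Sigma_{A_0}\subset f(V)$ we get $A_0=\Ends(\Sigma_{A_0})\subset\Ends(f(V))=\phi(U)$, hence $\phi(E\setminus U)=E\setminus\phi(U)\subset E\setminus A_0=Q$; since $\phi$ is a homeomorphism this image is a clopen subspace of $Q$ homeomorphic to $E\setminus U=Z\sqcup Q$. If $Q=\emptyset$, this is absurd, as $Z\sqcup Q=Z$ is nonempty. If $Q\neq\emptyset$, then passing to $\delta$-th Cantor--Bendixson derivatives, and using that a clopen subspace $C\subset Q$ has $C^{(\delta)}=C\cap Q^{(\delta)}\subset Q^{(\delta)}$, we would obtain $|Z^{(\delta)}|+|Q^{(\delta)}|=\big|(Z\sqcup Q)^{(\delta)}\big|\leq|Q^{(\delta)}|$, impossible as $|Z^{(\delta)}|\geq 1$. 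Thus condition~(3) fails, so $\mcg(\Sigma)$ is not locally CB.

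The main obstacle is this last step: one needs to see that condition~(3), applied to a suitably small neighborhood of the top end $x$, forces the complement of that neighborhood's end set to embed -- as a clopen subspace, after a self-homeomorphism of $\Sigma$ -- into the ``leftover'' end space $Q=E\setminus A_0$, and then to choose the neighborhood so that this complement carries strictly more top-rank ends than $Q$ can absorb. The two auxiliary facts used (finiteness of $Q^{(\delta)}$, and realizability of a prescribed clopen neighborhood of $x$ as the end set of a connected one-boundary subsurface of $\Sigma_{A_0}$) are routine consequences of Richards' classification~\cite{Richards} and the structure of $\omega^\alpha+1$.
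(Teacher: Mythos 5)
Your proof is correct and follows the route the paper intends: the corollary is stated as an immediate consequence of Theorem~\ref{thm:classificationCB}, and you derive it from that theorem by showing condition~(3) must fail, with the Mazurkiewicz--Sierpi\'nski normal form and a Cantor--Bendixson rank count supplying the detail needed when $\calP\neq\emptyset$ (for $E\cong\omega+1$ the clopen complement of any neighborhood of the top end is finite and the contradiction is instant, but for higher rank one really does need your counting step). You are also right to read ``finite genus'' as ``finite nonzero genus'': in the genus-zero case Proposition~\ref{prop:SSCB} makes $\mcg(\Sigma)$ globally, hence locally, CB, so the statement as literally written excludes that case implicitly.
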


As a more involved example, we have the following. 

\begin{corollary} \label{cor:infinite-rank-sort-of}
Suppose that $\Sigma$ has finite genus and self-similar end space, with a single maximal end $x$, but infinitely many distinct  immediate precursors to $x$.  Then $\mcg(\Sigma)$ is not locally CB.  
\end{corollary}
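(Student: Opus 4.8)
Suppose, for contradiction, that $\mcg(\Sigma)$ is locally CB. The plan is to feed this into the classification Theorem~\ref{thm:classificationCB}: it produces a finite type surface $K$ and a partition $E = \big(\bigsqcup_{A\in\calA}A\big)\sqcup\big(\bigsqcup_{P\in\mathcal{P}}P\big)$ with properties (1)--(3). Because $E$ is self-similar with a single maximal end, Proposition~\ref{prop:M_self_sim} gives $\calM(E)=\{x\}$; since $\calM(E)=\bigsqcup_{A\in\calA}\calM(A)$ and each $\calM(A)$ is nonempty (Proposition~\ref{prop:maximal_element}), we must have $|\calA|=1$. Write $A_0$ for this set, so $E = A_0\sqcup\bigsqcup_{P\in\mathcal P}P$, $A_0$ is self-similar with $\calM(A_0)=\{x\}$, and (by Proposition~\ref{prop:partition}) $A_0\cong E$. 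Note the genus hypothesis is used here only to guarantee $K\neq\emptyset$, since $K=\emptyset$ would force $\Sigma$ to have zero or infinite genus; a short argument with property (3) applied to a very small neighborhood of $x$ then also forces $\mathcal P\neq\emptyset$, so $\bigsqcup_{P}P$ is a nonempty clopen subset of $E$ with $x\notin\bigsqcup_P P$.

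Next I would exploit property (3) to show that $\bigsqcup_{P}P$ realizes every immediate precursor of $x$. Let $y_1,y_2,\dots$ be the (infinitely many, pairwise incomparable) immediate precursor types of $x$. Fix $i$. Since $\overline{E(y_i)} = E(y_i)\cup\{x\}$ (an accumulation point of $E(y_i)$ is $\moreq y_i$, hence $\sim y_i$ or $=x$ because $y_i$ is immediately below the maximum), for a sufficiently small neighborhood $V$ of $x$ in $\Sigma$ we have $E(y_i)\not\subseteq E(V)$, so $E\setminus E(V)$ contains a point of type $y_i$. Applying (3) to $V$, there is $f_V\in\Homeo(\Sigma)$ with $f_V(V)\supseteq\Sigma_{A_0}$; hence $\Sigma\setminus f_V(V)\subseteq\Sigma\setminus\Sigma_{A_0}$, and taking ends, $E\setminus E(V)\cong f_V(E\setminus E(V)) = E\setminus f_V(E(V))$, which is a clopen subset of $E\setminus A_0=\bigsqcup_P P$. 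Thus $\bigsqcup_{P}P$ contains a clopen set realizing $y_i$; as $i$ was arbitrary, $\bigsqcup_{P}P$ realizes all of $y_1,y_2,\dots$.

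Finally, the contradiction: $\bigsqcup_P P$ is compact and avoids $x$, yet it carries points of the infinitely many pairwise incomparable types $y_i$, each of which only accumulates (outside its own class) at $x$. Concretely, set $F_i:=E(y_i)\cap\bigsqcup_P P$; each $F_i$ is nonempty, and $\overline{F_i}\subseteq\overline{E(y_i)}\cap\bigsqcup_P P = (E(y_i)\cup\{x\})\cap\bigsqcup_P P = F_i$, so $F_i$ is closed. Using that the $y_i$ are pairwise incomparable—so a point of one type cannot be an accumulation point of the classes of the others, and a transverse accumulation of infinitely many of the classes forces the point to be $\moreq x$, i.e. to equal $x\notin\bigsqcup_P P$—one gets that $\bigcup_i F_i$ is closed and that each $F_i$ is open in it. Then $\bigcup_i F_i$ is a compact space partitioned into infinitely many nonempty clopen pieces, which is impossible. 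This last step, controlling the accumulation points of $\bigcup_i F_i$, is the technical heart of the argument (it is the exact analogue, for immediate precursors, of the fact that local coarse boundedness forces only finitely many maximal types); everything else is bookkeeping with the classification theorem.
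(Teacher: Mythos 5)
Your overall route is the same as the paper's: assume local coarse boundedness, invoke Theorem~\ref{thm:classificationCB} to get $\calA=\{A\}$ with $x_A=x$, and then play condition (3) against the infinitude of immediate precursor types to contradict compactness of $E-A$. Your step pushing every precursor type into $\bigsqcup_P P$ via condition (3) is correct, and is the paper's final step run in reverse (the paper instead asserts that $E-A$ realizes only finitely many types and uses (3) to produce a $V$ whose complement realizes one more). The reduction to $|\calA|=1$ is also fine, modulo the fact that ``finite genus'' must be read as ``finite nonzero genus,'' since a genus-zero surface with self-similar end space is globally CB by Proposition~\ref{prop:SSCB}; that is an issue with the statement rather than with your argument.

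The gap is exactly where you locate the ``technical heart,'' and your sketch does not close it. The assertion that a transverse accumulation of infinitely many of the classes forces the accumulation point to be $\moreq x$ is false: for a fixed $i$, $q\moreq y_i$ requires \emph{every} neighborhood of $q$ to contain a point of type $y_i$, whereas transverse accumulation only supplies, for each neighborhood, \emph{some} infinite, neighborhood-dependent set of indices. Concretely, let $V_1,V_2,\dots$ be disjoint clopen blocks realizing the pairwise incomparable types $y_1,y_2,\dots$ and Hausdorff converging to a point $w$. Then $w$ satisfies your transverse accumulation condition, yet $w\not\moreq y_i$ for each $i$ (a small neighborhood of $w$ misses $V_i$ and contains no point of type $y_i$) and $w\not\moreq x$ (its small neighborhoods contain no points of type $y_1$, while every neighborhood of $x$ does); such a $w$ is a \emph{new} immediate precursor of $x$, incomparable to all the $y_i$. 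This is precisely the configuration of the set $D$ in Example~\ref{ex:non_tame}, and taking countably many copies of $D$ converging to a single point gives a self-similar end space with a unique maximal end and infinitely many immediate precursors in which one copy of $D$ --- a compact clopen set avoiding $x$ --- meets \emph{every} precursor class. (Pairwise incomparable types can be realized with planar ends alone, e.g.\ Cantor sets densely decorated by countable ends of different Cantor--Bendixson ranks, so this occurs within the corollary's hypotheses.) In such examples $\bigcup_i F_i$ is not closed, its closure picking up $w$, so the ``infinite clopen partition of a compact set'' contradiction never materializes; the corollary still holds there, but for a different reason (condition (3) forces $E-A$ to contain arbitrarily many points of $E(w)$, a class that is discrete and accumulates only at $x$). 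In fairness, the paper's own proof dispatches this same step with ``by construction, $E-A$ contains ends of only finitely many types,'' which is clear for the displayed example (where the closures of the precursor classes themselves Hausdorff converge to $x$) but not for general end spaces satisfying the hypotheses; you have correctly isolated the hard point, but the argument given for it does not work.
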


As a concrete example, one could construct $E$ by taking countably many copies of a Cantor set indexed by $\N$, all sharing a single point in common and Hausdorff converging to that point, with the $n$th copy accumulated everywhere by points locally homeomorphic to $\omega^n +1$.  

\begin{proof} 
If $\mcg(\Sigma)$ were locally CB, then we would have a finite type surface $K$ as in Theorem \ref{thm:classificationCB}. 
Since $\calM$ is a singleton, $\calA = \{A\}$, and $x_A = x$, and $\Sigma_A$ is some neighborhood of the end $x$.  However, by construction, $E - A$ contains ends of only finitely many types.   Thus, we may choose a smaller neighborhood $V$ of $x$ so that $\Sigma - V$ has more distinct types of ends.  Then $\Sigma - V$ cannot possibly be homeomorphic to $\Sigma - \Sigma_A$, so no such $f_V$ exists.  
\end{proof}

By contrast, if $\Sigma$ is finite genus with end space equal to a Cantor set, or attained by the construction in Corollary \ref{cor:infinite-rank-sort-of} but replacing $\N$ with a finite number, then $\mcg(\Sigma)$ is locally CB.    We draw attention to a specific case of this to highlight the role played by $\calA$ and $\mathcal{P}$.

\begin{example} \label{ex:weird_ex}
Let $\Sigma$ be a surface of finite, nonzero genus $g$, with $E$ homeomorphic to the union of a Cantor set $C$ and a Cantor set $D$, and a countable set $Q$, with $C \cap D = \{x\}$ and the accumulation points of $Q$ equal to $D$, as illustrated in Figure \ref{fig:PandA}
Then by Theorem \ref{thm:classificationCB}, a CB neighborhood of the identity in $\mcg(\Sigma)$ can be taken to be $\calV_K$ where $K$ is a finite type subsurface of genus $g$ with two boundary components, with one complimentary region to $K$ having $x$ as an end, and the other containing points of both $C$ and $D$.   In this case, $\calA$ and $\calP$ are both singletons, with one complimentary region in each. 

The set $E$ itself is self-similar, and the decomposition into self similar sets given by Proposition \ref{prop:partition} is trivial.  However, if $K'$ is a finite type subsurface realizing this decomposition (with a single complimentary region), then $\calV_{K'}$ is not a CB set.  Indeed, one may find a nondisplaceable subsurface in the complement homeomorphic to a three holed sphere, where one complimentary region has $x$ as and end, one contains all the genus of $\Sigma$ but no ends, and the third contains points of $C$, for example.

  \begin{figure*}[h]
     \centerline{ \mbox{
 \includegraphics[width = 1.7in]{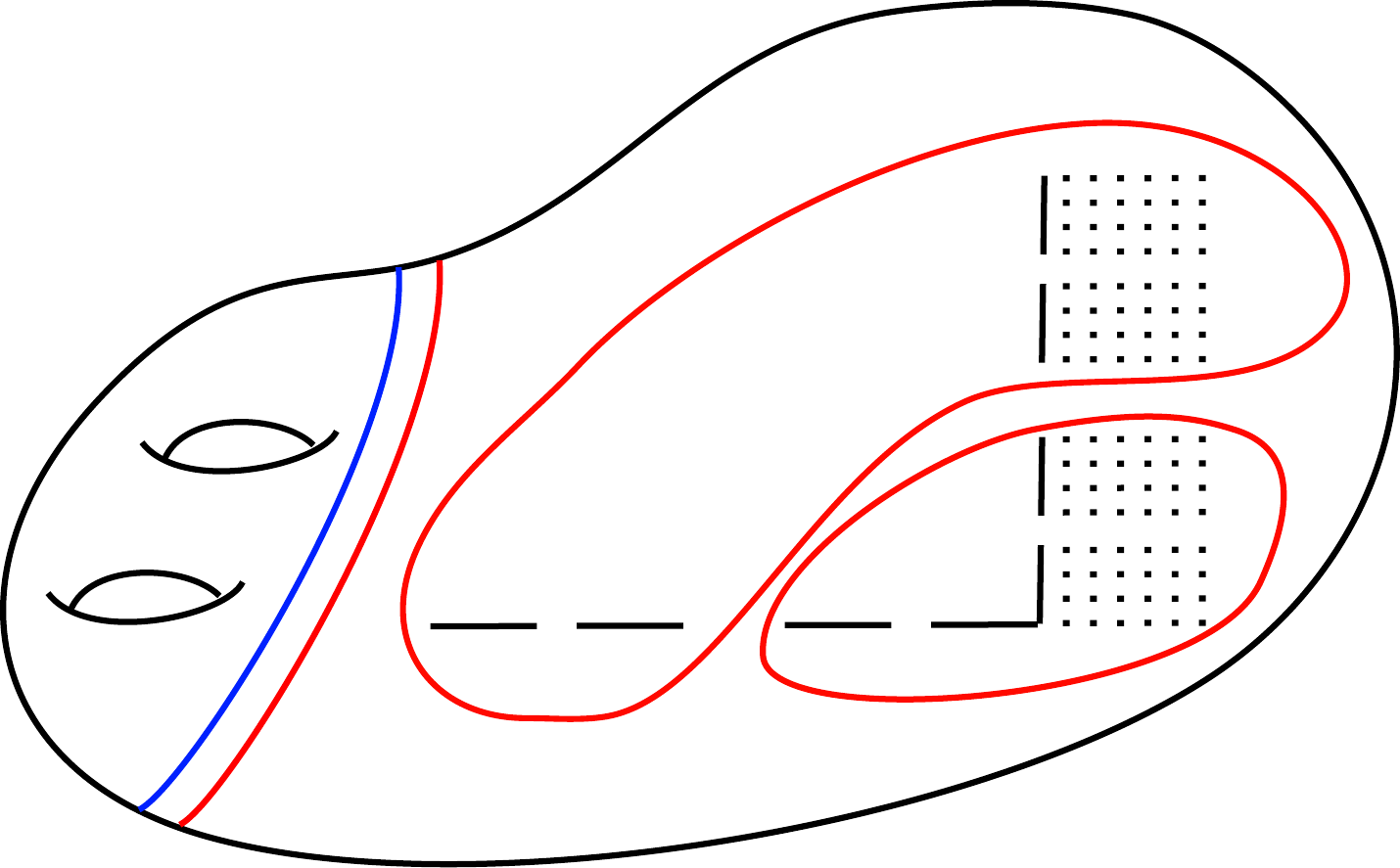}}}
 \caption{The subsurface with red boundary defines a CB neighborhood, while the smaller subsurface with blue boundary does not.}
  \label{fig:PandA}
  \end{figure*}

\end{example} 

\section{CB generated mapping class groups}  \label{sec:CBgen}
In this section we give general criteria for when mapping class groups are CB generated, building towards the proof of Theorem \ref{thm:CB_generated}.

\subsection{Two criteria for CB generation} \label{sec:criteria}

\begin{notation}
For a subset $X \subset E$, we say a family of neighborhoods $U_n$ in $E$ 
{\em descends to $X$} if $U_n$ are nested, meaning $U_{n+1} \subseteq U_n$, and if 
$\bigcap_{n\in \N} U_n = X$. As a shorthand, we write $U_n \searrow X$. If $X= \{x\}$ is a singleton, we abuse notation slightly and write 
$U_n \searrow x$ and say $U_n$ descends to $x$.   
\end{notation} 

\begin{definition}[Limit type] We say that an end set $E$ is {\em limit type} if there is a finite index subgroup G of 
$\mcg(\Sigma)$, a $G$--invariant set $X\subset E$, points $z_n \in E$, indexed by $n \in \N$ which are pairwise inequivalent, 
and a family of neighborhoods $U_n \searrow X$ such that 
\[
E(z_n) \cap U_n \not = \emptyset, \qquad 
E(z_n) \cap U_0^c \not = \emptyset,
\qquad\text{and}\qquad
E(z_n) \subset \big(U_n \cup U_0^c\big). 
\]
Here $U_0^c = E - U_0$ denotes the complement of $U_0$ in $E$. 
\end{definition} 

The following example explains our choice of the terminology ``limit type''.

\begin{example} \label{ex:limit_type}
Suppose that $\alpha$ is a countable limit ordinal, and $E \cong \omega^\alpha \cdot n + 1$, with $n \geq 2$, and $E^G = \emptyset$.   To see that this end space is limit type, take $G$ to be the finite index subgroup pointwise fixing the $n$ maximal ends.   Fix a maximal end $x$ and a clopen neighborhood $U_0$ of $x$ disjoint from the other maximal ends, and let $U_n \subset U_0$ be nested clopen sets forming a neighborhood basis of $x$.   Since $U_n - U_{n+1}$ is closed, there is a maximal ordinal $\beta_n$ such that $U_n$ contains points locally homeomorphic to $\omega^{\beta_n} +1$.  Passing to a subsequence we may assume that all of these are distinct, and one may choose $z_n \in U_n$ to be a point locally homeomorphic to $\omega^{\beta_n} +1$.    Note that necessarily the sequence $\beta_n$ converges to the limit ordinal $\alpha$.   The assumption that $n \geq 2$ ensures that the sets $E(z_n)$ contain points outside of $U_0$, and we require this in the definition to ensure that $E$ is not self-similar.  
\end{example}

\begin{lemma}[Limit type criterion] \label{lem:limit-type} 
If an end set $E$ has limit type, then $\mcg(\Sigma)$ is not CB generated. 
\end{lemma}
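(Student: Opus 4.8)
The plan is to use Rosendal's characterization of CB generation: a Polish group $G$ with a CB identity neighborhood is CB generated if and only if whenever $G = \bigcup_n V_n$ is written as an increasing union of open subsets with $V_n \cdot V_n \subset V_{n+1}$, some $V_n$ is already all of $G$ (equivalently, a countable increasing chain of ``coarsely bounded pieces'' that exhausts $G$ must stabilize). So the strategy is to take the data witnessing limit type --- the finite index subgroup $G_0 \le \mcg(\Sigma)$, the $G_0$-invariant set $X$, the inequivalent ends $z_n$, and the descending neighborhoods $U_n \searrow X$ --- and from it manufacture an exhausting chain that does not stabilize, thereby obstructing CB generation. (Since a group is CB generated iff a finite index subgroup is, it suffices to defeat CB generation of $G_0$.)

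First I would, for each $n$, build a concrete mapping class $\phi_n \in G_0$ that is ``supported near level $n$'': using that $E(z_n)$ meets both $U_n$ and $U_0^c$ and is contained in $U_n \cup U_0^c$, and that $E(z_n)$ is homeomorphism-invariant under $G_0$ (as $z_n$'s type is preserved, and $X$ is $G_0$-invariant so $U_n$-vs-$U_0^c$ structure is respected on the $z_n$-stratum), I would take $\phi_n$ to be a homeomorphism that pushes a copy of a neighborhood of a point of $E(z_n) \cap U_0^c$ into $U_n$ and acts as a ``handle shift'' or pseudo-Anosov-flavored map on a nondisplaceable configuration straddling $U_n$ and $U_0^c$. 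The key point is that any such $\phi_n$ must genuinely ``see'' the difference between scale $n$ and scale $0$: I want to show that $\phi_n$ cannot be written as a short word in $\calV_K \cup \{\phi_1,\dots,\phi_{n-1}\}$ for the CB identity neighborhood $\calV_K$, because the elements $\phi_1,\dots,\phi_{n-1}$ only interact with the ends $z_1,\dots,z_{n-1}$ which, after passing deep enough into the $U_n$, are ``finished'' --- there is no nontrivial type left between them and $X$ that $\phi_n$ exploits. Concretely I would produce, for each $n$, an unbounded length function $\ell_n$ (via the nondisplaceable-subsurface machinery of Section \ref{sec:nondisplace}, Proposition \ref{prop:disconnected}) which is bounded on $\calV_K$ and on each $\phi_j$ with $j \ne n$, but unbounded on powers of $\phi_n$; this is exactly the kind of witness used in the genus / $E \cong \omega^\alpha n+1$ limit-ordinal case of Theorem \ref{thm:countableE}(iii).

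Then I would assemble these into the non-stabilizing exhaustion: set $W_n$ to be (an open CB set containing) the ``word-length $\le n$ ball'' with respect to $\calV_K$ together with $\phi_1,\dots,\phi_n$, check $W_n$ is CB using Theorem \ref{thm:CB_criterion} (finitely many new generators times $\calV_K$, a fixed CB neighborhood), check $W_n W_n \subset W_{2n} \subset W_{n'}$ after reindexing, and check $\bigcup_n W_n = G_0$ since $\calV_K$ generates $G_0$ topologically (local CB gives $\calV_K$ a neighborhood of the identity, and $\mcg(\Sigma)$ is generated by any identity neighborhood together with enough elements --- here one must be slightly careful and possibly enlarge the generating family by a fixed countable set known to generate, which does not affect the argument). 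Finally, $W_n \ne G_0$ for every $n$ because $\phi_{n+1} \notin W_n$: the length function $\ell_{n+1}$ is bounded on every element of $W_n$ (it is bounded on $\calV_K$ and on $\phi_1,\dots,\phi_n$ and is subadditive) but unbounded on $\phi_{n+1}$. Hence no $W_n$ exhausts $G_0$, so $G_0$, and therefore $\mcg(\Sigma)$, is not CB generated.

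The main obstacle I anticipate is the construction of the length functions $\ell_n$ with the precise separation property: that $\ell_n$ is simultaneously unbounded on $\phi_n$ yet bounded on $\phi_j$ for all $j \ne n$ and on the whole of $\calV_K$. This requires pinning down a nondisplaceable subsurface $S_n$ living in the ``annular region'' $U_{n-1} - U_n$ (roughly) whose type $z_{n-1}$/$z_n$ fingerprint makes it invisible to maps supported at other levels --- the subtlety is that the $U_n$ are only nested with intersection $X$, not disjoint, so ``level $n$'' and ``level $m$'' overlap, and one must use the pairwise inequivalence of the $z_n$ and the invariance of $E(z_n)$ to certify nondisplaceability of $S_n$ and the boundedness of $\ell_n$ on $\phi_j$ ($j\ne n$). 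Managing this interplay cleanly --- essentially an infinite-dimensional version of the ``surjection onto $\bigoplus \Z$ is not finitely generated'' obstruction --- is where the real work lies; everything else is bookkeeping with Theorem \ref{thm:CB_criterion} and Rosendal's chain criterion.
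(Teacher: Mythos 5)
Your plan hinges on producing, for each $n$, a length function $\ell_n$ that is bounded on a CB identity neighborhood $\calV_K$ and on the $\phi_j$ with $j\neq n$ but unbounded on powers of $\phi_n$, built from a nondisplaceable subsurface $S_n$ sitting roughly in $U_{n-1}-U_n$. This step fails, for two reasons. First, the lemma is used (and proved) in the setting where $\mcg(\Sigma)$ is locally CB, and by Lemma \ref{lem:nondispCB} there is then \emph{no} nondisplaceable finite type subsurface in $\Sigma-K$; the subsurfaces $S_n$ you want do not exist. Worse, even if some $S_n$ disjoint from $K$ were nondisplaceable, the resulting length function is unbounded on mapping classes supported on $S_n$, which lie in $\calV_K$ --- so it could not be bounded on $\calV_K$, which your chain argument requires. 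Second, and more conceptually, the separation property you are after (independent ``directions'' detected by length functions, essentially a surjection onto $\bigoplus\Z$) is the \emph{infinite rank} obstruction of Lemma \ref{lem:infinite-rank}, which is logically independent of limit type. In Example \ref{ex:limit_type}, $E\cong\omega^\alpha\cdot 2+1$ with $\alpha$ a limit ordinal, the class $E(z_n)$ accumulates at every point of rank greater than $\beta_n$, of which $U_n$ contains many besides the invariant set $X$; so neither the counting homomorphisms of Lemma \ref{lem:infinite-rank} nor any level-$n$ length function of the kind you describe is available, yet the group is not CB generated. Limit type cannot be reduced to an infinite-rank-style argument.

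The argument that works is much softer and uses no length functions. By Theorem \ref{thm:CB_criterion}, if $G$ were CB generated then $G=\langle F\calV\rangle$ for some finite $F$ and any prescribed identity neighborhood $\calV$, so it suffices to defeat every finite $F$. Choose $\calV$ so small that every $g\in\calV$ satisfies $g(U_n)\subset U_0$ and $g(U_0^c)\cap U_n=\emptyset$ for all $n$. Since each $f\in F$ preserves $X$ and each equivalence class $E(z_n)\subset U_n\sqcup U_0^c$, and $U_n\searrow X$, there is $N$ such that for all $n>N$ and all $f\in F\cup\calV$ one has $f\bigl(E(z_n)\cap U_n\bigr)\subset U_n$; hence every word in $F\cup\calV$ preserves $E(z_n)\cap U_n$. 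But $E(z_n)$ meets both $U_n$ and $U_0^c$, so there is a homeomorphism carrying a point of $E(z_n)\cap U_n$ into $U_0^c$, and that element cannot lie in $\langle F\calV\rangle$. Your elements $\phi_n$ are the right witnesses of non-generation; the point is that their non-membership is certified by this set-theoretic invariance, not by unbounded length functions.
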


\begin{proof}
Let $G$, $X$, $U_n$ and $z_n$ be as in the definition of limit type. We will show $G$ 
is not CB generated.  Since $G$ is finite index, this is enough to show that $\mcg(\Sigma)$ is not CB generated. 
Furthermore, since $\mcg(\Sigma)$ (and hence $G$) is assumed to be locally CB, it suffices to show that there is some neighborhood $\calV_G$ of the identity in $G$ such that for any finite set $F$, the set $F\calV_G$ does not generate $G$.   

Let $\calV_G$ be a neighborhood of the identity in 
$G$, chosen small enough so that, for every $g \in \calV_G$, and all $n>0$ we have 
$g(U_n) \subset U_0$ and $g(U_0^c) \cap U_n = \emptyset$. 

Let $F$ be any finite subset of $G$. Since $G$ preserves both the set $X$ and the set $E(z_n) \subset U_n \sqcup U_0^c$, there exists $N \in \N$ such that for all $n > N$, and all $f \in F$, we have 
\[
f \big( E(z_n) \cap U_n) \big) \subset U_n.
\]      
The same holds for elements of $\calV_G$.  

Fix such $n>N$, and let $x_n \in E(z_n) \cap U_n$ and $y_n \in E(z_n) \cap U_0^c$ .   Since $x_n \sim y_n$, there is a homeomorphism $h$ with $h(x_n)$ lying in a small neighborhood of $y_n$ contained in $U_0^c$.  
By our observation above, $h$ is not in the subgroup generated by $F\calV_G$, which shows that $G$ is not CB generated, as desired. 
\end{proof}

A second obstruction to CB-generation is the following ``rank" condition. 

\begin{definition}[Infinite rank] 
We say $\mcg(\Sigma)$ has \emph{infinite rank} if there is a finite
index subgroup $G$ of $\mcg(\Sigma)$, a closed $G$--invariant set $X$, neighborhood
$U$ of $X$ and points $z_n$, $n \in \N$, each with a {\em stable neighborhood} (see def. \ref{def:stable_nbhd}) such that 
\begin{itemize}
\item $z_n \notin E(z_m)$ if $m \neq n$, 
\item for all $n$, $E(z_n)$ is countably infinite and has at least one accumulation point in both $X$ and in $E-U$, and
\item the set of accumulation points of $E(z_n)$ in $U$ is a subset of $X$. 
\end{itemize}
If the above does not hold, we say instead that $\mcg(\Sigma)$ has {\em finite rank}.  
\end{definition} 

\begin{example}  \label{ex:infinite_rank}
A simple example of such a set is as follows.  Let $C_n$ be the union of a countable set and a Cantor set, with Cantor-Bendixson rank $n$, and $n$th derived set equal to the Cantor set.  For each $C_n$, select a single point $z_n$ of the Cantor set to be an end accumulated by genus.  Now create an end space $E$ by taking $\N$-many copies of each $C_n$, arranged so that they have exactly two accumulation points $x$ and $y$ (and these accumulation points are independent of $n$).   Then $X = \{x\}$  and the points $z_n$ satisfy the definition.  
\end{example} 


Examples of surfaces with {\em countable} end spaces and infinite rank mapping class groups are much more involved to describe.  (Note that these necessarily must have infinite genus.). It would be nice to see a general procedure for producing families of examples.  

\begin{lemma}[Infinite Rank Criterion]  \label{lem:infinite-rank} 
If $\mcg(\Sigma)$ has infinite rank, then it is not CB generated. 
\end{lemma}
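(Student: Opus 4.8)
The plan is to mirror the structure of the proof of the limit-type criterion (Lemma~\ref{lem:limit-type}): I will produce, for every finite set $F$ in a suitable finite-index subgroup $G$, an element of $G$ that cannot be written as a word in $F$ together with a fixed small identity neighborhood. The new ingredient compared to the limit-type case is that the obstruction is no longer that projections get ``trapped'' in a shrinking neighborhood, but rather an accounting of how a finite-type piece can meet only boundedly many of the independent ``directions'' $E(z_n)$. Since $\mcg(\Sigma)$ is assumed locally CB (it must be, or there is nothing to prove — one can either assume this or invoke that CB-generated implies locally CB since a CB generating set is a CB identity neighborhood), it suffices to exhibit a CB identity neighborhood $\calV_G$ such that $F\calV_G$ fails to generate $G$ for every finite $F$.

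\begin{proof}
Let $G$, $X$, $U$ and $z_n$ be as in the definition of infinite rank. Since $G$ is finite index in $\mcg(\Sigma)$, it suffices to prove that $G$ is not CB generated. Because $\mcg(\Sigma)$ — and hence $G$ — is locally CB (a CB generating set would in particular be a CB identity neighborhood), it is enough to find an identity neighborhood $\calV_G$ in $G$ so that for every finite set $F \subset G$ the set $F\calV_G$ does not generate $G$.

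For each $n$, since $z_n$ has a stable neighborhood, fix a stable neighborhood $N_n$ of $z_n$; by Lemma~\ref{lem:stable_nbhd} every sufficiently small neighborhood of every point of $E(z_n)$ is homeomorphic to $N_n$, and since the accumulation points of $E(z_n)$ inside $U$ lie in $X$ while $E(z_n)$ also accumulates in $E - U$, we may assume the $N_n$ are pairwise disjoint, disjoint from $X$, and that $E(z_n) \cap U$ is covered by small copies of $N_n$ accumulating only onto $X$. Choose a finite-type subsurface $K$ realizing the clopen separation: $X$, a fixed collection of the $N_n$'s, and the rest of $E$ lie in distinct complementary regions of $K$, arranged so that $\calV_K$ is a CB identity neighborhood in $\mcg(\Sigma)$ (this uses Theorem~\ref{thm:classificationCB}), and set $\calV_G = \calV_K \cap G$. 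Now, given a finite $F \subset G$, there is $m$ such that for all $n \geq m$ no element of $F$ moves the component of $K$ meeting $E(z_n)\cap U$ off of the union of components containing $\{N_j : j \le m\}$; more precisely, since $G$ preserves $X$ and each set $E(z_n)$, and since $F$ is finite, there is a bound $m$ so that every $f \in F$ (and every element of $\calV_G$) maps each $N_n$ with $n > m$ into the union $\bigcup_{j} N_j \cup (E - U) =: \calZ_m$-ish controlled region, i.e. cannot carry $N_n$ across $X$ into $U$ past depth $m$. Then for $n > m$ pick $x_n \in E(z_n)$ accumulating in $X$ (inside $U$) and $y_n \in E(z_n) \cap (E-U)$; since $x_n \sim y_n$ there is $h \in G$ carrying a tiny neighborhood of $x_n$ onto a tiny neighborhood of $y_n$. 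A word of length $\ell$ in $F\calV_G$ can only propagate a small copy of $N_n$ through at most $\ell$ ``steps'' of the nested structure toward $X$, never reversing the depth-$m$ trapping, so $h$ is not in the subgroup generated by $F\calV_G$. Hence $G$, and therefore $\mcg(\Sigma)$, is not CB generated.
\end{proof}

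\textbf{The main obstacle} will be making the middle step fully rigorous: precisely quantifying the ``trapping depth'' so that a word of arbitrary finite length in $F\calV_G$ provably cannot realize the homeomorphism $h$. In the limit-type proof this was clean because the sets $E(z_n)$ were forced into $U_n \sqcup U_0^c$ with $U_n$ a genuine neighborhood basis; here one must instead exploit the stable-neighborhood hypothesis (via Lemma~\ref{lem:stable_nbhd} and Lemma~\ref{lem:consume}) and the countability-with-two-accumulation-sets structure of each $E(z_n)$ to build the analogue of a ``rank/depth'' homomorphism — essentially an unbounded length function detecting how many independent directions $z_n$ have been ``consumed'' into $U$ — and to check it is continuous and genuinely unbounded on $G$ but bounded on $F\calV_G$-words of bounded length. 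I would structure that as: first extract from infinite rank a quotient or an action on which the $z_n$-directions give $\mathbb{Z}^n$-worth of independent displacement for every $n$, contradicting the Milnor–Schwarz-type consequence of CB generation that $G$ be coarsely connected with coarsely bounded ``jumps.''
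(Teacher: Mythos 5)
There is a genuine gap in the middle step, and it is not merely a matter of polishing. Your key claim is that, after choosing a trapping depth $m$, the element $h$ carrying a neighborhood of $x_n$ onto a neighborhood of $y_n$ (for $n>m$) cannot lie in the subgroup generated by $F\calV_G$ because ``a word of length $\ell$ can only propagate a small copy of $N_n$ through at most $\ell$ steps.'' This does not prove anything: the subgroup generated by $F\calV_G$ contains words of \emph{arbitrary} length, so bounding the displacement per letter never excludes a fixed element $h$ that itself only moves finitely many points of $E(z_n)$. The limit-type argument you are imitating works for a different reason: there, every generator maps $E(z_n)\cap U_n$ into itself, so $E(z_n)\cap U_n$ is an invariant set for the whole generated subgroup, and $h$ visibly violates that invariance. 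In the infinite-rank setting there is no such invariant set for a single $n$ --- elements of $F$ are perfectly free to move finitely many points of $E(z_n)$ across $\partial U$, and if some $f\in F$ does so, then for that single index $n$ nothing distinguishes $h$ from elements of $\langle F\calV_G\rangle$. The hypothesis only guarantees that the accumulation points of $E(z_n)$ inside $U$ lie in the $G$-invariant set $X$, which is exactly what makes a \emph{net count} finite, not what makes a set invariant.

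The correct mechanism, which the paper uses and which you gesture at only in your ``main obstacle'' paragraph without executing, is to turn each $z_n$ into a continuous homomorphism $\ell_n\from G\to\Z$, namely the difference between the number of points of $E(z_n)$ that $\phi$ moves out of $U$ and the number it moves into $U$; finiteness follows from the invariance of $X$, continuity follows because $\ell_n$ vanishes on a small identity neighborhood $\calV$, and surjectivity of $(\ell_{n_1},\dots,\ell_{n_k})\from G\to\Z^k$ follows by building shift maps supported on disjoint stable neighborhoods of the $E(z_{n_i})$ (this is where the stable-neighborhood hypothesis is actually used). The conclusion is then a pigeonhole argument on rank, not a trapping argument: any CB set lies in some $(F\calV)^k$, the image of $\langle(F\calV)^k\rangle$ in $\Z^j$ is generated by the images of the $|F|$ elements of $F$ since $\calV$ lies in the kernel, and a subgroup of $\Z^j$ generated by fewer than $j$ elements is proper. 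Your final sentence about extracting ``$\Z^n$-worth of independent displacement for every $n$'' is the right idea, but as written the proof body substitutes an invalid argument for it, so the proposal does not establish the lemma.
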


\begin{proof}
Let $G$, $X$, $U$ and $z_n$ be as the definition of infinite rank. For every 
$z_n$, we define a function $\ell_n \from G \to \Z$ as follows.  
For $\phi \in G$, define
\[
\ell_n(\phi) = 
\Big| \big( E(z_n) \cap U \big) - \phi^{-1}(U) \Big| - 
\Big| \big( E(z_n) \cap \phi^{-1}(U) \big) - U \Big|
\]
That is, $\ell_n(\phi)$ is the the difference between the number of points in $E(z_n)$
that $\phi$ maps out of $U$ and the number of points in $E(z_n)$ that $\phi$ maps 
into $U$. 

Since $X$ is $G$--invariant and contains all of the accumulation points of $E(z_n)$ in $U$, 
the value of $\ell_n$ is always finite.  It is also easily verified that $\ell_n$ is a homomorphism.  Moreover, since each $z_n$ has a stable neighborhood (all of which are pairwise homeomorphic), for any finite collection $n_1, \ldots n_k$ one may construct, for each $i$, a ``shift" homomorphism $\phi_i$ supported on a union of disjoint stable neighborhoods of $E(z_{n_i})$, taking one stable neighborhood to the next, such that $\ell_{n_i}(\phi_i) = 1$ and $\ell_{n_j}(\phi_j) = 0$ for $j \neq i$.  
Finally, $\ell_n$ is continuous, in fact any neighborhood $\calV$ of the identity in $G$ which is small enough so that elements of $\calV$ fix the isotopy class of a curve separating $U$ from $E - U$, we will have $\ell_n(\calV) = 0$. 

Thus, 
we have for each $k \in \N$ a surjective, continuous homomorphism
\[
(\ell_{n_1}, \dots, \ell_{n_k}) \from G \to \Z^k
\]
which restricts to the trivial homomorphism on the neighborhood $\calV$ of the identity described above.   

By Theorem \ref{thm:CB_criterion}, any CB set is contained in a set of the form $(F \calV)^k$ for some finite set $F$ and $k \in \N$.  Given any such $F$, choose $j > |F|$.  Then restriction of $(\ell_{n_1}, \dots, \ell_{n_j})$ to the subgroup generated by $(F \calV)^k$ cannot be surjective, as $\calV$ lies in its kernel.  It follows that no CB set can generate $G$. 
Since $G$ is finite index in $\mcg(\Sigma)$, the same is also true $\mcg(\Sigma)$. 
\end{proof}

\subsection{End spaces of locally CB mapping class groups}   \label{sec:limit-type}
For the remainder of this section, we assume that $\mcg(\Sigma)$ is locally CB, our ultimate goal being to understand which such groups are additionally CB generated.  
Recall that Proposition \ref{prop:partition} gave a decomposition of $E$ into a disjoint union of self-similar sets homeomorphic to $A \in \calA$, realized by a finite type subsurface $L \subset K$.  However, as shown in example \ref{ex:weird_ex}, the neighborhood $\calV_L$ might not be CB.  We now show that $\calV_L$ is CB generated.

\begin{lemma} \label{lem:CB_gen_nbhd}
Assume that $\mcg(\Sigma)$ is locally CB. 
Let $L$ be a finite type surface whose complimentary regions realize the decomposition $E = \bigsqcup{A \in \calA}$ given by Proposition \ref{prop:partition}.   Then $\calV_L$ is CB generated.  

Furthermore, we may take $L$ to have genus zero if $\Sigma$ has infinite genus and genus equal to that of $\Sigma$ otherwise; and a number of punctures equal to the number of isolated planar (not accumulated by genus) ends of $\Sigma$ if that number is finite, and zero otherwise.  
\end{lemma}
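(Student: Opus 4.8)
The plan is to show directly that $\calV_L$ satisfies the hypotheses of Rosendal's criterion (Theorem~\ref{thm:CB_criterion}) in a relative sense: I will produce a CB subset $\calC \subset \calV_L$ and a finite set of ``shift''-type elements so that $\calV_L$ is generated by $\calC$ together with this finite set. Since $\mcg(\Sigma)$ is locally CB, Proposition~\ref{prop:partition2} gives a finite type surface $K \supset L$ with $\calV_K$ a CB neighborhood of the identity, and complementary regions of $K$ partitioning $E$ into self-similar pieces $A \in \calA$ and ``parasite'' pieces $P \in \mathcal{P}$, each $P$ homeomorphic to a clopen subset of some $A$. The first step is to compare $\calV_L$ and $\calV_K$: an element $g \in \calV_L$ is supported on $\Sigma - L$, which is a disjoint union of subsurfaces $\Sigma_A$ (with end space $A$), so $g$ factors as a product of $|\calA|$ homeomorphisms $g_A$ each supported on a single $\Sigma_A$. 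Thus it suffices to show that, for each $A$, the subgroup $\calV_L \cap \Homeo(\Sigma_A)$ (mapping classes supported on $\Sigma_A$) is CB generated, and then take the union of the resulting generating sets.

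The heart of the argument is therefore a self-similar piece: $\Sigma_A$ has self-similar end space $A$ and zero or infinite genus, so by Proposition~\ref{prop:SSCB} applied to $\Sigma_A$ (viewed as a surface with one boundary component, or capping the boundary with a punctured disk), $\mcg(\Sigma_A)$ is CB. But we need CB \emph{generation} of $\calV_L$, not coarse boundedness; the subtlety — exactly as in Example~\ref{ex:weird_ex} — is that although $\mcg(\Sigma_A)$ is CB, when we embed $\Sigma_A$ back into $\Sigma$ and only require the identity on $L$ (not on all of $K$), the piece $\Sigma_A$ may contain copies of parasite ends or of smaller-type ends that prevent $\calV_L$ itself from being CB. The key point is that $K$ subdivides $\Sigma_A$ into $\Sigma_A$'s ``self-similar core'' $\Sigma_{A}^{\mathrm{core}}$ plus finitely many pieces $\Sigma_P$ with $P \in \mathcal{P}$, and by the construction of the $P$'s (via Lemma~\ref{lem:consume}) each $P \sqcup A \cong A$. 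So I would pick, for each $P \subset A$, a homeomorphism $f_P$ of $\Sigma$ supported in $\Sigma_A$ that absorbs $\Sigma_P$ into the core — more precisely, realizing the homeomorphism $P \sqcup A \to A$ — and collect these $f_P$ (and their inverses) into a finite set $F_A$. Then any $g \in \calV_L$ supported on $\Sigma_A$ can be conjugated by a word in $F_A$ into an element supported on $\Sigma_A^{\mathrm{core}}$, i.e. into something lying in $\calV_K$ up to bounded error: one shows $g \in (F_A \calV_K)^{m}$ for $m$ bounded in terms of $|\mathcal{P}|$, $|\calA|$. Since $\calV_K$ is CB, hence contained in $(\calF' \calV_L)^{k'}$ for some finite $\calF'$ and $k'$ by Theorem~\ref{thm:CB_criterion}, this exhibits $\calV_L \subset (F \calV_L')^{N}$ for $\calV_L' \subset \calV_L$ a genuine CB subset and $F$ finite — precisely CB generation.

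The main obstacle, and the place requiring the most care, is the bookkeeping that makes the conjugation argument actually stay inside $\calV_L$: the $f_P$ must be chosen to be the identity on $L$, and the conjugates $f_P^{-1} g f_P$ must remain supported away from $L$, which requires the absorbing homeomorphisms to be arranged compatibly for all $P$ and all $A$ simultaneously (one cannot naively absorb one $P$ into $A$ and thereby disturb another). I expect this to be handled exactly as the analogous absorption in the proof of Proposition~\ref{prop:SSCB} and Lemma~\ref{lem:find_R}: fix disjoint connected neighborhoods of the relevant end sets inside $\Sigma_A$, use Richards' classification to build $f_P$ supported in a compact region of $\Sigma_A$ disjoint from $L$, and note that after conjugation the support of $g$ has been pushed into a region on which, after finitely many further such moves, it agrees with an element fixing $K$. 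The ``furthermore'' clause on the genus and puncture count of $L$ is then immediate from the corresponding freedom in Proposition~\ref{prop:partition2} / Theorem~\ref{thm:classificationCB}: a finite type surface realizing the partition of $E$ into the $A \in \calA$ can always be chosen with genus zero when $\Sigma$ has infinite genus (pushing all genus into the infinite-genus complementary pieces) and with genus equal to that of $\Sigma$ otherwise, and with one puncture for each isolated planar end, since such ends must be separated off individually and contribute a puncture to $L$.
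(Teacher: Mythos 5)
There is a genuine gap, and it is located exactly where your proposal claims a uniform bound. You assert that every $g \in \calV_L$ supported on $\Sigma_A$ lies in $(F_A \calV_K)^m$ with $m$ bounded in terms of $|\calP|$ and $|\calA|$, and conclude $\calV_L \subset (F\calV_L')^N$ for a fixed $N$. By Theorem \ref{thm:CB_criterion} this would make $\calV_L$ itself coarsely bounded (substitute $\calV_K \subset (\calF'\calV)^{k'}$ for an arbitrary identity neighborhood $\calV$), which is false in general: Example \ref{ex:weird_ex} exhibits an $L$ realizing the Proposition \ref{prop:partition} decomposition for which $\calV_L$ is \emph{not} CB, because $\Sigma - L$ contains a nondisplaceable finite type subsurface and hence $\calV_L$ carries an unbounded length function. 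The lemma only claims CB \emph{generation}, i.e.\ $\calV_L = \bigcup_m (\text{CB set})^m$ with no uniform bound on $m$, and your argument conflates the two notions. Concretely, the step that fails is the conjugation: for $f_P^{-1} g f_P$ to lie in $\calV_K$ you would need $f_P$ to carry the support of $g$ entirely off of $K$, and this is obstructed by the finite type topology trapped in $K - L$ (genus, punctures, and the boundary curves of the $\Sigma_P$). For instance a large power of a Dehn twist about an essential curve in $K \cap \Sigma_A$ not isotopic into $L$, or a power of a partial pseudo-Anosov supported on a nondisplaceable subsurface of $\Sigma - L$, lies in $\calV_L$ but cannot be brought into $(F_A\calV_K)^m$ for any bounded $m$. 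Your absorption maps $f_P$ only address the end-space data $P \sqcup A \cong A$; they say nothing about how $g$ acts on the finite type surface $K' = K \cap \Sigma_A$.

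The paper's proof supplies precisely the ingredient you are missing. After using self-similarity to find $g_A$ embedding the ``parasite'' part $T$ (the union of the $\Sigma_P$ plus a holed sphere) into the core $\Sigma'$, and a bounded number of moves to make the resulting composition the identity on $T$, one is left with an arbitrary element of the mapping class group of the infinite type surface $K' \cup \Sigma'$; this is reduced to a \emph{pure} mapping class and then expressed, via Observation \ref{obs:pmap}, as an unboundedly long word in $\mcg(\Sigma') = \calV_K \cap \mcg(\Sigma_A)$ and a fixed finite set of Dehn twists. Those Dehn twists are the extra finite generators that account for the difference between $\calV_L$ and $\calV_K$, and the unbounded word length in them is exactly why $\calV_L$ is CB generated without being CB. Your treatment of the ``furthermore'' clause and the initial factorization of $g$ into $|\calA|$ pieces supported on the $\Sigma_A$ do agree with the paper, but the core of the argument needs to be replaced along these lines.
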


For the proof, we need the following observation, which follows from well known results on standard generators for mapping class groups of finite type surfaces. 

\begin{observation} \label{obs:pmap}
Let $\Sigma$ be an infinite type surface, possibly with finitely many boundary components, and $S \subset \Sigma$ a finite type subsurface.  Then there is a finite set of 
Dehn twists $D$ such that for any finite type surface $S'$,
$\mcg(S')$ is generated by $D$ and $\calV_S$.
\end{observation}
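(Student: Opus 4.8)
The plan is to prove the equivalent statement: there is a finite set $D$ of Dehn twists in $\Sigma$ so that for every finite-type subsurface $S' \subseteq \Sigma$, the subgroup $\mcg(S') \leq \mcg(\Sigma)$ obtained by extending homeomorphisms of $S'$ by the identity is contained in $\langle D \cup \calV_S\rangle$. First I would reduce to the case $S \subseteq S'$: any finite-type $S'$ sits inside a connected finite-type $S''$ with $S \cup S' \subseteq S''$, and $\mcg(S') \leq \mcg(S'')$, so it suffices to handle $S''$. Next, fix once and for all a finite-type subsurface $S^+$ with $S \subseteq S^+$ containing a collar of $\partial S$, a finite generating set $D_0$ of $\mcg(S^+)$ consisting of Dehn twists and point-pushing maps (the latter being products of two Dehn twists), and --- using that $\Sigma$ has infinite type --- a fixed infinite-type subsurface $\Omega \subseteq \Sigma \setminus S^+$ to serve as a reservoir.

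The second step is to choose, for each $S'$, a generating set of $\mcg(S')$ \emph{adapted} to the splitting $S' = S^+ \cup \overline{S' \setminus S^+}$. Building $S'$ up from $S^+$ by attaching handles and boundary-parallel pieces and applying the standard finite-generation theorems for mapping class groups of finite-type surfaces (Dehn, Lickorish, Humphries), one obtains generators each of which is: (i) supported in $S^+$, hence a word in $D_0$; (ii) supported in $\overline{S' \setminus S^+}$, hence an element of $\calV_S$; or (iii) a \emph{connector}, a Dehn twist about a curve meeting $S^+$ in one of finitely many standard patterns and otherwise running into $\overline{S' \setminus S^+}$. The finiteness of the list of ``standard patterns'' is the point at which the change-of-coordinates principle for $S^+$ is used: the trace of a generator on $S^+$ can be normalized by an element of $\mcg(S^+)$, contributing a factor in $\langle D_0\rangle$, and $\mcg(S^+)$ has only finitely many orbits of such arc systems.

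The final and, I expect, most delicate step is to absorb each connector uniformly in $S'$. Given a connector $T_c$, the change-of-coordinates principle for $\Sigma$ itself yields a homeomorphism $\psi$ of $\Sigma$ fixing $S^+$ pointwise --- so $\psi \in \calV_S$ --- and carrying the part of $c$ outside $S^+$ to a fixed curve running into $\Omega$; since $\psi$ fixes $S^+$ it also fixes $c \cap S^+$, so $\psi(c) = c_0$ is one of finitely many curves depending only on $(S,\Sigma)$. Hence $T_c = \psi^{-1} T_{c_0} \psi \in \langle D \cup \calV_S\rangle$ provided we take $D = D_0 \cup \{T_{c_0}\}$, a finite set of Dehn twists. (If $\Sigma$ has isolated planar ends, the same scheme applies after including in $D$ the finitely many arc-swaps realizing permutations of the ends lying in $S^+$; permutations of ends disjoint from $S$ already lie in $\calV_S$.) The main obstacle throughout is uniformity: arranging that both the ``finitely many standard patterns'' in step two and the ``finitely many connector models $c_0$'' in step three can be bounded in terms of $S$ and $\Sigma$ alone. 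This rests on the finiteness of the relevant $\mcg(S^+)$-orbits and on $\Sigma$ being of infinite type, so that the exterior of every connector --- for every $S'$ --- can be pushed into the single fixed reservoir $\Omega$ by an element of $\calV_S$.
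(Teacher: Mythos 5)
Your overall architecture (split the generators of $\mcg(S')$ into pieces supported in $S^+$, pieces supported away from $S$, and ``connectors'' crossing $\partial S^+$, then absorb the connectors) is reasonable, but the final normalization step has a genuine gap. You claim that for every finite-type $S'$ and every connector $T_c$ there is a $\psi$ fixing $S^+$ pointwise (hence in $\calV_S$) carrying $c$ to one of \emph{finitely many} models $c_0$. Any such $\psi$ must preserve the homeomorphism type (genus and end space) of each complementary component of the arc system $c \setminus S^+$ inside $\Sigma \setminus S^+$, and connectors with separating exterior arcs are unavoidable: if, say, $\overline{S' \setminus S^+}$ contains a pair of pants $R$ glued to $S^+$ along one circle, whose other two boundary curves $\beta_1,\beta_2$ bound infinite-type pieces of $\Sigma$ with end sets $X_1, X_2$, then \emph{every} essential connector arc in $R$ cuts off $\beta_1$ or $\beta_2$ together with the entire piece behind it, and at least one such connector is genuinely needed to generate $\mcg(S')$ over $\mcg(S^+)$ and $\mcg(\overline{S'\setminus S^+})$ (a pure braid group computation shows one suffices, but not zero). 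As $S'$ varies, the cut-off end sets range over essentially arbitrary clopen subsets of the end space of $\Sigma$, which fall into infinitely many homeomorphism types whenever that end space is inhomogeneous (e.g.\ $E \cong \omega^\omega+1$), so no finite list of models $c_0$ and no single reservoir $\Omega$ can absorb them all. The finiteness of $\mcg(S^+)$-orbits controls only the trace of $c$ on $S^+$, not its exterior.

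The paper sidesteps this entirely by citing Patel--Vlamis: there is a single \emph{locally finite but infinite} collection $\calD$ of curves in $\Sigma$, built from a fixed exhaustion so that the connectors at each stage live inside the next finite-type piece of the exhaustion and never need to be normalized globally, whose Dehn twists generate all mapping classes supported on finite-type subsurfaces. One then takes $D$ to be the twists about the finitely many curves of $\calD$ meeting $S$, and observes that the twists about all remaining curves of $\calD$ already lie in $\calV_S$. To salvage your argument you would need to replace ``finitely many connector models pushed into a reservoir'' by ``a locally finite family of connectors adapted to a fixed exhaustion,'' which is essentially that proof.
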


In fact, akin to Lickorish's Dehn twist generators for the mapping class group of a surfaces of finite type, 
one can find a set $\calD$ of simple closed curves in $\Sigma$ such that every curve in $\calD$
intersects only finitely many other curves in $\calD$, and such that the set of Dehn twists 
around curves in $\calD$ generates the subgroup of $\mcg(\Sigma)$ consisting of mapping classes supported on finite type 
subsurfaces of $\Sigma$. (See \cite{PatelVlamis}). One can then take the set $D$ of Observation \ref{obs:pmap} to
be the set of Dehn twists around the curves in $\calD$ that intersect $S$. 

\begin{proof}[Proof of Lemma \ref{lem:CB_gen_nbhd}]
Let  $K$ be the surface given by Theorem \ref{thm:classificationCB}.  
For each $P \in \calP$, there exists $A \in \calA$ such that $A - \{x_A\}$ contains a 
homeomorphic copy of $P$.  Choose one such $A$ for each $P \in \calP$, and for $A \in \calA$ let $P_A$ denote the union of the elements of $\calP$ assigned to $A$.   
Let $L \subset K$ be a connected, finite type surface with $|\calA|$ boundary components, and such that the complimentary regions of $L$ partition $E$ into the sets $A \cup P_A$, as $A$ ranges over $\calA$.  We take $L$ to have the same number of punctures and genus as $K$.  For each $A$, let $\Sigma_A$ denote the complimentary region to $L$ with end space $A \cup P_A$.  

If $f \in \calV_{L}$, then $f$ can be written as a product of $|\calA|$ homeomorphisms, one supported on each surface $\Sigma_A$ (and hence identifiable with an element of 
$\mcg(\Sigma_A)$).  So it suffices to show, for each $A \in \calA$, that $\mcg(\Sigma_A)$ is generated by $\calV_K \cap \mcg(\Sigma_A)$, which is a CB subset of $\mcg(\Sigma)$, together with a finite set.  

Fix $A$, let $K'$ denote $K \cap \Sigma_A$, let $\Sigma_1$, $\Sigma_2$ \ldots 
$\Sigma_n$ denote the connected components of $\Sigma_A - K'$ with end spaces elements of $\calP$, and let $\Sigma'$ be the connected component with end space $A$.  Thus, $\calV_K \cap \mcg(\Sigma_A) = \mcg(\Sigma')$.   Let $T \subset \Sigma_A$ be a connected, infinite type subsurface consisting of the union of $\Sigma_1 \sqcup  \ldots  \sqcup \Sigma_n$ and an $(n+1)$-holed sphere in $\Sigma_A$, so that $T$ has a single boundary component.  

Recall from Proposition \ref{prop:partition2} that $P_A$ contains no maximal points, that $A \cup P_A$ is a self-similar set (and homeomorphic to $A$), and in particular we can find a copy of $P_A$ in any neighborhood of $x_A$.  
By the classification of surfaces, this implies that there is a homeomorphic copy of $T$ contained in $\Sigma'$, i.e. some homeomorphism $g_A$ of $\Sigma_A$ with $g_A(T) \subset \Sigma'$.  This homeomorphism and its inverse will form part of the desired finite set.  

Let $f \in \calV_{L}$.   
Since $\calM(A)$ is an invariant set, we may find a neighborhood $U$ of $\calM(A)$ in $\Sigma_A$, which we may take to be a (infinite type) subsurface of $\Sigma'$ with a single boundary component,  such that $f(U) \subset \Sigma'$.  
Let $T'$ be a homeomorphic copy of $T$ contained in $U$.  Thus, $f(T') \subset \Sigma'$, and so there exists $h \in \mcg(\Sigma')$ with $hf(T') = g_A(T)$, so $g_A^{-1}hf(T') = T$.
Since $g_A^{-1}hf$ is a homeomorphism taking $T'$ to $T$, we have 
\[
g_A^{-1}hf(\Sigma_A - T') = \Sigma_A - T \subset (\Sigma' \cup K').
\]  
Thus, there exists $h' \in \calV_L$ interchanging $g_A^{-1}hf(T)$ with $T'$, and such that $h'\circ (g_A^{-1}hf)$ agrees with $g_A$ on $T$.  It follows that 
\[g_A^{-1} \circ h'\circ g_A^{-1}hf\] 
is the identity on $T$ and it can be identified with a mapping class of $K' \cup \Sigma'$.  
Since all ends of $K' \cup \Sigma'$ are ends of the connected surface $\Sigma'$, after modifying by an element of $\mcg(\Sigma')$, we can take this to be a pure mapping class.  By Observation \ref{obs:pmap}, $\pmcg(K' \cup \Sigma')$ is generated by $\mcg(\Sigma')$ and a fixed finite set of Dehn twists, hence taking these Dehn twists together with 
$g_A^{\pm 1}$ gives the required finite set.  
\end{proof} 

Going forward, we will ignore the surface $K$ produced earlier that defined the CB neighborhood $\calV_K$, and instead use the surface $L$ which gives a simpler decomposition of the end space.   The sets $P \in \calP$ play no further role, and we focus on the decomposition 
$E = \bigsqcup_{A \in \calA} A$ given by the end spaces of complimentary regions to the surface $L$.   

\subsection*{Further decompositions of end sets}
Now we begin the technical work of the classification of CB generated mapping class groups.  As motivation for our next lemmas, consider the surface depicted in Figure \ref{fig:locCB} (left).  This surface has a mapping class group which is both locally CB and CB generated (we have not proved CB generation yet, but the reader may find it an illustrative exercise to attempt this case by hand).  
Here, the decomposition of $E$ given by the surface $L$ is $E = A \sqcup B \sqcup C$ where $A$ and $C$ are accumulated by genus, $A$ and $B$ are homeomorphic to $\omega +1$, and $C$ is a singleton.  
As well as a neighborhood of the identity of the form $\calV_L$, any generating set must include a ``handle shift" moving genus from $A$ into $C$ (see Definition \ref{def:handle_shift} below), as well as a ``puncture shift" that moves isolated punctures out of $A$ and into $B$.   If each handle was replaced by, say, a puncture accumulated by genus, one would need a shift moving these end types in and out of neighborhoods of $A$ and $C$ instead.    

To generalize this observation to other surfaces with more complicated topology, we need to identify types of ends of $\Sigma$ that accumulate at the maximal ends of the various sets in the decomposition.  The sets $W_{A,B}$ defined in Lemma \ref{lem:W_sets} and refined in Lemma \ref{lem:best_set} below pick out blocks of ends that can be shifted between elements $A$ and $B$ in $\calA$.  Ultimately, we will have to further subdivide these blocks to distinguish different ends that can be independently shifted, this is carried out in Section \ref{sec:tameCBgen}. 

\begin{lemma} \label{lem:W_sets}
Assume $E$ does not have limit type. Then 
\begin{itemize}
\item For every $A \in \calA$, there is a neighborhood $N(x_A) \subset A$ containing 
$x_A$ such that $A-N(x_A)$ contains a representative of every type in $A - \{x_A\}$.
\item For every pair $A, B \in \calA$, there is a clopen set $W_{A, B} \subset (A-N(x_A))$ 
with the property that
\[
E(z) \cap W_{A,B} \neq \emptyset 
\quad\Longleftrightarrow\quad 
E(z) \cap \big(A-\{x_A\}\big) \neq \emptyset \quad\text{and}\quad 
E(z) \cap \big(B-\{x_B\}\big) \neq \emptyset.
\]
\item There is a clopen set $W_A \subset (A-N(x_A))$ with the property that if
$E(z) \cap \big(A-\{x_A\}\big) \neq \emptyset$ and, for all $B \neq A$, 
$E(z) \cap \big(B-\{x_B\}\big) = \emptyset$ then $E(z) \cap W_A \neq \emptyset.$
\end{itemize}
\end{lemma}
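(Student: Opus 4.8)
The plan is to prove the three parts in order: the first part is the substantive one, and I would prove it by contradiction, using the hypothesis that $E$ is not of limit type together with the fact, coming from local coarse boundedness, that $\mcg(\Sigma)$ admits no nondisplaceable finite type subsurface disjoint from the surface $K$ of Theorem \ref{thm:classificationCB}. The second and third parts would then be deduced from the first, together with the finiteness of $\calA$.

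\textbf{First part.} When $\calM(A)$ is a Cantor set one can produce $N(x_A)$ directly: $\calM(A)\cap(A-N(x_A))$ is again a Cantor set of maximal ends, each has a stable neighborhood (Lemma \ref{lem:stable_easy}) homeomorphic to a stable neighborhood of $x_A$ (Lemma \ref{lem:stable_nbhd}), and self-similarity of $A$ is used to see that a small such neighborhood, being homeomorphic to $A$, already realizes every type of $A-\{x_A\}$; so it remains to treat the case $\calM(A)=\{x_A\}$. There I assume no such $N(x_A)$ exists and fix a neighborhood basis $A\supseteq U_1\supseteq U_2\supseteq\cdots$ of $x_A$ with $\bigcap_n U_n=\{x_A\}$. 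For each $n$ some type realized in $A-\{x_A\}$ has all its representatives in $A$ lying in $U_n$; since $\bigcap_n U_n=\{x_A\}$, no single type does this for every $n$, so after passing to a subsequence I get pairwise inequivalent ends $z_n$ with $\emptyset\neq E(z_n)\cap A\subseteq U_n$ and $z_n\not\sim x_A$. If infinitely many classes $E(z_n)$ meet $E-A$, then, taking $X$ to be the (invariant) equivalence class of $x_A$ and $U_0$ a suitable clopen neighborhood of $X$, the $z_n$, $X$, $U_n$ witness that $E$ is of limit type --- contradiction. If instead all but finitely many $E(z_n)$ lie inside $A$, then each such $E(z_n)$ is a $\mcg(\Sigma)$-invariant subset of $A$ clustered near $x_A$, and running through the invariant sets $E(z_n)$, the invariant class $E(x_A)$, and the invariant class $E(x_B)$ for some $B\in\calA\setminus\{A\}$ --- replacing $E(z_n)$ when it is infinite by its closure (a Cantor set, if $E(z_n)$ is non-scattered) or by the final nonempty term of its Cantor--Bendixson derivation (a finite invariant set otherwise) --- one of the two constructions of Example \ref{ex:easy_nondisplace} yields a nondisplaceable finite type subsurface which, being clustered near $x_A$, can be pushed off any compact set and in particular off $K$; this contradicts Lemma \ref{lem:nondispCB}. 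Finally I note that any clopen neighborhood of $x_A$ smaller than a valid $N(x_A)$ is again valid, so I fix $N(x_A)$, and similarly $N(x_B)$, once and for all.

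\textbf{Second part.} Fix $A,B\in\calA$ and let $O\subseteq A-N(x_A)$ be the set of ends whose equivalence type is realized in $B-\{x_B\}$, i.e.\ $O=\{w\in A-N(x_A):E(w)\cap(B-\{x_B\})\neq\emptyset\}$. That $O$ is open in $A-N(x_A)$ is immediate: if $w\in O$ and $b\in B-\{x_B\}$ satisfies $b\sim w$, then $w\lesseq b$ gives a neighborhood $V$ of $w$ and $f\in\mcg(\Sigma)$ with $f(V)$ inside a clopen neighborhood of $b$ contained in $B-\{x_B\}$, so each $w'\in V$ has the representative $f(w')$ of its type in $B-\{x_B\}$. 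The crux is that $O$ is also closed. Suppose $w_i\to w$ with all $w_i\in O$; by the first part applied to $B$, each $w_i$ is equivalent to some $b_i\in B-N(x_B)$, and compactness of the clopen set $B-N(x_B)$ lets me pass to a subsequence with $b_i\to b\in B-N(x_B)\subseteq B-\{x_B\}$. Convergence yields $w_i\lesseq w$ and $b_i\lesseq b$ (Lemma \ref{lem:closed}), hence $b_i\sim w_i\lesseq w$. If the types $[w_i]$ are eventually equal to one type $[w']$, then $w$ is an accumulation point of $E(w')$, so $w'\lesseq w$; if $w'\sim w$ then $[w]$ is realized in $B-\{x_B\}$ and $w\in O$, while if $w'\less w$ strictly the resulting configuration is excluded exactly as in the first part (either $E$ has limit type or there is a nondisplaceable subsurface off $K$). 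If instead infinitely many of the $[w_i]$ are distinct, they and the data $(b_i,b,w)$ assemble into a limit-type (or infinite-rank) configuration, again impossible. Hence $O$ is clopen and I set $W_{A,B}:=O$. The asserted equivalence then holds: if $W_{A,B}$ meets $E(z)$ the witness lies in $A-N(x_A)\subseteq A-\{x_A\}$ and, by definition of $O$, $E(z)$ meets $B-\{x_B\}$; conversely if $E(z)$ meets both, then by the first part $E(z)$ has a representative in $A-N(x_A)$, which lies in $O=W_{A,B}$.

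\textbf{Third part and the main obstacle.} Since $\calA$ is finite, $W_A:=(A-N(x_A))\setminus\bigcup_{B\in\calA,\,B\neq A}W_{A,B}$ is clopen and contained in $A-N(x_A)$; and if $E(z)$ meets $A-\{x_A\}$ but no $B-\{x_B\}$ for $B\neq A$, then by the first part $E(z)$ has a representative $a\in A-N(x_A)$, and $a$ cannot lie in any $W_{A,B}$ (that would force the type of $z$ to be realized in $B-\{x_B\}$), so $a\in W_A$ and $E(z)\cap W_A\neq\emptyset$. The genuinely delicate step is the closedness of $O$ in the second part --- equivalently, that ``the equivalence type of this end is realized in $B-\{x_B\}$'' is a locally constant condition on $A-N(x_A)$. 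Excluding a sequence of points of varying types converging onto a point of a type not realized in $B$ is exactly where the two standing hypotheses must be used together: that $E$ is not of limit type kills the configurations whose relevant classes escape into $E-(A\cup B)$, and local coarse boundedness --- through Example \ref{ex:easy_nondisplace} and Lemma \ref{lem:nondispCB} --- kills those whose classes stay clustered near the maximal ends. Making this last dichotomy airtight (ensuring one always lands on a finite invariant set of cardinality $\geq 3$, or a Cantor invariant set leaving room for a third clump) is the part I expect to be most technical.
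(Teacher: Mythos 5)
Your architecture (first bullet by contradiction with limit type, the other two bullets deduced from it) matches the paper's, and you correctly isolated the point the paper glosses over: the definition of limit type demands $E(z_n)\cap U_0^c\neq\emptyset$, so with $U_0=A$ one must know that the classes $E(z_n)$ actually escape $A$. But your treatment of the subcase $E(z_n)\subset A$ does not close. The only invariant sets your construction has available inside a small neighborhood of $x_A$ are $E(x_A)\cap A=\{x_A\}$ and the sets $E(z_n)$; if each $E(z_n)$ is countable and its Cantor--Bendixson derivation collapses to $\{x_A\}$ (as happens when $A\cong\omega^\omega+1$ and $z_n$ has rank $n$), you extract no finite invariant set of cardinality $\geq 3$ and no invariant Cantor set near $x_A$, so neither construction of Example \ref{ex:easy_nondisplace} yields a nondisplaceable subsurface that can be pushed off $K$; importing $E(x_B)$ destroys exactly the ``disjoint from $K$'' property that Lemma \ref{lem:nondispCB} requires. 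The way to kill this subcase is not a nondisplaceable subsurface but condition (3) of Theorem \ref{thm:loc_CB}, which holds under the standing local-CB assumption: the homeomorphisms $f_V$ with $f_V(V)\supset\Sigma_A$ carry $A-E(V)$ into $E-A$, so whenever $E\neq A$ every type realized in $A-\{x_A\}$ is also realized in $E-A$. Hence $E(z_n)\cap(E-A)\neq\emptyset$ automatically and your subcase (b) is vacuous. (When $E=A$ the first bullet can genuinely fail, e.g.\ for $E\cong\omega^\omega+1$, but then $E$ is self-similar and the lemma is not invoked.)

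Your second bullet also takes a genuinely different, and harder, route. You set $W_{A,B}$ equal to the full set $O$ of points of $A-N(x_A)$ whose type is shared with $B-\{x_B\}$ and must prove $O$ closed; the paper only needs the much easier fact that the set of points of $A$ whose type is \emph{not} shared with $B$ is closed (essentially your openness argument, which is correct), takes a small clopen neighborhood $U_N$ of that set, and defines $W_{A,B}=A-(U_N\cup N(x_A))$ -- clopen by fiat -- using a limit-type argument only to show that some fixed $N$ already catches every shared type. Your closedness step is where the proof stops being a proof: ``assemble into a limit-type (or infinite-rank) configuration'' is not checkable, since to witness limit type you must exhibit an invariant set $X$, neighborhoods $U_n\searrow X$, and verify $E(z_n)\subset U_n\cup U_0^c$, and the classes $E(w_i)$ in your situation are not confined near the pair $(w,b)$. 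Worse, infinite rank is not a hypothesis of this lemma -- it enters only from Lemma \ref{lem:mc_finite} onward -- so any step that needs to exclude infinite-rank configurations is using an unavailable assumption. I recommend abandoning the closedness of $O$ and adopting the complement-of-a-neighborhood construction; your third bullet then goes through essentially as written, since a representative in $A-N(x_A)$ of a type realized only in $A$ can never lie in any $W_{A,B}$.
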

In other words, $W_{A,B}$ contains representatives of every type of end that appears 
in both $A - \{x_A\}$ and $B - \{x_B\}$, and $W_A$ contains representatives of every type that appears only in $A$.  

We declare $W_{A,B} = \emptyset$ if $A - \{x_A\}$ 
and $B - \{x_B\}$ have no common types of ends, and similarly take $W_A = \emptyset$ if each types of end in $A$ appears also in some $B \neq A$.  

\begin{proof}
We start with the first assertion. If $\calM(A)$ is a Cantor set then we can take 
$N(x_A)$ to be any neighborhood of $x_A$ that does not contain all of $\calM(A)$, and 
the first assertion follows since $\calM(A)$ is the set of maximal points.  
Otherwise, $\calM(A) = \{x_A\}$.  Let $G$ be the finite index subgroup of $\mcg(\Sigma)$ that fixes 
$E(x_A)$ (which we know is finite). If such a neighborhood $N(x_A)$ does not exist, 
then there is a nested family of neighborhoods $U_n$ descending to $x_A$ 
and points $z_n \in U_n$ where $(E(z_n) \cap A) \subset U_n$.
Then letting $X = \{x_A\}$ and assuming $U_0 = A$, we see that $E$ has limit type. 
The contradiction proves the first assertion.

For the second assertion, fix $A$ and $B \in \calA$ and let 
$$X = \Big\{x \in E \mid  E(x) \cap A \neq \emptyset 
\quad\text{ and }\quad 
E(x) \cap B = \emptyset\Big\}.$$
Then $X \cap A$ is closed -- this follows since $A$ is closed, and if $x_n$ is a sequence of points in $X \cap A$ converging to $x_\infty$ but there is some point $z \in E(x_\infty) \cap B$, then any neighborhood of $z$ would contain homneomorphic copies of neighborhoods of $x_n$, for sufficiently large $n$, contradicting the fact that $E(x_n) \cap B = \emptyset$.  

Now consider a family of neighborhoods $U_n$ of $X \cap A$ with 
$U_n \searrow X$ and $U_0 \cap B = \emptyset$.  
Let $W_n = A - \big( U_n \cup N(x_A) \big)$.
Note that, since we have removed the neighborhood $U_n$ of $X$, every point in $W_n$ has 
a representative in $B$. We claim that, for some $N \in \N$, $W_N$ contains a 
representative of all points that appear in both $A$ and $B$, that is to say, $W_{A,B}$ 
can be taken to be $W_N$. 
To prove the claim, suppose for contradiction that it fails. Then after passing to a subsequence, we may find points 
$z_n$, all of distinct types, such that $z_n \in U_n$, $E(z_n) \cap A \neq \emptyset$, and $E(z_n) \cap B \neq \emptyset$. Since $E(z_n)$ intersects 
$U_0^c \supset B$, this implies that $E$ has limit type. The contradiction proves
the second assertion.  

For the third assertion, consider the closed set  
\[
X = \Big\{x \in E \mid  E(x) \cap A \neq \emptyset 
\quad\text{ and }\quad  E(x) \cap B = \emptyset, \quad \forall B \neq A \Big\}.
\]
Let $U$ be any clopen neighborhood of $X \cap A$ in $A$, and let $W_A = U - N(x_A)$.  Then by definition of $N(x_A)$, $(X \cap A) - N(x_A)$ contains a representative of every type appearing only in $A$, so this remains true of its clopen neighborhood $W_A$.  
\end{proof}

\begin{definition}
When $E(z)$ is countable, we will say that $z$ is a point {\em of countable type}. 

For $A, B \in \calA$, we define 
\[E(A,B) = \Big\{ z \in A - \{x_A\} : E(z) \cap B \neq \emptyset \Big\} \]
and define $E_{\rm mc}(A,B)$ (the ``maximal countable set") to be the subset of $E(A,B)$ consisting of points $z$ 
where $z$ is maximal in $E(A,B)$ and of countable type.  
\end{definition} 
 
Note that $E(A,B) \subset A$.  In particular $E(A,B) \neq E(B, A)$, and the same is true for the maximal countable sets $E_{\rm mc}(A,B)$.  
The following elementary observation will be useful.  

\begin{observation} \label{obs:Emc}
If $z$ is any point of countable type, then any accumulation point $p$ of $E(z)$ satisfies $z \less p$.   Thus, if $z \in \Emc(A,B)$, the only accumulation point of $E(z)$ in $A$ is $x_A$ and so $(E(z) \cap A) - N(x_A)$ is a finite set.  
\end{observation}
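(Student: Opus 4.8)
The plan is to prove the two assertions in turn: first the general fact that every accumulation point of a countable-type equivalence class sits strictly above it, and then to read off the consequence for $z\in\Emc(A,B)$.

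For the general fact, fix $z$ of countable type and let $p$ be an accumulation point of $E(z)$. That $z\lesseq p$ is the elementary observation already recorded before Definition~\ref{def:partial_order}: given a neighborhood $U$ of $p$, any $z'\in E(z)$ close enough to $p$ has $U$ as a neighborhood, and since $z\sim z'$ there are a neighborhood $V$ of $z$ and $f\in\mcg(\Sigma)$ with $f(V)\subset U$. For strictness, suppose toward a contradiction that $p\sim z$, so $p\in E(z)$. Then $E(z)$ contains a point that accumulates it, and I would run the ``propagation'' argument from the proof of Proposition~\ref{prop:maximal_element}: for any $z'\in E(z)$ one has $p\lesseq z'$, so every neighborhood of $z'$ contains a homeomorphic copy of a neighborhood of $p$; such a neighborhood meets $E(z)$ in infinitely many points, and their images (under an element of $\mcg(\Sigma)$, which preserves $\sim$-classes) again lie in $E(z)$, so $z'$ too is an accumulation point of $E(z)$. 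Hence $E(z)$ is dense in itself, and $\overline{E(z)}$ is a perfect, compact, totally disconnected metric space, i.e.\ a Cantor set. The final step is to see this is incompatible with $E(z)$ countable: by the Zorn-type argument of Proposition~\ref{prop:maximal_element} (with Lemma~\ref{lem:closed}) one picks a maximal end $m\succcurlyeq z$; by Proposition~\ref{prop:partition2} $m$ lies in a self-similar piece $A\in\calA$, hence has a stable neighborhood $U_m$; since $z\lesseq m$, a small such $U_m$ contains a homeomorphic copy of a neighborhood of $z$, hence a dense-in-itself set of countable-type ends all of a single type strictly below $m$; I would rule this out because a stable (hence self-similar, with a single maximal type) neighborhood cannot contain such a set.

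For the consequence, let $z\in\Emc(A,B)$ and let $p\in A$ be an accumulation point of $E(z)$. Since $A$ is clopen, $p$ is in fact an accumulation point of $E(z)\cap A$. By the general fact, $z\less p$. If $p\neq x_A$ then $p\in A-\{x_A\}$, and because $z$ is $\less$-maximal in $E(A,B)$ while $z\less p$, we must have $p\notin E(A,B)$, i.e.\ $E(p)\cap B=\emptyset$; I would argue this is impossible (so $p=x_A$). In the main case $\calM(A)=\{x_A\}$ one has $z\less p\less x_A$, and using that representatives of $z$ accumulate at $p$ while $E(z)\cap B\neq\emptyset$ — together with the standing hypothesis that $E$ is not of limit type — one produces a point of $E(p)$ inside $B$, contradicting $E(p)\cap B=\emptyset$. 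Finally, since $N(x_A)$ is a neighborhood of $x_A$ and no other point of $A$ accumulates $E(z)$, the set $(E(z)\cap A)-N(x_A)$ is discrete, relatively closed in the compact set $A-N(x_A)$, and has no accumulation point there, hence is finite.

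The main obstacle is the last step of the general fact — excluding a countable equivalence class that is dense in itself. Everything else is bookkeeping with the definitions and the already-established structure (Proposition~\ref{prop:partition2}, stable neighborhoods, the propagation trick), but showing that a stable neighborhood of a maximal end cannot contain a Cantor-set's worth of ends of a fixed strictly-smaller countable type is the point that needs a genuine argument, presumably a further back-and-forth/shift construction in the spirit of Lemma~\ref{lem:stable_nbhd} and Observation~\ref{obs:shift}.
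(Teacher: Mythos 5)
The paper gives no proof of this Observation at all, so the question is whether your sketch closes; I don't believe it does, on either half. For the first sentence you correctly isolate the crux — ruling out a countable equivalence class $E(z)$ that is dense in itself — but your proposed reduction does not reduce anything: passing to a stable neighborhood $U_m$ of a maximal $m \moreq z$ leaves you needing exactly the same statement ("a countable dense-in-itself class of a fixed non-maximal type cannot exist") inside $U_m$, and self-similarity of $U_m$ concerns the type of $m$, not the lower type $[z]$, so no shift or back-and-forth construction presents itself. (It also imports Proposition \ref{prop:partition2}, i.e.\ local coarse boundedness, into what is a hypothesis-free statement about end spaces.) The missing idea is Baire category. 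If $E(z)$ is dense in itself, let $K=\overline{E(z)}$, a perfect compact metric space, hence Baire; $E(z)$ is countable and dense, so meager in $K$, and $K-E(z)$ is comeager. Fix a nested clopen neighborhood basis $U^1\supset U^2\supset\cdots$ of $z$ and let $G_n$ be the set of $q\in E$ such that no $f\in\mcg(\Sigma)$ maps any neighborhood of $q$ into $U^n$. Each $G_n$ is closed (the defining condition passes to limits, since a neighborhood of $q$ is eventually a neighborhood of nearby $q_j$), and $G_n\cap E(z)=\emptyset$ because $z'\lesseq z$ for every $z'\sim z$; hence $G_n\cap K$ is closed with empty interior in $K$, i.e.\ nowhere dense. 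But every $q\in K-E(z)$ satisfies $z\lesseq q$ and $q\nsim z$, hence $q\not\lesseq z$, hence $q\in G_n$ for some $n$. So the comeager set $K-E(z)$ lies in the meager set $\bigcup_n(G_n\cap K)$, a contradiction.

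For the second sentence, the step you flag as "I would argue this is impossible" is the one that fails, and in fact the sentence as literally stated is too strong. Maximality of $z$ in $E(A,B)$ only excludes accumulation points $p\in E(A,B)$; it does not exclude $p\in A-\{x_A\}$ with $E(p)\cap B=\emptyset$, and such $p$ occur. For instance, take $\calA=\{A,A',B,B'\}$ where $\calM(A),\calM(A')$ are Cantor sets of one maximal type, $\calM(B),\calM(B')$ are Cantor sets of an incomparable maximal type, and isolated punctures are dense in all four pieces: this surface is locally CB, tame, neither limit type nor infinite rank, the puncture class $z$ lies in $\Emc(A,B)$, and $E(z)$ accumulates at every point of $\calM(A)$; since $N(x_A)$ omits part of $\calM(A)$ when the latter is a Cantor set, $(E(z)\cap A)-N(x_A)$ is infinite. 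What is true, and is all that is used downstream (Lemma \ref{lem:best_set}), is that $E(z)\cap W_{A,B}$ is finite: by the construction in Lemma \ref{lem:W_sets}, $W_{A,B}$ is a clopen set disjoint from a neighborhood of the closed set of points of $A$ whose class misses $B$, so any accumulation point of $E(z)$ inside $W_{A,B}$ would lie in $E(A,B)$ and, by the first sentence, be strictly above $z$, contradicting maximality of $z$ in $E(A,B)$. I would prove and cite that statement rather than the one printed.
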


\begin{lemma} \label{lem:mc_finite}
Suppose $\mcg(\Sigma)$ has neither limit type nor infinite rank.  Then, for any $A,B \in \calA$, the set
$E_{\rm mc}(A,B)$ contains only finitely many different types. 
\end{lemma}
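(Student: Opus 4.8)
The plan is to argue by contradiction: if $\Emc(A,B)$ contained infinitely many distinct types, we would extract data witnessing that $\mcg(\Sigma)$ has infinite rank, contradicting the hypothesis. So suppose $z_1, z_2, \ldots \in \Emc(A,B)$ are pairwise inequivalent. First I would pass to a subsequence so that, using Lemma~\ref{lem:W_sets}, all the $z_n$ lie in $W_{A,B} \subset A - N(x_A)$, and (since the sets $E(z_n)$ live in $A$ with $x_A$ the only possible accumulation point of each, by Observation~\ref{obs:Emc}) the $z_n$ accumulate only at points of $\calM(A)\subset \calM(E)$; by a further extraction we may assume $z_n \to x$ for a single $x \in \calM(A)$. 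Since $E(z_n)\cap B\neq\emptyset$ as well, each $E(z_n)$ also has an accumulation point outside $A$, in particular outside a suitable neighborhood $U$ of $\calM(A)$.

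Next I would arrange the two required ingredients for the infinite rank definition: a finite-index $G$-invariant closed set $X$ and stable neighborhoods for the $z_n$. For $X$, take the closure of the set of points whose equivalence class meets $A$ but meets no other element of $\calA$ — or more simply, work with $X = \calM(A)$ (which is finite or a Cantor set, hence closed, and is invariant under the finite-index subgroup $G$ fixing the finitely many maximal types of $E$); one checks that all accumulation points of each $E(z_n)$ lying in a small neighborhood $U$ of $X$ are in fact in $X$, because the only accumulation point of $E(z_n)$ in $A$ is $x_A$ (Observation~\ref{obs:Emc}) and $x_A \in \calM(A) = X$. For the stable neighborhoods: since $z_n$ has countable type, $E(z_n)$ is countable, and I would invoke Proposition~\ref{prop:countable_nice} together with the structure of countable end spaces to see that $z_n$ — being locally homeomorphic to some $\omega^{\beta}+1$ — has a stable neighborhood (any sufficiently small neighborhood of a point in a countable end space is a stable neighborhood of that point, since $\omega^\beta+1$ self-embeds cofinally). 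The remaining bookkeeping is to ensure $z_n \notin E(z_m)$ for $m\neq n$, which holds by our choice of pairwise inequivalent points, and that each $E(z_n)$ is countably infinite with an accumulation point both in $X$ (namely $x_A$, which it must have since $z_n$ lies in a deleted neighborhood and accumulates at $x_A$, or else $E(z_n)$ is finite and can be enlarged within its type — here I may need to replace $z_n$ by a point of the same type whose equivalence class is genuinely infinite, which is possible since $x_A$ is an accumulation point of $E(z_n)$ forces infinitude, or conversely if $E(z_n)$ is finite one discards it) and in $E - U$ (from $E(z_n)\cap B \neq \emptyset$).

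With $G$, $X$, $U$, and $\{z_n\}$ in hand satisfying all three bullet points of the infinite rank definition, Lemma~\ref{lem:infinite-rank} gives that $\mcg(\Sigma)$ is not CB generated, and a fortiori — wait, the conclusion we actually want contradicts the hypothesis directly: $\mcg(\Sigma)$ has infinite rank, contradicting the assumption. Hence only finitely many types occur in $\Emc(A,B)$.

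The main obstacle I anticipate is the stable-neighborhood requirement: the infinite rank definition demands genuine \emph{stable} neighborhoods of the $z_n$, not merely that they be of countable type, and one must be careful that the finitely-many-types extraction and the passage to accumulation at a single $x\in\calM(A)$ are compatible with this. The cleanest route is probably to first prove (or cite from the countable-case analysis in Section~\ref{sec:order}) that every point of countable type in \emph{any} end space has a stable neighborhood — this uses that a neighborhood of such a point is homeomorphic to $\omega^\beta + 1$, which is a self-similar set — and then the rest of the argument is routine pigeonholing and an application of Lemma~\ref{lem:infinite-rank}.
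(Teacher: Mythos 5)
Your high-level strategy --- running the definition of infinite rank in the contrapositive, with the $z_n$ representatives of infinitely many distinct types in $\Emc(A,B)$ --- is the same as the paper's, but two of your specific choices fail, and the paper's short case analysis exists precisely to route around them. The first problem is invariance of $X$. When $\calM(A)$ is a Cantor set, the class $E(x_A)$ is in general spread over several elements of $\calA$ (the construction of $K$ in Theorem \ref{thm:classificationCB} deliberately splits each Cantor class of maximal ends between at least two complementary regions), and the finite-index subgroup preserving each maximal type preserves $E(x_A)$ only setwise: it can move points of $E(x_A)\cap A$ into other regions, since $A$ is not a canonical subset of $E$. So $X=\calM(A)=E(x_A)\cap A$ is not $G$-invariant for any finite-index $G$, and neither is your alternative, which again refers to $A$. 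The paper instead takes $X=\{x_A\}$ and $U=A$ when $\calM(A)$ is a singleton (the pointwise stabilizer of the finite set $E(x_A)$ is finite index), takes the fully invariant $X=E(x_A)$ with $U=B^c$ when $\calM(A)$ is a Cantor set with $E(x_A)\cap B=\emptyset$, and in the remaining case ($E(x_A)$ meets $B$) argues directly that $\Emc(A,B)=\emptyset$, because every point of $E(A,B)$ is $\lesseq x_A$ and hence the only maximal type in $E(A,B)$ is the uncountable class $E(x_A)$. Your uniform argument has no way to handle this last case, where there is simply no invariant closed set separating the relevant ends.

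The second problem is your resolution of the stable-neighborhood hypothesis, which you correctly identify as the crux but then dispose of with a false statement. ``Countable type'' means that the equivalence class $E(z)$ is countable; it does not mean that $z$ has a countable neighborhood, so $z$ need not be locally homeomorphic to any $\omega^\beta+1$, and Proposition \ref{prop:countable_nice} does not apply (it concerns surfaces whose entire end space is countable). Indeed, Example \ref{ex:non_tame} produces points $x$ with $E(x)$ a singleton --- hence of countable type --- admitting no stable neighborhood, so the lemma you propose to prove first is false. Two smaller gaps: a single point of $E(z_n)\cap B$ is not an accumulation point of $E(z_n)$ in $E-U$, and you cannot simply ``discard'' those $z_n$ with $E(z_n)$ finite if there are infinitely many of them; both are repaired by observing that self-similarity forces $E(z)$ to accumulate at $x_{A'}$ for every $A'\in\calA$ containing a non-maximal point of $E(z)$, so that these classes are automatically infinite with the required accumulation points --- but this needs to be said rather than hedged.
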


\begin{proof}
As a first case, suppose that $\calM(A)$ is a single point.  Let $G$ be the finite index subgroup of $\mcg(\Sigma)$ 
that fixes $x_A$ (recall that $E(x_A)$ is finite). Now $X = \{ x_A\}$ is $G$--invariant and since $\mcg(\Sigma)$
does not have infinite rank, we can take $U=A$ and conclude that 
$E_{\rm mc}(A,B)$ has finitely many different types. 

Otherwise, $\calM(A)$ is a Cantor set.  If $E(x_A)$ does not intersect $B$, we can 
take $X = E(x_A)$ and $U =B^c$. Then $X$ is $\mcg(\Sigma)$--invariant and again
the fact that $\mcg(\Sigma)$ does not have infinite rank implies that 
$E_{\rm mc}(A,B)$ has finitely many different types. 

If $\calM(A)$ is a Cantor set and $E(x_A)$ intersects $B$, then $E(x_A)$ intersects
$E(A,B)$ and hence the only maximal point in $E(A,B)$ is $x_A$ itself, thus $\Emc(A,B)$ is empty.  
\end{proof} 

\subsection{Tame end spaces} 

\begin{definition} \label{def:tame}
An end space $E$ is {\em tame} if any $x \in E$ that is either maximal or an immediate predecessor of a maximal point has a {\em stable neighborhood} as in Definition \ref{def:stable_nbhd}.
\end{definition} 

If $\Sigma$ has locally CB mapping class group, then Theorem~\ref{thm:loc_CB} 
implies that maximal points have stable neighborhoods, so half of the tameness condition is satisfied.  The other half is an assumption that will be used in the next two sections.  
While this seems like a restrictive hypothesis, the class of tame surfaces is very large.  In fact, the following problem seems to be challenging, as the examples of non-tame surfaces (excluding those which are self-similar, see Example \ref{ex:non_tame} below) which we can easily construct all seem to have infinite-rank or limit-type like behavior.  

\begin{problem} \label{prob:non_tame}
Does there exist an example of a non-tame surface whose mapping class group has nontrivial, well-defined quasi-isometry type (i.e. is locally, but not globally, CB
and CB generated)?  
\end{problem} 

\begin{example}[Non-tame surfaces] \label{ex:non_tame} 
Suppose $\{z_n\}_{n \in \N}$ is a sequence of points in $E$ which are non-comparable, i.e. for all $i \neq j$ we have neither $z_i \lesseq z_j$ nor $z_j \lesseq z_i$.  An end space containing such a sequence may be constructed, for instance, as in Example \ref{ex:infinite_rank}, and even (as in that example) have the property that each $z_i$ admits a stable neighborhood $V_n$.  Let $D$ denote a set consisting of the disjoint union of one copy of each $V_n$ and a singleton $x$, so that the sets $V_n$ Hausdorff converge to $x$.   Then $x$ is a maximal point, but fails the stable neighborhood condition in the definition of tame, since the homeomorphism type of small neighborhoods of $x$ do not eventually stabilize.  

A surface with end space $D$ fails the condition of Theorem \ref{thm:loc_CB} so is not locally CB, but one can easily modify this construction to provide locally, and even globally, CB examples.  For instance, let $E$ be the disjoint union of countably many copies of $D$, arranged to have exactly $k$ accumulation points.  If $k = 1$, the end space constructed is self-similar, with the sole accumulation point the unique maximal point.  If $k > 1$, the end space may be partitioned into finitely many self-similar sets satisfying the condition of Theorem \ref{thm:loc_CB}, but has immediate predecessors to the maximal points with no tame neighborhood.   (However, we note that this example is infinite rank, so the mapping class group of a surface with this end type is not CB generated.) 
\end{example}

The main application of the tameness condition is that it allows us give a standard form to other subsets of $E$.   As a first example of this, we have the following.  

\begin{lemma} \label{lem:best_set}
Suppose that $\Sigma$ has tame end space.  
Then, under the hypotheses of Lemma \ref{lem:mc_finite}, the sets $W_{A,B}$ from 
Lemma \ref{lem:W_sets} can be chosen so that for any $z \in E_{\rm mc}(A,B)$, the set 
$E(z) \cap W_{A,B}$ is a singleton.   Such a choice specifies a set unique up to 
homeomorphism, and in this case $W_{A,B}$ is homeomorphic to $W_{B,A}$.  
\end{lemma}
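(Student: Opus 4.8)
The plan is to refine the set $W_{A,B}$ produced by Lemma \ref{lem:W_sets} in two stages: first cut it down so that it meets each of the finitely many types in $\Emc(A,B)$ in exactly one point (this is where tameness and finiteness enter), then argue the resulting choice is canonical up to homeomorphism and symmetric in $A,B$.

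\medskip

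\emph{Step 1: arrange $E(z)\cap W_{A,B}$ to be a singleton for each $z\in\Emc(A,B)$.}
By Lemma \ref{lem:mc_finite} there are only finitely many types $z_1,\dots,z_m$ in $\Emc(A,B)$. Fix $z=z_i$. By Observation \ref{obs:Emc}, the only accumulation point of $E(z)$ in $A$ is $x_A$, so $E(z)\cap W_{A,B}$, being a subset of $A-N(x_A)$, is a finite (nonempty, by the defining property of $W_{A,B}$) set, say of cardinality $k_i\geq 1$. If $k_i>1$, pick one point $p\in E(z)\cap W_{A,B}$; since $\Sigma$ is tame and $z$ is (by definition of $\Emc$) a maximal point of $E(A,B)$ hence an immediate predecessor of the maximal point $x_A$ of $A$ — so $p\sim z$ has a stable neighborhood — every point of $E(z)\cap W_{A,B}$ has a stable neighborhood homeomorphic to that of $p$. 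Using Lemma \ref{lem:consume}, absorb all but one of the points of $E(z)\cap W_{A,B}$: shrink $W_{A,B}$ by removing small clopen neighborhoods of the superfluous points, each of which is homeomorphic to a clopen subset of a stable neighborhood of the retained point $p$, so that the homeomorphism type of $W_{A,B}$ (in particular, which other types it meets, and in what multiplicities) is unchanged. Carry this out simultaneously for $z_1,\dots,z_m$; since the $z_i$ are pairwise inequivalent, the shrinkings can be done in disjoint parts of $W_{A,B}$, so they do not interfere. After this finite process $E(z)\cap W_{A,B}$ is a singleton for every $z\in\Emc(A,B)$, while $W_{A,B}$ still has the characterizing property of Lemma \ref{lem:W_sets}.

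\medskip

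\emph{Step 2: uniqueness up to homeomorphism and the symmetry $W_{A,B}\cong W_{B,A}$.}
Having normalized, I claim any two sets $W,W'$ satisfying (i) the property of Lemma \ref{lem:W_sets} for the pair $(A,B)$ and (ii) meeting each type of $\Emc(A,B)$ in a single point are homeomorphic. The point is that the types appearing in such a set are exactly the types $z$ with $E(z)\cap(A-\{x_A\})\neq\emptyset$ and $E(z)\cap(B-\{x_B\})\neq\emptyset$; among these the non-countable-type ones and the countable-type non-maximal ones are forced, by self-similarity-type arguments, to appear ``as much as possible'' (a stable neighborhood of such a point absorbs copies of itself and of everything below it), while the finitely many maximal countable types are now pinned to multiplicity one. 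A standard back-and-forth argument, exactly in the style of Lemma \ref{lem:stable_nbhd} — building a homeomorphism $W\to W'$ by matching up stable neighborhoods of corresponding points and exhausting — then produces the desired homeomorphism; I would phrase this as: both $W$ and $W'$ are, after removing the (matched) maximal-countable points, disjoint unions of stable neighborhoods realizing the same types, hence homeomorphic, and the finitely many removed points are matched bijectively by type. Finally, for the symmetry: the same normalization applied to the pair $(B,A)$ yields $W_{B,A}\subset B-N(x_B)$ meeting each type of $\Emc(B,A)$ once. But $\Emc(A,B)$ and $\Emc(B,A)$ consist of points of the \emph{same} types — if $z\in A$ is a maximal countable type meeting $B$, there is $w\in B$ with $w\sim z$ meeting $A$, and $w$ is then maximal countable in $E(B,A)$ — and likewise the non-maximal and non-countable types occurring in $W_{A,B}$ and $W_{B,A}$ coincide. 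So $W_{A,B}$ and $W_{B,A}$ satisfy the same characterizing+normalization conditions (with the roles of $A,B$ swapped, which does not affect the list of types), hence are homeomorphic by the uniqueness just established.

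\medskip

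\emph{Main obstacle.} The delicate point is Step 1: I must be sure that deleting the superfluous points of $E(z)\cap W_{A,B}$ via Lemma \ref{lem:consume} does not change which \emph{other} types $W_{A,B}$ meets, nor create a violation of the Lemma \ref{lem:W_sets} property — in particular that no type is ``only" witnessed inside a neighborhood that gets absorbed. This is where one genuinely uses that the retained point $p$ has a stable, hence self-similar, neighborhood (so the absorbed neighborhood embeds back into what remains) and that $z$, being maximal in $E(A,B)$, dominates $\lesseq$-below everything of countable type that could be lost. Getting the bookkeeping right so that all $m$ absorptions are carried out simultaneously in disjoint regions, and verifying that the resulting $W_{A,B}$ is then truly canonical, is the part that needs care; the back-and-forth in Step 2 is then routine given Lemma \ref{lem:stable_nbhd}.
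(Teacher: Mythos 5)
Your Step 1 is essentially the paper's argument: finiteness of the set of types in $\Emc(A,B)$ (Lemma \ref{lem:mc_finite}) and of $E(z)\cap W_{A,B}$ (Observation \ref{obs:Emc}), together with stable neighborhoods, let you delete all but one stable neighborhood per maximal countable type without losing any represented type, exactly as in the paper. The genuine gap is in Step 2. Your uniqueness/symmetry argument rests on the claim that, after removing the stable neighborhoods $V_1,\dots,V_k$ of the maximal countable points, the remainders $W$ and $W'$ of two normalized choices are ``disjoint unions of stable neighborhoods realizing the same types, hence homeomorphic.'' Neither half of this is available. First, tameness only supplies stable neighborhoods for maximal points of $E$ and immediate predecessors of maximal points; a generic point of $E(A,B)$ lying strictly below the maximal countable types need not have one. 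Second, and more seriously, $W$ and $W'$ need not be homeomorphic at all: the normalization pins the multiplicity of the \emph{maximal} countable types to one, but says nothing about how many points of a non-maximal countable type $w\prec z$ survive \emph{outside} the retained stable neighborhoods. One admissible choice of $W_{A,B}$ may leave three such points in $W$ and another may leave five, and these remainders can be genuinely non-homeomorphic; so a back-and-forth between $W$ and $W'$ cannot get started, and ``same types appear'' does not determine a homeomorphism type without controlling multiplicities and accumulation structure.

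The paper avoids exactly this by never comparing $W$ with $W'$ directly. Writing $W'_{A,B}=W\sqcup\bigl(\bigcup_i V_i\bigr)$ and $W'_{B,A}=W'\sqcup\bigl(\bigcup_i V_i'\bigr)$ with $V_i\cong V_i'$ (your observation that $\Emc(A,B)$ and $\Emc(B,A)$ carry the same types is correct and is what justifies this matching), it uses Lemma \ref{lem:consume} and compactness to show that every point of $W$ has a neighborhood absorbable into a stable neighborhood of a maximal-type accumulation point of its class inside $W'_{B,A}$, hence $W\cup W'_{B,A}\cong W'_{B,A}$, and symmetrically $W'\cup W'_{A,B}\cong W'_{A,B}$. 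Chaining,
\[
W'_{A,B}\cong W'_{A,B}\cup W'\cong W\cup W'\cup\Bigl(\bigcup_i V_i\Bigr)\cong W\cup W'\cup\Bigl(\bigcup_i V_i'\Bigr)\cong W\cup W'_{B,A}\cong W'_{B,A}.
\]
The indeterminate multiplicities are swallowed by absorption rather than matched by a bijection. You should replace your structural claim in Step 2 with this argument; the same chain, run with two normalized choices for the same ordered pair $(A,B)$, gives the uniqueness statement as well.
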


\begin{proof} Fix a choice of set $W_{A,B}$ as given by Lemma \ref{lem:W_sets}.  
For each $z \in E_{\rm mc}(A,B)$,  choose disjoint stable neighborhoods
around every point in the finite set $E(z) \cap W_{A,B}$ (this set is finite by Observation \ref{obs:Emc}) and remove all but one neighborhood, leaving the rest of $W_{A,B}$ unchanged . Denote this new set by $W'(A,B)$.  Since one such neighborhood remains, any type that was represented in $W_{A,B}$ is 
still represented there, so it satisfies the hypotheses of Lemma \ref{lem:W_sets}.  
If $E$ is countable, any such choice will produce a homeomorphic set by the definition of stable neighborhood.  
It remains to show that this holds generally (i.e. even when some maximal points are contained in Cantor sets this construction still produces a set unique up to homeomorphism), and that 
$W'_{A,B}$ is homeomorphic to $W'_{B,A}$.  We prove both assertions simultaneously. 

Let $z_1, \dots, z_k \in W'_{A,B}$ be points of maximal countable type and let
$V_1, \dots, V_k$ be the chosen disjoint stable neighborhoods of these points in $W'_{A,B}$. 
Let $W = W'_{A,B} - \cup_i V_i$. Similarly, choose $V_1', \dots, V_k'$ to be 
stable neighborhood of points of maximal countable type in $W_{B,A}$ such that
$V_i$ is homeomorphic to $V_i'$ and let $W' = W'_{B,A} - \cup_i V_i'$.  
We start by showing that 
\[
W \cup W_{B,A} \cong W_{B,A}. 
\]
This is because, for any point in $x \in W$, there is a point $y \in W'_{B,A}$ of maximal type where
$y$ is an accumulation point of $E(x)$. Hence, by Lemma \ref{lem:consume}, there is a neighborhood $U_x$ of $x$ and stable neighborhood $V_y$ of $y$ such that $U_x \cup V_y$ is homeomorphic to $V_y$. 
Since $W$ is compact, 
finitely many such neighborhoods are enough to cover $W$ and, shrinking these neighborhoods if needed,
we can write $W$ as the  disjoint union of finitely many such neighborhoods. Thus, $W$ can be absorbed into $W'_{B,A}$. 

Similarly we have $W' \cup W'_{A,B}$ is homeomorphic to $W'_{A,B}$. That is, 
\[
W'_{A,B} \cong W'_{A,B} \cup W' \cong
W \cup W' \cup \Big( \bigcup_i V_i \Big) \cong 
W \cup W' \cup \Big( \bigcup_i V_i' \Big) \cong
W \cup W'_{B,A} \cong W'_{B,A}.
\]
This finishes the proof. 
\end{proof}

Going forward, we will use $W_{A,B}$ to denote the (well-defined up to homeomorphism) sets constructed in the Lemma, each containing a single representative of each of its maximal countable types.

\subsection{Classification of CB generated mapping class groups} \label{sec:tameCBgen} 

The purpose of this section is to prove Theorem \ref{thm:CB_generated}, namely, the statement that the necessary conditions for CB generation introduced in Section \ref{sec:criteria} are also sufficient for tame surfaces.

We continue with the notation and conventions introduced in the previous section, in particular the following.

\begin{convention}
Going forward, $L$ denotes the finite-type surface furnished by Proposition \ref{prop:partition}, so that the complimentary regions to $L$ produce a decomposition 
$E = \bigsqcup_{A \in \calA} A$
where each $A$ is self-similar, and we have $\sqcup \calM(A) = \calM(E)$.  
\end{convention} 
The next proposition is the main technical ingredient in the proof of Theorem \ref{thm:CB_generated}.  It says that, by using elements from a CB set, one may map any neighborhood of $x_A$ in $E$ homeomorphically onto $A$ while pointwise fixing any set $B \in \calA$ which shares no end types with $A-U$.    

\begin{proposition} \label{prop:push} 
Assume $E$ is tame, not of limit type 
and $\mcg(\Sigma)$ does not have infinite rank.  
Then there is a finite set 
$F \subset \mcg(\Sigma)$ such that the following holds: \\
Let $A \in \calA$ and let $U \subset A$ be a neighborhood of $x_A$.  
If $\calB_U \subset \calA$ is a subset that satisfies $E(y) \cap ( \bigcup_{B \in \calB_U} B )\neq \emptyset$ for all $y \in A - U$, then 
there is an element $f$ in the group generated by $F$ and $\calV_L$ with 
$f(U) = A$, and $f|_C = \id$ for all $C \in (\calA-\calB_U)$.  
\end{proposition}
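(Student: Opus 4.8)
The plan is to produce $f$ as a word in $\calV_L$ together with a single fixed finite set $F$ of ``elementary shift'' homeomorphisms between the complementary regions of $L$. The point of the three hypotheses is precisely that only finitely many kinds of elementary shift are ever needed, uniformly over $A$, $U$ and $\calB_U$: not-limit-type and not-infinite-rank bound the number of end types that can pile up near $x_A$, and tameness gives those types well-behaved model neighborhoods. Throughout I use that an element of $\calV_L$ may realize an arbitrary homeomorphism supported on any single region $\Sigma_C$, and that I only have to land $f$ in the \emph{group} generated by $F$ and $\calV_L$ — not bound its word length — so repeated applications of a generator cost nothing.

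First I would fix $F$. For each ordered pair $(A,B)$ in the finite set $\calA$, and each ``block type'' $\tau$ drawn from a finite list — the maximal countable types of $\Emc(A,B)$, which is finite by Lemma~\ref{lem:mc_finite} and which after Lemma~\ref{lem:best_set} carry well-defined stable neighborhoods; a handle; an isolated puncture; and the (finitely many flavors of) ``Cantor'' types contained in $\calM$ — I would fix once and for all an elementary shift $\sigma^\tau_{A,B}$: a homeomorphism of $\Sigma$ supported on $\Sigma_A\cup\Sigma_B$, pointwise fixing the boundary curves of $L$ bounding every other region, which moves one $\tau$-block out of a prescribed standard slot near $x_A$ and into a slot near $x_B$ while simultaneously shifting the $\tau$-blocks already present in $\Sigma_B$ one step toward $x_B$ by an Observation~\ref{obs:shift}--type map. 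Such $\sigma^\tau_{A,B}$ exists because $\tau\lesseq x_B$ forces $\tau$-points to accumulate at $x_B$, so the shift ``does not use up'' $B$. Let $F$ be the collection of all these maps and their inverses; by Lemma~\ref{lem:mc_finite} this is finite.

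Next I would standardize $Y:=A-U$. Using one element of $\calV_L$ supported on $\Sigma_A$, I would rearrange so that $Y$ is a finite disjoint union of ``columns,'' one per block type $\tau$ occurring in $Y$, a column being either finitely many $\tau$-blocks sitting in standard slots near $x_A$, or — when $\tau$-points accumulate inside $Y$ — a single clopen set carrying an entire Observation~\ref{obs:shift}-tower of $\tau$-blocks. That this is possible, i.e. that $Y$ involves only finitely many block types and no limit-type pile-up toward $x_A$, is where not-limit-type, not-infinite-rank, and tameness enter, via the control on $N(x_A)$ and $W_{A,B}$ from Lemmas~\ref{lem:W_sets}, \ref{lem:mc_finite}, \ref{lem:best_set}. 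Then, for each $\tau$-column I would pick $B=B(\tau)\in\calB_U$ containing type $\tau$ (possible by the covering hypothesis) and apply $\sigma^\tau_{A,B}$ repeatedly — once for each finite block, once for a whole tower — sandwiched between $\calV_L$-moves that feed the next block into the standard slot and tidy up; only finitely many shifts are composed, and each is supported on $\Sigma_A\cup\Sigma_B$ with $B\in\calB_U$, so every $C\in\calA-\calB_U$ is fixed throughout. After this evacuation the region $\Sigma_A$ carries exactly the ends $U$; since $A$ is self-similar with maximal end $x_A$ and $E$ is tame, $A$ (and hence any neighborhood of $x_A$ in $A$, in particular $U$) is a stable neighborhood of $x_A$, so $U\cong A$ by Remark~\ref{rem:stable_nbhd}, Lemma~\ref{lem:stable_nbhd} and Proposition~\ref{prop:M_self_sim}; a final $\calV_L$-move (and, if needed, one further element of $F$) then normalizes $\Sigma_A$, yielding $f$ with $f(U)=A$. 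When $\calM(A)$ is a Cantor set rather than a singleton, $\calB_U$ necessarily contains $A$, and the type-$x_A$ part of $Y$ is absorbed back into $\Sigma_A$ internally using self-similarity of $A$ and Lemma~\ref{lem:consume}, the lower-type part being handled exactly as above.

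The step I expect to be the main obstacle is the bookkeeping in the standardization: showing that, as $U$ ranges over \emph{all} neighborhoods of $x_A$, the set $Y=A-U$ always decomposes into finitely many of the prescribed columns — this is exactly what would fail if $E$ had limit type or $\mcg(\Sigma)$ had infinite rank, and it is the reason both are excluded. A secondary subtlety is confirming that the elementary shifts can be chosen so that several columns, and arbitrarily many repetitions, can be absorbed into the \emph{same} $B$ without exhausting it; this rests on the Observation~\ref{obs:shift} mechanism and the self-similarity of $B$, together with the accumulation of $\tau$-points at $x_B$ whenever $\tau$ is a type of $B$.
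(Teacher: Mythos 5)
Your overall architecture --- a finite set of shift maps indexed by pairs $(A,B)$ and by end types, a standardization of $A-U$ by an element of $\calV_L$, and then an evacuation of $A-U$ into the regions of $\calB_U$ --- matches the paper's proof. But there is a genuine gap in the standardization step, and it is exactly the step you flagged as the main obstacle. Your plan decomposes $Y=A-U$ into ``finitely many columns, one per block type $\tau$ occurring in $Y$,'' and your finite set $F$ contains one elementary shift per type. This cannot work as stated: even under all three hypotheses, $Y$ can contain infinitely many pairwise inequivalent types. For instance, with $E\cong\omega^{\alpha+1}\cdot 2+1$ ($\alpha$ infinite, not of limit type, finite rank, tame), the set $A-U$ typically contains points locally homeomorphic to $\omega^{\beta}+1$ for every $\beta\le\alpha$. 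The hypotheses of not-limit-type and finite rank do \emph{not} bound the number of types in $A-U$; what they give you (via Lemmas \ref{lem:W_sets} and \ref{lem:mc_finite}) is a single clopen block $W_{A,B}$ containing a representative of \emph{every} type shared by $A$ and $B$, and the finiteness of the set of \emph{maximal countable} types in $E(A,B)$. So a type-by-type evacuation is the wrong bookkeeping, and your finite list of ``block types'' (maximal countable types, handles, punctures, Cantor types in $\calM$) omits, e.g., non-maximal uncountable types, of which there may be infinitely many.

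The paper's resolution, which your proposal is missing, is a structure theorem (Step 4 of its proof): any clopen $W\subset E(A,B)$ containing a copy of $W_{A,B}$ and avoiding $x_A$ is homeomorphic to $W_{A,B}$ together with finitely many surplus stable neighborhoods, one for each surplus point of each of the finitely many maximal countable types. All other types --- however many there are --- are \emph{absorbed} into $W_{A,B}$ without being counted, by covering $W-W_{A,B}$ with finitely many neighborhoods $U_y$ such that $U_y\cup V_x\cong V_x$ for a suitable stable neighborhood $V_x\subset W_{A,B}$ (Lemma \ref{lem:consume} plus compactness). Consequently $F$ needs only one ``bulk'' shift $\eta_{A,B}$ per pair, translating the whole tiling $A-\{x_A\}=\bigsqcup_k T_k(A)$ by copies of $W_A\sqcup\bigsqcup_B W_{A,B}$, plus one refined shift $\eta_{A,B,z}$ per maximal countable type $z$ to dispose of the surplus; these are exactly the classes the counting homomorphisms of Lemma \ref{lem:infinite-rank} detect, which is why they and only they require individual shifts. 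Your secondary worry (that repeated shifts into the same $B$ do not ``exhaust'' it) is handled correctly by your Observation~\ref{obs:shift} mechanism and is not an issue. To repair your argument, replace the ``one column per type'' decomposition with the bulk-block-plus-finite-surplus normal form, proved by the absorption argument above.
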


\begin{proof}
The proof consists of several preliminary structural results on end spaces, carried out in Steps 1-4; the set $U$ and $\calB_U$ are introduced in the final step.  

\subsection*{Step 1: Decomposition of the sets $\mathbf{A \in \calA}$.} 
Fix $A \in \calA$.  For every $B \in \calA$, consider a copy of 
$W_{A,B} \subset A$ as in Lemma~\ref{lem:best_set}, as well as a homeomorphic copy of $W_A$.  A short argument shows that 
we may choose these sets to be pairwise disjoint, so that we have 
$W_{A,B} \cap W_{A, B'} = \emptyset$ whenever $B \neq B'$ and 
$W_{A,B} \cap W_A = \emptyset$ for all $B$. This is as follows: enumerate the 
sets $B_1, B_2, \ldots, B_k$ of $\calA - \{A\}$ and perform our original construction to obtain $W_{A,B_1}$.  
This set is disjoint from $N(x_A)$.  By self-similarity, there is a homeomorphic copy of $A$ inside $N(x_A)$, hence we may find a set $W_{A, B_2}$ disjoint from $W_{A, B_1}$ and also disjoint from a smaller copy of $N(x_A)$.  Continuing in this manner, we may produce the desired sets. Doing this one more time, we also find a disjoint copy of $W_A$.  
We keep these sets (and refer to them to by this notation, $W_{A,B}$ and $W_A$) 
for the remainder of the proof.  

Let 
\[
T_0 =W_A \sqcup \left(\bigsqcup_{B \in \calA-\{A\}} W_{A, B}\right) \subset A. 
\]
By construction, for every $y \in A- \{x_A\}$, $E(y)$ intersects $T_0$ by Theorem \ref{thm:loc_CB}.

Let $V_1 = A - T_0$ and consider a family of neighborhoods $V_k \searrow x_A$. Each $V_k$ contains a copy of $A$ and hence a copy $T_k$ of $T_0$. After dropping some of the sets $V_k$ 
from the nested sequence and reindexing, we can assume $T_1 \subset (V_1 - V_2)$. 
Continuing in this way, we find a new nested sequence of neighborhoods (which we again denote 
by $V_k$) so that 
$(V_k - V_{k+1})$ contains a copy $T_k$ of $T_0$. In particular, the sets $T_k$ are disjoint. 

Our next goal is to modify this construction so that we in fact have $(V_k - V_{k+1}) \cong T_k$, i.e. we obtain a nested family of neighborhoods such that the annular regions between them are homeomorphic to the sets $T_k$ above.  
To do this, we first show that we can distribute the set 
\[
Q = (V_1-V_2)-T_1
\] 
among finitely many of the other sets $T_k$, $k>1$ while preserving the homeomorphism class of the $T_k$; and then proceed iteratively. 

For each point $y \in Q$, $E(y)$ intersects $T_0$ and hence $y$ has a neighborhood 
$V_y \subset Q$ that has a homeomorphic copy inside $T_0$. Since $Q$ is compact, 
finitely may such neighborhoods are sufficient 
to cover $Q$. Making some of these neighborhoods smaller, we can 
write $Q = Q_1 \sqcup \dots \sqcup Q_m$, where every $Q_i$ has a copy 
in $T_0$ and hence in every $T_k$. For $j=1, \dots, m$ and $k \equiv j \mod m$ 
let $Q_k'$ be the copy of $Q_j$ in $V_k$. For $k=1, \dots, m$ define
\[
T_k' = (T_k -Q_k') \cup Q_k
\]
and for $k >m$, define 
\[
T_k' = (T_k - Q_k') \cup Q_{k-m}'. 
\] 
Each $T_k'$ is still homeomorphic to $T_0$, the sets $T_k'$ are disjoint and every point in 
$(V_1 - V_2)$ is contained in some $T_k'$. Note that $T_0$ is not modified. 

Similar to above, we can distribute the points in 
\[
Q'= (V_2-V_3) - \bigcup_{k \geq 1} T_k'
\] 
among the sets $T_k'$, $k=2, 3, \dots$ without changing their topology. 
That is, we obtain a family $T_k''$ of disjoint
sets homeomorphic to $T_0$ such that their union covers $A - V_3$ without
modifying $T_0$ or $T_1'$. Continuing in this way, every $T_k$ is modified 
finitely many times and stabilizes after $k$ steps. Thus, $\{ T_k^{(k)} \mid k \in \N\}$ is a family 
of disjoint copies of $T_0$ that covers $A - \{x_A\}$. To simplify notation, denote $T_k^{(k)}$ by $T_k(A)$. 
To summarize, 
\[
A-\{x_A\} = \bigsqcup_{k\geq 0} T_k(A), 
\qquad\text{and for}\qquad
U_n = \bigsqcup_{k\geq n} T_k(A), \text{ we have } U_n \searrow \{x_A\}. 
\]

Since $T_0 = W_A \sqcup \left(\sqcup_{B\neq A} W_{A,B}\right)$, we have a similar 
decomposition each set $T_k(A)$, which we notate by  
\[
T_k(A) = W_A^k \sqcup \left(\bigsqcup_{B \in \calA - \{ A\}} W_{A, B}^k\right). 
\]
where $W_A^k$ is a set homeomorphic to $W_A$ and 
$W_{A, B}^k$ is a set homeomorphic to $W_{A,B}$, for $k \in \N$.  

We also have the above decomposition for every $B \in \calA-\{A\}$.  For notational convenience, when $k<0$,
we define
\[
W_{A,B}^k := W_{B, A}^{-k-1}. 
\]

\subsection*{Step 2: a first shift map.} 
Using the decomposition above, we define the first (of several) homeomorphisms that shifts points between $A$ and $B$.  
Since the sets $W_{A,B}^k$, for $k \in \Z$ are disjoint and homeomorphic and Hausdorff converge to the points $x_A$ and $x_B$ as $k$ approaches $\infty$ and $-\infty$ respectively, there exists a homeomorphism  
 $\eta_{A,B}$ such that 
\[
\eta_{A,B} \big(W_{A, B}^k\big) = W_{A, B}^{k+1}, \qquad \forall \, k \in \Z
\]
and restricts to the identity elsewhere in $E$.   Fix one such map for each 
(unordered) pair $A, B \in \calA$. 

\subsection*{Step 3: shifting maximal countable ends independently.} 
Now we define homeomorphisms allowing one to shift the maximal countable ends one by one.  As motivation, 
consider, for instance, a surface with $E \cong \omega \cdot 2 +1$, such that $E^G$ and the closure of $E-E^G$ are both homeomorphic to $\omega \cdot 2+ 1$.   There are two maximal ends, $\calA = \{A, B\}$ and we have the simple situation where $W_{A,B} = T_0$ consists of one of each type of non-maximal end.  The map $\eta_{A,B}$ moves both to the right or to the left.  
However, there is evidently a homeomorphism of $\Sigma$ which pointwise fixes $E- E^G$ and shifts the nonmaximal ends of $E^G$.   

  \begin{figure*}[h]
     \centerline{ \mbox{
 \includegraphics[width = 4in]{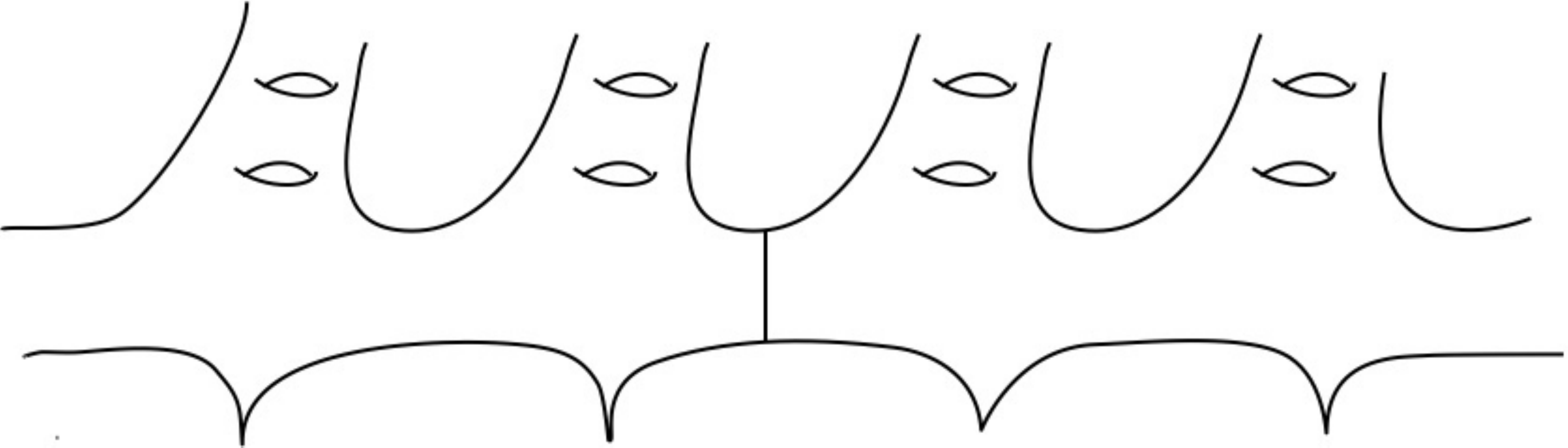}}}
 \caption{$E - E^G$ and the non-maximal ends of $E^G$ can be shifted independently}
  \label{fig:shift_ends}
  \end{figure*}

For $z \in E_{\rm mc}(A,B)$, 
let $W_{A, B}^k(z) \subset W_{A,B}^k$ be a stable neighborhood of the unique 
intersection point of $E(z)$ with $W_{A, B}^k$. By making these
neighborhood smaller, we can assume $W_{A, B}^k(z)$ for different $z \in E_{\rm mc}(A,B)$ 
are disjoint. 
(This is a slight abuse of notation since $W_{A, B}^k(z)$ depends only 
on the equivalence class of $z$ under $\sim$, not the point itself).  
Define $\eta_{A,B,z}$ to be a homeomorphism of $\Sigma$ so that 
\[
\eta_{A,B,z} \big(W_{A, B}^k(z) \big) = W_{A, B}^{k+1}(z), \qquad k \in \Z
\]
and that acts by the identity elsewhere in $E$. Note that the actions of $\eta_{A,B,z}$ on $E$ commute with each other 
and have support in $A \cup B$.

\subsection*{Step 4: Standard decomposition for sets of shared ends.} 
The following claim shows that clopen subsets of $E(A,B)$ have a standard form.  

\begin{claim}
Let $W \subset E(A,B)$ be a clopen set containing $W_{A,B}$ and disjoint from $x_A$. 
For $z \in E_{\rm mc}(A,B)$, let $p_z(W) = |E(z) \cap W|$. Then $W$ is homeomorphic to the set
\[ W_{A, B} \sqcup 
  \left( \bigsqcup_{[z] \in E_{\rm mc}(A,B)} \bigsqcup_{k=1}^{p_z(W)-1}  W_{A, B}^k(z) \right).
\]
\end{claim}

Recall that $W_{A,B} \subset T_0$ was a fixed set, chosen in step 1. However, note that this structure 
theorem also applies to any clopen subset of $E(A,B)$ which contains a {\em homeomorphic 
copy} of $W_{A,B}$.  

\begin{proof}[Proof of claim]  \renewcommand{\qedsymbol}{$\blacksquare$}
For $z \in E_{\rm mc}(A,B)$ and $y \in E(z) \cap (W- W_{A,B})$, choose a stable 
neighborhood $V_y$ of $y$ in $W$. Making the neighborhoods small enough, we can 
assume they are disjoint from each other and from $W_{A,B}$. Since stable neighborhoods are canonical, we can map the union of these 
neighborhoods homeomorphically to 
\[
\bigsqcup_{[z] \in E_{\rm mc}(A,B)} \bigsqcup_{k=1}^{p_z(W)-1}  W_{A, B}^k(z).
\]
It remains to show that if $p_z(W) = 1$ for every $z \in E_{\rm mc}(A,B)$, then $W$ is 
homeomorphic to $W_{A,B}$. 

For every point in $y \in (W-W_{A,B})$, there is a point $x \in W_{A,B}$ where 
$x$ is an accumulation point of $E(y)$. By Lemma~\ref{lem:consume}, for any stable 
neighborhoods $V_x$ and $V_y$ of $x$ and $y$ respectively, $V_x \cup V_y$ is 
homeomorphic to $V_x$.  Taking a cover of $W-W_{A,B}$ by such neighborhoods, we 
conclude that 
\[
W = (W- W_{A,B}) \cup W_{A,B} \cong W_{A,B}. 
\]
This proves the claim. 
\end{proof} 

\subsection*{Step 5. Finishing the proof.} Let 
\[
F = \Big\{ \eta_{A,B}^{\pm1}, \, \eta_{A,B,z}^{\pm 1} \, \Big| \, 
B \in \calA-\{A\} \quad\text{and}\quad [z] \in E_{\rm mc}(A,B)\Big\}.
\]
Let $U \subset A$ be a neighborhood of $x_A$ and let $\calB_U \subset \calA - \{A\}$ be 
as in the statement of the proposition. 
The homeomorphism $\prod_{B \in \calB_U} \eta_{A, B}^{-1}$ shifts the sets $W_{B,A}$
from $\sqcup_{B \in \calB_U}B$ into $A$, and in particular, 
\[
\bigsqcup_{B \in \calB_U} W_{A,B} \subset 
 A- \left( \prod_{B \in \calB_U} \eta_{A, B}^{-1}\right) (U).
\] 
Thus, up to applying this homomorphism, we may assume that $U$ is sufficiently small so that its complement contains $\sqcup_{B \in \calB_U} W_{A,B}$, the subset of $T_0$.

Fix $B_1 \in \calB_U$.  
Since $(A-U)\cap E(A,B_1)$ contains $W_{A, B_1}$, the claim proved in step 4 implies that $(A-U)\cap E(A,B_1)$
it is homeomorphic to the standard set 
\[ W_{A, B_1} \sqcup 
  \left( \bigsqcup_{[z] \in E_{\rm mc}(A,B_1)} \bigsqcup_{k=1}^{p_z(W)-1}  W_{A, B_1}^k(z) \right)
\]
in $A$ and the complements of these two sets in $A$ are homeomorphic
(each being homeomorphic to $A$). Thus, by the classificaiton of surfaces there is a homeomorphism $v_1$ supported on the complementary region to $L$ with end space $A$, hence in $\calV_L$, 
taking $(A- U) \cap E(A,B)$ to this standard set.  However, by construction, the image 
of this standard set under 
\[
\eta_{A,B_1} \circ \prod_{z \in \sE_c(A,B_1)} \eta_{A,B_1, z}^{p_z(W)-1}
\] 
is disjoint from $A$, and the image of its complement in $A$ is equal to $A$. 
Let 
\[
U' = \eta_{A,B_1} \circ \prod_{z \in \sE_c(A,B_1)} 
 \eta_{A,B_1, z}^{p_z(W)-1} \circ v_1(U).
 \]
Note that $\calB_{U'} = \calB_{U}- \{ B_1\}$. We now repeat the process above using 
$B_2 \in \calB_{U'}$ and $U'$ and produce an element of the subgroup generated by $F$ 
and $\calV_L$ which takes $U'$ to a subset of $A$ containing $E(A, B_2)$.  
Iterating this process for each $B \in \calB_U$ achieves the desired result.  
\end{proof}

We are almost ready to prove the main result of this section.  In order to do so, we need another finite set of mapping classes, the {\em handle shifts}, which we define now.   
See also \cite[Section 6]{PatelVlamis} for earlier use of this class of maps.  

\begin{definition} 
An {\em infinite strip with genus} is the surface $\R \times [-1,1]$ with a handle attached to the interior of each set $[m,m+1] \times [0,1]$, such that $(x, y) \mapsto (x+1, y)$ is a homeomorphism of the surface.  

A {\em handle shift} on the infinite strip with genus is the mapping class of the homeomorphism $h$ which pointwise fixes the boundary, agrees with $(x,y) \mapsto (x+1, y)$ outside an $\epsilon$-neighborhood of the boundary, and on the $\epsilon$ neighborhood agrees with $(x,y) \mapsto (x+\frac{1-|y|}{\epsilon}, y)$.  
\end{definition}

\begin{definition} \label{def:handle_shift}
Suppose that $\Sigma$ has locally CB mapping class group and $L$ is a surface as in Lemma \ref{lem:CB_gen_nbhd}   We call a (infinite type) subsurface $R \subset \Sigma$ an {\em infinite strip with genus} if it is homeomorphic to such a surface {\em and} so that each complimentary region $\Sigma'$ to $L$ which contains an end of $R$ has infinite genus in $\Sigma' - R$. 

A {\em handle shift} on $R$ is the mapping class of the map $h$ above (under our identification), extended to agree with the identity on the complement of $R$.  
\end{definition}

 Recall that the {\em pure mapping class group}, denoted $\pmcg(\Sigma)$, is the subgroup of $\mcg(\Sigma)$ which pointwise fixes $E$.  
We now prove a lemma on generating pure mapping classes. 

For each pair $(A,B)$ such that $x_A$ and $x_B$ are both accumulated by genus, let $R_{AB} \subset \Sigma$ be an infinite strip with genus with one end in $A$ and one end in $B$.  We may choose these (one at a time) so that they are disjoint subsurfaces of $\Sigma$.  
Fix also a handle shift $h_{AB} \in \Homeo(\Sigma)$ on $R_{AB}$.  

\begin{lemma}[Generating $\pmcg(\Sigma)$] \label{lem:gen_pmap}  
Let $G$ be a subgroup of $\mcg(\Sigma)$ containing all mapping classes supported on finite type subsurfaces, all mapping classes that fix each of the boundary components of $L$ and the handle shifts $h_{AB}$ defined above.  Then $G$ contains $\pmcg(\Sigma)$.  
\end{lemma}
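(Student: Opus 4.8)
The plan is as follows. Take an arbitrary $g\in\pmcg(\Sigma)$; it is enough to show $g\in G$. First I would correct $g$, by left-multiplying it by a finite word in the handle shifts $h_{AB}$ and by one mapping class supported on a finite-type subsurface, so that the resulting mapping class $g'$ preserves the isotopy class of $L$. Once $g'$ preserves $L$ it splits as a product of a mapping class supported on $L$ and one mapping class supported on each complementary region $\Sigma_A$, each of which lies in $G$ by hypothesis, so $g'\in G$ and hence $g\in G$.

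\emph{Correcting the genus.} For $A\in\calA$ let $\delta_A=\partial\Sigma_A$ be the corresponding boundary component of $L$. Since $g$ fixes every end, $g(\delta_A)$ again separates $A$ from $E-A$, and — both curves cutting off the same clopen end set — a change-of-coordinates argument lets us isotope $g$ so that the multicurve $g(\partial L)$ is disjoint from $\partial L$. Then $g(\delta_A)$ and $\delta_A$ cobound a subsurface $Y_A$ with empty end space, hence of finite type; let $\phi_A(g)\in\Z$ be the genus of $Y_A$, signed by which side of $\delta_A$ the curve $g(\delta_A)$ lies on. By the analysis of handle shifts in \cite[Section 6]{PatelVlamis}, the assignment $g\mapsto(\phi_A(g))_{A\in\calA}$ is a homomorphism $\pmcg(\Sigma)\to\Z^{\calA}$ which vanishes on every mapping class supported in a single $\Sigma_A$, which satisfies $\phi_A\equiv 0$ whenever $\Sigma_A$ has genus $0$ (equivalently whenever $x_A\notin E^G$), whose image lies in the hyperplane $\sum_A n_A=0$ (no genus is created), and on which the handle shift $h_{AB}$ acts by adding $\pm(e_A-e_B)$, where $e_A$ is the $A$-th coordinate vector. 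The $h_{AB}$ are indexed exactly by the pairs of infinite-genus regions, and the corresponding vectors $e_A-e_B$ span that hyperplane, so there is a (possibly empty) word $c$ in the $h_{AB}^{\pm1}$ with $\phi_A(cg)=0$ for all $A$. For this $cg$, each $cg(\delta_A)$ cobounds with $\delta_A$ a finite-type planar subsurface with two boundary components and no punctures, i.e. an annulus, so $cg(\delta_A)$ is isotopic to $\delta_A$. Thus $cg(\partial L)$ is a multicurve isotopic to $\partial L$, and performing that isotopy inside a finite-type subsurface $N$ containing $\partial L\cup cg(\partial L)$ gives a mapping class $f$ supported on $N$ with $fcg(\partial L)=\partial L$.

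\emph{Splitting and conclusion.} Set $g'=fcg$. Then $g'(\partial L)=\partial L$, and since $g'$ fixes all ends it does not permute the components of $\partial L$ (distinct components cut off distinct clopen end sets) nor the punctures of $L$; as $g'(L)$ is a finite-type connected subsurface with boundary $\partial L$ while the complementary regions of $L$ are of infinite type, $g'(L)$ is isotopic to $L$ and $g'(\Sigma_A)$ to $\Sigma_A$. Isotoping $g'$ to restrict to homeomorphisms of $L$ and of each $\Sigma_A$ fixing their boundaries, and letting $g_L$ (resp. $g_A$) be the mapping class supported on $L$ (resp. on $\Sigma_A$) agreeing with $g'$ there, we get $g'=g_L\cdot\prod_{A\in\calA}g_A$. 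Here $g_L$ is supported on the finite-type surface $L$, so $g_L\in G$; each $g_A$ is the identity on $\Sigma-\Sigma_A$, which contains $L$, hence $g_A$ fixes every boundary component of $L$ and so $g_A\in G$; and $c$ is a word in the $h_{AB}$ while $f$ is supported on the finite-type surface $N$, so $c,f\in G$. Therefore $g=c^{-1}f^{-1}g'\in G$, which proves $\pmcg(\Sigma)\subseteq G$. The one substantive step is the genus correction: one must verify that the flux $(\phi_A)$ is a well-defined, conservative homomorphism, that it is killed by mapping classes supported in a single region, and that its possible values are realized by the finitely many handle shifts $h_{AB}$. This is precisely the behaviour of handle shifts established by Patel--Vlamis, so the detailed argument consists of adapting \cite[Section 6]{PatelVlamis}; the remaining steps are standard change-of-coordinates arguments for infinite-type surfaces.
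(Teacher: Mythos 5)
Your proposal follows the same architecture as the paper's proof: use the handle shifts to repair the mismatch in how $g$ distributes genus among the complementary regions of $L$, then invoke change of coordinates inside a large finite-type subsurface to pull $g(\partial L)$ back to $\partial L$, and finally split the corrected map into a mapping class supported on $L$ and one supported on each $\Sigma_A$. The difference is organizational: the paper moves the excess genus one handle at a time, using conjugates $\phi h_{AB}\phi^{-1}$ by elements $\phi$ fixing the boundary components of $L$ (hence still in $G$) to position the strip where the offending handle sits, whereas you package the entire obstruction into a flux homomorphism landing in the sum-zero sublattice of $\Z^{k}$ ($k$ the number of infinite-genus complementary regions) and kill it with a single word in the $h_{AB}$ themselves. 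Since a homomorphism to an abelian group is conjugation-invariant, the uncongugated shifts realize the same flux values, so your version is legitimate and arguably cleaner in that it separates the homological obstruction from its geometric realization.

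Two steps are, however, stated incorrectly, though both are repairable. First, you cannot in general isotope $g$ so that $g(\partial L)$ is disjoint from $\partial L$: two separating curves cutting off the same clopen end set need not be realizable disjointly (already on a closed genus-$2$ surface, two separating curves inducing the same genus partition can have geometric intersection number $4$ and are then not isotopic to disjoint curves), so the flux cannot be defined as the genus of a cobounding subsurface. The correct definition --- the one implicit in the paper's proof --- is $\phi_A(g)=\mathrm{genus}\bigl(g(\Sigma_A)\cap T\bigr)-\mathrm{genus}\bigl(\Sigma_A\cap T\bigr)$ for any finite-type $T\supset L\cup g(L)$; this is independent of $T$ and has every property you list, and the Patel--Vlamis/Aramayona--Patel--Vlamis results you cite are proved for homomorphisms of exactly this form. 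Second, vanishing flux does not make $cg(\delta_A)$ isotopic to $\delta_A$ (again false already in finite type, for the same reason); what it gives is that $\Sigma_A\cap N$ and $cg(\Sigma_A)\cap N$ are homeomorphic rel $\partial N$ for each $A$, so the change-of-coordinates principle produces a mapping class $f$ supported on $N$ --- not an ambient isotopy --- with $fcg(\partial L)=\partial L$. That weaker statement is all your argument uses, so with these two corrections the proof goes through.
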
 

\begin{proof}
For $A \in \calA$, let $\Sigma_A$ denote the connected component of $\Sigma - L$ with end space $A$, and let $\partial_A$ denote its boundary component.  
Let $g \in \pmcg(\Sigma)$.  Then $g(\Sigma_A)$ also has end space $A$, and a single boundary component $g(\partial A)$.    Let $T \subset \Sigma$ be a connected, finite type subsurface large enough to contain $L \cup g(L)$.  
If, for each $A \in \calA$, the surface $\Sigma_A \cap T$ is homeomorphic rel $\partial T$ to $g(\Sigma_A) \cap T$, then there is a mapping class $\phi$ supported on $T$ such that $\phi(L) = L$, which proves what we needed to show.  

So we are reduced to the case where for some $A$ the surface $\Sigma_A \cap T$ is not homeomorphic to $g(\Sigma_A) \cap T$.   Both are connected surfaces with the same number of boundary components, so we conclude that they must have different genus.
In particular, this only occurs if $\Sigma$ is itself of infinite genus, for otherwise we choose $K$ by convention to contain all the genus of $\Sigma$.  

Without loss of generality, assume that the genus of $g(\Sigma_A) \cap T$ is larger than that of $\Sigma_A \cap T$.  Since $T$ is finite genus, there must also be another $B \in \calA$ such that the genus of $g(\Sigma_B) \cap T$ is smaller than that of $\Sigma_B \cap T$. 
Since $L$ is chosen so that complimentary regions have either zero or infinite genus, we conclude that $\calM(A)$ and $\calM(B)$ must be accumulated by genus. 

Consider the handle shift $h_{AB}$ supported on $R_{AB}$, which has one end in $A$ and one end in $B$.  
Let $\phi$ be a homeomorphism preserving the ends of $\Sigma$, pointwise fixing the boundary components of $L$, and such that the intersection of $\phi(R_{AB})$ with $T \cap (g(\Sigma_A) - \Sigma_A)$ and with $T \cap (\Sigma_B \cap g(\Sigma_B))$ each have genus one,  that $\phi(R_{AB}) \cap T$ has genus 2 (so there is not genus elsewhere in $T$), and so that, up to replacing $h_{AB}$ with its inverse, $\phi h_{AB} \phi^{-1}$ shifts the genus from $T \cap g(\Sigma_A)$ into $T \cap \Sigma_B$.  See figure \ref{fig:handleshift} for an illustration in a simple setting. 
Such a homeomorphism $\phi$ exists by the classification of surfaces, and our stipulation that the compliment of $R_{AB}$ have infinite genus in complimenatry regions of $L$.  

  \begin{figure*}[h]
     \labellist 
  \small\hair 2pt
   \pinlabel $\Sigma_A$ at -15 230
   \pinlabel $\Sigma_B$ at 602 220
   \pinlabel $g(K)$ at 420 160
   \pinlabel $K$ at 190 50
   \endlabellist
     \centerline{ \mbox{
 \includegraphics[width = 3.5in]{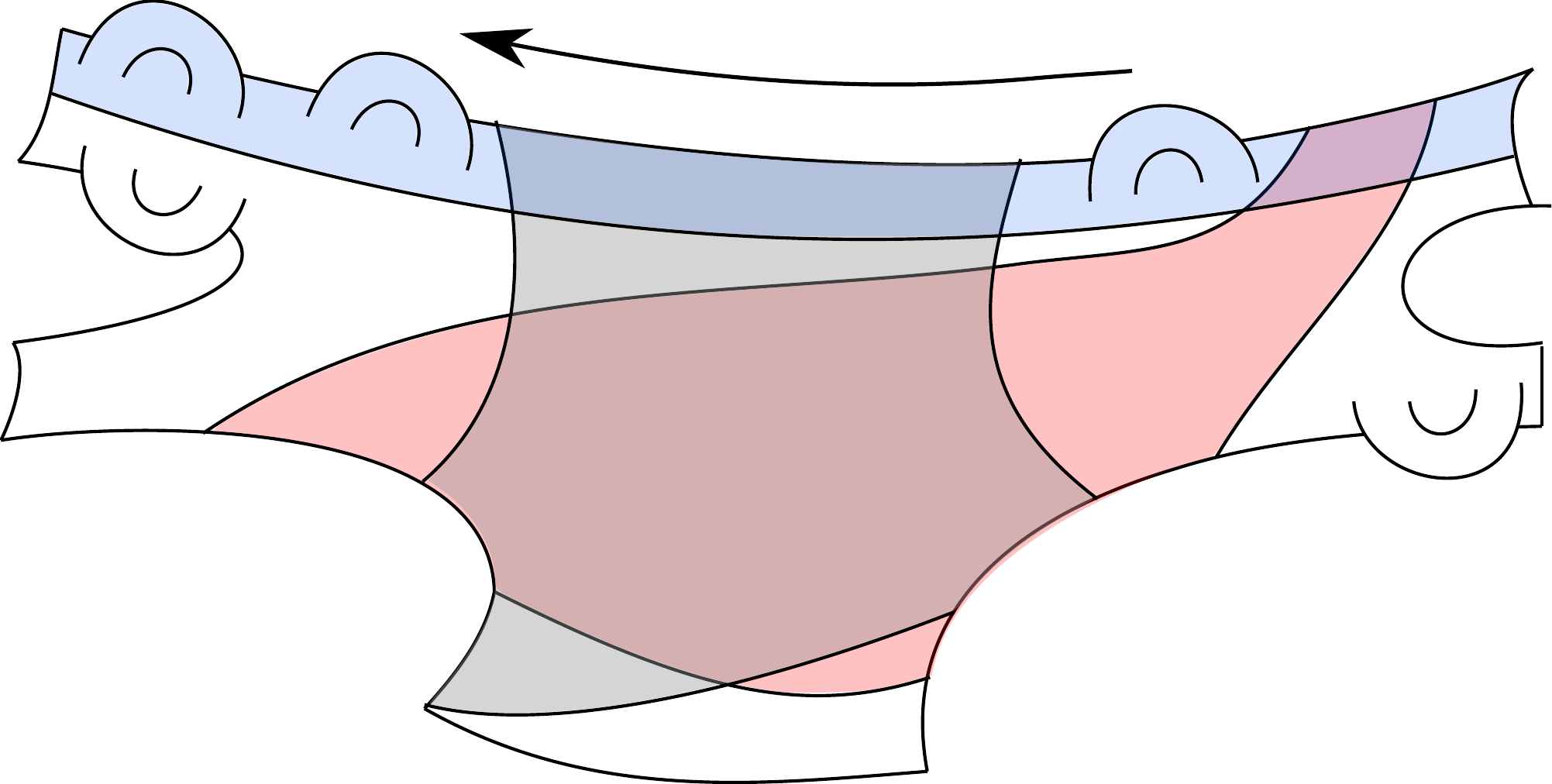}}}
 \caption{$T$ containing $K$ and $g(K)$, and the domain $\phi(R_{AB})$ of the handle shift}
  \label{fig:handleshift}
  \end{figure*}

Then $\phi h_{AB} \phi^{-1} g(\Sigma_A) \cap T$ has one fewer genus than $g(\Sigma_A) \cap T$, and 
$\phi h^{-1}_{AB} \phi^{-1} g(\Sigma_A) \cap T$ one additional genus, and there is no change otherwise in the genus of complimentary regions.  
Continuing in this fashion, one may iteratively modify $g$ by composing by elements of $G$ so as to arrive at a homeomorphism $g'$ with the property that 
$\Sigma_A \cap T$ is homeomorphic to $g'(\Sigma_A) \cap T$ for all $A \in \calA$, which is what we needed to show. 
\end{proof} 

\paragraph{A CB generating set.} 
We our now in a position to prove the Theorem on CB generation.  Our CB generating set will consist of 
$\calV_K$, and the finite set consisting of the Dehn twists $D$ from Observation~\ref{obs:Emc}, the finite set $F$ from Proposition \ref{prop:push}, the handle shifts $h_{AB}$, and a finite collection of homeomorphisms $g_{AB}$, one for each pair $A, B \in \calA$ such that $x_A$ and $x_B$ are of the same type. 

\begin{proof}[Proof of Theorem \ref{thm:CB_generated}]
One direction follows from Lemmas \ref{lem:limit-type}  and \ref{lem:infinite-rank}. 
We prove the other direction.  For this, we show the generating set described in the paragraph above (with precise definitions of $g_{AB}$) is in fact CB.  

Let $\calV_K \cup D$ be the CB set given by Observation~\ref{obs:Emc} (recall that $D$ is a finite collection of Dehn twists).
Let $F$ be the finite set from Proposition \ref{prop:push}.   For each pair of maximal points $x_A, x_B$ in $E^G$, let $h_{AB}$ be the handle shift defined above Lemma \ref{lem:gen_pmap}.  
Let $\chi$ be the CB set consisting of $\calV_K \cup D$ together with the homeomorphisms from $F$ and all the $h_{AB}$.  

We show first that $\chi$ generates the pointwise stabilizer of $\{x_A : A \in \calA\}$.  After this, we will add finitely many more homeomorphisms $g_{AB}$ to generate $\mcg(\Sigma)$.  

Suppose that $\phi$ fixes each basepoint $x_A \in E$.  We proceed inductively on the number of elements of $\calA$ which are pointwise fixed by the action of $\phi$ on $E$.  Let $\calA_{\rm id}$ denote the subset (possibly empty) of $\calA$ such that, for each $A \in \calA_{\rm id}$, the ends of $A$ are pointwise fixed by $\phi$, and let $\calA^c = \calA - \calA_{\rm id}$.  
Choose a set $A \in \calA-\calA_{\rm id}$.   For every 
$B \in  \calA_{\rm id}^c $, let  $U_B=B - \phi(A)$. Then for every end $z \in (B-U_B) \subset \phi(A)$, there is some 
end $y \sim z$ which lies in $A$. Hence, by Proposition \ref{prop:push}
(setting $\calB = \calA-\{A, B\}$), there is an element $g$ in the group generated by $\chi$ with support
in $A \cup B$ that sends $U_B$ to $B$. In particular, $g\, \phi(A) \cap B = \emptyset$
and the restriction of $g \, \phi$ to sets in $\calA_{\rm id}$ is still the identity. 

Doing this for every other $C \in \calA_{\rm id}^c$, we may modify $\phi$ by elements of $\chi$ to obtain
 a map $\phi'$ such that  
$\phi'(A)$ is disjoint from every $C \in \calA-\{A\}$, i.e. $\phi'(A) \subset A$. 
Letting $U = \phi(A')$, we see that the conditions of the Proposition \ref{prop:push} 
are again satisfied since the restriction of $\phi'$ to sets in $\calA_{\rm id}$ is the identity. 
Hence, there is $g' \in \langle \chi \rangle$ that is also identity on every set in $\calA_{\rm id}$
and that sends $U$ to $A$. That is, $g' \phi(A) = A$ and there is $\psi \in \calV_L$
such that the restriction of $\psi g' \phi'$ to $A$ is the identity. 

Continuing in this way, at every step, we increase the number of sets in $\calA_{\rm id}$, eventually obtaining a homeomorphism which pointwise fixes all ends.  Since $\chi$ generates $\pmcg(\Sigma)$, we conclude that $\phi \in \langle \chi \rangle$.  

Now we show that there is a finite set $F'$ such that $\chi \cup F'$ generate $\mcg(\Sigma)$.  
Construct $F'$ as follows.  For any $A, B \in \calA$ where points in 
$\calM(A)$ and $\calM(B)$ are of the same type,  choose one element $g_{A, B}$
sending $N(x_A)$ to $N(x_B)$ (recall that these are stable neighborhoods) and restricting to the identity on every set in $\calA - \{ A, B\}$. 
Let $F'$ be the set of all such chosen $g_{A,B}$.   To see $\chi \cup F'$ generates, let $\phi \in \mcg(\Sigma)$.  Suppose 
$\phi(x_A) \in B$.  We modify $\phi$ to a map $\phi'$ in one of the following ways. 

(Case 1) Assume $\phi(x_B) \not = x_B$. There is a $\psi \in \calV_L$ with support
 in $B$ that sends $\phi(x_A)$ to $x_B$ and hence 
 \[
\phi' =  g_{A, B}  \psi  \phi
 \]
 fixes $x_A$. 

(Case 2) Assume $\phi(x_B) = x_B$. Then $\calM(B)$ has more than one point and hence
it is a Cantor set. Take a map $\psi \in \calV_L$ with support in $B$ that sends 
$\phi(x_A)$ to $x_B$ and sends $x_B$ to a point in $B-N(x_B)$. 
Then 
\[
\phi' =\psi^{-1}  g_{A,B}  \psi  \phi 
\] 
sends $x_A$ to $x_A$ and still fixes $x_B$. 

Note that the number of points $x_A$ that are fixed 
by $\phi'$ is one more than that of $\phi$. Hence, after repeating this process finitely many times, we arrive 
at an element fixing each maximal point, hence generated by $\chi$. This finishes the proof. 
\end{proof}

\section{Classification of CB Mapping class groups}  \label{sec:global_CB}
In this section we prove Theorem \ref{thm:global_CB} classifying the surfaces $\Sigma$ 
for which the group $\mcg(\Sigma)$ is CB. In the case where $E$ is uncountable, we will 
add the hypothesis that $\Sigma$ is tame. However, we expect the classification theorem 
to hold without this additional hypothesis, since it is only used in the very last portion of the proof.  

\noindent Note that the telescoping case occurs only when $E$ is uncountable, by Proposition \ref{prop:telescoping_uncountable}. 

\begin{proof}[Proof of Theorem \ref{thm:global_CB}]
If $\Sigma$ has zero or infinite genus and is either telescoping or has self-similar end space, then 
it was shown in Propositions \ref{prop:SSCB} and \ref{prop:telescoping} that 
$\mcg(\Sigma)$ is CB, with no hypothesis on tameness. 
We prove the other direction.  Assume that $\Sigma$ has a CB 
mapping class group.  By Example \ref{ex:finite-genus}, this implies that $\Sigma$ has zero or infinite genus.  Also, being globally CB, $\mcg(\Sigma)$ is in particular locally CB so the end space admits a decomposition $E = \sqcup_{A \in \calA} A$ into finitely many self-similar sets as in Theorem \ref{thm:loc_CB}. Then Example \ref{ex:easy_nondisplace} implies that, if we take such a decomposition with $\calA$ of minimal cardinality, then $\calA$ has either one or two elements.  Finally, if $\calA$ is a singleton, then $E$ is self-similar.  Thus, we only need to take care of the case where $\calA$ has exactly two elements.  

Example \ref{ex:easy_nondisplace} also shows that, if $\calA = \{A, B\}$, then $\calM(A)$ and $\calM(B)$ are either both singletons or Cantor sets.  A slight variation on the argument there also allows us to eliminate the case where they are both Cantor sets: if points of $\calM(A)$ are not of the same type as those in $\calM(B)$, then one may construct a nondisplaceable subsurface just as in the example by having $\calM(A)$ play the role of the singleton.  Otherwise, points of $\calM(A)$ and $\calM(B)$ are all of the same type and hence $\calM(E) = \calM(A) \cup \calM(B) = E(x_A)$ and Lemmas \ref{lem:SS_nbhd} and \ref{lem:consume} together imply that $E$ is self-similar.

Thus, we can assume that $\calM(A) = \{x_A\}$ and $\calM(B) = \{x_B\}$.  
We start by showing in this case that $E_\mathrm{mc}(A,B) = \emptyset$. 
To show this, suppose for contradiction that we have some $z \in E_\mathrm{mc}(A,B)$.  Then $E(z)$ accumulates to both $x_A$ 
and $x_B$ and since $z$ is maximal in $E - \{x_A, x_B\}$, the set $E(z)$ has no other accumulation
points. As in Lemma \ref{lem:infinite-rank}, we can 
define a continuous homomorphism to $\Z$ on the subgroup that pointwise fixes $\{x_A, x_B\}$ (which is of index at most two in $\mcg(\Sigma)$), via
\[
\ell(\phi) = 
\Big| \big\{x \in E(z) \mid x \in A, \quad \phi(x) \in B \big\}\Big| -
\Big| \big\{x \in E(z) \mid x \in B, \quad \phi(x) \in A \big\}\Big|. 
\]
Let $U_0 \subset A$ be a neighborhood of $z$ not containing $x_A$. 
Since $z \in E_{\rm mc}(A,B)$, we can find a homeomorphic copy
$U_1 \subset B$ of $U_0$ in $B$. Since $A$ and $B$ are self-similar, we may find 
disjoint homeomorphic copies $U_2, U_3, \ldots$ of $U_0$ in $A$ descending to $x_A$, 
and homeomorphic copies $U_{-1}, U_{-2}, \ldots$ of $U_0$ in $B$ descending to $x_B$.  
Let $\eta$ be a homeomorphism that sends $U_i$ to $U_{i+1}$ 
and restricts to the identity everywhere else. Then $\ell (\eta^n)=n$, so the homomorphism 
$\ell$ is unbounded and $\mcg(\Sigma)$ is not CB. This gives the desired contradiction, so we conclude that  
$E_\mathrm{mc}(A,B) = \emptyset$. Note that, in particular, this implies 
$E$ is not countable. 

We now show that $E$ is telescoping.  Let $N(x_A)$ and $N(x_B)$ be as in Lemma  \ref{lem:W_sets}.  Let $V_1$ and $V_2$ be subsurfaces with a single boundary component so that the end space of $V_1$ is $N(x_A)$ and that of $V_2$ is $N(x_B)$.  
We will check the definition of telescoping by using these neighborhoods of $x_1 = x_A$ and $x_2 = x_B$. 

Let $W_1 \subset V_1$ and $W_2 \subset V_2$ be neighborhoods of $x_A$ and 
$x_B$ respectively. Let $S$ be a finite type subsurface, homeomorphic to a pair of pants, 
whose complimentary regions partition $E$ into $W_1$, $V_2$ and the remaining ends.    Provided $N(x_A)$ and $N(x_B)$ are chosen small enough,  condition (3) of Theorem \ref{thm:loc_CB} ensures that either $\Sigma$ has genus 0, or that $\Sigma - \left(V_1 \cup V_2\right)$ has infinite genus. 

Let $f_1$ be a homeomorphism displacing $S$.  We may also assume that $f_1$ fixes $x_A$ and $x_B$, since existence of a nondisplaceable subsurface in the finite-index subgroup of $\mcg(\Sigma)$ stabilizing $x_A$ and $x_B$ is sufficient to show that $\mcg(\Sigma)$ is not CB.  
Then, up to replacing $f_1$ with its inverse, we have $f_1(\Sigma - W_1) \subset V_2$.  A similar argument gives a homeomorphism $f_2$ with $f_2(\Sigma - W_2) \subset V_1$ and so the second condition in the definition of telescoping is satisfied.  

For the first condition, we need to find a homeomorphism of the subsurface $\Sigma - V_2$ that maps $W_1$ to $V_1$.  By Lemma \ref{lem:consume}, we know that $V_1$ and $W_1$ are homeomorphic (their end sets are homeomorphic, and they each have zero or infinite genus and one boundary component)
 so we need only show that their compliments are homeomorphic and apply the classification of surfaces.   Since, as remarked above, $\Sigma$ either has genus 0 or $\Sigma - \left(V_1 \cup V_2\right)$ has infinite genus, we need only produce such a homeomorphism on the level of end spaces.   Here we will finally invoke tameness. Let 
\[\Sigma' = \Sigma- \big(V_1 \cup V_2 \big).\]  
By definition of $N(x_A)$, for any end $z$ of $V_1 - W_1$ there exists an end $x$ of $\Sigma'$ where $z \lesseq x$, and hence some maximal point $x \in V$ with $z \lesseq x$.  
Since $x$ is not of countable type, it is necessarily an accumulation point of $E(x)$.  Since $x$ has a stable neighborhood $V_x$, Lemma \ref{lem:consume} implies that $z$ has a neighborhood $U_z$ such that $U_z \cup V_x$ is homeomorphic to $V_x$.  Thus, on the level of ends, the end space of $\Sigma'$ is homeomorphic to that of its union with $U_z$.  

Since the end space of $(V_1 - W_1)$ is compact, it may be covered by finitely many such neighborhoods $U_z$ (varying $z$); applying the procedure above to each of them in turn produces the desired homeomorphism on the level of end spaces, showing the two subsurfaces are homeomorphic.  
\end{proof}

\bibliographystyle{plain}

\begin{thebibliography}{10}

\bibitem{AFP}
J.~Aramayona, A.~Fossas, and H.~Parlier.
\newblock Arc and curve graphs for infinite-type surfaces.
\newblock {\em Proc. Amer. Math. Soc.}, 145(11), 4995--5006, 2017.

\bibitem{APV}
J.~Aramayona, P.~Patel and N.~Vlamis.
\newblock The first integral cohomology of pure mapping class groups.
\newblock 2018.
\newblock Preprint, arXiv:1711.03132.

\bibitem{Bavard}
J.~Bavard. 
\newblock Hyperbolicit\'e du graphe des rayons et quasi-morphismes 
sur un gros groupe modulaire.
\newblock {\em Geom. Topol.} 20, no. 1, 491--535, 2016. 

\bibitem{BDR}
J.~Bavard, S.~Dowdall and K.~Rafi.
\newblock Isomorphisms between big mapping class groups.
\newblock 2017.
\newblock Preprint, arXiv:1708.08383.

\bibitem{CamerloGao}
R.~Camerlo and S.~Gao.
\newblock The completeness of the isomorphism relation for countable Boolean algebras.
\newblock {\em Trans. Amer. Math. Soc.} 353, no. 2, 491--518, 2001.

\bibitem{Chaman} 
R.~Chamanara.
\newblock Affine automorphism groups of surfaces of infinite type. 
\newblock In the tradition of Ahlfors and Bers, III, 123--145, 
{\em Contemp. Math.} 355, Amer. Math. Soc., Providence, RI, 2004. 

\bibitem{DFV}
F.~Fanoni, M.~Durham and N.~Vlamis.
\newblock Graphs of curves on infinite-type surfaces with mapping class group
  actions.
\newblock 2018.
\newblock Preprint, arXiv:1611.00841.

\bibitem{FHV}
F.~Fanoni, S.~Hensel and N.~Vlamis.
\newblock Big mapping class groups acting on homology.
\newblock 2019.
\newblock Preprint,  arXiv:1905.12509.

\bibitem{Hooper}
W.P.~Hooper.
\newblock Grid graphs and lattice surfaces. 
\newblock {\em Int. Math. Res. Not.} IMRN, no. 12, 2657--2698, 2013.

\bibitem{Ketonen}
J.~Ketonen.
\newblock The structure of countable {B}oolean algebras.
\newblock {\em Ann. of Math. (2)}, 108(1), 41--89, 1978.

\bibitem{MM2}
H.~Masur and Y.~Minsky.
\newblock Geometry of the complex of curves. {II}. {H}ierarchical structure.
\newblock {\em Geom. Funct. Anal.}, 10(4), 902--974, 2000.

\bibitem{MM1}
H.~Masur and Y.~Minsky.
\newblock Geometry of the complex of curves. {I}. {H}yperbolicity.
\newblock {\em Invent. Math.}, 138(1), 103--149, 1999.

\bibitem{MS}
S.~Mazurkiewicz and W.~Sierpinski.
\newblock Contribution \`a la topologie des ensembles d\'enombrables.
\newblock {\em Fundamenta Mathematicae}, 1, 17--27, 1920.

\bibitem{PatelVlamis}
P.Patel and N.~Vlamis
\newblock Algebraic and topological properties of big mapping class groups. 
\newblock {\em Algebr. Geom. Topol.}, 18, no. 7, 4109--4142, 2018. 

\bibitem{Rand}
A.~Randecker.
\newblock Wild translation surfaces and infinite genus. 
\newblock {\em Algebr. Geom. Topol.} 18, no. 5, 2661--2699, 2018.

\bibitem{Richards}
I.~Richards.
\newblock On the classification of noncompact surfaces.
\newblock {\em Trans. Amer. Math. Soc.}, 106, 259--269, 1963.

\bibitem{Rosendal_book}
C.~Rosendal.
\newblock Coarse geometry of topological groups.
\newblock Book manuscript.

\bibitem{RosendalBergman}
C.~Rosendal.
\newblock Global and local boundedness of {P}olish groups.
\newblock {\em Indiana Univ. Math. J.}, 62(5), 1621--1678, 2013.

\bibitem{RosendalLargeScale}
C.~Rosendal.
\newblock 2014.
\newblock Preprint arXiv:1403.3106.

\bibitem{Schleimer}
S.~Schleimer.
\newblock Notes on the complex of curves.
\newblock http://homepages.warwick.ac.uk/~masgar/Maths/notes.pdf.

\end{thebibliography}

\end{document}